\numberwithin{equation}{section}
\newtheorem{thm}{Theorem}[section]
\newtheorem{lem}[thm]{Lemma}
\newtheorem{Prop}[thm]{Proposition}
\newtheorem{Rem}[thm]{Remark}
\begin{document}
	\title[The nonlinear Hartree equation]
	{Asymptotic behavior of least energy solutions to the nonlinear Hartree equation near critical exponent}
	
	\author[S. Cingolani]{Silvia Cingolani}
	\address{\noindent Silvia Cingolani \newline
Dipartimento di Matematica, Universit\`{a} degli Studi di Bari Aldo Moro,\newline Via Orabona 4, 70125 Bari, Italy.}
	\email{silvia.cingolani@uniba.it}
	
	\author[M. Yang]{Minbo Yang$^\dag$}
	\address{\noindent Minbo Yang  \newline
		School of Mathematical Sciences, Zhejiang Normal University,\newline
		Jinhua 321004, Zhejiang, People's Republic of China.}\email{mbyang@zjnu.edu.cn}
	
	\author[S. Zhao]{Shunneng Zhao$^\ddag$}
	\address{\noindent Shunneng Zhao  \newline
Dipartimento di Matematica, Universit\`{a} degli Studi di Bari Aldo Moro,\newline Via Orabona 4, 70125 Bari, Italy.
		\vspace{2mm}
		\newline
School of Mathematical Sciences, Zhejiang Normal University,\newline
		Jinhua 321004, Zhejiang, People's Republic of China.}
	\email{snzhao@zjnu.edu.cn}

	\thanks{2020 {\em{Mathematics Subject Classification.}} Primary 35B09; 35B40;  Secondly  35J20; 35J60.}
	
	\thanks{{\em{Key words and phrases.}} Nonlocal problem; Least energy solutions; Asymptotic behavior; Critical exponent.}
	
\thanks{Silvia Cingolani was supported by PNRR MUR project PE0000023 NQSTI - National Quantum Science and Technology Institute (CUP H93C22000670006), PRIN PNRR P2022YFAJH ``Linear and Nonlinear PDEs: New directions and applications," and partially supported by INdAM-GNAMPA.}

	\thanks{$^\dag$Minbo Yang was partially supported by the National Key Research and Development Program of China (No. 2022YFA1005700) and National Natural Science Foundation of China (12471114).}
	\thanks{$^\ddag$Shunneng Zhao was supported by PNRR MUR project PE0000023 NQSTI - National Quantum Science and Technology Institute (CUP H93C22000670006),
National Natural Science Foundation of China (12401146, 12261107) and Natural Science Foundation of Zhejiang Province (LMS25A010007).}
	
	\allowdisplaybreaks
	
	\begin{abstract}
		{\small
			In this paper, we study that the nearly critical nonlocal problem
\begin{equation*}
\left\lbrace
\begin{aligned}
    &-\Delta u=(|x|^{-{(n-2)}}\ast u^{p-\epsilon})u^{p-1-\epsilon} \quad \mbox{in}\quad \Omega,\\
    &u>0\quad \mbox{in}\quad\hspace{1mm} \Omega,\\
    &u=0\quad \mbox{on}\hspace{2.5mm}\partial\Omega,
   \end{aligned}
\right.
\end{equation*}
where $\Omega$ is a smooth bounded domain in $\mathbb{R}^n$ for $n=3,4,5$, $\ast$ denotes the standard convolution, $\epsilon>0$ is a small parameter and $p=\frac{n+2}{n-2}$ is energy-critical exponent.
 We study the asymptotic behavior of least energy solutions as $\epsilon\rightarrow0$. These solutions are shown to blow-up at exactly one point $x_0$ and location of this point is characterized. In addition, the shape and exact rates for blowing-up are studied. Finally, in order to further locate the blowing-up point $x_0$, we prove that $x_0$ is a global maximum point of the Robin's function of $\Omega$.
		}
	\end{abstract}
	
	\vspace{3mm}
	
	\maketitle
	\section{Introduction}
\subsection{Motivation and main results}
In this paper, we are concerned with the following nonlocal equation:
\begin{equation}\label{ele-1.1}
\left\lbrace
\begin{aligned}
    &-\Delta u=(|x|^{-{\mu}}\ast u^{p-\epsilon})u^{p-1-\epsilon} \quad \mbox{in}\quad \Omega,\\
    &u>0\quad \mbox{in}\quad\hspace{1mm} \Omega,\\
    &u=0\quad \mbox{on}\hspace{2.5mm}\partial\Omega,
   \end{aligned}
\right.
\end{equation}
where $\Omega$ is a smooth bounded domain in $\mathbb{R}^n$ for $n\geq3$, $\mu\in(0,n)$, $\epsilon>0$ is a small parameter, the exponent $p=\frac{2n-\mu}{n-2}$ is a threshold on the existence of a solution to \eqref{ele-1.1}. If $\epsilon>0$, one
can find a solution to \eqref{ele-1.1} by applying the standard variational argument with the compact embedding $H^{1}(\Omega)\hookrightarrow L^{\frac{2n(p-\epsilon)}{2n-\mu}}(\Omega)$. If $\epsilon=0$ and $\Omega$ is star-shaped, then an application of the nonlocal-type Pohozaev identity in \cite{GY18} gives nonexistence of a nontrivial solution for \eqref{ele-1.1}.
The problem \eqref{ele-1.1} can be understood as the nonlocal version of the following problem with nonlinearities of slightly subcritical growth
 \begin{equation}\label{SC}
 -\Delta u
=u^{\frac{n+2}{n-2}-\varepsilon}
 ,~~~u>0,~~~\text{in}~~\Omega,~~~
 		u=0,~~~\text{on}~~\partial\Omega.
 \end{equation}
Atkinson and Peletier \cite{ATKINSON-1986} studied the asymptotic behavior of radial solutions of \eqref{SC} by using ODE technique in the unit ball of $\mathbb{R}^3$. Later, Brezis and Peletier \cite{BP} used the method of PDE to obtain the same results as that in \cite{ATKINSON-1986} for the spherical domains. Moreover, they conjectured that the same kind of results hold for non
spherical domains. This question was answered affirmatively in the general case by Han \cite{HANZCHAO} (independently by Rey \cite{Rey-1989}),
in which they independently proved that if $\epsilon$ small enough, $u_{\epsilon}$ blows-up at the unique point $x_0$ that is a critical point of the Robin
function of $\Omega$, and analysis of the shape and exact rates for blowing up. Moreover, for the classical local Brezis-Nirenberg problem,  Rey \cite{Rey-1990} constructed a family of solutions for this problem which asymptotically blow up at a non-degenerate critical point of the Robin function. After these seminal works, numerous results of a similar nature appeared in the literature, and the following represents only a extension of them: for the
number of blow-up point $k>1$ \cite{Musso-Pistoia-2002,B-L-R}, the related eigenvalue problem \cite{CKIM,GP05}, the uniqueness of positive solutions \cite{dgp,GniN,LiWZ}, the fractional Laplacian \cite{CKL}, and references therein. See \cite{Djadli, Malchiodi} for the existence of positive solutions to the conformal scalar curvature equation by applying the perturbation method.

In the spirit of Rey \cite{Rey-1990}, Chen and Wang \cite{cw}
proved that if a solution $u_{\epsilon}$ of \eqref{ele-1.1} satisfies
\begin{equation*}
|\nabla u_\varepsilon|^2\rightarrow C_{HLS}^{\frac{2n-\mu}{n+2-\mu}}\delta(x-x_0)~~~\text{as}~\varepsilon\rightarrow0
\end{equation*}
for $n\geq3$ and $\mu\in(0, \min\{n,4\})$, where $C_{HLS}>0$ is a constant depending on the dimension $n$ and parameters $\mu$ (see \eqref{Prm}) and $\delta(x)$ denotes the Dirac measure at the origin,
then the concentrate point $x_{0}\in \Omega$ is a critical point of Robin function $\phi$ (see below). In addition, they also used the finite dimensional reduction method to give a kind of converse result, i.e., for $\epsilon$ sufficiently small, \eqref{ele-1.1} has a family of solutions $u_{\epsilon}$ concentrating around $x_0$ under the restriction of some dimensions.

In this line of research, motivated by the works of Han \cite{HANZCHAO} and Rey \cite{Rey-1989} on the classical local problem, the aim of this paper is to study the asymptotic behavior of least energy solutions $u_{\epsilon}$ for nonlocal problem \eqref{ele-1.1} when $p=\frac{n+2}{n-2}>2$ is the $\dot{H}^{1}$-energy-critical exponent and $\epsilon>0$ is close to zero.

To state the result,
we recall from \cite{H-L-1928,S1963} that
the classical Hardy-Littlewood-Sobolev inequality
    \begin{equation}\label{hlsi}
    \int_{\mathbb{R}^n}\int_{\mathbb{R}^n}f(x)|x-y|^{-\mu} g(y)dxdy\leq C(n,r,t,\mu)\|f\|_{L^r(\mathbb{R}^n)}\|g\|_{L^t(\mathbb{R}^n)}
    \end{equation}
with $\mu\in(0,n)$, $1<r,t<\infty$ and $\frac{1}{r}+\frac{1}{t}+\frac{\mu}{n}=2$. Moreover, in the general diagonal case $t=r=\frac{2n}{2n-\mu}$, Lieb in \cite{Lieb83} classified the extremal function of HLS inequality with sharp constant by rearrangement and symmetrisation, and obtained the best constant
    \begin{equation}\label{defhlsbc}
    C_{n,r,t,\mu}:=C_{n,\mu}=\frac{\Gamma((n-\mu)/2)\pi^{\mu/2}}{\Gamma(n-\mu/2)}\left(\frac{\Gamma(n)}{\Gamma(n/2)}\right)^{1-\frac{\mu}{n}},
    \end{equation}
   and the equality holds if and only if
    \begin{equation*}
   f(x)=cg(x)=a\Big(\frac{1}{1+\lambda^2|x-x_0|^2}\Big)^{\frac{2n-\mu}{2}}
    \end{equation*}
    for some $a\in \mathbb{C}$, $\lambda\in \mathbb{R}\backslash\{0\}$ and $x_0\in \mathbb{R}^n$.
The classical Hardy-Littlewood-Sobolev inequality and Sobolev inequality tell us that
\begin{equation}\label{Prm}
C_{HLS}\left(\int_{\mathbb{R}^n}(|x|^{-\mu} \ast|u|^{p})|u|^{p} dx\right)^{\frac{1}{p}}\leq\int_{\mathbb{R}^n}|\nabla u|^2 dx, \quad u\in \dot{H}^{1}(\mathbb{R}^n),
\end{equation}
for some positive constant $C_{HLS}$ depending only on $n$ and $\mu$, where $n\geq 3$, $\mu\in(0,n)$ and $\dot{H}^{1}(\mathbb{R}^n):=\mathcal{D}^{1,2}(\mathbb{R}^n)$, the completion of $C_{0}^{\infty}(\mathbb{R}^n)$ under the norm $\|\nabla u\|_{L^2(\mathbb{R}^n)}$.
It is well-known that the Euler-Lagrange equation of \eqref{Prm}, up to scaling, is given by
 \begin{equation}\label{ele}
 -\Delta u=(|x|^{-\mu}\ast |u|^{p})|u|^{p-2}u \quad \mbox{in}\quad \mathbb{R}^n.
 \end{equation}
Furthermore, the authors in \cite{DAIQIN, DY19,GHPS19} independently classified all positive solutions of \eqref{ele} are functions of the form
\begin{equation}\label{defU}
W[\xi,\lambda](x)=\tilde{c}_{n,\mu}\big(\frac{\lambda}{1+\lambda^2|x-\xi|^2}\big)^{\frac{n-2}{2}},\hspace{1mm}\lambda\in\mathbb{R}^{+},\hspace{1mm}\xi\in\mathbb{R}^n,
\end{equation}
and obtained the optimal constant in \eqref{Prm}
    \begin{equation*}
C_{HLS}=S\left[\frac{\Gamma((n-\mu)/2)\pi^{\mu/2}}{\Gamma(n-\mu/2)}\left(\frac{\Gamma(n)}{\Gamma(n/2)}\right)^{1-\frac{\mu}{n}}\right]^{(2-n)/(2n-\mu)}.
    \end{equation*}
Here the constant $\tilde{c}_{n,\mu}$ is given by
\begin{equation}\label{fU}
    \tilde{c}_{n,\mu}:=[n(n-2)]^{\frac{n-2}{4}}S^{\frac{(n-\mu)(2-n)}{4(n-\mu+2)}}C_{n,\mu}^{\frac{2-n}{2(n-\mu+2)}},
      \end{equation}
$S$ is the best Sobolev constant and $C_{n,\mu}$ is defined in \eqref{defhlsbc}.

Our main result is in the following:
    \begin{thm}\label{Figalli}
 Assume that $n=3,4,5$, $p=\frac{n+2}{n-2}$ and $\epsilon$ is sufficiently small. Let $u_\varepsilon$ be a solution of \eqref{ele-1.1} such that
\begin{equation}\label{minimi}
\frac{\int_{\Omega}|\nabla u_{\epsilon}|^2dx}{\left[\int_{\Omega}(|x|^{-(n-2)} \ast|u_{\epsilon}|^{p-\epsilon})|u_{\epsilon}|^{p-\epsilon} dx\right]^{\frac{1}{p-\epsilon}}}=C_{HLS}+o(1)\quad\mbox{as}\quad\epsilon\rightarrow0.
\end{equation}
 Then
\begin{itemize}
\item[$(a)$]
there exists $x_0\in\Omega$ such that, after passing to a subsequence, we have
\begin{equation*}
u_\epsilon\rightarrow0\in C^1(\Omega\setminus\{x_0\})\quad\mbox{as}\quad\epsilon\rightarrow0
\end{equation*}
and
\begin{equation*}
|\nabla u_{\epsilon}|^2\rightarrow\big[C_{HLS}\big]^{\frac{n+2}{4}}\delta(x-x_0)\quad\mbox{as}\quad\epsilon\rightarrow0
\end{equation*}
in the sense of distributions and where $\delta(x)$ denotes the Dirac measure at the origin.
\item[$(b)$] $x_0$ is a critical of $\phi(x):=H(x,x)$ (Robin's function of $\Omega$) for $x\in\Omega$.
The function $H(x,y)$ is given as follows: for any $y\in\Omega$, $H(x,y)$ satisfies
\begin{equation*}
\Delta H(x,y) =0\quad\mbox{in}\quad\Omega,\quad\quad
	H(x,y)=-\frac{1}{(n-2)\omega_n|x-y|^{n-2}}\quad\mbox{on}\quad\partial\Omega.
\end{equation*}
The function $H$ is nothing but the regular part of the Green function. Indeed, if $G(x,y)$ denotes the
Green's function of the Laplacian on $H_{0}^1({\Omega})$, then we have
   \begin{equation}\label{Robin}
	H(x,y)=G(x,y)-\frac{1}{(n-2)\omega_n|x-y|^{n-2}},
\end{equation}
    where $\omega_n$ is the area of the unit sphere in $\mathbb{R}^n$.
\end{itemize}
    \end{thm}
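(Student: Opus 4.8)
The plan is to follow the strategy of Han and Rey for the local problem \eqref{SC}, adapting each step to handle the nonlocality of the Choquet term. Throughout I write $u_\epsilon$ for the least energy solution normalized by \eqref{minimi}, and I set $M_\epsilon:=\max_\Omega u_\epsilon=u_\epsilon(x_\epsilon)$ and $\lambda_\epsilon:=M_\epsilon^{2/(n-2)}\to\infty$ (the blow-up is forced because the limiting problem \eqref{ele-1.1} with $\epsilon=0$ has no solution on bounded domains by the Pohozaev identity of \cite{GY18}).

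\textbf{Step 1: Concentration-compactness and single-point blow-up.} First I would show that, after scaling, $u_\epsilon$ converges to a ground state bubble. Define the rescaled functions $v_\epsilon(y):=\lambda_\epsilon^{-(n-2)/2}u_\epsilon(x_\epsilon+\lambda_\epsilon^{-1}y)$ on the expanding domain $\lambda_\epsilon(\Omega-x_\epsilon)$. Using the normalization \eqref{minimi} and the Hardy--Littlewood--Sobolev inequality \eqref{Prm}, the energy of $u_\epsilon$ converges to the least energy level of the limiting Choquet problem \eqref{ele} on $\mathbb{R}^n$; hence $v_\epsilon\to W[0,1]$ (the bubble \eqref{defU}, appropriately normalized) in $\dot H^1_{\rm loc}$ and in $C^1_{\rm loc}$ by elliptic regularity together with the boundedness of the nonlocal potential $|x|^{-(n-2)}\ast v_\epsilon^{p-\epsilon}$ in $L^\infty_{\rm loc}$. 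The Choquet-type concentration-compactness lemma (in the nonlocal setting this is by now standard) rules out dichotomy and vanishing, giving that $|\nabla u_\epsilon|^2$ converges as a measure to a single Dirac mass at some $x_0=\lim x_\epsilon\in\overline\Omega$; a standard argument using the Pohozaev identity near the boundary, or a moving-plane/Kelvin-transform comparison, excludes $x_0\in\partial\Omega$. Combined with a uniform decay estimate $u_\epsilon(x)\le C\lambda_\epsilon^{-(n-2)/2}|x-x_\epsilon|^{-(n-2)}$ away from $x_\epsilon$ (proved via the representation formula and a bootstrap on the nonlocal potential), this yields $u_\epsilon\to 0$ in $C^1_{\rm loc}(\overline\Omega\setminus\{x_0\})$ and the identification of the constant $[C_{HLS}]^{(n+2)/4}$ in front of the Dirac mass, proving part $(a)$.

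\textbf{Step 2: Precise asymptotics and the Pohozaev/Green identity.} The heart of part $(b)$ is to extract the location equation for $x_0$. Here I would write $u_\epsilon = W[x_\epsilon,\lambda_\epsilon] - \varphi_\epsilon$ where $\varphi_\epsilon$ is the error, and establish sharp estimates: $\|\varphi_\epsilon\|_{\dot H^1}\to 0$ at a controlled rate and, crucially, that $u_\epsilon$ is uniformly close to $\alpha_n\lambda_\epsilon^{(2-n)/2}M_\epsilon^{-1}\cdot(\text{something})$—more precisely that $M_\epsilon u_\epsilon\to c_n G(\cdot,x_0)$ in $C^1_{\rm loc}(\overline\Omega\setminus\{x_0\})$, the Green's function appearing because the rescaled potential approaches $|x|^{-(n-2)}\ast W^p$ which by the Choquet equation is comparable to $-\Delta W/W^{p-1}$. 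Then I apply the Pohozaev identity for \eqref{ele-1.1} on a small ball $B_\delta(x_\epsilon)$: multiplying the equation by $x\cdot\nabla u_\epsilon$ and $u_\epsilon$ and integrating, the interior terms involving $\epsilon$ produce (after using the convergence of energies) a term of order $\epsilon$ with a definite sign, while the boundary integrals over $\partial B_\delta$ are evaluated using the Green's function expansion $u_\epsilon\approx c_n M_\epsilon^{-1}(\lambda_\epsilon^{-(n-2)/2}|x-x_\epsilon|^{-(n-2)} + H(x,x_\epsilon)+o(1))$. The nonlocal term requires care: one must expand $|x|^{-(n-2)}\ast u_\epsilon^{p-\epsilon}$ near $x_\epsilon$ using the fact that $u_\epsilon^{p-\epsilon}$ concentrates, so the convolution is asymptotically $(\int u_\epsilon^{p-\epsilon})|x-x_\epsilon|^{-(n-2)}$ up to lower order, reducing the nonlocal Pohozaev boundary terms to local-type ones.

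\textbf{Step 3: Passing to the limit in the balance law.} Matching the two sides of the Pohozaev identity and letting $\delta\to 0$ after $\epsilon\to 0$, the leading boundary contribution is a multiple of $\nabla_x H(x_0,x_0)=\tfrac12\nabla\phi(x_0)$ (the cross terms $\lambda_\epsilon^{-(n-2)/2}|x-x_\epsilon|^{-(n-2)}\cdot H$ integrate against the Pohozaev vector field to give exactly the gradient of the regular part, as in \cite{HANZCHAO,Rey-1989}), while the interior $\epsilon$-term has a fixed sign and vanishes in the limit; this forces $\nabla\phi(x_0)=0$, i.e.\ $x_0$ is a critical point of the Robin function. \textbf{The main obstacle} I anticipate is Step 2: obtaining the sharp pointwise and $C^1$ expansion of $u_\epsilon$ in terms of $W[x_\epsilon,\lambda_\epsilon]$ and $H(\cdot,x_\epsilon)$ with error estimates strong enough to survive integration in the Pohozaev identity, because the nonlocal convolution couples the behavior of $u_\epsilon$ near the concentration point with its behavior far away, so the error analysis cannot be localized as cleanly as in the local case; one must control $|x|^{-(n-2)}\ast\varphi_\epsilon$ and the mixed terms $|x|^{-(n-2)}\ast(W^{p-1}\varphi_\epsilon)$ using HLS together with the explicit decay of $W$, and track how the exponent shift $p\mapsto p-\epsilon$ interacts with the convolution (it affects both factors), which produces the $\epsilon\log\lambda_\epsilon$-type terms whose precise coefficient determines the rate $\lambda_\epsilon\sim \epsilon^{-\alpha_n}$ and pins down the sign needed to conclude.
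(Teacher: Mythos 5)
Your overall strategy for part $(a)$ lines up well with the paper: rescale around $x_\epsilon$, identify the limiting bubble $W[0,1]$ via the sharp nonlocal constant and a decay estimate, rule out boundary concentration, and conclude the $\delta$-mass via the Green representation of $u_\epsilon$. For part $(b)$, however, your description of the Pohozaev balance does not match the mechanism the paper actually uses, and there is one place in part $(a)$ where your proposed alternative tool would fail.

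First, boundary exclusion. You offer ``moving-plane/Kelvin-transform comparison'' as one of the possible routes to exclude $x_0\in\partial\Omega$. The moving-plane method requires convexity (or a geometric restriction) near the boundary; the paper explicitly dispenses with this, and instead derives boundary non-concentration from a local translational Pohozaev identity on $B'(x_\epsilon,2d_\epsilon)$ together with sharp pointwise estimates of $u_\epsilon$ and $\nabla u_\epsilon$ on $\partial B'(x_\epsilon,2d_\epsilon)$ (Lemma~\ref{lem:Sobolevinequalitynot} and Theorem~\ref{blow-up}), producing the contradiction $n-1\leq 1-(n-2)p$. The key input for those pointwise estimates is the decay bound $v_\epsilon\le cW$ (Lemma~\ref{cWU}), which the paper derives from the Kelvin transform of $v_\epsilon$ combined with the $\varepsilon$-regularity Lemma~\ref{regular1}; you should name this mechanism rather than just asserting ``a bootstrap on the nonlocal potential,'' because the exponents $\mu=n-2$ and the Kelvin transform are exactly what makes the argument close and also why the paper is limited to $\mu=n-2$.

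Second, and more substantively, your Step~3 conflates the translational and dilational Pohozaev identities. In the paper, the location equation $\nabla\phi(x_0)=0$ comes from the translational identity \eqref{PH} applied with $B'=\Omega$: since $u_\epsilon=0$ on $\partial\Omega$, the right-hand side vanishes identically and one gets the exact identity $\int_{\partial\Omega}(\partial u_\epsilon/\partial\nu)^2\nu_j\,dS=0$ --- there is no $\epsilon$-term to balance and no small ball $B_\delta$ is involved. One then passes to the limit using the $C^{1,\alpha}$-convergence $\|u_\epsilon\|_\infty u_\epsilon\to c\,G(\cdot,x_0)$ away from $x_0$ (Theorem~\ref{consequence}) and invokes the Brezis--Peletier identity \eqref{GX0X}, $\int_{\partial\Omega}(\nabla G(x,x_0),\nu)^2\nu\,dS=-\nabla\phi(x_0)$, to conclude. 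The picture you describe --- an interior $\epsilon$-term with a definite sign balanced against a boundary integral involving $H(\cdot,x_\epsilon)$ --- is precisely the dilational Pohozaev identity \eqref{pohozave}, which is what the paper uses to obtain the blow-up rate $\epsilon\|u_\epsilon\|_\infty^2\to c|\phi(x_0)|$ in Theorem~\ref{Figalli2}, not the location of $x_0$. So your obstacle in Step~2 (needing a sharp expansion $u_\epsilon=W[x_\epsilon,\lambda_\epsilon]-\varphi_\epsilon$ surviving integration) is somewhat over-engineered for Theorem~\ref{Figalli}: the leading-order Green-function convergence in $C^{1,\alpha}$ away from $x_0$ suffices, and that convergence is obtained directly from the integral representation of $u_\epsilon$ and the decay bound, without the refined decomposition that the paper only introduces later for Theorem~\ref{asymptotic}.

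\end{document}
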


The next two theorems make more precise the behavior of solutions. First we give the rate of blow-up of the maximum of the solutions.
 \begin{thm}\label{Figalli2}
  Let the assumptions of Theorem \ref{Figalli} be satisfied. Then
\begin{equation*}
\lim\limits_{\epsilon\rightarrow0}\epsilon\|u_\epsilon\|_{L^{\infty}(\Omega)}^2=\widetilde{\alpha}_{n}^2\frac{n+2}{n-2}\big[\frac{1}{C_{HLS}}\big]^{\frac{n+2}{4}}\big\|W\big\|^{2(2^{\ast}-1)}_{L^{2^{\ast}-1}(\mathbb{R}^n)}|\phi(x_0)|
\end{equation*}
with
$$\widetilde{\alpha}_{n}=\left[\frac{n(n-2)}{\sqrt{S}}\right]^{\frac{n-2}{4}} \frac{\pi^{\frac{n}{2}}\Gamma(1)}{\Gamma(\frac{n+2}{2})}\left\{\frac{\Gamma(1)\pi^{(n-2)/2}}{\Gamma((n+2)/2)}\left(\frac{\Gamma(n)}{\Gamma(n/2)}\right)^{2/n}\right\}^{\frac{2-n}{8}}, $$
 where $2^{\ast}:=\frac{2n}{n-2}$, $W:=W[0,1](x)$, $\Gamma(s)=\int_0^{+\infty} x^{s-1}e^{-x}\,dx$ and $S$ is the best Sobolev constant in $\mathbb{R}^n$. Moreover, for any $x\in\Omega\setminus\{x_0\}$, it holds that
\begin{equation*}
\frac{u_\epsilon(x)}{\sqrt{\epsilon}}\rightarrow\sqrt{\frac{\tilde{\alpha}_n(n-2)}{n+2}}\big[C_{HLS}\big]^{\frac{n+2}{8}}\frac{G(x,x_0)}{\sqrt{|\phi(x_0)|}}.
\end{equation*}
\end{thm}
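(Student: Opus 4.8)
The plan is to run the blow-up analysis of Han \cite{HANZCHAO} and Rey \cite{Rey-1989} for the local problem \eqref{SC}, with the local Pohozaev identity replaced by the nonlocal one of Gao--Yang \cite{GY18}. Write $P_\epsilon$ for a maximum point of $u_\epsilon$ and $M_\epsilon:=u_\epsilon(P_\epsilon)=\|u_\epsilon\|_{L^\infty(\Omega)}$; by Theorem \ref{Figalli}$(a)$, $P_\epsilon\to x_0\in\Omega$ and $M_\epsilon\to+\infty$. First I would perform the inner rescaling
\[
\lambda_\epsilon:=M_\epsilon^{\frac{2}{n-2}-\frac{\epsilon}{2}},\qquad w_\epsilon(y):=\frac{1}{M_\epsilon}\,u_\epsilon\!\big(P_\epsilon+\lambda_\epsilon^{-1}y\big),
\]
chosen so that $w_\epsilon(0)=\|w_\epsilon\|_{\infty}=1$ and $-\Delta w_\epsilon=\big(|y|^{-(n-2)}\ast w_\epsilon^{p-\epsilon}\big)w_\epsilon^{p-1-\epsilon}$ on $\lambda_\epsilon(\Omega-P_\epsilon)$. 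Interior Schauder estimates for the (now uniformly controlled) nonlocal right-hand side, together with Theorem \ref{Figalli} and the classification \eqref{defU}, give, along a subsequence, $w_\epsilon\to W$ in $C^1_{\mathrm{loc}}(\mathbb{R}^n)$, where $W$ solves $-\Delta W=\big(|y|^{-(n-2)}\ast W^p\big)W^{p-1}$ with $W(0)=\max W=1$; up to an explicit power of $\tilde c_{n,\mu}$, all bubble integrals of $W$ are those of $W[0,1]$. A preliminary nontrivial point, obtained from a crude form of the estimates below, is that $\epsilon M_\epsilon^2$ is bounded and bounded away from zero; hence $\epsilon\ln M_\epsilon\to0$, so $M_\epsilon^2\lambda_\epsilon^{2-n}=M_\epsilon^{(n-2)\epsilon/2}\to1$. (The restriction $n\le5$ enters precisely through $p>2$.)

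Next I would establish the outer expansion. From the Green representation $u_\epsilon(x)=\int_\Omega G(x,y)\,\big(|y|^{-(n-2)}\ast u_\epsilon^{p-\epsilon}\big)(y)\,u_\epsilon^{p-1-\epsilon}(y)\,dy$ and the change of variables above,
\[
M_\epsilon\!\int_\Omega\!\big(|y|^{-(n-2)}\ast u_\epsilon^{p-\epsilon}\big)u_\epsilon^{p-1-\epsilon}\,dy=M_\epsilon^2\lambda_\epsilon^{2-n}\!\!\int_{\lambda_\epsilon(\Omega-P_\epsilon)}\!\!\big(|z|^{-(n-2)}\ast w_\epsilon^{p-\epsilon}\big)w_\epsilon^{p-1-\epsilon}\,dz\longrightarrow\kappa_n,
\]
with $\kappa_n:=\int_{\mathbb{R}^n}\big(|z|^{-(n-2)}\ast W^p\big)W^{p-1}\,dz=\int_{\mathbb{R}^n}(-\Delta W)\,dz$; passing to the limit requires ruling out loss of mass at infinity, i.e. a uniform decay bound $w_\epsilon(y)\le C(1+|y|)^{-(n-2)}$ from a nonlocal Harnack/comparison argument. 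Since $M_\epsilon$ times the right-hand side of \eqref{ele-1.1} concentrates, as a measure, at $x_0$ with total mass $\kappa_n$, the Green representation yields $M_\epsilon u_\epsilon\to\kappa_n\,G(\cdot,x_0)$ in $C^1_{\mathrm{loc}}(\overline\Omega\setminus\{x_0\})$; in particular, on each fixed small sphere $\partial B_\rho(P_\epsilon)$, $M_\epsilon u_\epsilon$ and $M_\epsilon\nabla u_\epsilon$ are uniformly close to $\kappa_n G(\cdot,x_0)$ and $\kappa_n\nabla G(\cdot,x_0)$.

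The core is the Pohozaev balance on $B_\rho(P_\epsilon)$: testing \eqref{ele-1.1} against $(x-P_\epsilon)\cdot\nabla u_\epsilon$ and against $u_\epsilon$, combining, and invoking the nonlocal identity of \cite{GY18}, the interior contribution collapses (using $p=\tfrac{n+2}{n-2}$, i.e. $\tfrac{2n-\mu}{2p}=\tfrac{n-2}{2}$ with $\mu=n-2$) to
\[
\Big(\tfrac{2n-\mu}{2(p-\epsilon)}-\tfrac{2n-\mu}{2p}\Big)\iint_{B_\rho(P_\epsilon)^2}\frac{u_\epsilon^{p-\epsilon}(x)\,u_\epsilon^{p-\epsilon}(y)}{|x-y|^{n-2}}\,dx\,dy=c_n\,\epsilon\,\big(1+o(1)\big)\,C_{HLS}^{\frac{n+2}{4}},
\]
since the double integral tends to $\iint_{\mathbb{R}^n\times\mathbb{R}^n}\big(|z|^{-(n-2)}\ast W^p\big)W^p=C_{HLS}^{(n+2)/4}$ (scale-invariant when $\mu=n-2$, hence bubble-independent) and $c_n>0$ is explicit, while the genuinely nonlocal surface term and the long-range interaction term over $B_\rho(P_\epsilon)\times(\Omega\setminus B_\rho(P_\epsilon))$ are $o(M_\epsilon^{-2})$, again by $p>2$. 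The remaining purely local boundary terms are computed through the outer expansion and the splitting $G(x,x_0)=\tfrac{1}{(n-2)\omega_n|x-x_0|^{n-2}}+H(x,x_0)$: harmonicity of $H(\cdot,x_0)$ and the mean value property cancel the singular self-interaction and leave
\[
\kappa_n^2M_\epsilon^{-2}\Big[\tfrac{n-2}{2}\!\int_{\partial B_\rho}\!\!G\,\partial_\nu G-\tfrac12\!\int_{\partial B_\rho}\!\!|\nabla G|^2(x\cdot\nu)+\!\int_{\partial B_\rho}\!\!\partial_\nu G\,(x\cdot\nabla G)\Big]\longrightarrow-\tfrac{n-2}{2}\,\kappa_n^2M_\epsilon^{-2}\,\phi(x_0)
\]
as $\epsilon\to0$ and then $\rho\to0$. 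Equating the two sides, using $\phi=H(\cdot,\cdot)<0$ on $\Omega$, gives $\lim_{\epsilon\to0}\epsilon M_\epsilon^2=(\text{explicit const})\cdot\kappa_n^2\,|\phi(x_0)|\,C_{HLS}^{-(n+2)/4}$. Finally I would compute $\kappa_n$: the Hamiltonian-system structure for $\mu=n-2$ forces $|z|^{-(n-2)}\ast W^p=\sqrt{(n-2)\omega_n}\,W$, so $\kappa_n$ is an explicit multiple of $\tilde c_{n,\mu}\int_{\mathbb{R}^n}W[0,1]^p$; inserting this together with the values of $C_{n,\mu}$, $S$, $\tilde c_{n,\mu}$ from \eqref{defhlsbc}--\eqref{fU}, and recording $2^\ast-1=p$ and $\|W\|_{L^{2^\ast-1}}^{2(2^\ast-1)}=\big(\int_{\mathbb{R}^n}W[0,1]^p\big)^2$, reproduces the first displayed limit with the constant $\widetilde{\alpha}_n$. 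The second assertion is then immediate: on $\Omega\setminus\{x_0\}$, $\tfrac{u_\epsilon(x)}{\sqrt\epsilon}=\tfrac{M_\epsilon u_\epsilon(x)}{\sqrt{\epsilon M_\epsilon^2}}\to\tfrac{\kappa_n\,G(x,x_0)}{\sqrt{\lim_{\epsilon\to0}\epsilon M_\epsilon^2}}$, and substituting the value of $\lim_{\epsilon\to0}\epsilon M_\epsilon^2$ just found yields exactly $\sqrt{\tfrac{\widetilde{\alpha}_n(n-2)}{n+2}}\,\big[C_{HLS}\big]^{(n+2)/8}\,G(x,x_0)/\sqrt{|\phi(x_0)|}$.

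I expect the main obstacle to lie in two places. First, upgrading the weak concentration of Theorem \ref{Figalli} to the sharp $C^1$ expansion $M_\epsilon u_\epsilon\to\kappa_n G(\cdot,x_0)$ near $x_0$ with no mass loss in $\kappa_n$: this needs uniform two-sided pointwise bounds on $u_\epsilon$, obtained by a Green-function bootstrap combined with a Harnack-type argument adapted to the convolution nonlinearity, where the slow $|z|^{-(n-2)}$ decay of the bubble makes the tail control delicate. Second, writing down and estimating the nonlocal Pohozaev identity on a small ball: in addition to the usual local boundary terms one encounters genuinely nonlocal surface terms and a $B_\rho(P_\epsilon)\times(\Omega\setminus B_\rho(P_\epsilon))$ interaction term, and proving these are of lower order — which is precisely where $p>2$ (i.e. $n\le5$) and the criticality $\mu=n-2$ are really used — is the crux of the argument.
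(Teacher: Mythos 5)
Your proposal is correct in substance but takes a genuinely different route from the paper's own proof, and the comparison is worth recording.

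The paper does not localize the Pohozaev identity to a small ball around the blow-up point. Instead it proves the \emph{global} nonlocal Pohozaev identity \eqref{pohozave} on all of $\Omega$, where the Dirichlet condition $u_\epsilon\equiv0$ on $\partial\Omega$ makes every genuinely nonlocal boundary contribution vanish, and one is left with
\[
\frac{(n-2)^2}{n+2-\epsilon(n-2)}\,\epsilon\int_\Omega(|x|^{-(n-2)}\ast u_\epsilon^{p-\epsilon})u_\epsilon^{p-\epsilon}\,dx
=\int_{\partial\Omega}\Big(\frac{\partial u_\epsilon}{\partial\nu}\Big)^2(\nu,x-x_0)\,dS_x.
\]
Multiplying by $\|u_\epsilon\|_\infty^2$, the left side goes to $\tfrac{(n-2)^2}{n+2}\cdot C_{HLS}^{(n+2)/4}\cdot\lim\epsilon\|u_\epsilon\|_\infty^2$ by \eqref{SHL}, while the right side is evaluated by the already-proved Theorem \ref{consequence} (the $C^{1,\alpha}$ outer expansion $\|u_\epsilon\|_\infty u_\epsilon\to\widetilde\alpha_n\|W\|^{2^*-1}G(\cdot,x_0)$) together with the classical Brezis--Peletier identity $\int_{\partial\Omega}(\partial_\nu G)^2(\nu,x-x_0)\,dS=-(n-2)H(x_0,x_0)$ from \eqref{GX0}. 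There are therefore no tail or surface cross-terms to estimate; the rate falls out in one line. You instead use a \emph{local} dilation-type Pohozaev identity on $B_\rho(P_\epsilon)$, which forces you to control (i) the nonlocal tail $B_\rho\times(\Omega\setminus B_\rho)$, (ii) genuinely nonlocal surface terms on $\partial B_\rho$, and (iii) to let $\rho\to0$ at the end, reproducing the Brezis--Peletier computation by hand from the splitting $G=\Gamma+H$. This is the Han--Rey strategy and it does work here — your size estimates $\int_{B_\rho}u_\epsilon^{p-\epsilon}\sim M_\epsilon^{-1}$, $u_\epsilon(z)\sim M_\epsilon^{-1}$ off $B_\rho$, give tail $O(M_\epsilon^{-1-p})=o(M_\epsilon^{-2})$ as claimed — but it is more labor than the global identity buys you for free. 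A second point where you re-derive material the paper factors out: your ``outer expansion'' $M_\epsilon u_\epsilon\to\kappa_n G(\cdot,x_0)$ is precisely Theorem \ref{consequence}, which the paper proves separately before the rate; your $\kappa_n$ is the paper's $\widetilde\alpha_n\|W\|_{L^{2^*-1}}^{2^*-1}$ via \eqref{p1-00}. Minor correction there: the reduction formula is $|z|^{-(n-2)}\ast W^p=\widetilde\alpha_n\,W^{2^*-p}=\widetilde\alpha_n\,W$ (since $2^*-p=1$ when $\mu=n-2$), with constant $\widetilde\alpha_n$ as in \eqref{p1-00}, not $\sqrt{(n-2)\omega_n}$; you only need ``an explicit constant'' so this does not break the argument, but it is the constant that gets plugged into the final expression.
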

Th first result of Theorem \ref{Figalli2} is a consequence of the following the behavior if solutions in terms of the Green's function.
\begin{thm}\label{consequence}
 Let the assumptions of Theorem \ref{Figalli} be satisfied. Then
 $$
\lim\limits_{\epsilon\rightarrow0^{+}}\|u_{\epsilon}\|_{L^{\infty}(\Omega)}u_{\epsilon}(x)=\widetilde{\alpha}_{n}\big\|W\big\|^{2^{\ast}-1}_{L^{2^{\ast}-1}(\mathbb{R}^n)}G(x,x_{0}).
$$
where $\widetilde{\alpha}_{n}$ is defined in Theorem \ref{Figalli2} and where the convergence is in $C^{1,\alpha}(\omega)$ with $\omega$ any subdomain of $\Omega$ not containing $x_0$.
\end{thm}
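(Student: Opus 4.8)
The plan is to deduce the statement from the Green representation formula together with the blow--up analysis carried out in the proof of Theorem~\ref{Figalli}. Put $M_\epsilon:=\|u_\epsilon\|_{L^\infty(\Omega)}=u_\epsilon(x_\epsilon)$. From that analysis (which uses $n=3,4,5$) I would take as known that $x_\epsilon\to x_0$, $M_\epsilon\to+\infty$, and, with $\lambda_\epsilon:=(M_\epsilon/\tilde c_{n,\mu})^{2/(n-2)}$ chosen so that $W[x_\epsilon,\lambda_\epsilon](x_\epsilon)=M_\epsilon$, that the rescalings $\tilde u_\epsilon(w):=M_\epsilon^{-1}u_\epsilon(x_\epsilon+\lambda_\epsilon^{-1}w)$ converge in $C^1_{\mathrm{loc}}(\mathbb R^n)$ to $\tilde c_{n,\mu}^{-1}W$ and satisfy a global majorant $\tilde u_\epsilon(w)\le C(1+|w|)^{-(n-2)}$ on $\lambda_\epsilon(\Omega-x_\epsilon)$; I would also use the a priori bound $M_\epsilon\le C\epsilon^{-K}$ coming from the energy estimates, so that $M_\epsilon^{\,\epsilon}\to1$. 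Setting $f_\epsilon:=(|x|^{-(n-2)}\ast u_\epsilon^{p-\epsilon})\,u_\epsilon^{p-1-\epsilon}\ge0$, so that $-\Delta u_\epsilon=f_\epsilon$ in $\Omega$ and $u_\epsilon=0$ on $\partial\Omega$, the Green function gives $M_\epsilon u_\epsilon(x)=\int_\Omega G(x,y)\,(M_\epsilon f_\epsilon)(y)\,dy$, and the whole problem reduces to the behaviour of the measure $M_\epsilon f_\epsilon\,dx$.

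The central step is to prove that $M_\epsilon f_\epsilon\,dx\rightharpoonup c_0\,\delta_{x_0}$ weakly-$*$, with $c_0:=\widetilde\alpha_n\|W\|_{L^{2^{\ast}-1}(\mathbb R^n)}^{2^{\ast}-1}$, and I would split this into two sub-steps. \emph{No mass survives away from $x_0$:} for fixed $\rho>0$ and $y\in\Omega\setminus B_\rho(x_0)$, the Newtonian-type potential is bounded by $(|x|^{-(n-2)}\ast u_\epsilon^{p-\epsilon})(y)\le C\rho^{-(n-2)}\|u_\epsilon^{p-\epsilon}\|_{L^1(\Omega)}\le C'\rho^{-(n-2)}M_\epsilon^{-1}$, where the $L^1$ bound again follows from the rescaling, and the majorant gives $u_\epsilon^{p-1-\epsilon}(y)\le C M_\epsilon^{-(p-1)}\rho^{-(n-2)(p-1)}$; hence $\|M_\epsilon f_\epsilon\|_{L^\infty(\Omega\setminus B_\rho(x_0))}\to0$ because $p>1$. \emph{Computation of the total mass:} writing $\int_\Omega f_\epsilon=\iint_{\Omega\times\Omega}|y-z|^{-(n-2)}u_\epsilon^{p-\epsilon}(z)u_\epsilon^{p-1-\epsilon}(y)\,dz\,dy$ and substituting $y=x_\epsilon+\lambda_\epsilon^{-1}w$, $z=x_\epsilon+\lambda_\epsilon^{-1}w'$, scale counting and the relation $\lambda_\epsilon^{(n-2)/2}=M_\epsilon/\tilde c_{n,\mu}$ give
\[
M_\epsilon\int_\Omega f_\epsilon\,dx=M_\epsilon^{-2\epsilon}\,\tilde c_{n,\mu}^{\,\frac{2(n+2)}{n-2}}\iint |w-w'|^{-(n-2)}\,\tilde u_\epsilon^{\,p-\epsilon}(w')\,\tilde u_\epsilon^{\,p-1-\epsilon}(w)\,dw'\,dw ,
\]
the integration being over $\lambda_\epsilon(\Omega-x_\epsilon)\times\lambda_\epsilon(\Omega-x_\epsilon)$. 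Passing to the limit by dominated convergence (the majorant making the double integral uniformly convergent, and $M_\epsilon^{\,\epsilon}\to1$) and using the identity $(|x|^{-(n-2)}\ast W^p)W^{p-1}=-\Delta W$ from~\eqref{ele} reduces the right-hand side to $\tilde c_{n,\mu}\int_{\mathbb R^n}(-\Delta W)=\tilde c_{n,\mu}^2(n-2)\omega_n$, the last equality being the total-charge identity coming from $W(x)\sim\tilde c_{n,\mu}|x|^{-(n-2)}$ as $|x|\to\infty$. Since $2^{\ast}-1=p$, one has $\|W\|_{L^{2^{\ast}-1}}^{2^{\ast}-1}=\int_{\mathbb R^n}W^p=\tilde c_{n,\mu}^{\,p}\,\pi^{n/2}/\Gamma\!\big(\tfrac{n+2}{2}\big)$, and a direct computation with the explicit values of $\tilde c_{n,\mu}$, $C_{n,\mu}$ and $S$ identifies $\tilde c_{n,\mu}^2(n-2)\omega_n$ with $c_0$; together with the first sub-step this yields the claimed weak-$*$ convergence.

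Granting this, for each fixed $x\in\Omega\setminus\{x_0\}$ the map $y\mapsto G(x,y)$ is continuous near $x_0$, so $M_\epsilon u_\epsilon(x)\to c_0\,G(x,x_0)$ pointwise on $\Omega\setminus\{x_0\}$. To upgrade to convergence in $C^{1,\alpha}(\omega)$ for a subdomain $\omega$ with $\overline\omega\subset\Omega\setminus\{x_0\}$, I would choose $\omega\Subset\omega''\Subset\omega'$ with $\overline{\omega'}\subset\Omega\setminus\{x_0\}$: by the first sub-step $\|M_\epsilon f_\epsilon\|_{L^\infty(\omega')}\to0$, while the representation formula gives $\|M_\epsilon u_\epsilon\|_{L^\infty(\omega')}\le C$; interior $W^{2,q}$ estimates for $-\Delta(M_\epsilon u_\epsilon)=M_\epsilon f_\epsilon$ (with $q$ large) then bound $M_\epsilon u_\epsilon$ in $C^{1,\beta}(\omega'')$ for some $\beta\in(\alpha,1)$, and the compact embedding $C^{1,\beta}(\omega)\hookrightarrow C^{1,\alpha}(\omega)$ together with the already known pointwise limit forces $M_\epsilon u_\epsilon\to c_0\,G(\cdot,x_0)$ in $C^{1,\alpha}(\omega)$ — for the whole family, by uniqueness of the limit.

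The hard part will be the mass computation: one has to justify passing to the limit inside the singular nonlocal double integral, whose kernel $|y-z|^{-(n-2)}$ concentrates its singularity exactly at $(x_0,x_0)$ where $u_\epsilon$ blows up; this is precisely where the bubble-scale pointwise control of $u_\epsilon$ from the blow-up analysis is indispensable, and where one must verify that every contribution in which $z$ or $y$ stays a fixed distance from $x_0$ is genuinely $o(M_\epsilon^{-1})$ (this relies on the decay $\int_{B_\delta(x_0)}u_\epsilon^{p-1-\epsilon}\lesssim M_\epsilon^{-2}$). Controlling the mismatch between $u_\epsilon^{p-\epsilon}$ and $u_\epsilon^{p}$ through $M_\epsilon^{\,\epsilon}\to1$, and the final $\Gamma$-function bookkeeping checking $\tilde c_{n,\mu}^2(n-2)\omega_n=\widetilde\alpha_n\|W\|_{L^{2^{\ast}-1}}^{2^{\ast}-1}$, are routine but must be carried out carefully.
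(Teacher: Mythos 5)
Your overall strategy mirrors the paper's: recast the problem through $-\Delta(M_\epsilon u_\epsilon)=M_\epsilon f_\epsilon$ (equivalently its Green representation), show that $M_\epsilon f_\epsilon$ concentrates to a Dirac mass at $x_0$ by combining a rescaled total-mass calculation with pointwise smallness away from $x_0$, and then pass to $C^{1,\alpha}$ convergence by elliptic regularity. The mass step is carried out a little differently — you integrate $-\Delta W$ and invoke the flux at infinity, whereas the paper applies the identity $|x|^{-(n-2)}\ast W^p=\widetilde\alpha_n W^{2^{\ast}-p}$ — but both reduce to $\int_{\mathbb{R}^n}(-\Delta W)$ and are interchangeable. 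A caution, though, on the final bookkeeping: with $W=W[0,1]$ from \eqref{defU}, the flux identity gives $\int_{\mathbb{R}^n}(-\Delta W)=\widetilde\alpha_n\|W\|_{L^{2^{\ast}-1}}^{2^{\ast}-1}=(n-2)\omega_n\,\tilde c_{n,\mu}$, while your rescaled limit $\tilde u_\epsilon\to\tilde c_{n,\mu}^{-1}W$ produces $\tilde c_{n,\mu}^{2}(n-2)\omega_n$; the identification you announce as ``routine'' would fail by a factor of $\tilde c_{n,\mu}$. This discrepancy ultimately traces back to the paper's Lemma~\ref{finite}, which identifies the rescaled limit with $(1+|x|^2)^{-(n-2)/2}$ even though that profile does not solve \eqref{ele} unless $\tilde c_{n,\mu}=1$; the normalization has to be settled consistently before the constants match.

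The genuine gap, however, is in the regularity upgrade. You obtain $C^{1,\alpha}(\omega)$ only for $\omega$ with $\overline\omega\subset\Omega\setminus\{x_0\}$, through nested interior $W^{2,q}$ estimates, and you say so explicitly. But the theorem must cover subdomains reaching $\partial\Omega$: the proofs of Theorems~\ref{Figalli} and \ref{Figalli2} pass to the limit in boundary integrals coming from the Pohozaev identity \eqref{pohozave}, such as $\int_{\partial\Omega}(\partial u_\epsilon/\partial\nu)^2(\nu,x-x_0)\,dS_x$, and this requires uniform $C^1$ control of $M_\epsilon u_\epsilon$ up to $\partial\Omega$. Interior estimates give no control on $\nabla(M_\epsilon u_\epsilon)$ at the boundary. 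The paper secures exactly this via Lemma~\ref{regular}, a Schauder-type estimate for the Dirichlet problem which bounds $\|u\|_{W^{1,q}(\Omega)}+\|\nabla u\|_{C^{0,\alpha}(\omega')}$ up to $\partial\Omega$ by $\|f\|_{L^1(\Omega)}+\|f\|_{L^\infty(\omega)}$, with $\omega$ a neighborhood of $\partial\Omega$ away from $x_0$. Your proof needs such a boundary estimate before the compact embedding $C^{1,\beta}\hookrightarrow C^{1,\alpha}$ can be invoked on a set touching $\partial\Omega$; interior elliptic theory alone does not close the argument.
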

As in \cite{HANZCHAO}, we
note that author used the moving plane method as a key tool in the proof of uniform boundedness of $u_{\epsilon}$ near $\partial\Omega$, which requires the convexity of the domain. In present paper, we shall remove the convexity assumption and make use of the local Pohozaev-type identity and sharp point-wise estimates of $u_\epsilon$ to obtain that $x_{\epsilon}$ stays away from the boundary. We also observe that one of the key points in the proof of our main results is to find decay estimate of rescaled least energy solutions for Choquard equation \eqref{ele-1.1} by combining the Kelvin transform. We refer the readers to \cite{Cassani, Cingolani-1, Cingolani-2, V-Moroz, Moroz-2} for some relevant studies on nonlinear Choquard equations.

In the following, we hope to further locate the blow-up point $x_0$ and to give a precise asymptotic expansion of the least energy solutions.
More precisely, we shall show that $x_0$ is a global maximum point of $\phi(x)$.
\begin{thm}\label{maximum}
 Let the assumptions of Theorem \ref{Figalli} be satisfied. Then $\phi(x_\epsilon)\rightarrow\max_{x\in\Omega}\phi(x)$ as $\epsilon\rightarrow0$.
\end{thm}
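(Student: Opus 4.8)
The plan is to match two asymptotic expansions of the least energy: an exact one for $J_\epsilon(u_\epsilon)$ produced by the blow--up analysis of the previous sections, and an upper bound for the minimax level obtained by testing with bubbles centred at a maximum point of $\phi$. Write
\begin{equation*}
J_\epsilon(u)=\frac12\int_\Omega|\nabla u|^2\,dx-\frac{1}{2(p-\epsilon)}\int_\Omega\big(|x|^{-(n-2)}\ast|u|^{p-\epsilon}\big)|u|^{p-\epsilon}\,dx,\qquad
c_\epsilon:=\inf_{u\in H^1_0(\Omega)\setminus\{0\}}\ \max_{t>0}J_\epsilon(tu),
\end{equation*}
so that, $u_\epsilon$ being a least energy solution, $J_\epsilon(u_\epsilon)=c_\epsilon$. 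Since $\phi$ is continuous in $\Omega$ and $\phi(x)\to-\infty$ as $\dist(x,\partial\Omega)\to0$, it attains its maximum at some $x^*\in\Omega$; as $\phi<0$ in $\Omega$ this means $|\phi(x^*)|=\min_\Omega|\phi|$, and recalling from Theorem \ref{Figalli} that $x_\epsilon\to x_0$, the assertion amounts to proving $|\phi(x_0)|=\min_\Omega|\phi|$.

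First I would establish a sharp expansion of $J_\epsilon(u_\epsilon)$. By the blow--up analysis underlying Theorems \ref{Figalli}--\ref{consequence}, along a subsequence one has $u_\epsilon=\alpha_\epsilon\,PW[x_\epsilon,\lambda_\epsilon]+w_\epsilon$ with $\alpha_\epsilon\to1$, $\lambda_\epsilon\to+\infty$, $x_\epsilon\to x_0$, $\|\nabla w_\epsilon\|_{L^2(\Omega)}=o(\sqrt{\epsilon})$ and $\epsilon\,\lambda_\epsilon^{\,n-2}\to K_n|\phi(x_0)|$ for an explicit $K_n>0$ (the relation behind Theorem \ref{Figalli2}), where $PW[\xi,\lambda]$ denotes the projection of $W[\xi,\lambda]$ onto $H^1_0(\Omega)$. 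Using the identity $J_\epsilon(u_\epsilon)=\tfrac{p-1-\epsilon}{2(p-\epsilon)}\int_\Omega|\nabla u_\epsilon|^2$ valid on solutions, the scaling relation $\int_{\mathbb{R}^n}(|x|^{-(n-2)}\ast W[\xi,\lambda]^{p-\epsilon})W[\xi,\lambda]^{p-\epsilon}=\lambda^{-(n-2)\epsilon}\int_{\mathbb{R}^n}(|x|^{-(n-2)}\ast W^{p-\epsilon})W^{p-\epsilon}$, the pointwise expansion $W[\xi,\lambda]^{p-\epsilon}=W[\xi,\lambda]^{p}\big(1-\epsilon\log W[\xi,\lambda]+o(\epsilon)\big)$, and the boundary estimate $\int_\Omega(|x|^{-(n-2)}\ast W[\xi,\lambda]^{p})W[\xi,\lambda]^{p-1}\big(W[\xi,\lambda]-PW[\xi,\lambda]\big)=\beta_n|\phi(\xi)|\,\lambda^{-(n-2)}+o(\lambda^{-(n-2)})$ with $\beta_n>0$, one is led to
\begin{equation*}
J_\epsilon(u_\epsilon)=A_n+B_n\,\epsilon\log\tfrac1\epsilon+C_n\,\epsilon+D_n\,\epsilon\log|\phi(x_\epsilon)|+o(\epsilon),
\end{equation*}
where $A_n=\tfrac{p-1}{2p}\|\nabla W\|_{L^2(\mathbb{R}^n)}^2$, $D_n=\tfrac{1}{2p}\|\nabla W\|_{L^2(\mathbb{R}^n)}^2>0$ and $B_n,C_n$ depend only on $n$. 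The term carrying $|\phi(x_\epsilon)|$ arises because the orthogonality relation $\langle J'_\epsilon(u_\epsilon),\partial_\lambda PW[x_\epsilon,\lambda_\epsilon]\rangle=0$ forces $\lambda_\epsilon$, to leading order, to be the minimiser in $\lambda$ of $\lambda\mapsto\max_{t>0}J_\epsilon(t\,PW[x_\epsilon,\lambda])$, so that in fact $J_\epsilon(u_\epsilon)=\min_{\lambda>0}\max_{t>0}J_\epsilon(t\,PW[x_\epsilon,\lambda])+o(\epsilon)$.

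Next I would bound $c_\epsilon$ from above using the test family $PW[x^*,\lambda]$. The very same computation, now centred at $x^*$, gives with the same constants
\begin{equation*}
c_\epsilon\le\min_{\lambda>0}\max_{t>0}J_\epsilon\big(t\,PW[x^*,\lambda]\big)=A_n+B_n\,\epsilon\log\tfrac1\epsilon+C_n\,\epsilon+D_n\,\epsilon\log|\phi(x^*)|+o(\epsilon),
\end{equation*}
and, since $D_n>0$, the centre $x^*$ is precisely the one minimising the right--hand side over all possible centres in $\Omega$. Combining this with the previous display and with $J_\epsilon(u_\epsilon)=c_\epsilon$, the $\Omega$--independent quantities $A_n,\,B_n\epsilon\log\frac1\epsilon,\,C_n\epsilon$ cancel; dividing by $D_n\epsilon>0$ and letting $\epsilon\to0$, and using $x_\epsilon\to x_0$ together with the continuity of $\phi$, we obtain $\log|\phi(x_0)|\le\log|\phi(x^*)|=\log\min_\Omega|\phi|$. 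Since $|\phi(x_0)|\ge\min_\Omega|\phi|$ trivially, equality holds; thus $\phi(x_0)=\max_\Omega\phi$ and, because $x_\epsilon\to x_0$, also $\phi(x_\epsilon)\to\max_{x\in\Omega}\phi(x)$, which is the claim.

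The main obstacle is the sharp energy expansion of the second step: it must be accurate to order $\epsilon$ and not merely to the dominant order $\epsilon\log\frac1\epsilon$, and for the Hartree nonlinearity this requires delicate control of double Riesz--potential integrals — in particular the logarithmically weighted quantity $\int_{\mathbb{R}^n}(|x|^{-(n-2)}\ast W^{p})W^{p}\log W\,dx$, the dependence of the various constants on the shift $p\mapsto p-\epsilon$, and the cross terms coupling the convolution to the harmonic correction $W[\xi,\lambda]-PW[\xi,\lambda]$ — together with verifying that the remainder $w_\epsilon$ and the amplitude defect $\alpha_\epsilon-1$ contribute only $o(\epsilon)$ to the energy; this last point relies on the sharp pointwise decay estimates for the rescaled least energy solutions obtained in the earlier sections.
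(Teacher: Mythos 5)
Your proposal is correct in spirit and reaches the right conclusion, but it takes a genuinely different route from the paper. You work with the energy functional $J_\epsilon$ and the mountain--pass level $c_\epsilon$, expand to order $\epsilon$ after explicitly balancing the two competing corrections (the subcriticality term $\sim\epsilon\log\lambda$ against the boundary term $\sim\lambda^{-(n-2)}|\phi|$), which produces the intermediate $\epsilon\log\tfrac1\epsilon$ bookkeeping and, after substituting the optimal $\lambda$, a leading $\phi$--dependence through $\log|\phi|$. The paper instead works directly with the Rayleigh quotient $S_{HL}^\epsilon$ of \eqref{SKLS}: Proposition \ref{pro00} gives the exact expansion of $\lambda_\epsilon^{-(n-2)\epsilon/(p-\epsilon)}S_{HL}^\epsilon$ in powers of $\lambda_\epsilon^{-(n-2)}$ (using the asymptotics of Proposition \ref{pro}), while Lemma \ref{shlep} gives an upper bound by testing with $PW[x_1,\lambda_\epsilon]$ for an arbitrary $x_1\in\Omega$ but the \emph{same} concentration scale $\lambda_\epsilon$. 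Because both expansions are written at the same $\lambda_\epsilon$, the $\lambda$--dependent subcriticality terms ($\mathcal{D}'$, $\mathcal{E}'$) are identical on both sides and cancel without any optimization over $\lambda$, leaving immediately $\tilde H(x_0,x_0)\le\tilde H(x_1,x_1)$ for all $x_1$, i.e.\ $\phi(x_0)=\max_\Omega\phi$ since $\tilde H=-(n-2)\omega_nH$. Your approach is closer to the classical Han/Rey/Wang treatment and is conceptually transparent; the paper's approach buys a cleaner cancellation at the cost of needing the precise $O(\lambda_\epsilon^{-(n-2)})$ asymptotics of the rescaled solution (Proposition \ref{pro}), which is heavy but is in any case needed for Theorem \ref{asymptotic}. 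Both routes hinge on the same technical core: a sharp $o(\epsilon)$ (equivalently $o(\lambda_\epsilon^{-(n-2)})$) expansion of the least energy, and you correctly identify in your closing paragraph that verifying this --- in particular the $\log W$--weighted double Riesz integrals and the contribution of the remainder $w_\epsilon$ --- is the real work; if one is willing to rely on the estimates of Sections \ref{maximumpoint}--\ref{pro-1}, your scheme closes.
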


In what follows, we consider the minimization problem
\begin{equation}\label{SKLS}
S_{HL}^{\epsilon}=\inf\Big\{\frac{\int_{\Omega}|\nabla u|^2dx}{\left[\int_{\Omega}(|x|^{-(n-2)} \ast|u|^{p-\epsilon})|u|^{p-\epsilon} dx\right]^{\frac{1}{p-\epsilon}}}:~u\in H^{1,2}_{0}(\Omega),~u\not\equiv0\Big\}.
\end{equation}
In fact, we use the same framework and methods as in \cite{GW,JW0,HL}.
Choosing $x_\epsilon\in\Omega$ and the number $\lambda_{\epsilon}>0$ by
\begin{equation}\label{miu}
\lambda_{\epsilon}^{\frac{2(n-2)}{4-(n-2)\epsilon}}=\|u_{\epsilon}\|_{L^{\infty}(\Omega)}=u_\epsilon(x_\epsilon).
\end{equation}
We define a family of rescaled functions
$$v_{\epsilon}(x)=\lambda_{\epsilon}^{-\frac{2(n-2)}{4-(n-2)\epsilon}}u_{\epsilon}(\lambda_{\epsilon}^{-1}x+x_{\epsilon})\quad\mbox{for}\quad x\in\Omega_{\epsilon}:=\lambda_{\epsilon}(\Omega-x_{\epsilon}).$$
Then we find
(see \eqref{rescaled}),
\begin{equation*}
\begin{cases}
-\Delta v_{\epsilon}(x)=\big(|x|^{-{(n-2)}}\ast v_\epsilon^{p-\epsilon}\big)v_\epsilon^{p-1-\epsilon}\hspace{6mm}\mbox{in}\hspace{2mm} \Omega_{\epsilon},\\
0\leq v_{\varepsilon}(x)\leq1~~~~~~~~~~~~~~~~~~~~~~~~~~~~~~~~~~~~~~~~~~~~~\text{in}~~\Omega_{\varepsilon},
\\
v_{\varepsilon}(x)=0~~~~~~~~~~~~~~~~~~~~~~~~~~~~~~~~~~~~~~~~~~~~~~~~~~~~\text{on}~~\partial\Omega_{\varepsilon},\\
v_{\varepsilon}(0)=\max\limits_{x\in\Omega_{\varepsilon}}v_{\varepsilon}(x)=1.
\end{cases}
\end{equation*}
It is noticing that the authors of \cite{DAIQIN, DY19,GHPS19} independently classified the extremal function of \eqref{Prm} are the bubbles $W[\xi,\lambda]$ in \eqref{defU} and which is all positive solution to \eqref{ele-1.1}, and combined with $\|v_{\epsilon}\|_{L^{\infty}(\Omega)}$ is uniformly bounded by some constant $M$, by elliptic regularity, we have that
$$\|v_{\epsilon}\|_{C^{2+\alpha}(\bar{\Omega})}\leq M\hspace{2mm}\mbox{with}\hspace{2mm}\alpha\in(0,1).$$
Therefore, it is not hard to see that
$$v_\epsilon\rightarrow W[0,1](x)\quad\mbox{in}\quad C_{loc}^2(\mathbb{R}^n)$$
by combining the elliptic interior estimates and where $W[\xi,\lambda](x)$ are the only
positive solutions of the equation \eqref{ele}.
We establish the following asymptotic expansion of $u_{\epsilon}$.
\begin{thm}\label{asymptotic}
Let the assumptions of Theorem \ref{Figalli} be satisfied. Then
$$
v_{\epsilon}(y)
=W[0,1](y)+\lambda_{\varepsilon}^{-(n-2)}
\tilde{H}(x_{\epsilon},\lambda_{\epsilon}^{-1}y+x_{\epsilon})+\lambda_{\varepsilon}^{-(n-2)}\phi_0(y)+o(\lambda_{\varepsilon}^{-(n-2)})\quad{as}\quad\epsilon\rightarrow0,$$
where $\tilde{H}(x,y)=-(n-2)\omega_nH(x,y)$, $o(1)$ is uniform in $B(0,M\lambda_{\epsilon})$ with $M>0$ depending only on $\Omega$, and $\phi_0\in W^{2,q^{\prime}}(\mathbb{R}^n)$ for $q^{\prime}>n$ is the unique bound solution of
\begin{equation*}
\begin{split}
-\Delta\phi_0=&p
\Big(|y|^{-(n-2)}\ast W^{p-1}\phi_0\Big)
W^{p-1}+(p-1)\Big(|y|^{-(n-2)}\ast W^{p}\Big)
W^{p-2}\phi_{0}\\&-\kappa(n,x_0)\big(|y|^{-(n-2)}\ast W^{p}\big)W^{p-1}\log W-\kappa(n,x_0)\big(|y|^{-(n-2)}\ast W^{p}\big)W^{p-1}\log W\\&-(p-1)\tilde{c}_{n,\mu}\big(|y|^{-(n-2)}\ast W^p\big)W^{p-2}\tilde{H}(x_0,x_0)\\&
-p\tilde{c}_{n,\mu}\big(|y|^{-(n-2)}\ast W^{p-1}\tilde{H}(x_0,x_0)\big)W^{p-1}\quad\mbox{in}\quad\mathbb{R}^n,\hspace{2mm}\mbox{with}\hspace{2mm} W(x)=W[0,1](x).
\end{split}
\end{equation*}
\end{thm}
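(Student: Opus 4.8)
\textbf{Proof proposal for Theorem \ref{asymptotic}.}

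The plan is to carry out a careful linearisation of the rescaled equation around the limiting bubble $W=W[0,1]$, separating the boundary-layer correction --- which produces the $\tilde{H}$-term --- from the genuine interior correction $\phi_0$, which will solve the limiting linearised Choquard problem. Throughout I use what is already available from (the proofs of) Theorems \ref{Figalli}--\ref{maximum}: $v_\epsilon\to W$ in $C^2_{loc}(\mathbb{R}^n)$, $\lambda_\epsilon\to+\infty$, $x_\epsilon\to x_0$ with $x_0$ an interior critical point of $\phi$, the uniform bound $0\le v_\epsilon\le 1$, and --- decisively --- the sharp pointwise decay estimates of $v_\epsilon$ on $\Omega_\epsilon$ obtained earlier via the Kelvin transform. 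Write $N_\epsilon(v):=\bigl(|y|^{-(n-2)}\ast v^{p-\epsilon}\bigr)v^{p-1-\epsilon}$, so that $-\Delta v_\epsilon=N_\epsilon(v_\epsilon)$ in $\Omega_\epsilon$ with $v_\epsilon=0$ on $\partial\Omega_\epsilon$, while $-\Delta W=N_0(W)$ in $\mathbb{R}^n$.

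I would first set $w_\epsilon:=v_\epsilon-W$, which satisfies $-\Delta w_\epsilon=N_\epsilon(v_\epsilon)-N_0(W)$ in $\Omega_\epsilon$ and $w_\epsilon=-W$ on $\partial\Omega_\epsilon$, and split off the boundary layer: let $\psi_\epsilon$ be the harmonic function on $\Omega_\epsilon$ with $\psi_\epsilon=-W$ on $\partial\Omega_\epsilon$, and set $\theta_\epsilon:=w_\epsilon-\psi_\epsilon$, so that $v_\epsilon=W+\psi_\epsilon+\theta_\epsilon$ with $\theta_\epsilon=0$ on $\partial\Omega_\epsilon$. Using $W(y)=\tilde{c}_{n,\mu}|y|^{-(n-2)}+O(|y|^{-n})$ at infinity, the change of variables $x=\lambda_\epsilon^{-1}y+x_\epsilon$, and the fact that $\tilde{H}(\cdot,y)$ is the harmonic function on $\Omega$ with boundary value $|\cdot-y|^{-(n-2)}$, one obtains
\[
\psi_\epsilon(y)=-\tilde{c}_{n,\mu}\,\lambda_\epsilon^{-(n-2)}\tilde{H}\bigl(x_\epsilon,\lambda_\epsilon^{-1}y+x_\epsilon\bigr)+o\bigl(\lambda_\epsilon^{-(n-2)}\bigr)\quad\text{uniformly on }B(0,M\lambda_\epsilon),
\]
which is the origin of the $\tilde{H}$-term in the expansion. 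Next I put $\rho_\epsilon:=\lambda_\epsilon^{-(n-2)}$ --- claimed to be the exact size of the interior correction --- and $\Phi_\epsilon:=\rho_\epsilon^{-1}\theta_\epsilon$. The heart of the proof, and its main obstacle, is the sharp a priori estimate $|\Phi_\epsilon(y)|\le C(1+|y|)^{-\sigma}$ on $\Omega_\epsilon$ for some $\sigma>0$, uniformly in $\epsilon$: this is delicate precisely because of the nonlocal factor $|y|^{-(n-2)}\ast(\cdot)$, which forbids localisation, so the convolution must be controlled by feeding in the global decay of $v_\epsilon$, while near $\partial\Omega_\epsilon$ one has to combine this with a local Pohozaev-type identity --- the device that lets us drop the convexity of $\Omega$ --- to bound $\theta_\epsilon$ up to the boundary. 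Granting this bound, Calder\'on--Zygmund estimates for the nonlocal right-hand side together with Schauder theory give, along a subsequence, $\Phi_\epsilon\to\phi_0$ in $C^1_{loc}(\mathbb{R}^n)$ with $\phi_0\in W^{2,q'}(\mathbb{R}^n)$ for $q'>n$.

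To identify the equation for $\phi_0$ I would expand $N_\epsilon(W+\psi_\epsilon+\theta_\epsilon)$ using $W^{p-\epsilon}=W^p-\epsilon W^p\log W+O\bigl(\epsilon^2 W^p(\log W)^2\bigr)$ (and likewise for $W^{p-1-\epsilon}$) together with the first-order Taylor expansion in $\psi_\epsilon+\theta_\epsilon$, subtract $N_0(W)$, and divide by $\rho_\epsilon$. The Laplacian term and the part linear in $\theta_\epsilon$ assemble, in the limit, into $\mathcal{L}\phi_0$, where
\[
\mathcal{L}\varphi:=-\Delta\varphi-p\bigl(|y|^{-(n-2)}\ast W^{p-1}\varphi\bigr)W^{p-1}-(p-1)\bigl(|y|^{-(n-2)}\ast W^{p}\bigr)W^{p-2}\varphi ;
\]
since $\psi_\epsilon/\rho_\epsilon\to-\tilde{c}_{n,\mu}\tilde{H}(x_0,x_0)$ locally uniformly, and passage to the limit inside the convolution is licensed by the decay estimates, the $\psi_\epsilon$-contributions converge to the $\tilde{H}(x_0,x_0)$-terms of the stated equation; and the $\epsilon$-generated terms, which to leading order equal $-\epsilon\bigl[(|y|^{-(n-2)}\ast W^p\log W)W^{p-1}+(|y|^{-(n-2)}\ast W^p)W^{p-1}\log W\bigr]$, converge after division by $\rho_\epsilon$ exactly because $\epsilon/\rho_\epsilon\to\kappa(n,x_0)$. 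This last relation I read off by combining Theorem \ref{Figalli2}, which gives $\epsilon\|u_\epsilon\|_{L^\infty}^2\to\widetilde{\alpha}_n^2\,\tfrac{n+2}{n-2}\,[C_{HLS}]^{-\frac{n+2}{4}}\|W\|^{2(2^{\ast}-1)}_{L^{2^{\ast}-1}(\mathbb{R}^n)}|\phi(x_0)|$, with the identity $\|u_\epsilon\|_{L^\infty}=\lambda_\epsilon^{2(n-2)/(4-(n-2)\epsilon)}$ from \eqref{miu} (so that $\|u_\epsilon\|_{L^\infty}^2=\lambda_\epsilon^{n-2}(1+o(1))$); thus $\kappa(n,x_0)$ is precisely that constant, proportional to $\tilde{H}(x_0,x_0)$, and it may alternatively be extracted from a Pohozaev identity on a small ball centred at $x_\epsilon$. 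Collecting all contributions produces exactly the stated equation $\mathcal{L}\phi_0=(\text{source})$.

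Finally I must show that $\phi_0$ is the unique bounded solution --- so that the limit does not depend on the subsequence --- and make the error uniform. By the non-degeneracy of $W$ proved in \cite{DAIQIN,DY19,GHPS19}, the kernel of $\mathcal{L}$ on $\dot{H}^1(\mathbb{R}^n)$ is spanned by the dilation generator $Z_0:=\tfrac{n-2}{2}W+y\cdot\nabla W$ and the translations $Z_i:=\partial_{y_i}W$, $i=1,\dots,n$. Solvability of $\mathcal{L}\phi_0=(\text{source})$ in the orthogonal complement forces the source to be $L^2$-orthogonal to each $Z_i$: orthogonality to $Z_0$ is not automatic --- it is precisely the identity that pins down the value of $\kappa(n,x_0)$, consistently with Theorem \ref{Figalli2} --- whereas orthogonality to $Z_1,\dots,Z_n$ holds because $x_0$ is a critical point of $\phi$ (Theorem \ref{Figalli}(b)). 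The residual freedom in $\phi_0$, an element of $\ker\mathcal{L}$, is fixed by the normalisations inherited from $v_\epsilon$ --- that it attains its maximum, equal to $1$, at the origin --- which, once the explicit $W$- and $\psi_\epsilon$-parts are subtracted, translate into $\phi_0$ having vanishing projection onto $\ker\mathcal{L}$; this singles out the unique bounded solution. Applying the a priori decay bound to $\theta_\epsilon-\rho_\epsilon\phi_0$ then upgrades the local convergence to the uniform asymptotic expansion asserted in the theorem. The principal difficulty, as flagged above, is this uniform-up-to-the-boundary control $\theta_\epsilon=O(\lambda_\epsilon^{-(n-2)})$ in the presence of the convolution, for which the interplay between the Kelvin-transform decay of $v_\epsilon$ and the convexity-free Pohozaev argument near $\partial\Omega$ is essential.
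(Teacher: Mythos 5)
Your plan is the right one and matches the paper's approach in broad outline: the decomposition $v_\epsilon=W+\psi_\epsilon+\theta_\epsilon$, with $\psi_\epsilon$ the harmonic lift of the boundary value $-W$, is exactly the paper's $v_\epsilon=\lambda_\epsilon^{-(n-2)/2}PW[x_\epsilon,\lambda_\epsilon](\lambda_\epsilon^{-1}\cdot+x_\epsilon)+\lambda_\epsilon^{-(n-2)}\phi_\epsilon$ in different notation (so $\theta_\epsilon=\lambda_\epsilon^{-(n-2)}\phi_\epsilon$), and the computation $\epsilon\lambda_\epsilon^{n-2}\to\kappa(n,x_0)$ from Theorem \ref{Figalli2} and \eqref{miu} is correctly read off. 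However, there is a genuine gap at precisely the step you yourself flag as the ``heart of the proof.'' You assert the uniform pointwise decay $|\Phi_\epsilon(y)|\le C(1+|y|)^{-\sigma}$ with $\Phi_\epsilon=\lambda_\epsilon^{n-2}\theta_\epsilon$ and propose to get it from the Kelvin-transform decay of $v_\epsilon$ together with a local Pohozaev identity near $\partial\Omega_\epsilon$, but you give no argument, and this is also not the paper's route: the paper never proves decay of $\phi_\epsilon$ itself, only of the limit $\phi_0$. Instead, the paper establishes the \emph{softer} uniform bound $\|\phi_\epsilon\|_{L^{q'}(\Omega_\epsilon)}\le C(q')$ for $q'>n$ (Lemma \ref{upto}) --- hence $W^{2,q'}$ and $L^\infty$ bounds by elliptic regularity --- via a contradiction argument that already uses the non-degeneracy of $W$: if $\|\phi_{\epsilon_j}\|_{L^{q'}}\to\infty$, the normalised functions $\eta_j=\phi_j/\|\phi_j\|_{L^{q'}}$ are bounded in $W^{2,q'}$ (Lemma \ref{bopund}) and converge weakly, up to a subsequence, to some $\omega_0\in\ker\mathcal{L}$; the pointwise normalisations $\eta_j(0)\to0$, $\nabla\eta_j(0)\to0$ (inherited from $v_\epsilon(0)=1=\max v_\epsilon$) force $\omega_0=0$; and then the source terms vanish in the relevant norms, giving $\|\eta_j\|_{W^{2,q'}(\mathbb{R}^n)}=o(1)$ and contradicting $\|\eta_j\|_{L^{q'}}=1$. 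In your proposal non-degeneracy enters only at the final uniqueness step, so the mechanism that actually produces the a priori bound is absent; and the local Pohozaev identity you invoke here is, in the paper, used only in Section 3 to keep $x_\epsilon$ away from $\partial\Omega$ and plays no role in bounding $\phi_\epsilon$.

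Two smaller corrections. In your solvability discussion, orthogonality of the source in \eqref{fai0} to the translation modes $\partial_{y_i}W$ is automatic by parity --- the source is a radial function of $y$ (since $\tilde{H}(x_0,x_0)$ is a constant and $W$, $\log W$ are radial) --- and has nothing to do with $x_0$ being a critical point of $\phi$; that fact would matter only at the next order of the expansion. And the paper removes the residual element of $\ker\mathcal{L}$ not by imposing an orthogonality constraint but directly from the pointwise normalisations at the origin, which force the coefficients $c_0,\dots,c_n$ in $\phi_0-\tilde{\phi}_0=c_0\partial_\lambda W+\sum_i c_i\partial_{\xi_i}W$ to vanish; your framing is essentially equivalent once the parity point is corrected, but as written it misidentifies the source of the constraints.
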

\begin{Rem}
The proof of Theorems \ref{Figalli}-\ref{asymptotic} adapt the methods inspired by \cite{HANZCHAO,GW,JW0,HL}. By using the decay estimate of rescaled least energy solutions, local Pohozaev identity and an asymptotic expansion of ground energy $S_{HL}^{\epsilon}$, one could extend the results to dimensions $n=3, 4, 5$ and we consider the Newtonian potential instead of the Riesz potential because we need to use the Kelvin transform. Therefore, in the following parameter region:
\begin{equation}\label{paramater}
\mu=n-2\hspace{2mm}\mbox{and}\hspace{2mm}n\geq6, \hspace{2mm}\mbox{or}\hspace{2mm}n\geq3\hspace{2mm}\mbox{and}\hspace{2mm}n-2\neq\mu \in(0,n)\hspace{2mm}\mbox{with}\hspace{2mm}(0,4],
\end{equation}
these methods do not seem to lead to any improvement of Theorems \ref{Figalli}-\ref{asymptotic}. Taking this as inspiration allows us to ask an open question: whether the asymptotic behavior of the solution of problem \eqref{ele-1.1} as $\epsilon\rightarrow0$ in the results of this paper continue to hold in the case $n$ and $\mu$ satisfying \eqref{paramater}.
\end{Rem}

On the other hand, we also consider the nonlocal problem
\begin{equation}\label{ele-2}
    -\Delta u_\epsilon=(|x|^{-\mu}\ast u_\epsilon^{p_{\mu}})u_\epsilon^{p_{\mu}}+\epsilon u_\epsilon, \quad u_\epsilon>0\quad\mbox{in}\quad \Omega, \quad u_\epsilon=0\quad\mbox{on}\quad \partial\Omega
    \end{equation}
for $n\geq3$, $p_{\mu}=\frac{2n-\mu}{n-2}$, $\mu\in(0,n)$ with $\mu\in(0,4]$ and $\epsilon>0$ is a small parameter and $\Omega$ is a smooth bounded domain of $\mathbb{R}^n$. The existence if positive solutions for this problem can be found in \cite{GY18} and related topics, we refer the readers to \cite{Ghimenti} and reference therein.
They also proved that if $N\geq3$ and $\varepsilon=0$, \eqref{ele-2} admits no solutions when $\Omega$ is star-shaped.
Similarly to the problem \eqref{ele-1.1}, we can define the least enengy solutions for \eqref{ele-2}. The authors in \cite{YZ} proved that a result for this nonlocal
problem to locate the blow up point $x_0$ (that is $\nabla\phi(x_0)=0$ and $x_0\in \Omega$) by employing the finite dimensional reduction method. If the blow-up points satisfy a certain non-degeneracy condition, \cite{yyz} provided a kind of converse result for Theorem 1.1 in \cite{YZ}. However, the problem of the asymptotic character of $\{u_{\epsilon}\}_{\epsilon>0}$ and exact rates of blowing-up for problem \eqref{ele-2} has not been investigated so far. We shall address this gap by proving the following result.
\begin{thm}\label{Brezi-type}
Assume that $n\geq3$, $p_{\mu}=\frac{2n-\mu}{n-2}$, $\mu\in(0,n)$ with $\mu\in(0,4]$ and $\epsilon$ is sufficiently small. Let $u_{\epsilon}$ be a least energy solution of \eqref{ele-2}. Then the conclusions of Theorem \ref{Figalli} hold, and for $n\geq4$ we have
$$
\lim\limits_{\epsilon\rightarrow0^{+}}\epsilon\big\|u_{\epsilon}\big\|^{\frac{2(n-4)}{n-2}}_{L^{\infty}(\Omega)}=\frac{(n-2)\widetilde{\alpha}_{n,\mu}^2}{2\omega_n\cdot \sigma_n}\big\|W\big\|^{2(2^{\ast}-1)}_{L^{2^{\ast}-1}(\mathbb{R}^n)}
 |\phi(x_0)|
\quad\mbox{if}\quad n>4,
$$
and
$$
\lim\limits_{\epsilon\rightarrow0^{+}}\epsilon\log\big\|u_{\epsilon}\big\|_{L^{\infty}(\Omega)}=\frac{\widetilde{\alpha}_{n,\mu}^2}{\omega_n}\big\|W\big\|^{2(2^{\ast}-1)}_{L^{2^{\ast}-1}(\mathbb{R}^n)} |\phi(x_0)|\quad\hspace{2mm}\mbox{if}\quad n=4,
$$
where $\omega_n=\frac{2\pi^{\frac{n}{2}}}{\Gamma(n/2)}$, $\sigma_n=\int_{0}^{\infty}\frac{r^{n-1}}{(1+r^2)^{n-2}}dr$ and
$$\widetilde{\alpha}_{n,\mu}=\left[n(n-2)\right]^{\frac{n-2}{4}}S^{\frac{(2-n)(n-\mu)}{4(n-\mu+2)}}\frac{\pi^{\frac{n}{2}}\Gamma(\frac{n-\mu}{2})}{\Gamma(\frac{2n-\mu}{2})}\left\{\frac{\Gamma((n-\mu)/2)\pi^{\mu/2}}{\Gamma(n-\mu/2)}\left(\frac{\Gamma(n)}{\Gamma(n/2)}\right)^{1-\frac{\mu}{n}}\right\}^{\frac{2-n}{2(n-\mu+2)}}.$$
Moreover, for any $x\in\Omega\setminus\{x_0\}$, we have
$$
\lim\limits_{\epsilon\rightarrow0^{+}}\|u_{\epsilon}\|_{L^{\infty}(\Omega)}u_{\epsilon}(x)=\widetilde{\alpha}_{n}\big\|W\big\|^{2^{\ast}-1}_{L^{2^{\ast}-1}(\mathbb{R}^n)}G(x,x_{0}),
$$
where the convergence is in $C^{1,\alpha}(\omega)$ with $\omega$ any subdomain of $\Omega$ not containing $x_0$.
\end{thm}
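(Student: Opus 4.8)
The plan is to run the blow-up analysis exactly as for Theorems~\ref{Figalli}--\ref{consequence}, the only genuinely new ingredient being that the subcritical perturbation is now the linear term $\epsilon u$ rather than a lowering of the exponent; this changes only the bookkeeping in the final rate. \emph{Concentration.} First I would observe that the least energy level of \eqref{ele-2}, i.e. the value of $\inf\{\int_\Omega|\nabla u|^2-\epsilon\int_\Omega u^2:\ u\in H^{1}_0(\Omega),\ \int_\Omega(|x|^{-\mu}\ast|u|^{p_\mu})|u|^{p_\mu}=1\}$, tends to the critical constant $C_{HLS}$ as $\epsilon\to0$, and that this level is never attained on the bounded domain $\Omega$ (a bubble cannot meet the homogeneous Dirichlet condition; the nonexistence at $\epsilon=0$ on star-shaped domains from \cite{GY18} is the infinitesimal version of this). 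Hence $M_\epsilon:=\|u_\epsilon\|_{L^\infty(\Omega)}\to\infty$, and a Lions-type concentration--compactness argument, as for Theorem~\ref{Figalli}, forces single-bubble concentration: up to a subsequence there are $x_0\in\overline\Omega$ and $x_\epsilon\to x_0$ with $u_\epsilon(x_\epsilon)=M_\epsilon$, $u_\epsilon\to0$ in $C^1_{loc}(\overline\Omega\setminus\{x_0\})$ and $|\nabla u_\epsilon|^2\rightharpoonup[C_{HLS}]^{(2n-\mu)/(n-\mu+2)}\delta(x-x_0)$.

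\emph{Rescaling, interior concentration, critical point of the Robin function.} Set $\lambda_\epsilon=M_\epsilon^{2/(n-2)}$ and $v_\epsilon(y)=\lambda_\epsilon^{-(n-2)/2}u_\epsilon(\lambda_\epsilon^{-1}y+x_\epsilon)$ on $\Omega_\epsilon=\lambda_\epsilon(\Omega-x_\epsilon)$, so that $-\Delta v_\epsilon=(|y|^{-\mu}\ast v_\epsilon^{p_\mu})v_\epsilon^{p_\mu-1}+\epsilon\lambda_\epsilon^{-2}v_\epsilon$ with $0\le v_\epsilon\le v_\epsilon(0)=1$; since $\lambda_\epsilon\to\infty$ the linear term drops out, so by the classification of positive solutions of \eqref{ele} and elliptic regularity $v_\epsilon\to W$ in $C^{2}_{loc}(\mathbb{R}^n)$. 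Next I would prove the uniform decay $v_\epsilon(y)\le C(1+|y|)^{-(n-2)}$ by a comparison argument performed after a Kelvin transform (this forces $\mu\in(0,4]$), and then, as announced in the introduction, combine this pointwise bound with a local Pohozaev identity on small balls $B_\delta(x_\epsilon)$ --- the nonlocal term being estimated as a boundary-type quantity --- to show that $\dist(x_\epsilon,\partial\Omega)$ stays bounded away from $0$, whence $x_0\in\Omega$; this is exactly the step that dispenses with the convexity hypothesis of \cite{HANZCHAO}. Inserting the expansion $u_\epsilon\approx M_\epsilon^{-1}\big(\text{bubble at }(x_\epsilon,\lambda_\epsilon)+\tilde H(x_\epsilon,\cdot)\big)$ into the translation Pohozaev identities then yields $\nabla\phi(x_0)=0$, giving parts $(a)$ and $(b)$ of Theorem~\ref{Figalli} for \eqref{ele-2}.

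\emph{Rate of blow-up and Green's function limit.} The dilation local Pohozaev identity centred at $x_\epsilon$ produces a bulk contribution $\epsilon\int_\Omega u_\epsilon^2$ and a boundary (Robin) contribution of size $\sim c_1\lambda_\epsilon^{-(n-2)}|\phi(x_0)|$; matching the two gives the rate once $\int_\Omega u_\epsilon^2$ is known sharply. Since near $x_\epsilon$ one has $u_\epsilon\approx M_\epsilon W(\lambda_\epsilon(\cdot-x_\epsilon))$, $\int_\Omega u_\epsilon^2\sim M_\epsilon^2\lambda_\epsilon^{-n}\int_{B_{R\lambda_\epsilon}}W^2$, and $\int_{B_R}W^2\to\widetilde c_{n,\mu}^2\,\omega_n\sigma_n$ as $R\to\infty$ when $n\ge5$, whereas $\int_{B_R}W^2\sim\widetilde c_{n,\mu}^2\,\omega_n\log R$ when $n=4$; with $M_\epsilon^2=\lambda_\epsilon^{n-2}$ this gives $\int_\Omega u_\epsilon^2\sim\widetilde c_{n,\mu}^2\,\omega_n\sigma_n\,\lambda_\epsilon^{-2}$ for $n\ge5$ and $\int_\Omega u_\epsilon^2\sim\widetilde c_{n,\mu}^2\,\omega_n\,\lambda_\epsilon^{-2}\log\lambda_\epsilon$ for $n=4$. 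Substituting into the Pohozaev balance and using $\lambda_\epsilon=\|u_\epsilon\|_{L^\infty}^{2/(n-2)}$ yields $\lim_\epsilon\epsilon\|u_\epsilon\|_{L^\infty}^{2(n-4)/(n-2)}=$ const for $n>4$ and $\lim_\epsilon\epsilon\log\|u_\epsilon\|_{L^\infty}=$ const for $n=4$, the constants being assembled from the bubble integrals $\int_{\mathbb{R}^n}(|y|^{-\mu}\ast W^{p_\mu})W^{p_\mu-1}$ and $\int_{\mathbb{R}^n}|\nabla W|^2$, which reproduce the displayed expressions involving $\widetilde\alpha_{n,\mu}$, $\omega_n$, $\sigma_n$. (For $n=3$, $\int W^2$ diverges linearly, no clean balance exists, and the rate is accordingly stated only for $n\ge4$; the blow-up structure still holds.) Finally, for the Green's function limit, multiply the equation by $M_\epsilon$: by the rescaling and the decay bound the right-hand side $M_\epsilon(|x|^{-\mu}\ast u_\epsilon^{p_\mu})u_\epsilon^{p_\mu-1}+\epsilon M_\epsilon u_\epsilon$ converges (the last term being $O(\epsilon M_\epsilon\|u_\epsilon\|_{L^1})=o(1)$) to $c\,\delta(x-x_0)$ with $c=\int_{\mathbb{R}^n}(|y|^{-\mu}\ast W^{p_\mu})W^{p_\mu-1}=\widetilde\alpha_{n}\|W\|^{2^{\ast}-1}_{L^{2^{\ast}-1}(\mathbb{R}^n)}$, and elliptic estimates give $M_\epsilon u_\epsilon\to c\,G(\cdot,x_0)$ in $C^{1,\alpha}(\omega)$ for any subdomain $\omega$ of $\Omega$ not containing $x_0$.

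\emph{Main obstacle.} The delicate points are the exclusion of boundary concentration and the sharp rate: both require a local Pohozaev identity for the nonlocal operator with remainders controlled to the right order, together with sharp pointwise decay of $v_\epsilon$ (and of $u_\epsilon$ near $\partial\Omega$, without convexity) obtained via the Kelvin transform; the logarithmic regime $n=4$ and the precise constants $\sigma_n$, $\omega_n$, $\widetilde\alpha_{n,\mu}$ demand careful evaluation of the bubble integrals and of the asymptotic expansion of $S_{HL}^\epsilon$.
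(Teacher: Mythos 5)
Your proposal is correct and follows essentially the same route as the paper's Section~\ref{theorem1-7}: derive the balance $\tfrac{1}{2}\int_{\partial\Omega}(\partial_\nu u_\epsilon)^2(x\cdot\nu)\,dS=\epsilon\int_\Omega u_\epsilon^2$ from the Pohozaev identity combined with the Nehari identity, rescale with $\lambda_\epsilon=\|u_\epsilon\|_{L^\infty}^{2/(n-2)}$, obtain the uniform decay $v_\epsilon\lesssim(1+|y|)^{-(n-2)}$ by Kelvin transform, push through the Green's-function limit $\|u_\epsilon\|_{L^\infty}u_\epsilon\to\widetilde\alpha_n\|W\|^{2^*-1}G(\cdot,x_0)$, and match the boundary Robin contribution against $\epsilon\int u_\epsilon^2\sim\lambda_\epsilon^{-2}\int_{\Omega_\epsilon}v_\epsilon^2$, splitting $n>4$ (where $\int W^2=\tilde c_{n,\mu}^2\omega_n\sigma_n<\infty$) from $n=4$ (logarithmic growth). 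You actually make explicit the $n=4$ logarithmic regime and the $n=3$ divergence of $\int W^2$, which the paper's one-paragraph argument leaves implicit, but the logical structure is identical.
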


\subsection{Structure of the paper.}
	The paper is organized as follows. In section \ref{sec:sobolev}, we establish a decay estimate of rescaled least energy solutions $v_{\epsilon}$. Section \ref{sec:weaksolution} is devoted to proof uniform bounded of least energy solutions $u_\epsilon$ near $\partial\Omega$ which makes use of a local Pohozaev-type identity and sharp pointwise estimates of $u_\epsilon$, and here not applying the moving plane method. Based on these results, we then study the blow-up points and behavior of blow-up rates by exploiting the Green-type identity and complete the proof of Theorem \ref{Figalli}, Theorem \ref{Figalli2} and Theorem \ref{consequence} in section \ref{consequence00}. In section \ref{theorem1-7}, we give the proof of Theorem \ref{Brezi-type}. In section \ref{maximumpoint}, we obtain an upper bound for $S_{HL}^{\epsilon}$ by taking a good trial function and we further find a lower bound for $S_{HL}^{\epsilon}$ by combining Proposition \ref{pro}. These results are used in next subsection to prove Theorems \ref{maximum}-\ref{asymptotic}. Finally, in section \ref{pro-1}, we conclude that the proof of Proposition \ref{pro}. In the appendices we collect a series of technical lemmata which is required in the previous sections.

\subsection{Notations.}
Throughout this paper, $C$ and $c$ are indiscriminately used to denote various absolutely positive constants. $a\approx b$ means that $a\lesssim b$ and $\gtrsim b$, and we will use big $O$ and small $o$ notations to describe the limit behavior of a certain quantity as $\epsilon\rightarrow0$.

\section{Decay estimate of rescaled least energy solutions}\label{sec:sobolev}
In this section, we are devoted to prove that an adequate decay estimate for least energy solutions to \eqref{ele-1.1}.
Since $u_\epsilon$ become unbounded as $\epsilon\rightarrow0$ then we choose $x_\epsilon\in\Omega$ and the number $\lambda_{\epsilon}>0$ by
\begin{equation}\label{miu}
\lambda_{\epsilon}^{\frac{2(n-2)}{4-(n-2)\epsilon}}=\|u_{\epsilon}\|_{L^{\infty}(\Omega)}=u_\epsilon(x_\epsilon).
\end{equation}
We define a family of rescaled functions
$$v_{\epsilon}(x)=\lambda_{\epsilon}^{-\frac{2(n-2)}{4-(n-2)\epsilon}}u_{\epsilon}(\lambda_{\epsilon}^{-1}x+x_{\epsilon})\quad\mbox{for}\quad x\in\Omega_{\epsilon}:=\lambda_{\epsilon}(\Omega-x_{\epsilon}).$$
Then we have
\begin{equation}\label{rescaled}
-\Delta v_{\epsilon}(x)=\lambda_{\epsilon}^{-(\frac{2(n-2)}{4-(n-2)\epsilon}+2)}(-\Delta u_{\epsilon})\big(\lambda_{\epsilon}^{-1}x+x_{\epsilon}\big)=\big(|x|^{-{(n-2)}}\ast v_\epsilon^{p-\epsilon}\big)v_\epsilon^{p-1-\epsilon}\hspace{3mm}\mbox{in}\hspace{2mm} x\in\Omega_{\epsilon}.
\end{equation}
Now the Kelvin transform $w_{\epsilon}$ of $v_{\epsilon}$ defined by
$$w_{\epsilon}(x)=\frac{1}{|x|^{n-2}}v_{\epsilon}\big(\frac{x}{|x|^2}\big)\quad\mbox{in}\quad\Omega_{\epsilon}^{\ast}:=\big\{x:\frac{x}{|x|^{2}}\in\Omega_{\epsilon}\big\}$$
satisfies
\begin{equation}\label{Kelvin}
\begin{split}
-\Delta w_{\epsilon}(x)&=\frac{1}{|x|^{n+2}}(-\Delta v_{\epsilon})\big(\frac{x}{|x|^2}\big)\\&
=\frac{1}{|x|^{{\epsilon(n-2)}}}\Big(\int_{\Omega_{\epsilon}^{\ast}}\frac{w_\epsilon^{p-\epsilon}(y)}{|x-y|^{{(n-2)}}|y|^{{\epsilon(n-2)}}} dy\Big)w_\epsilon^{p-1-\epsilon}\hspace{4mm}\mbox{in}\hspace{2mm} x\in\Omega_{\epsilon}^{\ast}.
\end{split}
\end{equation}
\begin{Rem}
It is necessary to point out \eqref{rescaled} is not valid when $\mu\neq n-2$ in \eqref{ele-1.1}.
\end{Rem}

\begin{lem}\label{infinite}
Let $\lambda_{\epsilon}>0$ be the number introduced in \eqref{miu} and
the minimizing sequence $u_\epsilon$ of \eqref{minimi} is such that
\begin{equation*}
\lambda_{\epsilon}=\big\|u_{\epsilon}\big\|^{\frac{4-(n-2)\epsilon }{2(n-2)}}_{L^{\infty}(\Omega)}\rightarrow\infty \quad\mbox{as}\quad\epsilon\rightarrow0.
\end{equation*}
\end{lem}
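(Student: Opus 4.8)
The claim is that $\lambda_\epsilon \to \infty$, i.e., that the least energy solutions $u_\epsilon$ blow up in $L^\infty(\Omega)$ as $\epsilon \to 0$. The natural strategy is a proof by contradiction: assume that along a subsequence $\|u_\epsilon\|_{L^\infty(\Omega)}$ stays bounded, say $\|u_\epsilon\|_{L^\infty(\Omega)} \le M$ for all small $\epsilon$. First I would translate the normalization \eqref{minimi} into an energy identity: testing \eqref{ele-1.1} with $u_\epsilon$ itself gives $\int_\Omega |\nabla u_\epsilon|^2\,dx = \int_\Omega (|x|^{-(n-2)}\ast u_\epsilon^{p-\epsilon})u_\epsilon^{p-\epsilon}\,dx$, and combining this with \eqref{minimi} yields
\begin{equation*}
\int_\Omega |\nabla u_\epsilon|^2\,dx = \big(C_{HLS}+o(1)\big)^{\frac{p-\epsilon}{p-1-\epsilon}} \quad\text{and}\quad \int_\Omega (|x|^{-(n-2)}\ast u_\epsilon^{p-\epsilon})u_\epsilon^{p-\epsilon}\,dx = \big(C_{HLS}+o(1)\big)^{\frac{1}{p-1-\epsilon}},
\end{equation*}
so both quantities are bounded away from $0$ and $\infty$. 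In particular the Dirichlet energy of $u_\epsilon$ has a strictly positive lower bound.

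Next, under the boundedness assumption $\|u_\epsilon\|_{L^\infty(\Omega)} \le M$, I would show that $u_\epsilon \to 0$ strongly, which contradicts the positive lower bound on the energy. The point is that as $\epsilon \to 0$ the exponent $p-\epsilon$ approaches the critical value $p = \frac{n+2}{n-2}$ from below, so for $\epsilon > 0$ the embedding $H_0^1(\Omega)\hookrightarrow L^{\frac{2n(p-\epsilon)}{2n-(n-2)}}(\Omega)$ is compact (as recalled in the introduction). Since $\{u_\epsilon\}$ is bounded in $H_0^1(\Omega)$, up to a subsequence $u_\epsilon \rightharpoonup u_0$ weakly in $H_0^1(\Omega)$; with the uniform $L^\infty$ bound and standard elliptic estimates applied to \eqref{ele-1.1}, the right-hand side is bounded in $L^q(\Omega)$ for large $q$, so $u_\epsilon$ is bounded in $W^{2,q}(\Omega)$ and hence converges in $C^1(\overline\Omega)$ to a limit $u_0 \ge 0$ solving the critical problem \eqref{ele-1.1} with $\epsilon = 0$. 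If $u_0 \not\equiv 0$ then $u_0 > 0$ by the strong maximum principle, but by the nonlocal Pohozaev identity of \cite{GY18} such a solution cannot exist on a bounded domain — at least when $\Omega$ is star-shaped; for a general domain one argues instead that $u_0$ would be a nonzero critical point of the limiting quotient, forcing $\int_\Omega |\nabla u_0|^2 = C_{HLS}\big(\int_\Omega(|x|^{-(n-2)}\ast u_0^{p})u_0^{p}\big)^{1/p}$, i.e. $u_0$ would be an extremal for the Hardy–Littlewood–Sobolev–Sobolev inequality \eqref{Prm} on a bounded domain, which is impossible since the extremals \eqref{defU} are not compactly supported. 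Hence $u_0 \equiv 0$, and the $C^1$ convergence gives $\int_\Omega |\nabla u_\epsilon|^2 \to 0$, contradicting the lower bound. This forces $\|u_\epsilon\|_{L^\infty(\Omega)} \to \infty$, and then $\lambda_\epsilon = \|u_\epsilon\|_{L^\infty(\Omega)}^{\frac{4-(n-2)\epsilon}{2(n-2)}} \to \infty$ because the exponent $\frac{4-(n-2)\epsilon}{2(n-2)}$ stays bounded away from $0$ for small $\epsilon$ (here dimensions $n = 3,4,5$ ensure $4-(n-2)\epsilon > 0$).

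The main obstacle is the ruling-out step: showing that no nontrivial nonnegative $u_0$ can arise as the limit. For star-shaped $\Omega$ the cited nonexistence result \cite{GY18} closes it immediately, but the theorem is stated for arbitrary smooth bounded $\Omega$, so I would phrase the argument via the quotient: weak lower semicontinuity of the Dirichlet energy together with strong $L^{\frac{2n(p-\epsilon)}{2n-(n-2)}}$ convergence shows $S_{HL}^{\epsilon} \ge C_{HLS}$ is asymptotically attained, and if the limit $u_0$ were nonzero it would attain the Hardy–Littlewood–Sobolev–Sobolev constant $C_{HLS}$ on $\Omega$, contradicting the fact that this constant — being the same as on all of $\mathbb{R}^n$ — is never attained on a proper subdomain. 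A secondary technical point is justifying the passage to the limit inside the convolution term, which follows from the uniform $L^\infty$ bound together with the $C^1$ convergence, since the Riesz kernel $|x|^{-(n-2)}$ is locally integrable and $\Omega$ is bounded.
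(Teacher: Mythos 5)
Your proof is correct and takes essentially the same approach as the paper: argue by contradiction that $\|u_\epsilon\|_{L^\infty(\Omega)}$ stays bounded, upgrade to strong ($C^1$ or $C^2$) convergence to a limit $u_0$ via elliptic regularity, and derive a contradiction from the nonattainment of $C_{HLS}$ by any minimizer on a bounded domain, which is the fact the paper cites to \cite{GY18}. The only organizational difference is that the paper shows $u_0 \neq 0$ directly from the limit \eqref{SHL} of the nonlocal integral and then invokes nonattainment once, whereas you run two cases ($u_0\neq0$ ruled out by nonattainment, $u_0\equiv0$ ruled out by the uniform positive lower bound on the Dirichlet energy); both rest on the same ingredients, and your remark that the relevant fact is nonattainment of the infimum — not the star-shaped Pohozaev nonexistence — is the correct clarification.
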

\begin{proof}
In view of \eqref{ele-1.1} and \eqref{minimi}, we have
\begin{equation}\label{SHL}
\lim\limits_{\epsilon\rightarrow0}\int_{\Omega}(|x|^{-(n-2)} \ast|u_{\epsilon}|^{p-\epsilon})|u_{\epsilon}|^{p-\epsilon} dx=\big(C_{HLS}\big)^{\frac{n+2-\epsilon(n-2)}{4-\epsilon(n-2)}}.
\end{equation}
Now, assume that $\{u_\epsilon\}_{\epsilon>0}$ is uniformly bounded in $\Omega$. Then there exists a function $u_0$, up to subsequence, $u_\epsilon\rightarrow u_0$ uniformly in $C^2(\bar{\Omega})$ as $\epsilon\rightarrow0$, and it hold that $u_0\neq0$ due to \eqref{SHL}. Hence by taking the limit in \eqref{minimi}, we obtain
$$0\neq\int_{\Omega}|\nabla u_{0}|^2dx=C_{HLS}\left(\int_{\Omega}(|x|^{-(n-2)} \ast|u_{0}|^{p})|u_{0}|^{p} dx\right)^{\frac{1}{p}},$$
which contradicts that $C_{HLS}$ cannot be achieved by a minimizer in a bounded domain, see \cite{GY18}. Hence it should hold that $\lambda_{\epsilon}\rightarrow\infty$ as $\epsilon\rightarrow0$.
\end{proof}

\begin{lem}\label{finite}
It holds that
\begin{equation*}
\lim\limits_{\epsilon\rightarrow0}\lambda_{\epsilon}dist(x_{\epsilon},\partial\Omega)=\infty \quad\mbox{and}\quad\lim\limits_{\epsilon\rightarrow0}\lambda_{\epsilon}^{\epsilon}=1.
\end{equation*}
In addition, $v_{\epsilon}\rightarrow W[0,1]=(\frac{1}{1+|x|^2})^{(n-2)/2}$ in $\mathcal{D}^{1,2}(\mathbb{R}^n)$ as $\epsilon\rightarrow0$.
\end{lem}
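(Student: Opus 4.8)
The plan is a standard blow-up analysis for the rescaled family $v_\epsilon$ solving \eqref{rescaled}. First I record that the rescaling does not increase the Dirichlet energy: with $a_\epsilon:=\frac{2(n-2)}{4-(n-2)\epsilon}$ one has $n-2a_\epsilon-2=-\frac{(n-2)^2\epsilon}{4-(n-2)\epsilon}\le0$, hence
$$\int_{\Omega_\epsilon}|\nabla v_\epsilon|^2\,dx=\lambda_\epsilon^{\,n-2a_\epsilon-2}\int_\Omega|\nabla u_\epsilon|^2\,dx\le\int_\Omega|\nabla u_\epsilon|^2\,dx\to C_{HLS}^{\frac{n+2}{4}},$$
the convergence following from \eqref{minimi} together with the Nehari identity $\int_\Omega|\nabla u_\epsilon|^2=\int_\Omega(|x|^{-(n-2)}\ast u_\epsilon^{p-\epsilon})u_\epsilon^{p-\epsilon}$ (cf. \eqref{SHL}). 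Since $0\le v_\epsilon\le1$, the Hardy--Littlewood--Sobolev inequality bounds $|x|^{-(n-2)}\ast v_\epsilon^{p-\epsilon}$ uniformly in $L^{2^{\ast}}_{loc}$, hence so is the right-hand side of \eqref{rescaled}; because $n\le5$ gives $2^{\ast}>n/2$, $L^q$- and Schauder estimates (which apply since $\partial\Omega_\epsilon$ flattens, so the rescaled domains have uniformly controlled geometry) yield $\|v_\epsilon\|_{C^{2,\alpha}}\le M$ on compact subsets of $\overline{\Omega_\epsilon}$.

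Next I would prove $d_\epsilon:=\lambda_\epsilon\dist(x_\epsilon,\partial\Omega)=\dist(0,\partial\Omega_\epsilon)\to\infty$. Otherwise, along a subsequence $d_\epsilon\to d\in[0,\infty)$, the domains $\Omega_\epsilon$ converge to a half-space $H=\{x_n>-d\}$ and (by the bounds above, including boundary estimates) $v_\epsilon\to v$ in $C^2_{loc}(\overline H)$, where $v\in\mathcal{D}^{1,2}(H)$, $0\le v\le1$, $v\equiv0$ on $\partial H$, $v(0)=\max_{H}v=1$, and $-\Delta v=(|x|^{-(n-2)}\ast v^p)v^{p-1}$ in $H$. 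The case $d=0$ is immediately contradictory, since then $0\in\partial H$ but $v(0)=1$. For $d>0$ I would invoke the nonlocal Pohozaev identity of \cite{GY18}: testing the equation with $x\cdot\nabla v$ and with $v$ and subtracting, the criticality $p=\frac{n+2}{n-2}$ (equivalently $\mu=n-2$) makes all interior contributions cancel and leaves $\int_{\partial H}(x\cdot\nu)\,|\partial_\nu v|^2\,d\sigma=0$; as $x\cdot\nu\equiv d>0$ on $\partial H$, this forces $\partial_\nu v\equiv0$ there, and combined with $v\equiv0$ on $\partial H$ the unique continuation principle gives $v\equiv0$ in $H$, contradicting $v(0)=1$. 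Hence $d_\epsilon\to\infty$, so $\Omega_\epsilon$ exhausts $\mathbb{R}^n$; any subsequential $C^2_{loc}$ limit is then a positive solution of \eqref{ele} (with $\mu=n-2$) having maximum $1$ attained at the origin, so by the classification in \cite{DAIQIN,DY19,GHPS19} it equals $W[0,1]$, and since this is the limit along every subsequence, $v_\epsilon\to W[0,1]$ in $C^2_{loc}(\mathbb{R}^n)$.

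To conclude, extend $v_\epsilon$ by zero; then $v_\epsilon\rightharpoonup W[0,1]$ in $\mathcal{D}^{1,2}(\mathbb{R}^n)$ (the weak limit must agree with the $C^2_{loc}$ limit on every ball), so by weak lower semicontinuity and the identity in the first paragraph,
$$\int_{\mathbb{R}^n}|\nabla W[0,1]|^2\le\liminf_{\epsilon\to0}\lambda_\epsilon^{-\frac{(n-2)^2\epsilon}{4-(n-2)\epsilon}}\int_\Omega|\nabla u_\epsilon|^2=\Big(\liminf_{\epsilon\to0}\lambda_\epsilon^{-\frac{(n-2)^2\epsilon}{4-(n-2)\epsilon}}\Big)C_{HLS}^{\frac{n+2}{4}}.$$
Since $W[0,1]$, being a positive solution of \eqref{ele}, is an extremal for \eqref{Prm}, we have $\int_{\mathbb{R}^n}|\nabla W[0,1]|^2=C_{HLS}^{(n+2)/4}$; the above therefore forces $\limsup_{\epsilon\to0}\lambda_\epsilon^\epsilon\le1$, and together with $\lambda_\epsilon\to\infty$ (so that $\lambda_\epsilon^\epsilon\ge1$ for small $\epsilon$) this yields $\lambda_\epsilon^\epsilon\to1$. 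Feeding this back, $\int_{\Omega_\epsilon}|\nabla v_\epsilon|^2\to\int_{\mathbb{R}^n}|\nabla W[0,1]|^2$, so the weak convergence together with convergence of the norms upgrades to strong convergence $v_\epsilon\to W[0,1]$ in $\mathcal{D}^{1,2}(\mathbb{R}^n)$, which is the last assertion.

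The step I expect to be the main obstacle is the exclusion of a nontrivial limiting profile on a half-space: one must rigorously justify the nonlocal Pohozaev identity on the unbounded domain $H$ for a finite-energy solution (controlling the tails of the Riesz-type potential and the decay of $v$ at infinity) and then apply unique continuation --- this is the Choquard analogue of the classical nonexistence of positive finite-energy solutions of $-\Delta u=u^{2^{\ast}-1}$ in a half-space. A secondary subtlety, already defused by the energy-nonincreasing observation, is to avoid circularity between the a priori boundedness of $\int_{\Omega_\epsilon}|\nabla v_\epsilon|^2$, the value of $\lim_{\epsilon\to0}\lambda_\epsilon^\epsilon$, and the strong $\mathcal{D}^{1,2}$-convergence.
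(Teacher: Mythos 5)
Your proposal is correct in outline, but it takes a genuinely different route from the paper, and in one place it is actually more careful than the paper's own argument.

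The paper's proof of Lemma \ref{finite} is extremely terse: it dismisses the half-space alternative by citing a ``global compactness lemma'' from \cite{yyz} as a black box, and it ``proves'' $\lambda_\epsilon^\epsilon\to1$ with the single sentence that $\lambda_\epsilon\to\infty$ implies $\lambda_\epsilon^\epsilon\to1$. As written that implication is a non sequitur (take $\lambda_\epsilon=e^{1/\epsilon}$); the paper only establishes the quantitative rate $|\lambda_\epsilon^{-\epsilon}-1|=O(\lambda_\epsilon^{-(n-2)/2}\log\lambda_\epsilon)$ later, in Lemma \ref{thm:existenceofweaksolution}, and that later estimate rests on Lemma \ref{cWU}, whose proof already uses the $\mathcal{D}^{1,2}$-convergence from Lemma \ref{finite}, so one must be careful to avoid circularity. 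Your derivation of $\lambda_\epsilon^\epsilon\to1$ breaks that circle cleanly: you observe that the rescaling exponent $n-2a_\epsilon-2=-\tfrac{(n-2)^2\epsilon}{4-(n-2)\epsilon}$ is nonpositive, so the rescaled Dirichlet energy is $\le\int_\Omega|\nabla u_\epsilon|^2\to C_{HLS}^{(n+2)/4}$, and then pinch $\liminf\lambda_\epsilon^{-\beta_\epsilon}\ge1$ against $\lambda_\epsilon^{-\beta_\epsilon}\le1$ using weak lower semicontinuity and the extremality $\int|\nabla W|^2=C_{HLS}^{(n+2)/4}$. This is a self-contained and correct replacement for a step the paper essentially asserts.

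For the exclusion of the half-space profile, you use the nonlocal Pohozaev identity plus unique continuation, whereas the paper invokes global compactness. Both routes are standard; yours is more hands-on and shows exactly where criticality $p=\tfrac{n+2}{n-2}$ enters. Two small remarks. First, after the Pohozaev identity gives $\partial_\nu v\equiv0$ on $\partial H$, you do not actually need a unique-continuation theorem for the nonlocal equation: writing $V(x)=(|x|^{-(n-2)}\ast v^{p})v^{p-2}\ge0$ and viewing $-\Delta v=Vv$ as a linear Schr\"odinger equation with nonnegative potential, the strong maximum principle forces $v>0$ in $H$ (since $v(0)=1$), and then Hopf's lemma gives $\partial_\nu v<0$ on $\partial H$, a contradiction. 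This sidesteps the delicacy of unique continuation for the nonlocal term. Second, the obstacle you flag --- justifying the Pohozaev identity on the unbounded half-space and controlling the tails of the Riesz potential --- is real: one needs a priori decay of $v$ (e.g.\ via the Kelvin transform as in Lemma \ref{cWU}) to make the boundary terms at infinity vanish, so your remark correctly identifies where additional work is required to make this fully rigorous.
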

\begin{proof}
It is noticing that $\lambda_{\epsilon}\rightarrow\infty$ as $\epsilon\rightarrow0$ and so we have that $\lambda_{\epsilon}^{\epsilon}\rightarrow1$ as $\epsilon\rightarrow0$.
If $\lambda_{\epsilon}dist(x_{\epsilon},\partial\Omega)\rightarrow c\in[0,\infty)$ as $\epsilon\rightarrow0$, then one may assume that $\Omega_{\epsilon}$ converges to $\mathbb{R}^{n}_{+}$. Otherwise, i.e., if
$$\lim_{\epsilon\rightarrow0}\lambda_{\epsilon}dist(x_{\epsilon},\partial\Omega)=\infty,$$ then $\Omega_{\epsilon}$ converges to $\mathbb{R}^{n}$. In fact, we notice that, by global compactness lemma \cite{yyz}, this is always the case. Moreover, $W$ attains in $\mathbb{R}^{n}$ and combining \eqref{minimi}, we get
\begin{equation}\label{WW}
v_{\epsilon}\rightarrow W[0,1]=\big(\frac{1}{1+|x|^2}\big)^{\frac{n-2}{2}}\quad\mbox{in}\hspace{2mm}\mathcal{D}^{1,2}(\mathbb{R}^n)
\end{equation}
as $\epsilon\rightarrow0$. Concluding the proof.
\end{proof}

We first state the following lemma which is useful in our analysis (see \cite{HANZCHAO,GT} for the proof).
\begin{lem}\label{regular1}
Assume that $u\in H^{1}_{loc}(\Omega)$ be a positive solution of $-\Delta u=a(x)u^q$ with $1<q\leq2^{\ast}-1$. Then there exists $\varepsilon_0>0$ depending on $n$, such that if
$$
\int_{B(Q,r)}|a(x)u^{q-1}|^{\frac{n}{2}}dx\leq\varepsilon_0,
$$
then
\begin{align}\label{cU1}
\|u\|_{L^{(2^{\ast})^2/2}(B(Q,r))}\leq c(n)r^{-\frac{2}{2^{\ast}}}\|u\|_{L^{2^{\ast}}(Q,2r)},
\end{align}
with $c(n)$ depending on $n$ only. Furthermore, if there exists $0<\delta^{\prime}<2$ such that
\begin{align}
a(x)u^{q-1}\in L^{\frac{n}{2-\delta^{\prime}}}\big(B(Q,2r)\big),
\end{align}
then
\begin{align}\label{cU11}
\sup\limits_{B(Q,r)}u\leq cr^{-n}\Big(\int_{B(Q,2r)}u^{2^{\ast}}dx\Big)^{\frac{1}{2^{\ast}}},
\end{align}
where $c$ depends only on $n$, $\delta^{\prime}$ and $r^{\delta^{\prime}}\|au^{q-1}\|_{L^{n/(2-\delta^{\prime})}(B(Q,2r))}$.
\end{lem}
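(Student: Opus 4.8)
The statement is a standard consequence of the De Giorgi–Nash–Moser iteration scheme combined with a smallness hypothesis on the coefficient, and the strategy I would follow is the one in \cite{HANZCHAO,GT}. The idea is to treat the equation $-\Delta u = a(x)u^q$ as a linear equation $-\Delta u = V(x)u$ with potential $V(x) := a(x)u^{q-1}$, and to exploit the hypothesis that $V \in L^{n/2}(B(Q,2r))$ with small norm on $B(Q,r)$. I would first recall the Caccioppoli-type inequality obtained by testing the equation with $\eta^2 u^{2\beta-1}$ for a cutoff $\eta$ supported in $B(Q,2r)$ and a power $\beta \geq 1$: this yields
\begin{equation*}
\int \eta^2 |\nabla(u^\beta)|^2 \, dx \leq C\beta^2 \int |\nabla\eta|^2 u^{2\beta}\,dx + C\beta^2 \int \eta^2 |V| u^{2\beta}\,dx .
\end{equation*}
The second term on the right is handled by Hölder's inequality: $\int \eta^2 |V| u^{2\beta} \leq \|V\|_{L^{n/2}(\mathrm{supp}\,\eta)} \|\eta u^\beta\|_{L^{2^*}}^2$, and then by the Sobolev inequality $\|\eta u^\beta\|_{L^{2^*}}^2 \leq S^{-1}\int |\nabla(\eta u^\beta)|^2$, one can absorb this term into the left-hand side \emph{provided} $\|V\|_{L^{n/2}}$ is smaller than a dimensional constant $\varepsilon_0$ (depending only on $n$ through $S$ and the $\beta$ under consideration). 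Taking $\beta = 2^*/2$ in this absorbed estimate, together with the Sobolev inequality applied once more, converts $\|u\|_{L^{2^*}}$ control on $B(Q,2r)$ into $\|u^{2^*/2}\|_{L^{2^*}}$, i.e.\ $\|u\|_{L^{(2^*)^2/2}}$, control on $B(Q,r)$; tracking the scaling of the cutoff (with $|\nabla\eta| \lesssim r^{-1}$) produces the factor $r^{-2/2^*}$ in \eqref{cU1}. This step uses only the smallness hypothesis, not the stronger $L^{n/(2-\delta')}$ assumption.

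For the second conclusion \eqref{cU11}, the extra hypothesis $V = a u^{q-1} \in L^{n/(2-\delta')}(B(Q,2r))$ with $\delta'>0$ is precisely what upgrades the subcritical integrability of the potential so that Moser iteration closes. Here one iterates the Caccioppoli inequality over a sequence of shrinking balls $B(Q, r_k)$ with $r_k \downarrow r$, with exponents $\beta_k = (2^*/2)^k \to \infty$: at each stage Hölder's inequality with the exponent pair adapted to $n/(2-\delta')$ gives a bound of the form $\|u\|_{L^{2^*\beta_{k+1}}(B_{k+1})} \leq (C\beta_k)^{C/\beta_k}(r_k - r_{k+1})^{-C/\beta_k} \|u\|_{L^{2^*\beta_k}(B_k)}$, where the geometric series $\sum \beta_k^{-1} \log(C\beta_k)$ and $\sum \beta_k^{-1}\log(r_k-r_{k+1})^{-1}$ both converge because $\beta_k$ grows geometrically. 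Passing to the limit $k \to \infty$ and using $\|u\|_{L^\infty(B(Q,r))} = \lim_k \|u\|_{L^{2^*\beta_k}(B(Q,r))}$ yields $\sup_{B(Q,r)} u \leq C \|u\|_{L^{2^*}(B(Q,2r))}$; the normalization of the $r^{-n}$ prefactor in \eqref{cU11} comes from writing the $L^{2^*}$ average rather than the raw integral and matching powers of $r$ via scaling, so that the final constant depends only on $n$, $\delta'$, and the quantity $r^{\delta'}\|au^{q-1}\|_{L^{n/(2-\delta')}(B(Q,2r))}$, which is the scale-invariant form of the potential norm.

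The main obstacle — and the only genuinely delicate point — is ensuring that the absorption argument in the first step goes through \emph{uniformly}, i.e.\ that the threshold $\varepsilon_0$ can be chosen depending on $n$ alone and not on the solution $u$ or on the exponent $\beta$ used. This is where one must be careful to apply Hölder and Sobolev in the correct order and to control the constant multiplying $\int \eta^2 |V| u^{2\beta}$ after the test-function computation; since $q \leq 2^*-1$ the critical Sobolev exponent is exactly the one appearing, so the exponents match borderline and there is no room to spare — a cruder estimate would force $\varepsilon_0$ to depend on $\|u\|_{L^{2^*}}$. For \eqref{cU11} the subtlety is instead bookkeeping: one must verify the two series arising from the iteration converge and that the dependence of the final constant is exactly on the scale-invariant norm $r^{\delta'}\|au^{q-1}\|_{L^{n/(2-\delta')}}$ claimed in the statement. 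Both points are routine in principle but require the careful tracking of constants that is standard in Moser iteration; I would cite \cite{HANZCHAO,GT} for the full details and reproduce only the Caccioppoli inequality and the absorption step explicitly.
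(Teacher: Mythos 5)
The paper does not prove this lemma itself; it states it with a citation to \cite{HANZCHAO,GT}, and your outline is exactly the Moser iteration argument found there: rewrite $-\Delta u = a(x)u^q$ as $-\Delta u = Vu$ with $V = au^{q-1}$, test with $\eta^2 u^{2\beta-1}$, absorb the potential term via H\"older--Sobolev using the $L^{n/2}$ smallness for the single exponent $\beta = 2^*/2$ to get the reverse-H\"older estimate \eqref{cU1}, and then run the full iteration with the stronger $L^{n/(2-\delta')}$ integrability of $V$ to obtain \eqref{cU11}. Your concern about the uniformity of $\varepsilon_0$ in $\beta$ is resolved exactly as you indicate: the first conclusion uses only the single fixed $\beta = 2^*/2$, so $\varepsilon_0$ depends only on $n$, while the second conclusion does not rely on smallness at all but on the improved integrability, which is why the extra hypothesis is needed there.
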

The following lemma will play a key role in our analysis.
\begin{lem}\label{cWU}
There exists a positive constant $c$, independent of $\varepsilon$, such that
\begin{align}\label{cU}
v_\varepsilon(x)\leq cW(x)\quad \mbox{for}\quad x\in\mathbb{R}^n,
\end{align}
where $W(x)=W[0,1](x)$.
\end{lem}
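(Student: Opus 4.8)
The plan is to localize the estimate: bound $v_\epsilon$ by a fixed multiple of $W$ separately on a fixed neighbourhood of the concentration point and away from it, transporting the far-field bound to the origin by the Kelvin transform $w_\epsilon$ of \eqref{Kelvin}. Extending $v_\epsilon$ by $0$ outside $\Omega_\epsilon$, the inequality is trivial where $v_\epsilon=0$. Since $v_\epsilon\to W$ in $C^{2}_{loc}(\mathbb{R}^n)$ and $W\ge c_0>0$ on $\overline{B_1}$, one gets $v_\epsilon\le 2\le(2/c_0)W$ on $B_1$ for $\epsilon$ small, so it remains to prove $v_\epsilon(y)\le cW(y)$ for $|y|\ge 1$. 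Because $W$ is invariant under the Kelvin transform, $v_\epsilon(y)=|y|^{2-n}w_\epsilon(y/|y|^2)$ and $W(y)\ge 2^{(2-n)/2}|y|^{2-n}$ for $|y|\ge1$, it suffices to show that $w_\epsilon$ is bounded on $B_1$ by a constant independent of $\epsilon$. I will use two facts: (i) $w_\epsilon\to W$ in $C^{2}_{loc}(\mathbb{R}^n\setminus\{0\})$ (Kelvin transform of the $C^{2}_{loc}$-convergence of $v_\epsilon$), which already yields the bound on the annulus $\{1/2\le|x|\le1\}$; and (ii) $w_\epsilon\to W$ strongly in $\mathcal{D}^{1,2}(\mathbb{R}^n)$, hence in $L^{2^{\ast}}(\mathbb{R}^n)$, since the Kelvin transform is a linear isometry of $\mathcal{D}^{1,2}(\mathbb{R}^n)$ fixing $W$ — this is where Lemma \ref{finite} enters. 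Thus $\|w_\epsilon\|_{L^{2^{\ast}}(\mathbb{R}^n)}$ is bounded and $w_\epsilon$ does not concentrate at $0$: $\int_{B_r}w_\epsilon^{2^{\ast}}$ is as small as we wish once $r$, then $\epsilon$, is small. Everything reduces to a uniform $L^\infty$ bound for $w_\epsilon$ near the origin.

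Next I would make the equation for $w_\epsilon$ valid across the origin. Since $\lambda_\epsilon\,\dist(x_\epsilon,\partial\Omega)\to\infty$ (Lemma \ref{finite}), $\Omega_\epsilon\supset B(0,R_\epsilon)$ with $R_\epsilon\to\infty$, so $\Omega_\epsilon^{\ast}\supset B(0,\delta_0)\setminus\{0\}$ for any fixed $\delta_0$ and $\epsilon$ small, and $w_\epsilon$ solves \eqref{Kelvin} there. Writing it as $-\Delta w_\epsilon=\tilde a_\epsilon(x)\,w_\epsilon^{q}$ with $q=p-1-\epsilon$ and $\tilde a_\epsilon(x)=|x|^{-\epsilon(n-2)}\int_{\Omega_\epsilon^{\ast}}|x-y|^{-(n-2)}|y|^{-\epsilon(n-2)}w_\epsilon^{p-\epsilon}(y)\,dy$, one checks by the Hardy--Littlewood--Sobolev/Riesz-potential estimate that $\tilde a_\epsilon\in L^{2^{\ast}-o(1)}(B(0,\delta_0))$ with uniformly bounded norm and $\tilde a_\epsilon\to |x|^{-(n-2)}\ast W^{p}$ there: indeed $w_\epsilon\in L^{2^{\ast}}(\mathbb{R}^n)$ gives $w_\epsilon^{p-\epsilon}\in L^{2^{\ast}/(p-\epsilon)}$ with $2^{\ast}/(p-\epsilon)$ slightly above $\tfrac{2n}{n+2}$, the weights $|y|^{-\epsilon(n-2)}$, $|x|^{-\epsilon(n-2)}$ cost only $o(1)$ in the exponent, and the order-$2$ Riesz potential maps $L^{m}$ into $L^{m^{\ast\ast}}$, $\tfrac1{m^{\ast\ast}}=\tfrac1m-\tfrac2n$, with constant bounded since $m$ stays in a compact subinterval of $(1,\tfrac n2)$. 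In particular $\tilde a_\epsilon w_\epsilon^{q}\in L^{(2^{\ast})'+o(1)}\subset H^{-s}(B(0,\delta_0))$ for some fixed $s<\tfrac n2$, so the distribution $-\Delta w_\epsilon-\tilde a_\epsilon w_\epsilon^{q}$, being supported in $\{0\}$ and of negative order below $\tfrac n2$, must vanish; hence the equation holds weakly in all of $B(0,\delta_0)$.

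The heart of the matter is to show $\int_{B(0,r_0)}|\tilde a_\epsilon w_\epsilon^{q-1}|^{n/2}\,dx<\varepsilon_0$, the threshold of Lemma \ref{regular1}, for $r_0$ and then $\epsilon$ small. By H\"older with $\tilde a_\epsilon\in L^{\theta}$, $\theta=2^{\ast}-o(1)$, placing all the $w_\epsilon$ in the $L^{2^{\ast}}$ factor, one is reduced to $\int_{B(0,r_0)}w_\epsilon^{\beta_\epsilon}$ with $\beta_\epsilon=\tfrac{(q-1)n\theta}{2\theta-n}$; using $p-1=\tfrac4{n-2}$ one computes $\beta_0=2^{\ast}$ exactly, so the limiting potential $\big((|x|^{-(n-2)}\ast W^{p})\,W^{p-2}\big)^{n/2}$ lies precisely in $L^{1}$ (and is bounded near $0$, $W$ and the potential being smooth there), whence its integral over $B(0,r_0)$ tends to $0$ as $r_0\to0$, the $\epsilon$-perturbation being absorbed via $w_\epsilon\to W$ in $L^{2^{\ast}}$ and $\tilde a_\epsilon\to |x|^{-(n-2)}\ast W^p$. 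Here the hypothesis $n\in\{3,4,5\}$ enters decisively and twice: it guarantees $q=p-1-\epsilon>1$ so that Lemma \ref{regular1} applies, and $2^{\ast}>\tfrac n2$, i.e.\ $\theta>\tfrac n2$, so that the above H\"older split is legitimate. With this smallness, \eqref{cU1} upgrades $w_\epsilon$ to $L^{(2^{\ast})^2/2}(B(0,r_0))$ uniformly; feeding this back into the H\"older estimate puts $\tilde a_\epsilon w_\epsilon^{q-1}$ into $L^{n/(2-\delta')}(B(0,r_0))$ for some fixed $\delta'>0$, so \eqref{cU11} gives $\sup_{B(0,r_0/2)}w_\epsilon\le C$ with $C$ independent of $\epsilon$. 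Combined with fact (i) on $\{r_0/2\le|x|\le1\}$ this bounds $w_\epsilon$ on $B_1$, hence $v_\epsilon(y)\le C|y|^{2-n}\le C'W(y)$ for $|y|\ge1$, and with the first paragraph $v_\epsilon\le cW$ on $\mathbb{R}^n$.

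I expect the smallness estimate of the third paragraph to be the main obstacle: at $\epsilon=0$ all relevant exponents are critical — the linearized potential $(|x|^{-(n-2)}\ast W^{p})W^{p-2}$ sits exactly on the boundary $L^{n/2}$ — so the smallness cannot come from a margin of integrability but only from absolute continuity of the integral together with the strong $L^{2^{\ast}}$ (equivalently $\mathcal{D}^{1,2}$) convergence $w_\epsilon\to W$, which is precisely what rules out loss of mass at the origin. A secondary difficulty, absent in the local problem, is controlling the nonlocal factor $\tilde a_\epsilon$ uniformly in $\epsilon$ and, in particular, near the origin — which does not belong to $\Omega_\epsilon^{\ast}$; this is handled by the Hardy--Littlewood--Sobolev bound and the removable-singularity argument above.
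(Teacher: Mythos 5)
Your proposal follows the same strategy as the paper: Kelvin-transform $v_\epsilon$ to $w_\epsilon$, reduce the claim to a uniform $L^\infty$ bound for $w_\epsilon$ near the origin, and obtain that bound from the $\varepsilon$-regularity Lemma~\ref{regular1}, using the $\mathcal{D}^{1,2}$ convergence $v_\epsilon\to W$ (hence $w_\epsilon\to W$ in $L^{2^*}$) to furnish the smallness threshold. The only substantive difference is in how the nonlocal coefficient is tamed: the paper proves $a(x)\in L^\infty(\Omega_\epsilon^*)$ uniformly in $\epsilon$ (by splitting the convolution and using $|x|\gtrsim\lambda_\epsilon^{-1}$ on $\Omega_\epsilon^*$ together with $\lambda_\epsilon^\epsilon\to1$), while you only assert $\tilde a_\epsilon\in L^{\theta}$ with $\theta$ near $2^*$ and close via the H\"older bookkeeping culminating in $\beta_0=2^*$. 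Both work; your version makes the role of $n\le5$ (namely $q=p-1-\epsilon>1$ and $2^*>n/2$, which are the same inequality $n<6$) more transparent, and the exact identity $\beta_0=2^*$ is a clean observation that explains why only absolute continuity, not a margin of integrability, can yield the smallness.

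One framing point is slightly off and worth correcting, though it does not create a gap. You assert $\Omega_\epsilon^*\supset B(0,\delta_0)\setminus\{0\}$ and then argue a removable \emph{point} singularity at $0$. In fact $\Omega_\epsilon$ is bounded (of diameter $\sim\lambda_\epsilon$), so $\Omega_\epsilon^*$ omits a small ball $B(0,c/\lambda_\epsilon)$, not merely the point $\{0\}$; the distribution $-\Delta w_\epsilon-\tilde a_\epsilon w_\epsilon^{q}$ is therefore not supported at a single point. The correct (and standard) statement, which is what Lemma~\ref{regular1} actually requires and which the paper tacitly uses as well, is that the extension of $w_\epsilon$ by zero across $\partial\Omega_\epsilon^*$ is a nonnegative weak \emph{subsolution} of $-\Delta u=a(x)u^{q}$ on $B(0,R_0)$, and the Moser-type estimates in Lemma~\ref{regular1} hold for subsolutions. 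With that replacement your argument is complete and matches the paper's.
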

\begin{proof}
Let us recall the problem \eqref{rescaled},
\begin{equation*}
\begin{cases}
-\Delta v_{\epsilon}(x)=\big(|x|^{-{(n-2)}}\ast v_\epsilon^{p-\epsilon}\big)v_\epsilon^{p-1-\epsilon}\hspace{6mm}\mbox{in}\hspace{2mm} \Omega_{\epsilon},\\
0\leq v_{\varepsilon}(x)\leq1~~~~~~~~~~~~~~~~~~~~~~~~~~~~~~~~~~~~~~~~~~~~~\text{in}~~\Omega_{\varepsilon},
\\
v_{\varepsilon}(x)=0~~~~~~~~~~~~~~~~~~~~~~~~~~~~~~~~~~~~~~~~~~~~~~~~~~~~\text{on}~~\partial\Omega_{\varepsilon},\\
v_{\varepsilon}(0)=\max\limits_{x\in\Omega_{\varepsilon}}v_{\varepsilon}(x)=1,
\end{cases}
\end{equation*}
and problem \eqref{Kelvin}, we have
\begin{equation*}
\begin{cases}
-\Delta w_{\epsilon}(x)
=\frac{1}{|x|^{{\epsilon(n-2)}}}\Big(\int_{\Omega_{\epsilon}^{\ast}}\frac{w_\epsilon^{p-\epsilon}(y)}{|x-y|^{{n-2}}|y|^{{\epsilon(n-2)}}} dy\Big)w_\epsilon^{p-1-\epsilon}\hspace{4mm}\mbox{in}\hspace{2mm} \Omega_{\epsilon}^{\ast},\\
w_{\varepsilon}(x)=0~~~~~~~~~~~~~~~~~~~~~~~~~~~~~~~~~~~~~~~~~~~~~~~~~~~~\hspace{8mm}\hspace{8mm}\hspace{6mm}\text{on}~~\partial\Omega_{\epsilon}^{\ast}.\\
\end{cases}
\end{equation*}
By definition of $w_\epsilon$ and $0\leq v_{\varepsilon}(x)\leq1$, we have
\begin{align}\label{decay}
w_\epsilon\leq \frac{1}{|x|^{n-2}}, \quad \mbox{for}\hspace{2mm}x\in\Omega_{\varepsilon}^{\ast}.
\end{align}
Thus to conclude the proof of \eqref{cU}, it is sufficient to obtain
\begin{align}\label{bound}
\|w_\epsilon\|_{L^\infty(B(0,\frac{r}{2}))}\leq c
\end{align}
for some $c>0$ depending on $n$ and $\mu$ and where $B(0,\frac{r}{2})$ is a ball near $0$.
In virtue of Lemma \ref{regular1}, letting $a(x)$ be given by
\begin{equation*}
a(x):=\frac{1}{|x|^{{\epsilon(n-2)}}}\Big(\int_{\Omega_{\epsilon}^{\ast}}\frac{w_\epsilon^{p-\epsilon}(y)}{|x-y|^{{n-2}}|y|^{{\epsilon(n-2)}}} dy\Big).
\end{equation*}
In the following, we claim that $\|a(x)\|_{L^\infty(\Omega_{\varepsilon}^{\ast})}\leq c_1$ where $c_1$ independent of $\epsilon>0$ provided $\epsilon$ is sufficiently small. It is notice that
$$|x|^{{\epsilon(n-2)}}a(x)=\int_{\Omega_{\epsilon}^{\ast}}\frac{w_\epsilon^{p-\epsilon}(y)}{|x-y|^{{n-2}}|y|^{{\epsilon(n-2)}}} dy.$$
Then we have, for any $r>0$,
\begin{equation}\label{ax1}
|x|^{{\epsilon(n-2)}}a(x)\leq\int_{B(0,\frac{r}{2})}\frac{|w_\epsilon(y)|^{p-\epsilon}}{|x-y|^{{n-2}}|y|^{{\epsilon(n-2)}}} dy+\int_{\Omega_{\epsilon}^{\ast}\setminus B(0,\frac{r}{2})}\frac{|w_\epsilon(y)|^{p-\epsilon}}{|x-y|^{{n-2}}|y|^{{\epsilon(n-2)}}} dy.
\end{equation}
For any $x\in B(0,r)$, a direct computation by H\"{o}lder inequality we get
\begin{equation}\label{xn2ax}
\begin{split}
\int_{B(0,\frac{r}{2})}\frac{|w_\epsilon(y)|^{p-\epsilon}}{|x-y|^{{n-2}}|y|^{{\epsilon(n-2)}}} dy&\leq\int_{B(0,\frac{r}{2})}\frac{|w_\epsilon(y)|^{p-\epsilon}}{|y|^{{(\epsilon+1)(n-2)}}} dy+\int_{B(x,\frac{3}{2}r)}\frac{|w_\epsilon(y)|^{p-\epsilon}}{|x-y|^{{(\epsilon+1)(n-2)}}} dy\\&
\leq\big\|w_\epsilon\big\|^{p-\epsilon}_{L^{\frac{(p-\epsilon)q^{\prime}}{q^{\prime}-1}}(B(0,\frac{r}{2}))}\Big\|\frac{1}{|y|^{{(\epsilon+1)(n-2)}}}\Big\|_{L^{q^{\prime}}(B(0,\frac{r}{2}))}\\&+
\big\|w_\epsilon\big\|^{p-\epsilon}_{L^{\frac{(p-\epsilon)q^{\prime}}{q^{\prime}-1}}(B(x,\frac{3}{2}r))}\Big\|\frac{1}{|x-y|^{{(\epsilon+1)(n-2)}}}\Big\|_{L^{q^{\prime}}(B(x,\frac{3}{2}r))}<\infty,
\end{split}
\end{equation}
where $1<q^{\prime}<\frac{n}{(\epsilon+1)(n-2)}$.
For any $x\in \Omega_{\epsilon}^{\ast}\setminus B(0,r)$, we have $|x-y|>|y|$. Therefore, it holds that
\begin{equation}\label{xax}
\begin{split}
\int_{B(0,\frac{r}{2})}\frac{|w_\epsilon(y)|^{p-\epsilon}}{|x-y|^{{n-2}}|y|^{{\epsilon(n-2)}}} dy&\leq\int_{B(0,\frac{r}{2})}\frac{|w_\epsilon(y)|^{p-\epsilon}}{|y|^{{(\epsilon+1)(n-2)}}} dy\\&
\leq\big\|w_\epsilon\big\|^{p-\epsilon}_{L^{\frac{(p-\epsilon)q^{\prime}}{q^{\prime}-1}}(B(0,\frac{r}{2}))}\Big\|\frac{1}{|y|^{{(\epsilon+1)(n-2)}}}\Big\|_{L^{q^{\prime}}(B(0,\frac{r}{2}))}<\infty,
\end{split}
\end{equation}
where $1<q^{\prime}<\frac{n}{(\epsilon+1)(n-2)}$. On the other hand, we deduce that
\begin{equation}\label{xn}
\begin{split}
&\int_{\Omega_{\epsilon}^{\ast}\setminus B(0,\frac{r}{2})}\frac{|w_\epsilon(y)|^{p-\epsilon}}{|x-y|^{{n-2}}|y|^{{\epsilon(n-2)}}} dy\leq\int_{\mathbb{R}^n\setminus B(0,\frac{r}{2})}\frac{|w_\epsilon(y)|^{p-\epsilon}}{|x-y|^{{n-2}}|y|^{{\epsilon(n-2)}}} dy\\&
=\int_{(\mathbb{R}^n\setminus B(0,\frac{r}{2}))\cap B_{\frac{r}{2}}(x)}\frac{|w_\epsilon(y)|^{p-\epsilon}}{|x-y|^{{n-2}}|y|^{{\epsilon(n-2)}}} dy+\int_{(\mathbb{R}^n\setminus B(0,\frac{r}{2}))\cap( \mathbb{R}^n\setminus B_{\frac{r}{2}}(x))}\frac{|w_\epsilon(y)|^{p-\epsilon}}{|x-y|^{{n-2}}|y|^{{\epsilon(n-2)}}} dy.
\end{split}
\end{equation}
Similar to argument of \eqref{xn2ax}, we first obtain
\begin{equation}\label{q-1}
\begin{split}
\int_{(\mathbb{R}^n\setminus B(0,\frac{r}{2}))\cap B_{\frac{r}{2}}(x)}\frac{|w_\epsilon(y)|^{p-\epsilon}}{|x-y|^{{n-2}}|y|^{{\epsilon(n-2)}}} dy&\leq\frac{c}{r^{\epsilon(n-2)}}\int_{(\mathbb{R}^n\setminus B(0,\frac{r}{2}))\cap B_{\frac{r}{2}}(x)}\frac{|w_\epsilon(y)|^{p-\epsilon}}{|x-y|^{{n-2}}}dy\\&
\leq\frac{c}{r^{\epsilon(n-2)}}\int_{ B_{\frac{r}{2}}(x)}\frac{|w_\epsilon(y)|^{p-\epsilon}}{|x-y|^{{n-2}}}dy<\infty.
\end{split}
\end{equation}
Moreover, H\"{o}lder inequality gives that
\begin{equation}\label{q1pie}
\begin{split}
\int_{(\mathbb{R}^n\setminus B(0,\frac{r}{2}))\cap( \mathbb{R}^n\setminus B_{\frac{r}{2}}(x))}&\frac{|w_\epsilon(y)|^{p-\epsilon}}{|x-y|^{n-2}|y|^{{\epsilon(n-2)}}} dy\\&\leq\frac{c}{r^{n-2}}\big\|w_\epsilon\big\|^{p-\epsilon}_{L^{\frac{(p-\epsilon)q^{\prime}}{q^{\prime}-1}}(\mathbb{R}^n\setminus B(0,\frac{r}{2}))}\Big\|\frac{1}{|y|^{{\epsilon(n-2)}}}\Big\|_{L^{q^{\prime}}(\mathbb{R}^n\setminus B(0,\frac{r}{2}))}<\infty,
\end{split}
\end{equation}
where $q^{\prime}>\frac{n}{\epsilon(n-2)}$. By now plug \eqref{xn2ax}-\eqref{xax} and \eqref{xn}-\eqref{q1pie} in \eqref{ax1} we finally get
$$|x|^{{\epsilon(n-2)}}a(x)\in L^\infty(\Omega_{\varepsilon}^{\ast}),$$
where the bound independent of $\epsilon>0$ provided $\epsilon$ is sufficiently small. Therefore, if we have $|x|^{-\epsilon(n-2)}\in L^\infty(\Omega_{\varepsilon}^{\ast})$ and where $c$ independent of $\epsilon>0$, the claim follow. Indeed, by Lemma \ref{infinite}, we have $\||x|^{-\epsilon(n-2)}\|_{L^\infty(\Omega_{\varepsilon}^{\ast})}\leq c$  where $c$ independent of $\epsilon>0$.

Using Lemma \ref{regular1} and since $v_{\epsilon}\rightarrow W[0,1]$ in $\mathcal{D}^{1,2}(\mathbb{R}^n)$ by Lemma \ref{finite}, we can easily show that for any $\epsilon_1>0$ there exists $R_0>0$ such that
\begin{equation}\label{BRO}
\int_{B(0,2R_0)}w_{\epsilon}^{p}(x)dx\leq\epsilon_1,
\end{equation}
where we used that $p>2$.
By \eqref{cU1} of Lemma \ref{regular1} choosing $\epsilon_1=\epsilon_0$ and $\delta^{\prime}=2-\frac{2n(p-2-\epsilon)}{p^2}$, we deduce that
\begin{equation}\label{cO}
w_{\epsilon}\in L^{\frac{n(p-2-\epsilon)}{2-\delta^{\prime}}}\big(B(0,R_0)\big).
\end{equation}
Therefore,
\begin{equation*}
\int_{\Omega_{\varepsilon}^{\ast}}|a(x)w_\epsilon^{p-2-\epsilon}|^{\frac{n}{2-\delta^{\prime}}}dx\leq c\int_{\Omega_{\varepsilon}^{\ast}\setminus B(0,R_0)}|w_\epsilon^{p-2-\epsilon}|^{\frac{n}{2-\delta^{\prime}}}dx+\int_{ B(0,R_0)}|w_\epsilon^{p-2-\epsilon}|^{\frac{n}{2-\delta^{\prime}}}dx\leq c.
\end{equation*}
It follows from \eqref{cU11} of Lemma \ref{regular1} that $w_{\epsilon}\in L^{\infty}(B(0,R_0))$, and hence combining this bound with \eqref{decay}, we conclude that \eqref{decay}, as desired.
\end{proof}

\section{Estimates for $u_\varepsilon$ near the blow-up point}\label{sec:weaksolution}
Let $\{u_\epsilon\}_{\epsilon>0}$ be a family of least enegy solutions to \eqref{ele-1.1} and $x_{\epsilon}$ the blow-up point given in \eqref{miu}. The goal of this section is to deduce uniform bounded of least energy solutions $u_\epsilon$ near $\partial\Omega$ from a local Pohozaev-type identity and sharp point-wise estimates of $u_\epsilon$. More precisely, we prove that
\begin{thm}\label{blow-up}
Let $\Omega$ be a smooth bounded domain in $\mathbb{R}^n$.
Suppose that $\{u_\epsilon\}_{\epsilon>0}$ be a family of least enegy solutions to \eqref{ele-1.1} and satisfying \eqref{minimi}. Then $u_{\epsilon}$ are uniformly bounded near $\partial\Omega$ with respect to $\epsilon>0$ small and blow-up point an interior point of $\Omega$.
\end{thm}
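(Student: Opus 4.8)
The plan is to argue by contradiction: suppose that, along a subsequence, $M_\epsilon := \max_{\dist(x,\partial\Omega)\le \delta_0}u_\epsilon(x)\to\infty$ for some fixed small $\delta_0>0$, and let $y_\epsilon$ be a point near the boundary where this near-boundary maximum is attained. The first step is to compare scales: since $u_\epsilon(x_\epsilon)=\|u_\epsilon\|_{L^\infty(\Omega)}$ is the \emph{global} maximum and $x_\epsilon$ is the concentration point, one expects $u_\epsilon\to 0$ uniformly on compact subsets of $\bar\Omega\setminus\{x_0\}$ once we know $x_0$ is the only blow-up point. So the real work is to rule out a \emph{second} blow-up point sitting on or near $\partial\Omega$. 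I would set up a local blow-up around $y_\epsilon$, rescaling by $\nu_\epsilon:=M_\epsilon^{(4-(n-2)\epsilon)/(2(n-2))}$ and translating, and distinguish two cases according to whether $\nu_\epsilon\,\dist(y_\epsilon,\partial\Omega)$ stays bounded or tends to infinity. In the first case the rescaled limit domain is a half-space $\mathbb{R}^n_+$ with zero Dirichlet data, and the rescaled function would converge to a positive bounded solution of the limiting Choquard-type equation $-\Delta v=(|x|^{-(n-2)}\ast v^{p})v^{p-1}$ on $\mathbb{R}^n_+$ vanishing on $\partial\mathbb{R}^n_+$; by a Pohozaev/moving-plane-free argument (or a Kelvin-transform argument as in Lemma \ref{cWU}) such a solution must be trivial, contradicting $v(0)=1$. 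In the second case the limit is all of $\mathbb{R}^n$, producing a second bubble and contradicting the quantization of energy forced by \eqref{minimi} and \eqref{SHL}: the energy of a single bubble is exactly $(C_{HLS})^{(n+2)/4}$ to leading order, with no room for a second concentration.

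The heart of the matter, and the place where I would invest most of the effort, is the near-boundary exclusion \emph{without convexity}, which is where the paper departs from Han \cite{HANZCHAO} and Rey \cite{Rey-1989}. Instead of the moving-plane method I would use a local Pohozaev identity on a half-ball $B(P,\rho)\cap\Omega$ with $P\in\partial\Omega$, together with the sharp pointwise bound $v_\epsilon(x)\le cW(x)$ from Lemma \ref{cWU} (rescaled back to $u_\epsilon$, this gives $u_\epsilon(x)\le C\lambda_\epsilon^{-(n-2)/2+o(1)}|x-x_\epsilon|^{-(n-2)}$ for $x$ away from $x_\epsilon$, in particular near $\partial\Omega$). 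Concretely: multiply the equation by $(x-P)\cdot\nabla u_\epsilon$ and by $u_\epsilon$, integrate over $B(P,\rho)\cap\Omega$, and use the Dirichlet condition $u_\epsilon=0$ on $\partial\Omega\cap B(P,\rho)$ to reduce the boundary terms to an integral of $|\partial_\nu u_\epsilon|^2$ times $(x-P)\cdot\nu$ over that boundary piece. The interior (convolution) terms are controlled by the pointwise decay estimate and are $o(1)$ in $\epsilon$; hence $\int_{\partial\Omega\cap B(P,\rho)}((x-P)\cdot\nu)|\partial_\nu u_\epsilon|^2\,d\sigma\to 0$. Since we may choose $P$ and $\rho$ (depending on the geometry of $\Omega$, but uniformly on compact pieces of $\partial\Omega$) so that $(x-P)\cdot\nu\ge c_0>0$ on $\partial\Omega\cap B(P,\rho)$ — this only needs the smoothness of $\partial\Omega$, not global convexity — we conclude $\int_{\partial\Omega\cap B(P,\rho)}|\partial_\nu u_\epsilon|^2\,d\sigma\to 0$. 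Together with standard elliptic boundary estimates and a covering of $\partial\Omega$ by finitely many such good pieces, this forces $u_\epsilon$ to stay bounded in a neighborhood of $\partial\Omega$.

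Once uniform boundedness of $u_\epsilon$ near $\partial\Omega$ is established, the conclusion that the blow-up point is interior is almost immediate: by elliptic regularity $u_\epsilon$ is then bounded in $C^{2,\alpha}$ on a fixed neighborhood of $\partial\Omega$, so $x_\epsilon$ (where the global maximum $\to\infty$) must stay in a compact subset of $\Omega$; passing to a subsequence $x_\epsilon\to x_0\in\Omega$, and by Lemma \ref{finite} we have $\lambda_\epsilon\,\dist(x_\epsilon,\partial\Omega)\to\infty$, which records precisely that $x_0$ lies in the open set $\Omega$. The main obstacle I anticipate is making the estimate of the convolution (nonlocal) terms in the local Pohozaev identity genuinely uniform in $\epsilon$: unlike the local case, the Riesz-type potential $|x|^{-(n-2)}\ast u_\epsilon^{p-\epsilon}$ feels the whole profile of $u_\epsilon$, so one must carefully split the convolution integral into the bubble region near $x_\epsilon$ (handled by the rescaled bound $v_\epsilon\le cW$ and the explicit decay of $W$) and the far region (where $u_\epsilon$ is small and the kernel is bounded), and check that both contributions to the boundary Pohozaev balance vanish as $\epsilon\to 0$ — this is exactly the technical point flagged in the paper's Remark after Theorem \ref{asymptotic} about why one is forced to take $\mu=n-2$ and $n=3,4,5$.
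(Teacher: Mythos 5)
Your proposal correctly identifies the two main tools (local Pohozaev identity, the pointwise decay $v_\epsilon\le cW$ from Lemma \ref{cWU}) and correctly anticipates that the moving-plane method is being avoided, but the central step in your second paragraph does not go through as written. You integrate the Pohozaev identity over $B(P,\rho)\cap\Omega$ with $P\in\partial\Omega$ and $\rho$ fixed, and claim that "the interior (convolution) terms are controlled by the pointwise decay estimate and are $o(1)$." That estimate is only true when the bubble $x_\epsilon$ is well separated from $B(P,\rho)$. In the contradictory situation you are trying to rule out — $d_\epsilon=\tfrac14\,\dist(x_\epsilon,\partial\Omega)\to 0$, so $x_\epsilon\to P$ — the whole bubble sits inside $B(P,\rho)\cap\Omega$, and the nonlocal energy $\int_{B(P,\rho)\cap\Omega}(|x|^{-(n-2)}\ast u_\epsilon^{p-\epsilon})u_\epsilon^{p-\epsilon}$ is of order $1$, not $o(1)$; the boundary term on the spherical cap $\partial B(P,\rho)\cap\Omega$ also contributes and is never addressed. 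So you cannot conclude $\int_{\partial\Omega\cap B(P,\rho)}((x-P)\cdot\nu)|\partial_\nu u_\epsilon|^2\,d\sigma\to 0$ this way, and your uniform-boundedness conclusion remains unjustified. A secondary issue: in your first paragraph, the case $\nu_\epsilon\,\dist(y_\epsilon,\partial\Omega)\to\infty$ does not produce a "second bubble" in conflict with energy quantization; it produces the one and only bubble. That case is exactly the hard one ($\lambda_\epsilon d_\epsilon\to\infty$ but $d_\epsilon\to 0$) and cannot be dispatched by quantization.

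What the paper does is genuinely different. Assuming $d_\epsilon\to 0$, it applies the local Pohozaev identity \eqref{PH} on the small ball $B'(x_\epsilon,2d_\epsilon)$ centered at the blow-up point (not at a boundary point), which lies wholly inside $\Omega$; there is therefore no boundary term on $\partial\Omega$ at all, and no need for a local star-shapedness inequality. The engine is Lemma \ref{lem:Sobolevinequalitynot}, the sharp Green's-function expansion $u_\epsilon(x)=\lambda_\epsilon^{-(n-2)/2}A_WG(x,x_\epsilon)+o(\lambda_\epsilon^{-(n-2)/2}d_\epsilon^{-(n-2)})$ together with the analogous expansion of $\nabla u_\epsilon$ on $\partial B'(x_\epsilon,2d_\epsilon)$. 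Feeding these into the Pohozaev identity turns the gradient boundary integrals $F_{j,\epsilon}$ into $-\lambda_\epsilon^{-(n-2)}A_W^2\,\partial_{x_j}H(x,x_\epsilon)|_{x=x_\epsilon}$ plus lower order, and since the Robin function gradient blows up like $d_\epsilon^{-(n-1)}$ near the boundary, one gets $|\sum_j\alpha_jF_{j,\epsilon}|\ge c\,\lambda_\epsilon(\lambda_\epsilon d_\epsilon)^{-(n-1)}$. Meanwhile the nonlocal terms $K_{j,\epsilon}$, estimated via \eqref{KJEP}--\eqref{KIE13} using the decay $v_\epsilon\le cW$ and the Hardy--Littlewood--Sobolev inequality, are bounded by $c\,\lambda_\epsilon(\lambda_\epsilon d_\epsilon)^{1-(n-2)p}$. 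Since $\lambda_\epsilon d_\epsilon\to\infty$ and $(n-2)p=n+2>n$, the two exponents $-(n-1)$ and $1-(n-2)p$ cannot balance — a contradiction. This exponent-matching argument is the missing idea in your proposal; the fixed boundary half-ball decomposition does not deliver it.
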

Here the proof of our result will use a local Pohozaev-type identity and pointwise estimates of $u_\epsilon$ and $\nabla u_\epsilon$.
\subsection{The local Pohozaev-type identity}
Next, we will prove a local Pohozaev-type identity for functions in an arbitrary smooth open subset of $\Omega$.
\begin{lem}\label{lem:hardyinequality}
Assume that $u_{\epsilon}\in C^2(\Omega)$ is a solution of \eqref{ele-1.1}. Then  for an arbitrary smooth open subset $B^{\prime}\subset\Omega$, one has the following identity
\begin{equation}\label{PH}
\begin{split}
&-\int_{\partial B^{\prime}}\frac{\partial u_{\epsilon}}{\partial x_j}\frac{\partial u_{\epsilon}}{\partial\nu}dS_{x}+\frac{1}{2}\int_{\partial B^{\prime}}|\nabla u_{\epsilon}|^2\nu_jdS_{x}
=\frac{2
}{p-\epsilon}
\int_{\partial B^{\prime}}\int_{ B^{\prime}}\frac{u^{p-\epsilon}_{\epsilon}(x)u^{p-\epsilon}_{\epsilon}(z)}
{|x-z|^{n-2}}\nu_jdz dS_{x}\\&+\frac{1}{p-\epsilon}
\int_{\partial B^{\prime}}\int_{\Omega\setminus B^{\prime}}\frac{u^{p-\epsilon}_{\epsilon}(x)u^{p-\epsilon}_{\epsilon}(z)}
{|x-z|^{n-2}}\nu_jdz dS_{x}+\frac{n-2}{p-\epsilon}\int_{B^{\prime}}\int_{\Omega\setminus B^{\prime}}(x_j-z_j)\frac{u_{\epsilon}^{p-\epsilon}(z)u^{p-\epsilon}_{\epsilon}(x)}
{|x-z|^{n}}dz dx,
\end{split}
\end{equation}
for $j=1,\cdots,n$,
where $\nu=\nu(x)$ denotes the unit outward normal to the boundary $\partial B^{\prime}$. Moreover, we have
\begin{equation}\label{pohozave}
\begin{split}
\int_{\partial\Omega}\Big(\frac{\partial u_{\epsilon}}{\partial \nu}\Big)^2(\nu,x-z)dS_{x}=\frac{(n-2)^2}{n+2-\epsilon(n-2)}\cdot\epsilon\cdot\int_{\Omega}(|x|^{-(n-2)} \ast u_{\epsilon}^{p-\epsilon})u_{\epsilon}^{p-\epsilon} dx
\end{split}
\end{equation}
\end{lem}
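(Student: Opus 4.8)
The plan is to obtain both identities by the Rellich–Pohozaev technique: test the equation $-\Delta u_\epsilon=(|x|^{-(n-2)}\ast u_\epsilon^{p-\epsilon})u_\epsilon^{p-1-\epsilon}$ against a suitable vector field (the constant field $\partial_{x_j}u_\epsilon$ for \eqref{PH}, the dilation field $(x-z)\cdot\nabla u_\epsilon$ for \eqref{pohozave}) and integrate by parts. Throughout I would write $V_\epsilon(x):=\int_\Omega|x-y|^{2-n}u_\epsilon^{p-\epsilon}(y)\,dy$ for the Riesz potential, noting that $V_\epsilon\in C^1_{\mathrm{loc}}$ and $u_\epsilon\in C^1(\overline\Omega)\cap C^2(\Omega)$ by elliptic regularity, since $u_\epsilon^{p-\epsilon}\in L^\infty$ has compact support in the smooth domain $\Omega$; the only step that is not entirely standard is the handling of the nonlocal interaction term, which rests on the symmetry of the kernel $|x-y|^{2-n}$ and the antisymmetry of its gradient.

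For the local identity \eqref{PH} I would multiply the equation by $\partial_{x_j}u_\epsilon$ and integrate over $B'$. On the Laplacian side, the classical Rellich identity $-\int_{B'}\Delta u_\epsilon\,\partial_j u_\epsilon=\tfrac12\int_{\partial B'}|\nabla u_\epsilon|^2\nu_j\,dS_x-\int_{\partial B'}\partial_\nu u_\epsilon\,\partial_j u_\epsilon\,dS_x$ produces exactly the two boundary terms on the left of \eqref{PH}. On the nonlinear side I would write $u_\epsilon^{p-1-\epsilon}\partial_j u_\epsilon=\tfrac1{p-\epsilon}\partial_j(u_\epsilon^{p-\epsilon})$ and integrate by parts in $B'$, moving the derivative onto $V_\epsilon$; since $\partial_{x_j}|x-y|^{2-n}=-(n-2)(x_j-y_j)|x-y|^{-n}$, this yields a boundary term $\tfrac1{p-\epsilon}\int_{\partial B'}V_\epsilon u_\epsilon^{p-\epsilon}\nu_j\,dS_x$ and an interaction term $\tfrac{n-2}{p-\epsilon}\int_{B'}\int_\Omega(x_j-y_j)|x-y|^{-n}u_\epsilon^{p-\epsilon}(y)u_\epsilon^{p-\epsilon}(x)\,dy\,dx$. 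Splitting the inner integrals along $\Omega=B'\cup(\Omega\setminus B')$, the $B'\times B'$ part of the interaction term vanishes by antisymmetry under $x\leftrightarrow y$, while symmetrising the $B'\times B'$ piece of the boundary term across $\partial B'$ reorganises everything into the three terms on the right of \eqref{PH}.

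For \eqref{pohozave} I would first apply \eqref{PH} with $B'=\Omega$: since $u_\epsilon=0$ on $\partial\Omega$ forces $\nabla u_\epsilon=(\partial_\nu u_\epsilon)\nu$ there, the right-hand side of \eqref{PH} vanishes and one gets $\int_{\partial\Omega}(\partial_\nu u_\epsilon)^2\nu_j\,dS_x=0$ for every $j$; hence the left-hand side of \eqref{pohozave} is independent of $z$, and I may take $z=0$. Then I run the dilation Pohozaev argument: multiply by $x\cdot\nabla u_\epsilon$ and integrate over $\Omega$. The Laplacian side gives $\tfrac{2-n}{2}\int_\Omega|\nabla u_\epsilon|^2-\tfrac12\int_{\partial\Omega}(\partial_\nu u_\epsilon)^2(x\cdot\nu)\,dS_x$. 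For the nonlinear side, writing it as $\tfrac1{p-\epsilon}\int_\Omega V_\epsilon\,(x\cdot\nabla u_\epsilon^{p-\epsilon})$, integrating by parts, and using that $|x-y|^{2-n}$ is jointly homogeneous of degree $2-n$ so that (after symmetrising the double integral in $x,y$) $(x\cdot\nabla_x+y\cdot\nabla_y)|x-y|^{2-n}=(2-n)|x-y|^{2-n}$, one obtains $-\tfrac{n+2}{2(p-\epsilon)}\int_\Omega(|x|^{-(n-2)}\ast u_\epsilon^{p-\epsilon})u_\epsilon^{p-\epsilon}$. Equating the two sides, inserting the Nehari identity $\int_\Omega|\nabla u_\epsilon|^2=\int_\Omega(|x|^{-(n-2)}\ast u_\epsilon^{p-\epsilon})u_\epsilon^{p-\epsilon}$ (from testing against $u_\epsilon$), and using $p=\tfrac{n+2}{n-2}$, so that $(2-n)(p-\epsilon)+(n+2)=(n-2)\epsilon$ and $p-\epsilon=\tfrac{n+2-(n-2)\epsilon}{n-2}$, collapses the coefficient to $\tfrac{(n-2)^2\epsilon}{\,n+2-(n-2)\epsilon\,}$, which is exactly \eqref{pohozave}.

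The main obstacle I anticipate is justifying these integrations by parts in the presence of the diagonal singularity of the Riesz kernel and of its gradient: I would excise small balls $B_\delta(y)$ around the singular set, perform the integration by parts on the regular region, and check that the contributions of the small spheres vanish as $\delta\to0$ — for the gradient kernel this uses $\int_{\partial B_\delta(y)}(x_j-y_j)\,dS_x=0$ by oddness, so no residual term survives. A secondary, bookkeeping-type difficulty is keeping careful track of which set ($\Omega$ versus $B'$) each of the two variables in the interaction integral ranges over when symmetrising, since it is precisely this asymmetry that separates the $B'$-interaction term from the $(\Omega\setminus B')$-interaction term in \eqref{PH}; once this is organised correctly the remaining manipulations are routine.
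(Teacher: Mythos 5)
Your approach matches the paper's in both parts: test against $\partial_{x_j}u_\epsilon$ over $B'$, integrate by parts on the convolution term, and split $\Omega$ into $B'$ and $\Omega\setminus B'$; then for \eqref{pohozave}, test against the dilation field $(x-z)\cdot\nabla u_\epsilon$ and exploit the homogeneity of the kernel together with the Nehari identity. Your handling of the antisymmetry that kills the $B'\times B'$ gradient-of-kernel term, and of the removal of the diagonal singularity, is sound. However, you should carry the computation of \eqref{PH} through to the end rather than saying that ``symmetrising the $B'\times B'$ piece of the boundary term across $\partial B'$ reorganises everything into the three terms on the right of \eqref{PH}'' — that phrase is not doing any mathematical work, and if you actually push the integration by parts to completion you obtain
\[
\frac{1}{p-\epsilon}\int_{\partial B'}\int_{B'}\frac{u_\epsilon^{p-\epsilon}(x)u_\epsilon^{p-\epsilon}(z)}{|x-z|^{n-2}}\nu_j\,dz\,dS_x,
\]
i.e.\ coefficient $\tfrac1{p-\epsilon}$, whereas \eqref{PH} as stated has $\tfrac2{p-\epsilon}$. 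This factor of $2$ appears to be a slip in the paper's own derivation: adding \eqref{ph00} to its $x\leftrightarrow z$ counterpart doubles \emph{both} sides, so one should read $2A=\tfrac{2}{p-\epsilon}C$ and divide, not $A=\tfrac{2}{p-\epsilon}C$ as written in \eqref{PH2}. Since this term vanishes when $B'=\Omega$ ($u_\epsilon=0$ on $\partial\Omega$), identity \eqref{pohozave}, and your derivation of it, are unaffected; nevertheless the discrepancy is worth flagging because \eqref{PH} is later invoked with $B'$ a small interior ball (in the proof of Theorem \ref{blow-up}). Finally, your preliminary step — applying \eqref{PH} with $B'=\Omega$ to get $\int_{\partial\Omega}(\partial_\nu u_\epsilon)^2\nu_j\,dS=0$ and hence the $z$-independence of the left side of \eqref{pohozave} — is a nice observation that the paper obtains only implicitly, and the final algebra with $p=\tfrac{n+2}{n-2}$ is correct.
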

\begin{proof}
To prove \eqref{PH},
We multiply \eqref{ele-1.1} by $\frac{\partial u_{\epsilon}}{\partial x_j}$ and integrating on $B^{\prime}$,  we have
\begin{equation}\label{PH1}
-\int_{B^{\prime}}\Delta u_{\epsilon}\frac{\partial u_{\epsilon}}{\partial x_j}dx
=\int_{B^{\prime}}\frac{\partial u_{\epsilon}}{\partial x_j}\Big(\int_{\Omega}\frac{u^{p-\epsilon}_{\epsilon}(\xi)}
{|x-z|^{\mu}}dz\Big)u^{p-\epsilon}_{\epsilon}(x)dx.
\end{equation}
Then, a direct computation shows that
\begin{equation*}
\begin{split}
\int_{B^{\prime}}\frac{\partial u_{\epsilon}}{\partial x_j}\Big(&\int_{B^{\prime}}\frac{u^{p-\epsilon}_{\epsilon}(z)}
{|x-z|^{n-2}}dz\Big)u^{p-1-\epsilon}_{\epsilon}(x)dx
=-(p-1-\epsilon)\int_{B^{\prime}}\frac{\partial u_{\epsilon}}{\partial x_j}\Big(\int_{B^{\prime}}\frac{u^{p-\epsilon}_{\epsilon}(z)}
{|x-z|^{n-2}}dz\Big)u^{p-1-\epsilon}_{\epsilon}(x)dx\\&
+(n-2)\int_{B^{\prime}}\int_{B^{\prime}}(x_j-z_j)\frac{u^{p-\epsilon}_{\epsilon}(z)u^{p-\epsilon}_{\varepsilon}(x)}
{|x-z|^{n}}dz dx+\int_{\partial B^{\prime}}\int_{ B^{\prime}}\frac{u^{p-\epsilon}_{\varepsilon}(x)u^{p-\epsilon}_{\varepsilon}(z)}
{|x-z|^{n-2}}\nu_jdz dS_{x}.
\end{split}
\end{equation*}
Therefore, we have
\begin{equation}\label{ph00}
\begin{split}
&\int_{B^{\prime}}\frac{\partial u_{\epsilon}}{\partial x_j}\Big(\int_{B^{\prime}}\frac{u^{p-\epsilon}_{\epsilon}(z)}
{|x-z|^{\mu}}d\xi\Big)u^{p-1-\epsilon}_{\epsilon}(x)dx
\\&
=\frac{1}{p-\epsilon}
\int_{\partial B^{\prime}}\int_{ B^{\prime}}\frac{u^{p-\epsilon}_{\epsilon}(x)u^{p-\epsilon}_{\epsilon}(z)}
{|x-z|^{n-2}}\nu_jdz dS_x+\frac{n-2}{p-\epsilon}\int_{B^{\prime}}\int_{B^{\prime}}(x_j-z_j)\frac{u^{p-\epsilon}_{\epsilon}(z)u^{p-\epsilon}_{\epsilon}(x)}
{|x-z|^{n}}dz dx.
\end{split}
\end{equation}
Analogously, we also have
\begin{equation*}
\begin{split}
&\int_{B^{\prime}}\frac{\partial u_{\epsilon}}{\partial z_j}\Big(\int_{B^{\prime}}\frac{u^{p-\epsilon}_{\epsilon}(x)}
{|x-z|^{n-2}}dx\Big)u^{p-1-\epsilon}_{\epsilon}(z)dz
\\&
=\frac{1}{p-\epsilon}
\int_{\partial B^{\prime}}\int_{ B^{\prime}}\frac{u^{p-\epsilon}_{\epsilon}(z)u^{p-\epsilon}_{\epsilon}(x)}
{|x-z|^{n-2}}\nu_jdx dS_{z}+\frac{n-2}{p-\epsilon}\int_{B^{\prime}}\int_{B^{\prime}}(z_j-x_j)\frac{u^{p-\epsilon}_{\epsilon}(x)u^{p-\epsilon}_{\epsilon}(z)}
{|x-z|^{n}}dxdz.
\end{split}
\end{equation*}
Thus we eventually get that
\begin{equation}\label{PH2}
\begin{split}
\int_{B^{\prime}}&\frac{\partial u_{\epsilon}}{\partial x_j}\Big(\int_{B^{\prime}}\frac{u^{p-\epsilon}_{\epsilon}(\xi)}
{|x-z|^{n-2}}dz\Big)u^{p-1-\epsilon}_{\epsilon}(x)dx
=\frac{2}{p-\epsilon}
\int_{\partial B^{\prime}}\int_{ B^{\prime}}\frac{u^{p-\epsilon}_{\epsilon}(x)u^{p-\epsilon}_{\epsilon}(z)}
{|x-z|^{n-2}}\nu_jdz dS_{x}.
\end{split}
\end{equation}
Moreover, similar to the calculations of \eqref{ph00}, we deduce that
\begin{equation}\label{ph01}
\begin{split}
&\int_{B^{\prime}}\frac{\partial u_{\epsilon}}{\partial x_j}\Big(\int_{\Omega\setminus B^{\prime}}\frac{u^{p-\epsilon}_{\epsilon}(z)}
{|x-z|^{n-2}}dz\Big)u^{p-1-\epsilon}_{\epsilon}(x)dx
\\&
=\frac{1}{p-\epsilon}
\int_{\partial B^{\prime}}\int_{\Omega\setminus B^{\prime}}\frac{u^{p-\epsilon}_{\epsilon}(x)u^{p-\epsilon}_{\epsilon}(z)}
{|x-z|^{n-2}}\nu_jdz dS_{x}+\frac{n-2}{p-\epsilon}\int_{B^{\prime}}\int_{\Omega\setminus B^{\prime}}(x_j-z_j)\frac{u^{p-\epsilon}_{\epsilon}(z)u^{p-\epsilon}_{\epsilon}(x)}
{|x-z|^{n}}dz dx.
\end{split}
\end{equation}
On the other hand, we have
\begin{equation}\label{ph}
\begin{split}
-\int_{B^{\prime}}\Delta u_{\epsilon}\frac{\partial u_{\epsilon}}{\partial x_j}dx=-\int_{\partial B^{\prime}}\frac{\partial u_{\epsilon}}{\partial \nu}\frac{\partial u_{\epsilon}}{\partial x_j}dS_{x}+\sum_{l=1}^{n}\int_{ B^{\prime}}\frac{\partial u_{\epsilon}}{\partial x_j}\frac{\partial u^2_{\epsilon}}{\partial x_lx_i}dx.
\end{split}
\end{equation}
Hence, combining \eqref{PH1}, \eqref{PH2}, \eqref{ph01} and \eqref{ph}, we conclude that \eqref{PH}.

Finally, it is clear that
$$0=\big(\Delta u_\epsilon+(|x|^{-{(n-2)}}\ast u_\epsilon^{p-\epsilon})u_\epsilon^{p-1-\epsilon}\big)x\cdot\nabla u.$$
By symmetry and integrating by parts, we obtain
\begin{equation}\label{zave}
\frac{1}{2}\int_{\partial\Omega}\Big(\frac{\partial u_{\epsilon}}{\partial\nu}\Big)^{2}(x-z,n)dS_x=\frac{n+2}{2(p-\epsilon)}\int_{\Omega}(|x|^{-(n-2)} \ast u_{\epsilon}^{p-\epsilon})u_{\epsilon}^{p-\epsilon}dx+\frac{2-n}{2}\int_{\Omega}|\nabla u_{\epsilon}|^2dx.
\end{equation}
Therefore we get the conclusion \eqref{pohozave} by \eqref{ele-1.1} and \eqref{zave}.
\end{proof}

\subsection{Sharp pointwise estimates of $u_\epsilon$ and $\nabla u_\epsilon$}
We first prepare a preliminary lemma which are needed to prove Lemma \ref{lem:Sobolevinequalitynot}.
\begin{lem}[\cite{Ackerman}]\label{Gx}
For all $x\neq y\in\Omega$, there exists a positive constant $c$ such that
$$0<G(x,y)<\frac{c}{|x-y|^{n-2}}\quad\mbox{and}\quad|\nabla_{x}G(x,y)|\leq\frac{c}{|x-y|^{n-1}}.$$
\end{lem}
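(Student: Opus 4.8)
This is a classical fact about the Green function, and the plan is to combine the fundamental‑solution/regular‑part splitting with the maximum principle (for the two pointwise inequalities) and with the interior and boundary gradient estimates for harmonic functions (for the derivative bound). Writing $\Phi(z):=\frac{1}{(n-2)\omega_n|z|^{n-2}}$, so that $-\Delta\Phi=\delta_0$ in $\mathbb{R}^n$, recall from \eqref{Robin} that $G(x,y)=\Phi(x-y)+H(x,y)$, where for each fixed $y\in\Omega$ the function $H(\cdot,y)$ is harmonic in $\Omega$, continuous on $\overline{\Omega}$, with boundary values $H(x,y)=-\Phi(x-y)<0$ for $x\in\partial\Omega$. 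Applying the maximum principle to $H(\cdot,y)$ gives $\sup_{\overline\Omega}H(\cdot,y)=\max_{\partial\Omega}H(\cdot,y)<0$, hence $G(x,y)<\Phi(x-y)=\frac{1}{(n-2)\omega_n}|x-y|^{2-n}$ for every $x\in\Omega$, which is the asserted upper bound. For the positivity one notes that $-\Delta G(\cdot,y)=\delta_y\ge0$, so $G(\cdot,y)$ is lower semicontinuous and superharmonic in $\Omega$; since $G(\cdot,y)=0$ on $\partial\Omega$ and $G(x,y)\to+\infty$ as $x\to y$, the strong minimum principle for superharmonic functions on the connected set $\Omega$ forces $G(x,y)>0$ for all $x\in\Omega\setminus\{y\}$.

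For the gradient estimate, fix $x\neq y\in\Omega$, put $d:=|x-y|$ and $\rho:=\dist(x,\partial\Omega)$, and fix once and for all a radius $r_0>0$, depending only on $\Omega$, such that $\partial\Omega$ can be straightened inside any ball of radius $r_0$ by a $C^\infty$ diffeomorphism with controlled norms. I would pick a scale $s\approx\min(d,r_0)$ and split into two cases according to the size of $\rho$. If $\rho$ is not small compared with $s$, then a ball $B(x,s')\subset\Omega$ with $s'\approx\min(d,r_0)$ avoids the pole $y$, so $G(\cdot,y)$ is harmonic on it, and the first part gives $\sup_{B(x,s')}G(\cdot,y)\le C\,d^{2-n}$; the interior derivative estimate for harmonic functions then yields $|\nabla_xG(x,y)|\le C(s')^{-1}d^{2-n}$. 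If instead $\rho$ is small compared with $s$, pick $x^{*}\in\partial\Omega$ with $|x-x^{*}|=\rho$, so that (after slightly adjusting $s$) $G(\cdot,y)$ is harmonic in $B(x^{*},s)\cap\Omega$, vanishes on $B(x^{*},s)\cap\partial\Omega$, and satisfies $\sup_{B(x^{*},s)\cap\Omega}G(\cdot,y)\le C\,d^{2-n}$; straightening $\partial\Omega$ in $B(x^{*},s)$ and invoking the boundary ($C^{1,\alpha}$) Schauder estimate — which after rescaling reads $\sup_{B(x^{*},s/2)\cap\Omega}|\nabla v|\le Cs^{-1}\sup_{B(x^{*},s)\cap\Omega}|v|$ for $v$ harmonic and vanishing on the straightened boundary — gives the same bound, and $x\in B(x^{*},s)$ because $\rho$ is small compared with $s$. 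In both cases $s^{-1}d^{2-n}\lesssim d^{1-n}$: indeed, when $d\le r_0$ one has $s\approx d$, while when $d\ge r_0$ one has $s^{-1}d^{2-n}\approx r_0^{-1}d^{2-n}=(d/r_0)\,d^{1-n}\le(\mathrm{diam}(\Omega)/r_0)\,d^{1-n}$. This proves $|\nabla_xG(x,y)|\le c\,|x-y|^{1-n}$ with $c$ depending only on $n$ and $\Omega$.

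The routine but slightly fussy point in this plan is the bookkeeping of the scale $s$: it must be chosen simultaneously small enough that the relevant ball stays inside $\Omega$ (or meets $\partial\Omega$ only in a straightenable piece), small enough to exclude the pole $y$, and comparable to $|x-y|$ so that the factor $s^{-1}$ produced by the derivative estimates combines with the $d^{2-n}$ sup bound to give exactly $d^{1-n}$ — all uniformly in $x,y\in\Omega$, including the boundary regime where $\rho$, $d$ and $\dist(y,\partial\Omega)$ are all comparably small. Once $s\approx\min(|x-y|,r_0)$ is fixed, the argument reduces to the scale‑invariant interior and boundary gradient estimates for harmonic functions, whose constants depend only on $n$ and on the $C^{1,\alpha}$ character of $\partial\Omega$.
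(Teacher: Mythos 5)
The paper itself does not prove this lemma: it is stated with a citation to Ackermann--Clapp--Pistoia, and no argument is given in the text. So there is no in-paper proof against which to compare yours. Your self-contained argument is correct and entirely standard. The first half (decompose $G=\Phi+H$, apply the maximum principle to the harmonic function $H(\cdot,y)$ whose boundary trace $-\Phi(x-y)$ is strictly negative, and use the strong minimum principle for the superharmonic function $G(\cdot,y)$ to get positivity) is exactly the textbook route to $0<G<\Phi$. The second half is also the right way to get the gradient bound: choose a scale $s\approx\min(|x-y|,r_0)$, use the scale-invariant interior gradient estimate for harmonic functions when $\dist(x,\partial\Omega)\gtrsim s$, and the boundary Schauder estimate (after straightening $\partial\Omega$ on a ball of radius $r_0$) when $\dist(x,\partial\Omega)$ is small, in each case combined with the sup bound $\sup G\lesssim d^{2-n}$ on the relevant ball; the final reduction $s^{-1}d^{2-n}\lesssim d^{1-n}$ (distinguishing $d\le r_0$ from $d\ge r_0$) is handled correctly and is precisely the place where the constant acquires its dependence on $\Omega$. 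One minor imprecision worth fixing for clarity: in the boundary case you need $x$ to lie in the inner half-ball $B(x^*,s/2)\cap\Omega$ where the rescaled Schauder estimate controls $\nabla v$, not merely in $B(x^*,s)$; this does hold, since $|x-x^*|=\rho$ is small relative to $s$ by the case hypothesis, but the statement as written (``$x\in B(x^*,s)$'') undersells what you actually need and use.
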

Assume that $d_{\epsilon}=\frac{1}{4}dist(x_{\epsilon},\partial\Omega)\rightarrow0$ and the constant $A_{W}$ is given by
 $$A_{W}:=\int_{\mathbb{R}^n}\int_{\mathbb{R}^n}\frac{W^{p}(z)W^{p-1}(y)}{|y-z|^{n-2}}dydz.$$
 Then we have the following pointwise estimates of $u_\epsilon$ and first-order derivatives of $u_\epsilon$.
\begin{lem}\label{lem:Sobolevinequalitynot}
Suppose that $u_{\epsilon}$ be a solution of the equation \eqref{ele-1.1}.
Then for each point $x\in\partial B^{\prime}(x_{\epsilon},2d_{\epsilon})$, there holds
\begin{align*}
u_{\epsilon}(x)=\lambda_{\epsilon}^{-\frac{n-2}{2}}A_{W}G(x,x_{\epsilon})+o(\lambda_{\epsilon}^{-\frac{n-2}{2}}d_{\epsilon}^{-(n-2)}),
\end{align*}
and
\begin{align*}
\nabla u_{\epsilon}(x)=\lambda_{\epsilon}^{-\frac{n-2}{2}}A_{W}\nabla_{x}G(x,x_{\epsilon})+o(\lambda_{\epsilon}^{-\frac{n-2}{2}}d_{\epsilon}^{-(n-1)}).
\end{align*}
\end{lem}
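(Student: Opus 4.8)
The plan is to run the classical Green‑representation argument, with the global decay estimate of Lemma~\ref{cWU} as the main quantitative input. Since for each fixed $\epsilon>0$ the solution $u_\epsilon$ is classical and bounded on $\overline{\Omega}$ with $u_\epsilon|_{\partial\Omega}=0$, writing $f_\epsilon(y):=\big(|y|^{-(n-2)}\ast u_\epsilon^{p-\epsilon}\big)(y)\,u_\epsilon^{p-1-\epsilon}(y)\ge0$ for the right‑hand side of \eqref{ele-1.1}, one has $u_\epsilon(x)=\int_\Omega G(x,y)f_\epsilon(y)\,dy$. I would insert the concentration point $x_\epsilon$ into the first argument of $G$,
$$u_\epsilon(x)=G(x,x_\epsilon)\int_\Omega f_\epsilon(y)\,dy+\int_\Omega\big[G(x,y)-G(x,x_\epsilon)\big]f_\epsilon(y)\,dy,$$
and show that the first term gives the leading contribution while the second is $o\big(\lambda_\epsilon^{-\frac{n-2}{2}}d_\epsilon^{-(n-2)}\big)$. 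Throughout I would use, without further comment, that $\lambda_\epsilon d_\epsilon\to\infty$ and $\lambda_\epsilon^{\epsilon}\to1$ (Lemma~\ref{finite}) — so every factor of the form $\lambda_\epsilon^{O(\epsilon)}$ or $|y-x_\epsilon|^{O(\epsilon)}$ (with $|y-x_\epsilon|$ between $d_\epsilon$ and $\operatorname{diam}\Omega$) equals $1+o(1)$ uniformly — together with the rescaling relation $u_\epsilon(x_\epsilon+\lambda_\epsilon^{-1}\eta)=\lambda_\epsilon^{\frac{2(n-2)}{4-(n-2)\epsilon}}v_\epsilon(\eta)$, the pointwise bound $v_\epsilon\le cW$ of Lemma~\ref{cWU}, and the local convergence $v_\epsilon\to W$ of Lemma~\ref{finite}.

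For the first term, the change of variables $y=x_\epsilon+\lambda_\epsilon^{-1}\eta$, $z=x_\epsilon+\lambda_\epsilon^{-1}\zeta$ and $p=\frac{n+2}{n-2}$ give that the power of $\lambda_\epsilon$ in $\int_\Omega f_\epsilon$ is exactly $-\frac{n-2}{2}$ when $\epsilon=0$, hence $-\frac{n-2}{2}+O(\epsilon)$ in general, so that
$$\int_\Omega f_\epsilon\,dy=\lambda_\epsilon^{-\frac{n-2}{2}}\big(1+o(1)\big)\int_{\Omega_\epsilon}\big(|\cdot|^{-(n-2)}\ast v_\epsilon^{p-\epsilon}\big)v_\epsilon^{p-1-\epsilon}\,d\eta=\lambda_\epsilon^{-\frac{n-2}{2}}\big(A_W+o(1)\big);$$
the last step is dominated convergence, with an $\epsilon$‑independent dominator of the form $C(1+|\eta|)^{-(n+2)+\delta}$ (small $\delta$), legitimate because $v_\epsilon\le cW$ and $W$ decays like $|\eta|^{-(n-2)}$, which also forces $\big(|\cdot|^{-(n-2)}\ast W^{p}\big)(\eta)\lesssim(1+|\eta|)^{-(n-2)}$. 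For the error integral I split $\Omega$ at $|y-x_\epsilon|=d_\epsilon$. On $\{|y-x_\epsilon|\le d_\epsilon\}$ the segment from $x_\epsilon$ to $y$ stays at distance $\gtrsim d_\epsilon$ from $x$ (since $|x-x_\epsilon|=2d_\epsilon$), so Lemma~\ref{Gx} yields $|G(x,y)-G(x,x_\epsilon)|\lesssim d_\epsilon^{-(n-1)}|y-x_\epsilon|$, while the same rescaling and dominated convergence give $\int_\Omega|y-x_\epsilon|f_\epsilon(y)\,dy\lesssim\lambda_\epsilon^{-1-\frac{n-2}{2}}$; this piece is therefore $\lesssim\lambda_\epsilon^{-\frac{n-2}{2}}d_\epsilon^{-(n-2)}(\lambda_\epsilon d_\epsilon)^{-1}=o\big(\lambda_\epsilon^{-\frac{n-2}{2}}d_\epsilon^{-(n-2)}\big)$.

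On the far region $\{y\in\Omega:\ |y-x_\epsilon|>d_\epsilon\}$ I would use $|G(x,y)-G(x,x_\epsilon)|\le c|x-y|^{-(n-2)}+cd_\epsilon^{-(n-2)}$ together with the bound the global estimate produces for $|y-x_\epsilon|=\rho\gg\lambda_\epsilon^{-1}$: from $u_\epsilon(y)\lesssim\lambda_\epsilon^{-\frac{n-2}{2}}\rho^{-(n-2)}$, and after estimating the Newtonian potential of the concentrated mass $\int_\Omega u_\epsilon^{p-\epsilon}\lesssim\lambda_\epsilon^{-\frac{n-2}{2}}$ by splitting the $z$‑integral at $|z-x_\epsilon|=\rho/2$, one gets $f_\epsilon(y)\lesssim\lambda_\epsilon^{-\frac{n-2}{2}p}\rho^{-(n+2)}$. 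Since $\frac{n-2}{2}(p-1)=2$, the radial integrals $\int_{\rho>d_\epsilon}\rho^{-(n+2)}\rho^{n-1}\,d\rho\approx d_\epsilon^{-2}$ (plus a near/away split for the $|x-y|^{-(n-2)}$ part) show that both contributions are $\lesssim\lambda_\epsilon^{-\frac{n-2}{2}p}d_\epsilon^{-n}=\lambda_\epsilon^{-\frac{n-2}{2}}d_\epsilon^{-(n-2)}(\lambda_\epsilon d_\epsilon)^{-2}=o\big(\lambda_\epsilon^{-\frac{n-2}{2}}d_\epsilon^{-(n-2)}\big)$. Combining, and using $G(x,x_\epsilon)\lesssim d_\epsilon^{-(n-2)}$ (Lemma~\ref{Gx}) to absorb the $o(1)$ above, gives $u_\epsilon(x)=\lambda_\epsilon^{-\frac{n-2}{2}}A_W\,G(x,x_\epsilon)+o\big(\lambda_\epsilon^{-\frac{n-2}{2}}d_\epsilon^{-(n-2)}\big)$, uniformly for $x\in\partial B^{\prime}(x_\epsilon,2d_\epsilon)$.

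For the gradient I would differentiate the representation, $\nabla u_\epsilon(x)=\int_\Omega\nabla_xG(x,y)f_\epsilon(y)\,dy$, and repeat the same split with $\nabla_xG$ in place of $G$; on the far region Lemma~\ref{Gx} gives $|\nabla_xG(x,y)|\lesssim|x-y|^{-(n-1)}$ and $|\nabla_xG(x,x_\epsilon)|\lesssim d_\epsilon^{-(n-1)}$, and the far‑region error is $\lesssim\lambda_\epsilon^{-\frac{n-2}{2}p}d_\epsilon^{-(n+1)}=o\big(\lambda_\epsilon^{-\frac{n-2}{2}}d_\epsilon^{-(n-1)}\big)$ exactly as before. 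The only new ingredient is a second‑order bound on the near region: $y\mapsto\nabla_xG(x,y)$ is harmonic in $\Omega\setminus\{x\}$, the ball $B(x_\epsilon,\tfrac32d_\epsilon)$ lies at distance $\ge\tfrac12d_\epsilon$ from both $x$ and $\partial\Omega$, and there $|\nabla_xG(x,\cdot)|\lesssim d_\epsilon^{-(n-1)}$ by Lemma~\ref{Gx}, so the interior gradient estimate for harmonic functions gives $|D_y\nabla_xG(x,\cdot)|\lesssim d_\epsilon^{-n}$ on $B(x_\epsilon,d_\epsilon)$; hence $|\nabla_xG(x,y)-\nabla_xG(x,x_\epsilon)|\lesssim d_\epsilon^{-n}|y-x_\epsilon|$ and the near‑region error is $\lesssim d_\epsilon^{-n}\lambda_\epsilon^{-1-\frac{n-2}{2}}=o\big(\lambda_\epsilon^{-\frac{n-2}{2}}d_\epsilon^{-(n-1)}\big)$, giving the second estimate. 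I expect the main difficulty to be purely bookkeeping — tracking the $\lambda_\epsilon^{O(\epsilon)}$ and $\rho^{O(\epsilon)}$ factors uniformly and checking that the Newtonian potential of the concentrated density really decays like $\rho^{-(n-2)}$ away from $x_\epsilon$ — rather than anything conceptual, since Lemma~\ref{cWU} already supplies the decisive global control.
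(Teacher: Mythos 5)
Your proposal is correct, and its treatment of the error term $I_2$ is genuinely different from the paper's. The paper writes $u_\epsilon=I_1+I_2$ just as you do, and gets $I_1\to\lambda_\epsilon^{-(n-2)/2}A_WG(x,x_\epsilon)$ by the same rescaling and dominated convergence argument. For $I_2$, however, the paper keeps the nonlocal right-hand side as a genuine double integral $\int_\Omega\int_\Omega\frac{u_\epsilon^{p-\epsilon}(z)u_\epsilon^{p-1-\epsilon}(y)}{|y-z|^{n-2}}\big[G(x,y)-G(x,x_\epsilon)\big]\,dy\,dz$, splits $\Omega$ into three annular pieces $\Omega_1=B'(x_\epsilon,d_\epsilon)$, $\Omega_2=B'(x_\epsilon,4d_\epsilon)\setminus B'(x_\epsilon,d_\epsilon)$, $\Omega_3=\Omega\setminus B'(x_\epsilon,4d_\epsilon)$, and estimates the resulting nine pieces $I_{21},\dots,I_{29}$ one by one, using the Hardy--Littlewood--Sobolev inequality plus Lemma~\ref{cWU}; this produces the dimension-dependent case distinctions ($n<4$, $n=4$, $n>4$) that you will notice in the $I_{21}$ computation. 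You instead \emph{first} establish a pointwise decay bound $f_\epsilon(y)\lesssim\lambda_\epsilon^{-\frac{n+2}{2}}|y-x_\epsilon|^{-(n+2)}$ on the whole nonlinearity away from $x_\epsilon$ (by splitting the $z$-convolution at $|z-x_\epsilon|=\rho/2$ and using Lemma~\ref{cWU} twice), and then only need a \emph{two}-region split of the single $y$-integral: near $x_\epsilon$ you use the Lipschitz estimate $|G(x,y)-G(x,x_\epsilon)|\lesssim d_\epsilon^{-(n-1)}|y-x_\epsilon|$ together with the $\epsilon$-uniform convergence of $\int|\eta|\,(|\cdot|^{-(n-2)}\ast W^{p})W^{p-1}\,d\eta$, and far from $x_\epsilon$ you integrate the pointwise bound radially. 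This is cleaner (no nine-piece bookkeeping and no HLS case analysis), and the extra factor $(\lambda_\epsilon d_\epsilon)^{-1}$ near and $(\lambda_\epsilon d_\epsilon)^{-2}$ far still closes the argument exactly as the paper's does. For the gradient, your interior derivative estimate for the harmonic function $y\mapsto\nabla_xG(x,y)$ on $B(x_\epsilon,d_\epsilon)$ is precisely the bound $c|y-x_\epsilon|/d_\epsilon^n$ the paper writes down (with the symmetry $\nabla_yG(x,\cdot)=\nabla_xG(\cdot,x)$ making Lemma~\ref{Gx} applicable), so both proofs of the gradient estimate are essentially the same once the first estimate is in hand. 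One small thing worth making explicit in a full write-up: the pointwise majorant for dominated convergence should be $W^{p-\epsilon_0}$ with $\epsilon_0<2/(n-2)$ fixed (which dominates $W^{p-\epsilon}$ for $\epsilon\le\epsilon_0$ since $W\le1$), so that the convolution bound $(|\cdot|^{-(n-2)}\ast v_\epsilon^{p-\epsilon})(\eta)\lesssim(1+|\eta|)^{-(n-2)}$ is uniform in $\epsilon$.
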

\begin{proof}
We set
$$I_1:=G(x,x_{\epsilon})\int_{\Omega}\int_{\Omega}\frac{u_\epsilon^{p-\epsilon}(z)u_\epsilon^{p-1-\epsilon}(y)}{|y-z|^{n-2}}dydz,$$
$$I_2:=\int_{\Omega}\int_{\Omega}\frac{u_\epsilon^{p-\epsilon}(z)u_\epsilon^{p-1-\epsilon}(y)}{|y-z|^{n-2}}[G(x,y)-G(x,x_{\epsilon})]dydz.
$$
Then we get the integral representation of $u_{\epsilon}$ from \eqref{ele-1.1}:
\begin{align}\label{Green}
u_\epsilon=I_1+I_2.
\end{align}
Note that
\begin{equation*}
\begin{split}
\lim\limits_{\epsilon\rightarrow0^{+}}\lambda_{\epsilon}^{\frac{n-2}{2}}\int_{\Omega}\int_{\Omega}\frac{u_\epsilon^{p-\epsilon}(z)u_\epsilon^{p-1-\epsilon}(y)}{|y-z|^{n-2}}dydz
&=\lim\limits_{\epsilon\rightarrow0^{+}}\int_{\Omega_{\epsilon}}\int_{\Omega_{\epsilon}}\frac{v_\epsilon^{p-\epsilon}(z)v_\epsilon^{p-1-\epsilon}(y)}{|y-z|^{n-2}}dydz\\&
=\int_{\mathbb{R}^n}\int_{\mathbb{R}^n}\frac{W^{p}(z)W^{p-1}(y)}{|y-z|^{n-2}}dydz=A_{W}.
\end{split}
\end{equation*}
where we have used Lemma \ref{finite}, Lemma \ref{cWU} and the dominated convergence theorem.
Therefore, with the help of Lemma \ref{Gx} and  a straightforward computation we obtain
\begin{equation*}
\begin{split}
I_1=\lambda_{\epsilon}^{-\frac{n-2}{2}}A_{W}G(x,x_\epsilon)+o(\lambda_{\epsilon}^{-\frac{n-2}{2}}d_{\epsilon}^{-(n-2)}).
\end{split}
\end{equation*}
To estimate $I_2$, we split $\Omega$ as $\Omega=B^{\prime}(x_{\epsilon},d_{\epsilon})\cup[B^{\prime}(x_{\epsilon},4d_{\epsilon})\setminus B^{\prime}(x_{\epsilon},d_{\epsilon})]\cup[\Omega\setminus B^{\prime}(x_{\epsilon},4d_{\epsilon})]=:\Omega_1\cup\Omega_2\cup\Omega_3$.
Then the following decomposition holds
\begin{equation*}
\begin{split}
I_2&=\int_{\Omega}\int_{\Omega}\frac{u_\epsilon^{p-\epsilon}(z)u_\epsilon^{p-1-\epsilon}(y)}{|y-z|^{n-2}}[G(x,y)-G(x,x_{\epsilon})]dydz\\&
=\int_{\Omega_1}\int_{\Omega_1}\cdots+\int_{\Omega_1}\int_{\Omega_2}\cdots+\int_{\Omega_1}\int_{\Omega_3}\cdots+\int_{\Omega_2}\int_{\Omega_1}\cdots+\int_{\Omega_2}\int_{\Omega_2}\cdots+\int_{\Omega_2}\int_{\Omega_3}\cdots\\&+
\int_{\Omega_3}\int_{\Omega_1}\cdots+\int_{\Omega_3}\int_{\Omega_2}\cdots+\int_{\Omega_3}\int_{\Omega_3}\cdots=:I_{21}+I_{22}+\cdots+I_{28}+I_{29}.
\end{split}
\end{equation*}
It is sufficient to analysis integrals $I_{21},\cdots,I_{29}$.

\textbf{Estimate of $I_{21}$.} We first note that $|x-x_{\epsilon}|$ and for any $y\in\Omega_1$, it holds that $|x-y|\geq d_{\epsilon}$. Thus Lemma \ref{Gx} gives
\begin{equation}\label{gxx}
|G(x,y)-G(x,x_\epsilon)|\leq c\frac{|y-x_\epsilon|}{d_{\epsilon}^{n-1}}\quad \mbox{for every}\quad y\in \Omega_1.
\end{equation}
By virtue of Lemma \ref{finite}, Lemma \ref{cWU} and \eqref{gxx}, we compute
\begin{equation*}
\begin{split}
I_{21}(x)&\leq \frac{c}{d_{\epsilon}^{n-1}}\lambda_{\epsilon}^{\frac{2(n-2)}{4-(n-2)\epsilon}[2(p-\epsilon)-1]}\int_{\Omega_1}\int_{\Omega_1}\frac{v_\epsilon^{p-\epsilon}(\lambda_{\epsilon}(z-x_{\epsilon}))v_\epsilon^{p-1-\epsilon}(\lambda_{\epsilon}(y-x_{\epsilon}))}{|y-z|^{n-2}}|y-x_\epsilon|dydz\\&
\leq \frac{c\lambda_{\epsilon}^{-(n-2)/2}}{d_{\epsilon}^{n-2}}\frac{1}{\lambda_{\epsilon}d_{\epsilon}}\int_{B^{\prime}(0,\lambda_{\epsilon}d_{\epsilon})}\int_{B^{\prime}(0,\lambda_{\epsilon}d_{\epsilon})}\frac{v_\epsilon^{p-\epsilon}(z)v_\epsilon^{p-1-\epsilon}(y)}{|y-z|^{n-2}}|y|dydz
=o\big(\frac{\lambda_{\epsilon}^{-(n-2)/2}}{d_{\epsilon}^{n-2}}\big),
\end{split}
\end{equation*}
where we have used $\lambda_{\epsilon}d_{\epsilon}\rightarrow\infty$ as $\epsilon\rightarrow0$ and the following the estimate:
\begin{equation*}
\begin{split}
\int_{B^{\prime}(0,\lambda_{\epsilon}d_{\epsilon})}\int_{B^{\prime}(0,\lambda_{\epsilon}d_{\epsilon})}&\frac{v_\epsilon^{p-\epsilon}(z)v_\epsilon^{p-1-\epsilon}(y)}{|y-z|^{n-2}}|y|dydz \\&\leq c\Big\|\frac{1}{1+|y|}\Big\|_{L^\frac{2n(p-\epsilon)} {p}(B^{\prime}(0,\lambda_{\epsilon}d_{\epsilon}))}^{(n-2)(p-\epsilon)}\Big\|\frac{|y|}{1+|y|}\Big\|_{L^\frac{2n(p-1-\epsilon)} {p}(B^{\prime}(0,\lambda_{\epsilon}d_{\epsilon}))}^{(n-2)(p-1-\epsilon)}\\&
\leq
\begin{cases}
c, \quad\quad\quad\quad\quad\quad\quad\quad\hspace{2mm}\quad\quad\mbox{ when } n<4-2(n-2)\epsilon,\\
c\log(\lambda_{\epsilon}d_{\epsilon}), \hspace{2mm}\quad\quad\quad\quad\quad\quad\mbox{ when } n=4-2(n-2)\epsilon,\\
c(\lambda_{\epsilon}d_{\epsilon})^{\frac{n(n+4)-2n(n-2)(p-1-\epsilon)}{n+2}}, \mbox{ when } n>4-2(n-2)\epsilon,
\end{cases}
\end{split}
\end{equation*}
by Hardy-Littlewood-Sobolev inequality, and Lemma \ref{cWU} and since $\epsilon$ is small enough.

\textbf{Estimate of $I_{22}$.}
In the region $y\in\Omega_1$ and $z\in\Omega_2$, similar to estimate of $I_{21}$, we get first
\begin{equation*}
\begin{split}
\int_{B^{\prime}(0,\lambda_{\epsilon}d_{\epsilon})}\int_{B^{\prime}(0,4\lambda_{\epsilon}d_{\epsilon})\setminus B^{\prime}(0,\lambda_{\epsilon}d_{\epsilon})}&\frac{v_\epsilon^{p-\epsilon}(z)v_\epsilon^{p-1-\epsilon}(y)}{|y-z|^{n-2}}|y|dydz \\&\leq
\begin{cases}
c(\lambda_{\epsilon}d_{\epsilon})^{-[\frac{2n(p-\epsilon)}{p}-n]}, \quad\quad\quad\quad\hspace{1.5mm}\mbox{ when } n<4-2(n-2)\epsilon,\\
c(\lambda_{\epsilon}d_{\epsilon})^{-[\frac{2n(p-\epsilon)}{p}-n]}\log(\lambda_{\epsilon}d_{\epsilon}), \hspace{1.5mm}\mbox{ when } n=4-2(n-2)\epsilon,\\
c(\lambda_{\epsilon}d_{\epsilon})^{\frac{2n[2(p-\epsilon)-1]}{p}+\frac{2n(n+3)}{n+2}}, \hspace{6mm}\mbox{ when } n>4-2(n-2)\epsilon,
\end{cases}
\end{split}
\end{equation*}
Consequently,
\begin{equation*}
\begin{split}
I_{22}(x)&\leq \frac{c}{d_{\epsilon}^{n-1}}\lambda_{\epsilon}^{\frac{2(n-2)}{4-(n-2)\epsilon}[2(p-\epsilon)-1]}\int_{\Omega_1}\int_{\Omega_2}\frac{v_\epsilon^{p-\epsilon}(\lambda_{\epsilon}(z-x_{\epsilon}))v_\epsilon^{p-1-\epsilon}(\lambda_{\epsilon}(y-x_{\epsilon}))}{|y-z|^{n-2}}|y-x_\epsilon|dydz\\&
\leq \frac{c\lambda_{\epsilon}^{-(n-2)/2}}{d_{\epsilon}^{n-2}}\frac{1}{\lambda_{\epsilon}d_{\epsilon}}\int_{B^{\prime}(0,\lambda_{\epsilon}d_{\epsilon})}
\int_{B^{\prime}(0,4\lambda_{\epsilon}d_{\epsilon})\setminus B^{\prime}(0,\lambda_{\epsilon}d_{\epsilon})}\frac{v_\epsilon^{p-\epsilon}(z)v_\epsilon^{p-1-\epsilon}(y)}{|y-z|^{n-2}}|y|dydz
\\&=o\big(\lambda_{\epsilon}^{-(n-2)/2}d_{\epsilon}^{-(n-2)}\big).
\end{split}
\end{equation*}

\textbf{Estimate of $I_{23}$.} As the previous computation, for $y\in\Omega_1$ and $z\in\Omega_3$, we have
$$I_{23}=o\big(\lambda_{\epsilon}^{-(n-2)/2}d_{\epsilon}^{-(n-2)}\big).$$

\textbf{Estimate of $I_{24}$.}
When $y\in\Omega_2$ and $x\in\partial B^{\prime}(x_{\epsilon},2d_{\epsilon})$, we obtain that $|x-y|\leq6d_{\epsilon}$. Then by Lemma \ref{Gx} we deduce that
\begin{equation}\label{I24}
|G(x,y)-G(x,x_\epsilon)|\leq|G(x,y)|+|G(x,x_\epsilon)|\leq\frac{c}{|x-y|^{n-2}}.
\end{equation}
Moreover, for each $y\in\Omega_2$ and by Lemma \ref{cWU} imply that
\begin{equation}\label{I245}
u_{\epsilon}(y)\leq c \lambda_{\epsilon}^{\frac{2(n-2)}{4-(n-2)\epsilon}}v_{\epsilon}\big(\lambda_{\epsilon}(y-x_{\epsilon})\big)\leq c\lambda_{\epsilon}^{\frac{2(n-2)}{4-(n-2)\epsilon}}(\lambda_{\epsilon}d_{\epsilon})^{-(n-2)}.
\end{equation}
Combining this bound and \eqref{I24}, we deduce that
\begin{equation*}
\begin{split}
I_{24}&\leq c\int_{\Omega_2}\int_{\Omega_1}\frac{u_\epsilon^{p-\epsilon}(z)}{|y-z|^{n-2}}\frac{u_\epsilon^{p-1-\epsilon}(y)}{|x-y|^{n-2}}dydz\\&
\leq c\lambda_{\epsilon}^{-(\frac{n-2}{2}+4)}d_{\epsilon}^{-(n+6)}\int_{\Omega_2}\int_{\Omega_1}\frac{1}{|y-z|^{n-2}}\frac{1}{|x-y|^{n-2}}dydz=
o\big(\lambda_{\epsilon}^{-(n-2)/2}d_{\epsilon}^{-(n-2)}\big)
\end{split}
\end{equation*}
as $\epsilon\rightarrow0$.

\textbf{Estimate of $I_{25}$.} Similarly, we conclude that
$$I_{25}=o\big(\lambda_{\epsilon}^{-(n-2)/2}d_{\epsilon}^{-(n-2)}\big).$$

\textbf{Estimate of $I_{26}$.} For $y\in\Omega_1$ and $z\in\Omega_3$, we have $u_{\epsilon}(z)\leq c\lambda_{\epsilon}^{\frac{2(n-2)}{4-(n-2)\epsilon}}v_{\epsilon}(\lambda_{\epsilon}(z-x_{\epsilon}))$. Hence, applying this estimate, Hardy-Littlewood-Sobolev inequality and \eqref{I24}-\eqref{I245}, and Lemma \ref{cWU}, we deduce that
\begin{equation*}
\begin{split}
I_{26}&\leq c\int_{\Omega_2}\int_{\Omega_3}\frac{u_\epsilon^{p-\epsilon}(z)}{|y-z|^{n-2}}\frac{u_\epsilon^{p-1-\epsilon}(y)}{|x-y|^{n-2}}dydz\\&
\leq c\frac{\lambda_{\epsilon}^{\frac{2(n-2)}{4-(n-2)\epsilon}[2(p-\epsilon)-1]}}{\lambda_{\epsilon}^{(n-2)(p-1-\epsilon)+n}d_{\epsilon}^{(n-2)(p-1-\epsilon)}}\Big\|\frac{1}{|x-y|^{n-2}}\Big\|_{L^\frac{2n} {n+2}(\Omega_2)}\Big\|v_\epsilon^{p-\epsilon}(y)\Big\|_{L^\frac{2n} {n+2}(\mathbb{R}^n\setminus B^{\prime}(0,4\lambda_{\epsilon}d_{\epsilon}))}\\&
=o\big(\lambda_{\epsilon}^{-\frac{n-2}{2}}d_{\epsilon}^{-(n-2)}(\lambda_{\epsilon}d_{\epsilon})^{-\frac{6-n}{2}}\big)=o\big(\lambda_{\epsilon}^{-\frac{n-2}{2}}d_{\epsilon}^{-(n-2)}\big),
\end{split}
\end{equation*}
since the following inequality holds
$$\frac{2n}{p}<n<\frac{2n(p-\epsilon)}{p}\quad\mbox{as}\quad\epsilon\rightarrow0.$$

\textbf{Estimate of $I_{27}$.} In the region $\Omega_3$, we have $|x-y|\geq2d_{\epsilon}$ and $|x-x_{\epsilon}|=2d_{\epsilon}$ for $x\in\partial B^{\prime}(x_{\epsilon},2d_{\epsilon})$. Then
\begin{equation}\label{I26}
|G(x,y)-G(x,x_\epsilon)|\leq|G(x,y)|+|G(x,x_\epsilon)|\leq \frac{c}{d_{\epsilon}^{n-2}}.
\end{equation}
By a direct computation, it follows that
\begin{equation*}
\begin{split}
I_{27}&\leq cd_{\epsilon}^{-(n-2)}\int_{\Omega_3}\int_{\Omega_1}\frac{u_\epsilon^{p-\epsilon}(z)u_\epsilon^{p-1-\epsilon}(y)}{|y-z|^{n-2}}dydz\\&
\leq c\frac{\lambda_{\epsilon}^{\frac{2(n-2)}{4-(n-2)\epsilon}[2(p-\epsilon)-1]}}{\lambda_{\epsilon}^{n+2}d_{\epsilon}^{n-2}}
\Big\|v_\epsilon^{p-\epsilon}(y)\Big\|_{L^\frac{2n} {n+2}( B^{\prime}(0,\lambda_{\epsilon}d_{\epsilon}))}\Big\|v_\epsilon^{p-1-\epsilon}(y)\Big\|_{L^\frac{2n} {n+2}(\mathbb{R}^n\setminus B^{\prime}(0,4\lambda_{\epsilon}d_{\epsilon}))}\\&
=O\big(\lambda_{\epsilon}^{-\frac{n-2}{2}}d_{\epsilon}^{-(n-2)}(\lambda_{\epsilon}d_{\epsilon})^{-(\frac{2n(p-1-\epsilon)}{p}-n)}\big)=o\big(\lambda_{\epsilon}^{-\frac{n-2}{2}}d_{\epsilon}^{-(n-2)}\big),
\end{split}
\end{equation*}
since $n<\frac{2n(p-1-\epsilon)}{p}$ as $\epsilon\rightarrow0$.

\textbf{Estimate of $I_{28}$.} Similar to the argument of $I_{27}$, we also obtain
$$I_{28}=o\big(\lambda_{\epsilon}^{-(n-2)/2}d_{\epsilon}^{-(n-2)}\big).$$

\textbf{Estimate of $I_{29}$.} We now proceed similarly to $I_{27}$. Fo $y\in\Omega_3$ and $z\in\Omega_{3}$, we find
\begin{equation*}
\begin{split}
I_{29}&\leq c\lambda_{\epsilon}^{-\frac{n-2}{2}}d_{\epsilon}^{-(n-2)}\Big\|v_\epsilon^{p-\epsilon}(y)\Big\|_{L^\frac{2n} {n+2}( \mathbb{R}^n\setminus B^{\prime}(0,4\lambda_{\epsilon}d_{\epsilon}))}\Big\|v_\epsilon^{p-1-\epsilon}(y)\Big\|_{L^\frac{2n} {n+2}(\mathbb{R}^n\setminus B^{\prime}(0,4\lambda_{\epsilon}d_{\epsilon}))}\\&
=O\big(\lambda_{\epsilon}^{-\frac{n-2}{2}}d_{\epsilon}^{-(n-2)}(\lambda_{\epsilon}d_{\epsilon})^{-[\frac{2n[2(p-\epsilon)-1]}{p}-2n]}\big)=o\big(\lambda_{\epsilon}^{-\frac{n-2}{2}}d_{\epsilon}^{-(n-2)}\big).
\end{split}
\end{equation*}
Summarizing, combining these estimates, we conclude that
$$I_2=o\big(\lambda_{\epsilon}^{-\frac{n-2}{2}}d_{\epsilon}^{-(n-2)}\big).$$
We now plug estimate of $I_1$ and estimate of $I_2$, we get the first identity in Lemma \ref{lem:Sobolevinequalitynot}.

To prove the second identity in Lemma \ref{lem:Sobolevinequalitynot}, we first note that for $x\in\partial B^{\prime}(x_{\epsilon},2d_{\epsilon})$, by \eqref{gxx}-\eqref{I24} and \eqref{I26}, there holds
\begin{equation*}
|\nabla G(x,y)-\nabla G(x,x_\epsilon)|\leq
\begin{cases}
c\frac{|y-x_\epsilon|}{d_{\epsilon}^{n}}, \quad\quad\hspace{1.5mm}\mbox{ when } y\in\Omega_1,\\
c\frac{1}{|x-y|^{n-1}}, \quad\hspace{1.5mm}\mbox{ when } y\in\Omega_2,\\
c\frac{1}{d_{\epsilon}^{n-1}}, \quad\hspace{8mm}\mbox{ when } y\in\Omega_3.
\end{cases}
\end{equation*}
By exploiting this bound and similar to the computation of $I_1$ and $I_2$, it is easy to see that the second conclusion of Lemma \ref{lem:Sobolevinequalitynot} holds. Concluding the proof.
\end{proof}

\subsection{Proof of Theorem \ref{blow-up}}
In this subsection, we can now prove Theorem \ref{blow-up}.
\begin{proof}[Proof of Theorem \ref{blow-up}]
Assume now by contradiction that the conclusion of Theorem \ref{blow-up} does not hold true, in other words that, up to a subsequence,
\begin{equation}\label{contradiction}
d_{\epsilon}=\frac{1}{4}dist(x_{\epsilon},\partial\Omega)\rightarrow0\quad\mbox{as}\quad\epsilon\rightarrow0.
\end{equation}
Recall that a local Pohozaev-type identity in Lemma \ref{lem:hardyinequality}, for any $1\leq j\leq n$, we denote in what follows
$$
F_{j,\epsilon}:=-\int_{\partial B^{\prime}(x_{\epsilon},2d_{\epsilon})}\frac{\partial u_{\epsilon}}{\partial x_j}\frac{\partial u_{\epsilon}}{\partial\nu}dS_{x}+\frac{1}{2}\int_{\partial B^{\prime}(x_{\epsilon},2d_{\epsilon})}|\nabla u_{\epsilon}|^2\nu_jdS_{x},
$$
\begin{equation*}
\begin{split}
K_{j,\epsilon}=&\frac{2
}{p-\epsilon}
\int_{\partial B^{\prime}(x_{\epsilon},2d_{\epsilon})}\int_{ B^{\prime}(x_{\epsilon},2d_{\epsilon})}\frac{u^{p-\epsilon}_{\epsilon}(x)u^{p-\epsilon}_{\epsilon}(z)}
{|x-z|^{n-2}}\nu_jdz dS_{x}\\&+\frac{1}{p-\epsilon}
\int_{\partial B^{\prime}(x_{\epsilon},2d_{\epsilon})}\int_{\Omega\setminus B^{\prime}(x_{\epsilon},2d_{\epsilon})}\frac{u^{p-\epsilon}_{\epsilon}(x)u^{p-\epsilon}_{\epsilon}(z)}
{|x-z|^{n-2}}\nu_jdz dS_{x}\\&+\frac{n-2}{p-\epsilon}\int_{B^{\prime}(x_{\epsilon},2d_{\epsilon})}\int_{\Omega\setminus B^{\prime}(x_{\epsilon},2d_{\epsilon})}(x_j-z_j)\frac{u_{\epsilon}^{p-\epsilon}(z)u^{p-\epsilon}_{\epsilon}(x)}
{|x-z|^{n}}dz dx
\end{split}
\end{equation*}
and compute each term in the right-hand side of $F_{j,\epsilon}$ and $K_{j,\epsilon}$.

\textbf{Estimate of $F_{j,\epsilon}$.}
Due to $|x-x_\epsilon|=2d_{\epsilon}$, it follows from Lemmas \ref{Gx}-\ref{lem:Sobolevinequalitynot} and $\lambda_{\epsilon}d_{\epsilon}\rightarrow\infty$ as $\epsilon\rightarrow0$ that
\begin{equation*}
\begin{split}
F_{j,\epsilon}=&-\lambda_{\epsilon}^{-(n-2)}A_{W}^2\int_{\partial B^{\prime}(x_{\epsilon},2d_{\epsilon})}\Big[\frac{\partial G(x,x_{\epsilon})}{\partial x_j}\frac{\partial G(x,x_{\epsilon})}{\partial\nu}-\frac{1}{2}|\nabla_{x}G(x,x_{\epsilon})|^2\nu_j\Big]dS_{x}\\&
+o\big(|\partial B^{\prime}(x_{\epsilon},2d_{\epsilon})|\lambda_{\epsilon}^{-(n-2)}d_{\epsilon}^{-2(n-1)}\big)
=:F_{j,\epsilon}^{\prime}+o\big(\lambda_{\epsilon}^{-(n-2)}d_{\epsilon}^{-(n-1)}\big).
\end{split}
\end{equation*}
We now proceed similarly to \cite{Cao-Peng-Yan2021} for $F_{j,\epsilon}$. Then we have
$$F_{j,\epsilon}^{\prime}=-\lambda_{\epsilon}^{-(n-2)}A_{W}^2\frac{\partial H}{\partial x_j}(x,x_\epsilon)\Big|_{x=x_{\epsilon}}.$$
For unique unit vector $\nu_{x_{\epsilon}}=(\alpha_1,\alpha_2,\cdots,\alpha_n)\nu_{x_\epsilon}\in \mathbb{S}^{n-1}$ such that $x_{\epsilon}+dist(x_{\epsilon},\partial\Omega)\in\partial\Omega$, then there exists some positive $c$ such that for $1\leq j\leq n$, there holds
\begin{equation}\label{HJE}
\begin{split}
-\sum_{j=1}^{n}\alpha_jF_{j,\epsilon}=\lambda_{\epsilon}^{-(n-2)}A_{W}^2\frac{\partial H}{\partial x_j}(x,x_\epsilon)\Big|_{x=x_{\epsilon}}+o\big(\lambda_{\epsilon}^{-(n-2)}d_{\epsilon}^{-(n-1)}\big)\geq c\frac{\lambda_\epsilon}{(\lambda_{\epsilon}d_{\epsilon})^{n-1}}.
\end{split}
\end{equation}
\textbf{Estimate of $K_{j,\epsilon}$.} It is noticing that for $x\in\partial B^{\prime}(x_{\epsilon},2d_{\epsilon})$, we have
\begin{equation}\label{KJEP}
u_{\epsilon}(x)\leq c \lambda_{\epsilon}^{\frac{2(n-2)}{4-(n-2)\epsilon}}v_{\epsilon}\big(\lambda_{\epsilon}(x-x_{\epsilon})\big)\leq c\lambda_{\epsilon}^{\frac{2(n-2)}{4-(n-2)\epsilon}}(\lambda_{\epsilon}d_{\epsilon})^{-(n-2)}.
\end{equation}
Therefore, combining Lemma \ref{cWU} and Hardy-Littlewood-Sobolev inequality, a direct computation gives that
\begin{equation}\label{KIE}
\begin{split}
\int_{\partial B^{\prime}(x_{\epsilon},2d_{\epsilon})}&\int_{ B^{\prime}(x_{\epsilon},2d_{\epsilon})}\frac{u^{p-\epsilon}_{\epsilon}(x)u^{p-\epsilon}_{\epsilon}(z)}
{|x-z|^{n-2}}\nu_jdz dS_{x}\\&\leq c \lambda_{\epsilon}^\frac{2-n}{2}\Big\|v_\epsilon^{p-\epsilon}(y)\Big\|_{L^\frac{2n} {n+2}( B^{\prime}(0,2\lambda_{\epsilon}d_{\epsilon}))}\big|\partial B^{\prime}(x_{\epsilon},2d_{\epsilon})\big|^{\frac{n+2}{2n}}\lambda_{\epsilon}^{\frac{(n-2)p}{2}}(\lambda_{\epsilon}d_{\epsilon})^{-(n-2)p}\\&
\leq c \lambda_{\epsilon}^2d_{\epsilon}^{\frac{(n+2)(n-1)}{2n}}(\lambda_{\epsilon}d_{\epsilon})^{-(n-2)p} \leq c \lambda_{\epsilon}(\lambda_{\epsilon}d_{\epsilon})^{1-(n-2)p}.
\end{split}
\end{equation}
Analogously, we also have
\begin{equation}\label{KIE12}
\int_{\partial B^{\prime}(x_{\epsilon},2d_{\epsilon})}\int_{\Omega\setminus B^{\prime}(x_{\epsilon},2d_{\epsilon})}\frac{u^{p-\epsilon}_{\epsilon}(x)u^{p-\epsilon}_{\epsilon}(z)}
{|x-z|^{n-2}}\nu_jdz dS_{x}\leq c \lambda_{\epsilon}(\lambda_{\epsilon}d_{\epsilon})^{1-(n-2)p}.
\end{equation}
Applying \eqref{KJEP}, we finally compute
\begin{equation}\label{KIE13}
\begin{split}
&\int_{B^{\prime}(x_{\epsilon},2d_{\epsilon})}\int_{\Omega\setminus B^{\prime}(x_{\epsilon},2d_{\epsilon})}(x_j-z_j)\frac{u_{\epsilon}^{p-\epsilon}(z)u^{p-\epsilon}_{\epsilon}(x)}
{|x-z|^{n}}dz dx
\\&\hspace{6mm}\leq
c\lambda_{\epsilon}^\frac{n+2}{2}(\lambda_{\epsilon}d_{\epsilon})^{-(n-2)p}\int_{B^{\prime}(x_{\epsilon},2d_{\epsilon})}\Big(\int_{\mathbb{R}^n\setminus B^{\prime}(x_{\epsilon},2d_{\epsilon})}\frac{(x_j-z_j)}
{|x-z|^{n}}dz\Big)u^{p-\epsilon}_{\epsilon}(x)dx\\&
\hspace{6mm}\leq
c\lambda_{\epsilon}^\frac{n+2}{2}(\lambda_{\epsilon}d_{\epsilon})^{-(n-2)p}\int_{B^{\prime}(x_{\epsilon},2d_{\epsilon})}\Big(\int_{ 2\lambda_{\epsilon}d_{\epsilon}}^{\infty}\int_{\omega\in \mathbb{S}^{n-1}}\frac{r\omega_j}
{r^{n}}\cdot r^{n-1}d\omega dr\Big)u^{p-\epsilon}_{\epsilon}(x)dx=0.
\end{split}
\end{equation}
From \eqref{KIE}, \eqref{KIE12} and \eqref{KIE13}, we conclude that
\begin{equation*}
|K_{j,\epsilon}|\leq c \lambda_{\epsilon}(\lambda_{\epsilon}d_{\epsilon})^{1-(n-2)p}.
\end{equation*}

As a consequence, by \eqref{HJE} and estimates of $|K_{j,\epsilon}|$, we obtain that
$$c\frac{\lambda_\epsilon}{(\lambda_{\epsilon}d_{\epsilon})^{n-1}}\leq-\sum_{j=1}^{n}\alpha_jF_{j,\epsilon}\leq-\sum_{j=1}^{n}|\alpha_j||K_{j,\epsilon}|\leq
c \lambda_{\epsilon}(\lambda_{\epsilon}d_{\epsilon})^{1-(n-2)p},$$
which means that $n-1\leq1-(n-2)p$ as $\epsilon\rightarrow0$, that is, $p\leq\frac{n}{n-2}$, a contradiction and concluding the proof.
\end{proof}

\section{Proof of Theorems \ref{Figalli}, \ref{Figalli2} and \ref{consequence}}\label{consequence00}

We first give a lemma which is used in the proof of Theorems \ref{Figalli} and \ref{Figalli2}.

\begin{lem}\label{thm:uniquenessofweaksolution}
 There exists $M>0$, such that
\begin{equation}\label{eq:energyestimateu}
\epsilon\leq M \lambda_{\epsilon}^{-[n+2-\frac{2(n-2)}{4-(n-2)\epsilon}(2(p-\epsilon)-1)]}.
\end{equation}
\end{lem}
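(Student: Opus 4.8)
The plan is to derive \eqref{eq:energyestimateu} from the global Pohozaev identity \eqref{pohozave}, the only substantial point being a uniform pointwise bound for $\partial_{\nu}u_{\epsilon}$ on $\partial\Omega$. First I would specialize \eqref{pohozave} to $z=x_{\epsilon}$: by Theorem \ref{blow-up} the blow-up point is interior, hence $\dist(x_{\epsilon},\partial\Omega)\ge 2c_{0}>0$ for $\epsilon$ small, so $|(\nu,x-x_{\epsilon})|\le\mathrm{diam}(\Omega)$ on $\partial\Omega$; moreover \eqref{SHL} gives $\int_{\Omega}(|x|^{-(n-2)}\ast u_{\epsilon}^{p-\epsilon})u_{\epsilon}^{p-\epsilon}\,dx\to(C_{HLS})^{(n+2)/4}>0$, so this quantity is bounded below by a positive constant. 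Therefore \eqref{pohozave} yields
\[
\epsilon\ \le\ C\int_{\partial\Omega}\Big(\frac{\partial u_{\epsilon}}{\partial\nu}\Big)^{2}(\nu,x-x_{\epsilon})\,dS_{x}\ \le\ C\,\mathrm{diam}(\Omega)\int_{\partial\Omega}\Big(\frac{\partial u_{\epsilon}}{\partial\nu}\Big)^{2}dS_{x},
\]
and it suffices to prove $|\partial_{\nu}u_{\epsilon}(x)|\le C\lambda_{\epsilon}^{-[n+2-\beta_{\epsilon}(2(p-\epsilon)-1)]}$ uniformly for $x\in\partial\Omega$, where $\beta_{\epsilon}:=\frac{2(n-2)}{4-(n-2)\epsilon}$: squaring, integrating over $\partial\Omega$ and using $\lambda_{\epsilon}\to\infty$ together with the elementary identity $n+2-\beta_{\epsilon}(2(p-\epsilon)-1)=\frac{(n-2)(2-(n-2)\epsilon)}{4-(n-2)\epsilon}>0$ for $\epsilon$ small, one gets $\epsilon\le C\lambda_{\epsilon}^{-2[n+2-\beta_{\epsilon}(2(p-\epsilon)-1)]}\le M\lambda_{\epsilon}^{-[n+2-\beta_{\epsilon}(2(p-\epsilon)-1)]}$, which is \eqref{eq:energyestimateu} (in fact with a square to spare).

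For the boundary gradient bound I would use the Green representation $u_{\epsilon}(x)=\int_{\Omega}G(x,y)f_{\epsilon}(y)\,dy$, with $f_{\epsilon}:=(|x|^{-(n-2)}\ast u_{\epsilon}^{p-\epsilon})u_{\epsilon}^{p-1-\epsilon}$, so that $\partial_{\nu}u_{\epsilon}(x)=\int_{\Omega}\partial_{\nu}^{x}G(x,y)f_{\epsilon}(y)\,dy$ for $x\in\partial\Omega$, and bound $|\partial_{\nu}^{x}G(x,y)|\le c|x-y|^{1-n}$ by Lemma \ref{Gx}. Rescaling by $w=\lambda_{\epsilon}(y-x_{\epsilon})$ turns $f_{\epsilon}(y)$ into $\lambda_{\epsilon}^{\beta_{\epsilon}(2(p-\epsilon)-1)-2}$ times $\big((|\cdot|^{-(n-2)}\ast v_{\epsilon}^{p-\epsilon})v_{\epsilon}^{p-1-\epsilon}\big)(w)$, and Lemma \ref{cWU} lets me dominate the latter by a fixed integrable function $h(w)\le C(1+|w|)^{-(n+1)}$ (valid for $\epsilon$ small, since $(n-2)(p-\epsilon)>n$ while $(n-2)(p-1-\epsilon)$ is close to $4$). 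One is thus led to
\[
|\partial_{\nu}u_{\epsilon}(x)|\ \le\ C\,\lambda_{\epsilon}^{\,\beta_{\epsilon}(2(p-\epsilon)-1)-3}\int_{\Omega_{\epsilon}}\frac{h(w)}{|\lambda_{\epsilon}(x-x_{\epsilon})-w|^{\,n-1}}\,dw .
\]
Here $|\lambda_{\epsilon}(x-x_{\epsilon})|\ge 2c_{0}\lambda_{\epsilon}\to\infty$, while $\Omega_{\epsilon}=\lambda_{\epsilon}(\Omega-x_{\epsilon})$ has diameter $\le\lambda_{\epsilon}\,\mathrm{diam}(\Omega)$. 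Splitting the last integral over $\{|w|\le\frac12|\lambda_{\epsilon}(x-x_{\epsilon})|\}$ (where the kernel is $\lesssim\lambda_{\epsilon}^{1-n}$ and $\int_{\mathbb{R}^{n}}h<\infty$) and over the remaining part of $\Omega_{\epsilon}$ (where $h\lesssim\lambda_{\epsilon}^{-(n+1)}$ while $\int_{\Omega_{\epsilon}}|\lambda_{\epsilon}(x-x_{\epsilon})-w|^{1-n}\,dw\lesssim\lambda_{\epsilon}$) shows it is $\lesssim\lambda_{\epsilon}^{1-n}$, whence $|\partial_{\nu}u_{\epsilon}(x)|\le C\lambda_{\epsilon}^{\beta_{\epsilon}(2(p-\epsilon)-1)-n-2}$, as required. (Alternatively one could transport the estimates of Lemma \ref{lem:Sobolevinequalitynot} from $\partial B^{\prime}(x_{\epsilon},2d_{\epsilon})$ to $\partial\Omega$ by a maximum-principle comparison with $\lambda_{\epsilon}^{-(n-2)/2}A_{W}G(\cdot,x_{\epsilon})$, reaching the same conclusion.)

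The main obstacle is precisely this last estimate: one has to combine the scaling with the decay bound $v_{\epsilon}\le cW$ and with the interior-concentration information $\lambda_{\epsilon}\,\dist(x_{\epsilon},\partial\Omega)\to\infty$ (Theorem \ref{blow-up}), and to deal with the mild non-integrability of the Poisson-type kernel $\nabla_{x}G(x,\cdot)$ against $f_{\epsilon}$ by exploiting that $\Omega_{\epsilon}$ is bounded while $f_{\epsilon}$ is essentially concentrated where $|w|\lesssim 1$. The remaining ingredients — the form of the Pohozaev identity \eqref{pohozave}, the limit \eqref{SHL}, and the manipulation of the exponent $n+2-\beta_{\epsilon}(2(p-\epsilon)-1)$ — are routine.
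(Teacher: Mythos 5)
Your proof is correct, and the overall strategy is the same as the paper's: use the Pohozaev identity \eqref{pohozave} together with \eqref{SHL} to reduce the lemma to a pointwise bound $|\partial_\nu u_\epsilon|\lesssim\lambda_\epsilon^{\beta_\epsilon(2(p-\epsilon)-1)-n-2}$ on $\partial\Omega$, and derive that bound from the uniform decay $v_\epsilon\le cW$ (Lemma \ref{cWU}) and the interior concentration of $x_\epsilon$ (Theorem \ref{blow-up}). Where you differ is in how the boundary gradient estimate is obtained: the paper first records the $L^1(\Omega)$ bound \eqref{49} and the away-from-$x_0$ $L^\infty(\omega)$ bound \eqref{iy} on the nonlinear term $f_\epsilon=(|x|^{-(n-2)}\ast u_\epsilon^{p-\epsilon})u_\epsilon^{p-1-\epsilon}$, and then quotes the regularity Lemma \ref{regular} to conclude $\|\nabla u_\epsilon\|_{C^{0,\alpha}(\omega')}\lesssim\|f_\epsilon\|_{L^1(\Omega)}+\|f_\epsilon\|_{L^\infty(\omega)}$; you instead unpack this step directly via the Green representation $\partial_\nu u_\epsilon(x)=\int_\Omega\partial_\nu^x G(x,y)f_\epsilon(y)\,dy$, rescale, and split the potential integral into a concentrated piece (where $h$ is integrable and the kernel is $\lesssim\lambda_\epsilon^{1-n}$) and a tail piece (where $h\lesssim\lambda_\epsilon^{-(n+1)}$). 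The two are equivalent in substance—Lemma \ref{regular} is itself proved by Green's-function estimates—so yours is essentially a self-contained re-derivation of the same inequality; the paper's packaging is shorter, while yours makes the mechanism (decay of the rescaled nonlinearity against the singular Poisson kernel) explicit. Your observation that the final bound comes with a square to spare, and your simplification of the exponent to $\frac{(n-2)(2-(n-2)\epsilon)}{4-(n-2)\epsilon}$, are both correct and match the paper's intermediate claim \eqref{48}.
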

\begin{proof}
By \eqref{pohozave} and \eqref{SHL}, it suffices to check that
\begin{align}\label{48}
\int_{\partial\Omega}\big(\frac{\partial u_{\epsilon}}{\partial \nu}\big)^2(\nu,x)dS_{x}\leq \lambda_{\epsilon}^{-(n-2)}.
\end{align}
From Lemma \ref{cWU} and Hardy-Littlewood-Sobolev inequality, we calculate
\begin{align}\label{49}
\begin{split}
\int_{\Omega}(|x|^{-(n-2)} \ast u_{\epsilon}^{p-\epsilon})u_{\epsilon}^{p-1-\epsilon}dx&\leq C\lambda_{\epsilon}^{-[n+2-\frac{2(n-2)}{4-(n-2)\epsilon}(2(p-\epsilon)-1)]}\int_{\mathbb{R}^n}\int_{\mathbb{R}^n}\frac{W^{p-\epsilon}(y)W^{p-1-\epsilon}(x)}{|x-y|^{n-2}} dxdy\\&\leq C\lambda_{\epsilon}^{-[n+2-\frac{2(n-2)}{4-(n-2)\epsilon}(2(p-\epsilon)-1)]}.
\end{split}
\end{align}
Furthermore, using Lemma \ref{cWU} again we obtain
\begin{align*}
\begin{split}
(\frac{1}{|x|^{n-2}}\ast u_{\epsilon}^{p-\epsilon})&u_{\epsilon}^{p-1-\epsilon}\leq
\frac{c}{\lambda_{\epsilon}^{[n-2-\frac{2(n-2)}{4-(n-2)\epsilon}](2(p-\epsilon)-1)}}\\&\times\Big(\int_{\mathbb{R}^n}\frac{1}{|x-y|^{n-2}}\frac{1}{1+|y-x_{0}|^{(n-2)(p-\epsilon)}}dy\Big)\frac{1}{|x-x_{0}|^{-(n-2)(p-\epsilon)}}
\end{split}
\end{align*}
for $x\neq x_0$. We now establish the following key result:
\begin{align}\label{infinity}
\begin{split}
\int_{\mathbb{R}^n}\frac{1}{|x-y|^{n-2}}\frac{1}{1+|y-x_{0}|^{(n-2)(p-\epsilon)}}dy\in L^{\infty}(\mathbb{R}^n).
\end{split}
\end{align}
Moreover we note that
$$[n+2-\frac{2(n-2)}{4-(n-2)\epsilon}(2(p-\epsilon)-1)]<[n-2-\frac{2(n-2)}{4-(n-2)\epsilon}](2(p-\epsilon)-1)\hspace{2mm}\mbox{as}\hspace{2mm}\epsilon\rightarrow0.$$
Then using this bound and \eqref{infinity}, there exists $M>0$ such that
\begin{align}\label{iy}
(\frac{1}{|x|^{n-2}}\ast u_{\epsilon}^{p-\epsilon})u_{\epsilon}^{p-1-\epsilon}\leq
M\lambda_{\epsilon}^{-[n+2-\frac{2(n-2)}{4-(n-2)\epsilon}(2(p-\epsilon)-1)]}\frac{1}{|x-x_{0}|^{(n-2)(p-\epsilon)}}
\end{align}
for $x\neq x_0$ and some $M>0$.
Combining \eqref{49}, \eqref{iy} and Lemma \ref{regular}, the estimate \eqref{48} follows. Therefore, the conclusion holds.

We now turn to proof of \eqref{infinity}.
For any $r>0$, we have
\begin{equation*}
\int_{\mathbb{R}^n}\frac{1}{|x-y|^{n-2}}\frac{1}{1+|y-x_{0}|^{(n-2)(p-\epsilon)}}dy=\big(\int_{B(0,r)}+\int_{\mathbb{R}^n\setminus B(0,r)}\big)\frac{(1+|y|^{(n-2)(p-\epsilon)})^{-1}}{|x-x_0-y|^{n-2}}dy.
\end{equation*}
 If $x-x_0\in \mathbb{R}^n\setminus B(0,2r)$, we have
\begin{equation*}
\begin{split}
\int_{B(0,r)}\frac{1}{|x-x_0-y|^{n-2}}\frac{1}{1+|y|^{(n-2)(p-\epsilon)}}dy&
\leq\int_{B(0,r)}\frac{1}{|y|^{n-2}}\frac{1}{1+|y|^{{(n-2)(p-\epsilon)}}} dy\\&
\leq\big\|\frac{1}{|y|^{n-2}}\big\|_{L^{\zeta}(B(0,r))}\Big\|\frac{1}{1+|y|^{{(n-2)(p-\epsilon)}}}\Big\|_{L^{\frac{\zeta}{\zeta-1}}(B(0,r))}\\&<\infty,
\end{split}
\end{equation*}
by H\"{o}lder inequality and where $1<\zeta<\frac{n}{n-2}$.
If $x-x_0\in B(0,2r)$,  we get
\begin{equation*}
\begin{split}
&\int_{B(0,r)}\frac{1}{|x-x_0-y|^{n-2}}\frac{1}{1+|y|^{(n-2)(p-\epsilon)}}dy\\&
\leq\int_{B(0,r)}\frac{1}{|y|^{n-2}}\frac{1}{1+|y|^{(n-2)(p-\epsilon)}}dy+\int_{B(x-x_0,3r)}\frac{1}{|x-x_0-y|^{n-2}}\frac{1}{1+|x-x_0-y|^{(n-2)(p-\epsilon)}}dy\\&
\leq\big\|\frac{1}{|y|^{n-2}}\big\|_{L^{\zeta}(B(0,3r))}\Big\|\frac{1}{1+|y|^{{(n-2)(p-\epsilon)}}}\Big\|_{L^{\frac{\zeta}{\zeta-1}}(B(0,3r))}<\infty,
\end{split}
\end{equation*}
where $1<\zeta<\frac{n}{n-2}$.
Furthermore, we find
\begin{equation*}
\begin{split}
&\int_{\mathbb{R}^n\setminus B(0,r)}\frac{1}{|x-x_0-y|^{n-2}}\frac{1}{1+|y|^{(n-2)(p-\epsilon)}}dy\\&
=\big(\int_{(\mathbb{R}^n\setminus B(0,r))\cap B(x-x_0,r)} +\int_{(\mathbb{R}^n\setminus B(0,r))\cap(\mathbb{R}^n\setminus B(x-x_0,r))}\big)\frac{(1+|y|^{(n-2)(p-\epsilon)})^{-1}}{|x-x_0-y|^{n-2}}dy.
\end{split}
\end{equation*}
Similar to the previous argument, we deduce that
\begin{equation}\label{q-1}
\begin{split}
\int_{(\mathbb{R}^n\setminus B(0,r))\cap B(x-x_0,r)}&\frac{1}{|x-x_0-y|^{n-2}}\frac{1}{1+|y|^{(n-2)(p-\epsilon)}}dy\\&\leq\frac{C}{r^{(n-2)(p-\epsilon)}}\int_{ B(x-x_0,r)}\frac{1}{|x-x_0-y|^{n-2}}dy<\infty,
\end{split}
\end{equation}
and
\begin{equation}\label{q1pie}
\begin{split}
\int_{(\mathbb{R}^n\setminus B(0,r))\cap( \mathbb{R}^n\setminus B(x-x_0,r))}&\frac{1}{|x-x_0-y|^{n-2}}\frac{1}{1+|y|^{(n-2)(p-\epsilon)}}dy\\&\leq\frac{C}{r^{n-2}}
\int_{\mathbb{R}^n\setminus B(0,r))}\frac{1}{|y|^{(n-2)(p-\epsilon)}}dy<\infty.
\end{split}
\end{equation}
Thus, we conclude \eqref{infinity} by  the previous estimates.
\end{proof}
\begin{lem}\label{thm:existenceofweaksolution}
It holds that
\begin{equation}\label{lamta}
|\lambda_{\epsilon}^{-\epsilon}-1|=O(\lambda_{\epsilon}^{-\frac{n-2}{2}}\log\frac{1}{\lambda_{\epsilon}})\quad\mbox{as}\quad\epsilon\rightarrow0.
\end{equation}
\end{lem}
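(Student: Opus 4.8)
The plan is to reduce the claim to the single quantitative bound $\epsilon = O\big(\lambda_{\epsilon}^{-(n-2)/2}\big)$, which is essentially already contained in Lemma \ref{thm:uniquenessofweaksolution}. First I would write $\lambda_{\epsilon}^{-\epsilon}-1 = e^{-\epsilon\log\lambda_{\epsilon}}-1$ and recall from Lemma \ref{finite} that $\lambda_{\epsilon}^{\epsilon}\to1$, equivalently $\epsilon\log\lambda_{\epsilon}\to0$ as $\epsilon\to0$. Since $\lambda_{\epsilon}\to\infty$ we have $\log\lambda_{\epsilon}>0$ for $\epsilon$ small, so $s:=\epsilon\log\lambda_{\epsilon}\geq0$, and the elementary inequality $0\leq1-e^{-s}\leq s$ for $s\geq0$ gives $|\lambda_{\epsilon}^{-\epsilon}-1| = 1-e^{-\epsilon\log\lambda_{\epsilon}}\leq\epsilon\log\lambda_{\epsilon}$. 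Hence it suffices to prove $\epsilon\log\lambda_{\epsilon} = O\big(\lambda_{\epsilon}^{-(n-2)/2}\log\lambda_{\epsilon}\big)$, which in turn reduces to showing $\epsilon\leq C\lambda_{\epsilon}^{-(n-2)/2}$ for some $C>0$ and all small $\epsilon$.

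To obtain the last bound I would invoke Lemma \ref{thm:uniquenessofweaksolution}, which gives $\epsilon\leq M\lambda_{\epsilon}^{-\beta(\epsilon)}$ with $\beta(\epsilon):=n+2-\frac{2(n-2)}{4-(n-2)\epsilon}\big(2(p-\epsilon)-1\big)$. A direct computation using $p=\frac{n+2}{n-2}$, so that $2p-1=\frac{n+6}{n-2}$ and $\frac{2(n-2)}{4}=\frac{n-2}{2}$, shows $\beta(0)=n+2-\frac{n-2}{2}\cdot\frac{n+6}{n-2}=\frac{n-2}{2}>0$. Since $\beta$ is a rational (hence smooth) function of $\epsilon$ near $0$, one has $\beta(\epsilon)-\beta(0)=O(\epsilon)$, and therefore $\lambda_{\epsilon}^{\beta(0)-\beta(\epsilon)}=e^{O(\epsilon\log\lambda_{\epsilon})}$ stays bounded, again because $\epsilon\log\lambda_{\epsilon}\to0$ by Lemma \ref{finite}. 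Consequently $\epsilon\leq M\lambda_{\epsilon}^{-(n-2)/2}\,\lambda_{\epsilon}^{\beta(0)-\beta(\epsilon)}\leq C\lambda_{\epsilon}^{-(n-2)/2}$, as needed.

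Combining the two steps yields $|\lambda_{\epsilon}^{-\epsilon}-1|\leq\epsilon\log\lambda_{\epsilon}\leq C\lambda_{\epsilon}^{-(n-2)/2}\log\lambda_{\epsilon}=O\big(\lambda_{\epsilon}^{-(n-2)/2}\log\frac{1}{\lambda_{\epsilon}}\big)$, where the right-hand side is understood up to absolute value since $|\log\tfrac{1}{\lambda_{\epsilon}}|=\log\lambda_{\epsilon}$. I do not expect a genuine obstacle here: the only point requiring care is the $\epsilon$-dependence of the exponent $\beta(\epsilon)$, and this is precisely controlled by Lemma \ref{finite}, which keeps the correction factor $\lambda_{\epsilon}^{\beta(0)-\beta(\epsilon)}$ uniformly bounded. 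In other words, all of the substantive analytic input — the local Pohozaev identity and the uniform bound $v_{\epsilon}\leq cW$ — has already been absorbed into Lemma \ref{thm:uniquenessofweaksolution}, and the present statement is a short consequence.
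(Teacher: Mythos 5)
Your argument is correct and follows the same route the paper intends: the paper's entire proof is the one-line remark ``It can be derived by Lemma \ref{thm:uniquenessofweaksolution} and the theorem of mean,'' and you have simply unpacked it. Your use of the elementary inequality $0\leq 1-e^{-s}\leq s$ is equivalent to applying the mean value theorem to $t\mapsto\lambda_{\epsilon}^{-t}$ on $[0,\epsilon]$ (which yields $|\lambda_{\epsilon}^{-\epsilon}-1|=\epsilon(\log\lambda_{\epsilon})\lambda_{\epsilon}^{-\theta}\leq\epsilon\log\lambda_{\epsilon}$), so the first step matches. The one place you add genuine content beyond the paper's terse statement is in tracking the $\epsilon$-dependence of the exponent $\beta(\epsilon)$ from Lemma \ref{thm:uniquenessofweaksolution}: your verification that $\beta(0)=\frac{n-2}{2}$, that $\beta(\epsilon)-\beta(0)=O(\epsilon)$, and that the correction factor $\lambda_{\epsilon}^{\beta(0)-\beta(\epsilon)}=e^{O(\epsilon\log\lambda_{\epsilon})}$ stays bounded via Lemma \ref{finite} is exactly the point the paper glosses over, and it is handled correctly. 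No gap.
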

\begin{proof}
It can be derived by Lemmma \ref{thm:uniquenessofweaksolution} and the theorem of mean.
\end{proof}

\subsection{Proof of Theorem \ref{consequence}}

We shall give the proof of Theorem \ref{consequence}.
\begin{proof}[Proof of Theorem \ref{consequence}]
We have
\begin{equation}\label{laplacian}
-\Delta(\|u_{\epsilon}\|_{L^{\infty}(\Omega)}u_{\epsilon})=\|u_{\epsilon}\|_{L^{\infty}(\Omega)}\big(|x|^{-{(n-2)}}\ast u_\epsilon^{p-\epsilon}\big)u_\epsilon^{p-1-\epsilon}\quad\mbox{in}\quad \Omega.
\end{equation}
We integrate the right-hand side of \eqref{laplacian}
$$\int_{\Omega}\int_{\Omega}\|u_{\epsilon}\|_{L^{\infty}(\Omega)}\frac{u_\epsilon^{p-\epsilon}(y)u_\epsilon^{p-1-\epsilon}(x)}{|x-y^{n-2}} dxdy=\frac{\lambda_{\epsilon}^{\frac{4(n+2)}{4-(n-2)\epsilon}}}{\lambda_{\epsilon}^{n+2}}\lambda_{\epsilon}^{-\frac{4(n-2)\epsilon}{4-(n-2)\epsilon}}\int_{\Omega_{\epsilon}}\int_{\Omega_{\epsilon}}\frac{v_\epsilon^{p-\epsilon}(y)v_\epsilon^{p-1-\epsilon}(x)}{|x-y|^{n-2}} dxdy.$$
Therefore, combining \eqref{cU} by dominated convergence, Lemma \ref{thm:existenceofweaksolution} and the following identity (see \cite{gyz})
\begin{equation}\label{p1-00}
|x|^{-(n-2)}\ast W^{p}
=\int_{\mathbb{R}^n}\frac{W^{p}(y)}{|x-y|^{n-2}}dy
=\widetilde{\alpha}_{n}W^{2^{\ast}-p}(x),
\end{equation}
with $\widetilde{\alpha}_{n}=I(\gamma)S^{\frac{(2-n)}{8}}C_{n}^{\frac{2-n}{8}}[n(n-2)]^{\frac{n-2}{4}}$, we get
\begin{equation*}
\begin{split}
\lim\limits_{\epsilon\rightarrow0^{+}}\int_{\Omega}\|u_{\epsilon}\|_{L^{\infty}(\Omega)}\big(|x|^{-{(n-2)}}\ast u_\epsilon^{p-\epsilon}\big)u_\epsilon^{p-1-\epsilon}&=\int_{\mathbb{R}^n}\int_{\mathbb{R}^n}\frac{W^{p}(y)W^{p-1}(x)}{|x-y|^{n-2}}dxdy\\&
=\widetilde{\alpha}_{n}\big\|W\big\|^{2^{\ast}-1}_{L^{2^{\ast}-1}(\mathbb{R}^n)}<\infty.
\end{split}
\end{equation*}
Here and several times in the sequel, we use the elementary identity \eqref{p1-00}. Due to \eqref{infinity} and using Lemma \ref{thm:existenceofweaksolution} again, we obtain
\begin{equation*}
\begin{split}
\|u_{\epsilon}\|_{L^{\infty}(\Omega)}&\big(|x|^{-{(n-2)}}\ast u_\epsilon^{p-\epsilon}\big)u_\epsilon^{p-1-\epsilon}\\&\leq
M\lambda_{\epsilon}^{-(n-2)(2p-1)+\frac{4(n+2)}{4-(n-2)\epsilon}}\cdot\lambda_{\epsilon}^{2(n-2)[1-\frac{2}{4-(n-2)\epsilon}]}\frac{1}{|x-x_{0}|^{(n-2)(p-\epsilon)}}
\end{split}
\end{equation*}
for $x\neq x_0$ and some $M>0$. It is noticing that
$$-(n-2)(2p-1)+\frac{4(n+2)}{4-(n-2)\epsilon}<0.$$
Thus, combining Lemma \ref{infinite} and lemma \ref{finite}, we conclude that
\begin{equation}\label{LWUQ}
\|u_{\epsilon}\|_{L^{\infty}(\Omega)}\big(|x|^{-{(n-2)}}\ast u_\epsilon^{p-\epsilon}\big)u_\epsilon^{p-1-\epsilon}\rightarrow0\quad\mbox{for}\quad x\neq x_0.
\end{equation}
From \eqref{laplacian} and \eqref{LWUQ} we deduce that
$$
-\Delta(\|u_{\epsilon}\|_{L^{\infty}(\Omega)}u_{\epsilon})\rightarrow\widetilde{\alpha}_{n}\big\|W\big\|^{2^{\ast}-1}_{L^{2^{\ast}-1}(\mathbb{R}^n)}\delta_{x=x_0}
$$
in the sense of distributions in $\Omega$. Coming back to Lemma \ref{regular}, we set $\omega$ as a neighborhood of $\partial\Omega$ and $x_0\not\in\omega$.
Then, inequality \eqref{regu} tells us that
\begin{equation*}
\begin{split}
\big\|\|u_{\epsilon}\|_{L^{\infty}(\Omega)}u_{\epsilon}\big\|_{C^{1,\alpha}(\omega^{\prime})}&\leq C\big\|\|u_{\epsilon}\|_{L^{\infty}(\Omega)}\big(|x|^{-{(n-2)}}\ast u_\epsilon^{p-\epsilon}\big)u_\epsilon^{p-1-\epsilon}\big\|_{L^1(\Omega)}\\&+C\big\|\|u_{\epsilon}\|_{L^{\infty}(\Omega)}\big(|x|^{-{(n-2)}}\ast u_\epsilon^{p-\epsilon}\big)u_\epsilon^{p-1-\epsilon}\big\|_{L^\infty(\omega)}\hspace{2mm}\mbox{in}\hspace{2mm}C^{1,\alpha}(\omega)\hspace{2mm}\mbox{as}\epsilon\rightarrow0.
\end{split}
\end{equation*}
In conclusion, we have
$$ \|u_{\epsilon}\|_{L^{\infty}(\Omega)}u_{\epsilon}\rightarrow\widetilde{\alpha}_{n}\big\|W\big\|^{2^{\ast}-1}_{L^{2^{\ast}-1}(\mathbb{R}^n)}G(x,x_0)
\hspace{2mm}\mbox{in}\hspace{2mm}C^{1,\alpha}(\omega)
$$
for any neighborhood $\omega$ of $\partial\Omega$, not containing $x_0$. The result follows.
\end{proof}

\subsection{Proof of Theorem \ref{Figalli2}}
In this subsection, we are position to prove Theorem \ref{Figalli2}.
\begin{proof}[Proof of Theorem \ref{Figalli2}]
Recall that the Pohizave identity in Lemma \ref{lem:hardyinequality} and applying the identity \eqref{pohozave} to functions $\|u_{\epsilon}\|_{L^{\infty}(\Omega)}u_{\epsilon}$,  we obtain
\begin{equation}\label{epsilonfrac}
\begin{split}
\frac{(n-2)^2}{n+2-\epsilon(n-2)}\cdot\epsilon&\|u_{\epsilon}\|^2_{L^{\infty}(\Omega)}\int_{\Omega}(|x|^{-(n-2)} \ast u_{\epsilon}^{p-\epsilon})u_{\epsilon}^{p-\epsilon}dx\\&=\int_{\partial\Omega}\big(\|u_{\epsilon}\|_{L^{\infty}(\Omega)}\nabla u_{\epsilon}, \nu\big)\big(\|u_{\epsilon}\|_{L^{\infty}(\Omega)}\nabla u_{\epsilon}, \nu\big)(\nu,x-z)dS_{x}.
\end{split}
\end{equation}
In view of Theorem \ref{consequence} and \eqref{SHL}, and taking the limit in \eqref{epsilonfrac}, we obtain
\begin{equation*}
\lim\limits_{\epsilon\rightarrow0}\epsilon\|u_{\epsilon}\|^2_{L^{\infty}(\Omega)}=\widetilde{\alpha}_{n}^2\frac{n+2}{(n-2)^2}\big(\frac{1}{C_{HLS}}\big)^{\frac{n+2}{4}}\big\|W\big\|^{2(2^{\ast}-1)}_{L^{2^{\ast}-1}(\mathbb{R}^n)}
 \int_{\partial\Omega}\frac{\partial G(x,x_0)}{\partial \nu}\frac{\partial G(x,x_0)}{\partial \nu}(\nu,x-x_0)dS_{x}.
\end{equation*}
Since we have the following identity (see \cite{BP})
\begin{equation}\label{GX0}
\int_{\partial\Omega}\frac{\partial G(x,x_0)}{\partial \nu}\frac{\partial G(x,x_0)}{\partial \nu}(\nu,x-x_0)dS_{x}=-(n-2)H(x_0,x_0).
\end{equation}
Moreover, it is easy to check from the maximum principle that $H(x,x)<0$ for any $x\in\Omega$.
Consequently, the first result of theorem follows by the above identities. Hence, via this result and Theorem \ref{consequence}, we get limit of $u_{\epsilon}\epsilon^{-\frac{1}{2}}$ and
as desired.
\end{proof}

\subsection{Proof of Theorem \ref{Figalli}}

We are now ready to prove Theorem \ref{Figalli}.
\begin{proof}[Proof of Theorem \ref{Figalli}]
From Lemma \ref{regular} we deduce that
$$\|u_\epsilon\|_{C^{1,\alpha}(\omega)}\leq C\big(\|(|x|^{-{(n-2)}}\ast u_\epsilon^{p-\epsilon})u_\epsilon^{p-1-\epsilon}\|_{L^1(\Omega)}+\|(|x|^{-{(n-2)}}\ast u_\epsilon^{p-\epsilon})u_\epsilon^{p-1-\epsilon}\|_{L^\infty(\omega)}\big).$$
Coupling this bound and \eqref{iy}, we conclude that the first result of Theorem \ref{Figalli}.

Furthermore, it holds that
\begin{equation*}
\begin{split}
\int_{\mathbb{R}^n}|\nabla u_\epsilon|^2&=\int_{\mathbb{R}^n}\int_{\mathbb{R}^n}\frac{u_\epsilon^{p}(x)u_\epsilon^{p}(y)}{|x-y|^{n-2}}dxdy
\\&\rightarrow\int_{\mathbb{R}^n}\int_{\mathbb{R}^n}\frac{W^{p}(x)W^{p}(y)}{|x-y|^{n-2}}dxdy=\big(C_{HLS}\big)^{\frac{n+2}{4}}\quad\mbox{as}\quad\epsilon\rightarrow0.
\end{split}
\end{equation*}
From here and the first result of Theorem \ref{Figalli}, we have
$$|\nabla u_{\epsilon}|^2\rightarrow\big(C_{HLS}\big)^{\frac{n+2-\epsilon(n-2)}{4-\epsilon(n-2)}}\delta_{x_0}\quad\mbox{as}\quad\epsilon\rightarrow0$$
in the sense of distributions.

Finally, we note that from \eqref{pohozave}, the equality $\int_{\partial\Omega}(\frac{\partial u_{\epsilon}}{\partial\nu})^2\nu dS=0$.
In the limit, we obtain
\begin{equation}\label{0X}
\int_{\partial\Omega}(\nabla G(x,x_0),\nu)(\nabla G(x,x_0),\nu)\nu dS_{x}=0\quad\mbox{for every}\quad x_{0}\in\Omega.
\end{equation}
By exploiting the following identity (see \cite{BP})
\begin{equation}\label{GX0X}
\int_{\partial\Omega}(\nabla G(x,x_0),\nu)(\nabla G(x,x_0),\nu)\nu dS_{x}=-\nabla\phi(x_0)\quad\mbox{for every}\quad x_{0}\in\Omega.
\end{equation}
Then, the conclusion follows and hence Theorem \ref{Figalli} is proved.
\end{proof}
\section{Proof of Theorem \ref{Brezi-type}}\label{theorem1-7}
In this section, we are devoted to show that Theorem \ref{Brezi-type}.
As in the previous Pohozaev-type identity \eqref{pohozave}, we have
\begin{equation*}
\frac{2n-\mu}{2p_{\mu}}\int_{\Omega}(\frac{1}{|x|^{\mu}} \ast u_{\epsilon}^{p_{\mu}})u_{\epsilon}^{p_{\mu}}dx+\frac{2-n}{2}\int_{\Omega}|\nabla u_{\epsilon}|^2dx+\frac{n}{2}\epsilon\int_{\Omega}|u_{\epsilon}|^2dx. =\frac{1}{2}\int_{\partial\Omega}\Big(\frac{\partial u_{\epsilon}}{\partial\nu}\Big)^{2}(x-z,n)dS_x,
\end{equation*}
whence
\begin{equation}\label{brety}
\epsilon\int_{\partial\Omega}\Big(\frac{\partial u_{\epsilon}}{\partial \nu}\Big)^2(\nu,x-z)dS_{x}=\epsilon\int_{\Omega}u_{\epsilon}^{2} dx.
\end{equation}
In what follows $x_\epsilon\in\Omega$ and the number $\lambda_{\epsilon}>0$ are given by
\begin{equation}\label{miuinfini}
\lambda_{\epsilon}^{\frac{n-2}{2}}=\|u_{\epsilon}\|_{L^{\infty}(\Omega)}=u_\epsilon(x_\epsilon).
\end{equation}
Observe that, coupling
rescaled function
 $$v_{\epsilon}(x)=\lambda_{\epsilon}^{-\frac{n-2}{2}}u_{\epsilon}(\lambda_{\epsilon}^{-1}x+x_{\epsilon}),$$
 and the Kelvin transform $w_{\epsilon}$ of $v_{\epsilon}$, we have
\begin{equation*}
\begin{cases}
-\Delta v_{\epsilon}(x)=\big(|x|^{-{\mu}}\ast v_\epsilon^{p_{\mu}}\big)v_\epsilon^{p_{\mu}-1}+\epsilon v_\epsilon\hspace{6mm}\mbox{in}\hspace{2mm} \Omega_{\epsilon}:=\{x:\lambda_{\epsilon}^{-1}x+x_{\epsilon}\in\Omega\},\\
0\leq v_{\varepsilon}(x)\leq1~~~~~~~~~~~~~~~~~~~~~~~~~~~~~~~~~~~~~~~~~~~~~\text{in}~~\Omega_{\varepsilon},
\\
v_{\varepsilon}(x)=0~~~~~~~~~~~~~~~~~~~~~~~~~~~~~~~~~~~~~~~~~~~~~~~~~~~~\text{on}~~\partial\Omega_{\varepsilon},\\
v_{\varepsilon}(0)=\max\limits_{x\in\Omega_{\varepsilon}}v_{\varepsilon}(x)=1,
\end{cases}
\end{equation*}
and
\begin{equation*}
\begin{cases}
-\Delta w_{\epsilon}(x)
=\Big(\int_{\Omega_{\epsilon}^{\ast}}\frac{w_\epsilon^{p_\mu}(y)}{|x-y|^{\mu}} dy\Big)w_\epsilon^{{p_\mu}-1}+\epsilon\frac{1}{\lambda_\epsilon^2|x|^{4}}w_\epsilon \hspace{4mm}\mbox{in}\hspace{2mm} \Omega_{\epsilon}^{\ast}:=\{x: |x|^{-1}\in\Omega_{\epsilon}\},\\
w_{\varepsilon}(x)=0~~~~~~~~~~~~~~~~~~~~~~~~~~~~~~~~~~~~\hspace{2mm}\hspace{10mm}\hspace{8mm}\hspace{8mm}\text{on}~~\partial\Omega_{\epsilon}^{\ast}
\end{cases}
\end{equation*}
for $n\geq3$, $p_\mu=\frac{2n-\mu}{n-2}\geq2$ and $\mu\in(0,n)$.
Then we proceed similarly to Lemma \ref{cWU}. There exists a constant $c>0$ independently of $\epsilon>0$ provided $\epsilon$ is sufficiently small, such that
\begin{equation}\label{bredecay}
v_{\epsilon}(x)\leq c\big(\frac{1}{1+|x|^{2}}\big)^{\frac{n-2}{2}}\quad\mbox{in}\quad\Omega_{\epsilon}.
\end{equation}
Therefore, similarly to the computation of Theorem \ref{consequence}, using \eqref{bredecay} by dominated convergence we also deduce that
\begin{equation}\label{ndengyu4} \|u_{\epsilon}\|_{L^{\infty}(\Omega)}u_{\epsilon}\rightarrow\widetilde{\alpha}_{n}\big\|W\big\|^{2^{\ast}-1}_{L^{2^{\ast}-1}(\mathbb{R}^n)}G(x,x_0)
\hspace{2mm}\mbox{in}\hspace{2mm}C^{1,\alpha}(\omega)
\end{equation}
for any neighborhood $\omega$ of $\partial\Omega$, not containing $x_0$.
Recalling \eqref{brety}, and by \eqref{ndengyu4} yields
\begin{equation*}
\lim\limits_{\epsilon\rightarrow0}\epsilon\|u_{\epsilon}\|^2_{L^{\infty}(\Omega)}\lambda_{\epsilon}^{-2}\int_{\Omega_{\epsilon}}v_{\epsilon}^2(y)dy=\frac{\widetilde{\alpha}_{n}^2}{2}\big\|W\big\|^{2(2^{\ast}-1)}_{L^{2^{\ast}-1}(\mathbb{R}^n)}
 \int_{\partial\Omega}\frac{\partial G(x,x_0)}{\partial \nu}\frac{\partial G(x,x_0)}{\partial \nu}(\nu,x-x_0)dS_{x}.
\end{equation*}
Consequently, a direct calculation shows that
\begin{equation*}
\lim\limits_{\epsilon\rightarrow0}\epsilon\big\|u_{\epsilon}\big\|^{\frac{2(n-4)}{n-2}}_{L^{\infty}(\Omega)}\int_{\Omega_{\epsilon}}v_{\epsilon}^2(y)dy
=\frac{(n-2)\widetilde{\alpha}_{n}^2}{2}\big\|W\big\|^{2(2^{\ast}-1)}_{L^{2^{\ast}-1}(\mathbb{R}^n)}
 |\phi(x_0)|.
\end{equation*}
where we have used \eqref{miuinfini} and identity \eqref{GX0}. Consequently, combining this equality and \eqref{WW} yields the conclusion. On the other hand, the conclusions of Theorem \ref{Figalli} for problem \eqref{ele-2} follows by
Lemma \ref{regular} and \eqref{0X}-\eqref{GX0X}.

\section{Proofs of Theorems \ref{maximum}-\ref{asymptotic}}\label{maximumpoint}
We denote by $PW_{\xi,\lambda}$ the projection of a function $W_{\xi,\lambda}$ onto  $H_{0}^{1}(\Omega)$, namely,
\begin{equation}\label{eq2.13}
		\Delta PW[\xi,\lambda] =\Delta W[\xi,\lambda]\quad\mbox{in}\quad\Omega,\quad\quad
		PW[\xi,\lambda]=0\quad\mbox{on}\quad\partial\Omega.
\end{equation}
To prove Theorem \ref{asymptotic}, we will use $W[\xi,\lambda]$ (see \eqref{defU}) as an approximate solution and so we write
$$v_{\epsilon}=\lambda_{\epsilon}^{-(n-2)/2}PW[0,1]+\lambda_{\epsilon}^{-(n-2)}\phi_{\epsilon}.$$
\subsection{An upper bound for $S_{HL}^{\epsilon}$}
For the simplicity of notations, we write $W(x)$ and $W_1(x)$ instead of $W[0,1](x)$ and $W[x_1,\lambda_{\epsilon}]$ in the sequel, respectively.
We deduce the following important estimate.
\begin{lem}\label{shlep}
 Suppose that the assumptions of Theorem \ref{Figalli} be satisfied and $x_{\epsilon}\rightarrow x_0$ as $\epsilon\rightarrow0$. Then for any point $x_1\in\Omega$, it holds that
\begin{equation*}
\frac{S_{HL}^{\epsilon}}{\lambda_{\epsilon}^{\frac{(n-2)\epsilon}{p-\epsilon}}}\leq C_{HLS}+\lambda_{\varepsilon}^{-(n-2)}\bigg[\tilde{H}(x_1,x_1)C_{HLS}^{-\frac{n-2}{4}}\mathcal{F}^{\prime}+\frac{2}{p}C_{HLS}^{-\frac{n-2}{4}}\mathcal{D}^{\prime}-\frac{n+2}{4p^2}C_{HLS}\mathcal{E}^{\prime}\bigg]+o(\lambda_{\varepsilon}^{-(n-2)}).
\end{equation*}
Here $\tilde{H}(x,y)=-(n-2)\omega_nH(x,y)$ and
$$\mathcal{D}^{\prime}:=\kappa(n,x_0)\int_{\mathbb{R}^n}\int_{\mathbb{R}^n}\frac{W^{p}(y)W^{p}(z)\log W}{|y-z|^{n-2}}dydz,\hspace{2mm}\mathcal{E}^{\prime}:=\kappa(n,x_0)\log C_{HLS},$$
$$\mathcal{F}^{\prime}:=\tilde{c}_{n,\mu}\int_{\mathbb{R}^n}\int_{\mathbb{R}^n}\frac{W^{p}(z)W^{p-1}(y)}{|y-z|^{n-2}}dydz,\hspace{2mm}\kappa(n,x_0):=\widetilde{\alpha}_{n}^2\frac{n+2}{n-2}\big[\frac{1}{C_{HLS}}\big]^{\frac{n+2}{4}}\big\|W\big\|^{2(2^{\ast}-1)}_{L^{2^{\ast}-1}(\mathbb{R}^n)}|\phi(x_0)|.$$
\end{lem}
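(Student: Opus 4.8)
The plan is to use the projection $PW_1:=PW[x_1,\lambda_\epsilon]$ of the bubble $W_1=W[x_1,\lambda_\epsilon]$ (see \eqref{eq2.13}) as a competitor in the minimization problem \eqref{SKLS}, so that
\[
S_{HL}^{\epsilon}\le \mathcal{R}_\epsilon(PW_1):=\frac{\int_{\Omega}|\nabla PW_1|^2\,dx}{\big[\int_{\Omega}(|x|^{-(n-2)}\ast|PW_1|^{p-\epsilon})|PW_1|^{p-\epsilon}\,dx\big]^{1/(p-\epsilon)}},
\]
and to expand numerator and denominator up to order $\lambda_\epsilon^{-(n-2)}$. Three ingredients are used throughout: (i) the projection estimate $\varphi_1:=W_1-PW_1=\tilde c_{n,\mu}\lambda_\epsilon^{-(n-2)/2}\tilde H(x_1,\cdot)+O(\lambda_\epsilon^{-(n+2)/2})$, together with the elementary bound $0<\varphi_1<W_1$ (collected in the appendix); (ii) the scaling $W_1(x)=\lambda_\epsilon^{(n-2)/2}W(\lambda_\epsilon(x-x_1))$, which gives $\int_{\mathbb{R}^n}\int_{\mathbb{R}^n}\frac{W_1^{p}(z)W_1^{p}(y)}{|y-z|^{n-2}}dz\,dy=\int_{\mathbb{R}^n}\int_{\mathbb{R}^n}\frac{W^{p}(z)W^{p}(y)}{|y-z|^{n-2}}dz\,dy=C_{HLS}^{(n+2)/4}$ and, more generally, reduces every bubble integral to an explicit $\lambda_\epsilon$-free constant times a power of $\lambda_\epsilon$; (iii) the relation $\epsilon\lambda_\epsilon^{n-2}\to\kappa(n,x_0)$, which follows from Theorem \ref{Figalli2} and Lemma \ref{finite} once $x_\epsilon\to x_0$, and which converts every $\epsilon$-order quantity into a $\lambda_\epsilon^{-(n-2)}$-order one.

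\textbf{Numerator.} Since $\Delta PW_1=\Delta W_1$ in $\Omega$ and $PW_1=0$ on $\partial\Omega$, integration by parts together with the equation \eqref{ele} satisfied by $W_1$ yields $\int_{\Omega}|\nabla PW_1|^2=\int_{\Omega}(-\Delta W_1)PW_1=\int_{\Omega}(|x|^{-(n-2)}\ast W_1^{p})W_1^{p-1}(W_1-\varphi_1)$. The leading piece equals $C_{HLS}^{(n+2)/4}$ up to tails over $\mathbb{R}^n\setminus\Omega$ that are $O(\lambda_\epsilon^{-(n+2)/2})=o(\lambda_\epsilon^{-(n-2)})$ precisely because $n\le 5$, while replacing $\varphi_1$ by $\tilde c_{n,\mu}\lambda_\epsilon^{-(n-2)/2}\tilde H(x_1,x_1)$ on the concentration scale and using the scaling from (ii) produces
\[
\int_{\Omega}|\nabla PW_1|^2=C_{HLS}^{(n+2)/4}-\lambda_\epsilon^{-(n-2)}\tilde H(x_1,x_1)\mathcal{F}'+o(\lambda_\epsilon^{-(n-2)}).
\]

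\textbf{Denominator.} Here I would write $(PW_1)^{p-\epsilon}=W_1^{p}(1-\varphi_1/W_1)^{p}\,W_1^{-\epsilon}(1-\varphi_1/W_1)^{-\epsilon}$ and expand, keeping the terms $W_1^{p}$, $-pW_1^{p-1}\varphi_1$ and $-\epsilon W_1^{p}\log W_1$, with $\log W_1=\tfrac{n-2}{2}\log\lambda_\epsilon+\log W(\lambda_\epsilon(x-x_1))$. Inserting this into the double integral, reducing by scaling and using $\epsilon\lambda_\epsilon^{n-2}\to\kappa(n,x_0)$ gives
\[
\int_{\Omega}(|x|^{-(n-2)}\ast(PW_1)^{p-\epsilon})(PW_1)^{p-\epsilon}dx=C_{HLS}^{\frac{n+2}{4}}\big(1-(n-2)\epsilon\log\lambda_\epsilon\big)-2\lambda_\epsilon^{-(n-2)}\big(\mathcal{D}'+p\,\tilde H(x_1,x_1)\mathcal{F}'\big)+o(\lambda_\epsilon^{-(n-2)}).
\]
Raising this to the power $\tfrac{1}{p-\epsilon}=\tfrac1p+\tfrac{\epsilon}{p^2}+O(\epsilon^2)$, using $\epsilon\log C_{HLS}\sim\lambda_\epsilon^{-(n-2)}\mathcal{E}'$ and $C_{HLS}^{(n+2)/(4p)}=C_{HLS}^{(n-2)/4}$, gives the corresponding expansion of the denominator.

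\textbf{Conclusion.} Dividing the two expansions (so $C_{HLS}^{(n+2)/4}/C_{HLS}^{(n-2)/4}=C_{HLS}$ and $C_{HLS}\cdot C_{HLS}^{-(n+2)/4}=C_{HLS}^{-(n-2)/4}$), the $\tilde H(x_1,x_1)\mathcal{F}'$ contributions from numerator and denominator combine and one obtains $\mathcal{R}_\epsilon(PW_1)=C_{HLS}+C_{HLS}\tfrac{(n-2)\epsilon}{p}\log\lambda_\epsilon+\lambda_\epsilon^{-(n-2)}\big[C_{HLS}^{-\frac{n-2}{4}}\tilde H(x_1,x_1)\mathcal{F}'+\tfrac2p C_{HLS}^{-\frac{n-2}{4}}\mathcal{D}'-\tfrac{n+2}{4p^2}C_{HLS}\mathcal{E}'\big]+o(\lambda_\epsilon^{-(n-2)})$; dividing by $\lambda_\epsilon^{(n-2)\epsilon/(p-\epsilon)}=1+\tfrac{(n-2)\epsilon}{p}\log\lambda_\epsilon+o(\epsilon\log\lambda_\epsilon)$ then cancels the two $\epsilon\log\lambda_\epsilon$ terms and leaves exactly the stated inequality. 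The main obstacle is the bookkeeping of the three interacting small quantities $\lambda_\epsilon^{-(n-2)}$, $\epsilon$ and $\log\lambda_\epsilon$: one must check that all bubble tails and the remainders quadratic in $\varphi_1$ are genuinely $o(\lambda_\epsilon^{-(n-2)})$ (this is exactly where the restriction $n\in\{3,4,5\}$ enters), isolate the divergent $\log\lambda_\epsilon$ term created by $\log W_1$, and verify that it is absorbed precisely by the normalization $\lambda_\epsilon^{(n-2)\epsilon/(p-\epsilon)}$, so that only the clean $\lambda_\epsilon^{-(n-2)}$ correction survives. The nonlocal Hartree structure enters only through the explicit constants, since every bubble integral is reduced by scaling to the fixed quantities $\mathcal{D}'$, $\mathcal{E}'$, $\mathcal{F}'$.
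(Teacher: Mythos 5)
Your proposal is correct and takes essentially the same route as the paper: use $PW[x_1,\lambda_\epsilon]$ as a competitor in the Rayleigh quotient \eqref{SKLS}, expand the gradient term via the projection estimate (Lemma \ref{Lem2.2}), expand the Hartree integral using $W^{p-\epsilon}=W^{p}(1-\epsilon\log W+O(\epsilon^2))$ and the scaling of $W_1$, convert $\epsilon$ into $\kappa(n,x_0)\lambda_\epsilon^{-(n-2)}$ via Theorem \ref{Figalli2}, and Taylor-expand the exponent $\tfrac{1}{p-\epsilon}=\tfrac1p+\tfrac{\epsilon}{p^2}+O(\epsilon^2)$. The only organizational difference is bookkeeping of the normalizer $\lambda_\epsilon^{(n-2)\epsilon/(p-\epsilon)}$: the paper folds it into the rescaled denominator from the start (so that after scaling the powers of $\lambda_\epsilon$ cancel exactly and no $\epsilon\log\lambda_\epsilon$ term ever appears), whereas you compute the raw quotient first, incur the divergent $C_{HLS}\tfrac{(n-2)\epsilon}{p}\log\lambda_\epsilon$ piece, and then verify it is cancelled by dividing by $\lambda_\epsilon^{(n-2)\epsilon/(p-\epsilon)}$; this is the same calculation with the steps permuted, and it gives the identical $\lambda_\epsilon^{-(n-2)}$-order coefficient.
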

\begin{proof}
For any $x_1\in\Omega$ and we set $u(x)=PW_{1}(x)$.
Then, since \eqref{eq2.13}, Lemma \ref{Lem2.2} and $\Omega_{\epsilon}:=\lambda_{\epsilon}(\Omega-x_{1})$,
a direct calculation shows that
\begin{equation}\label{tiduuo}
\begin{split}
\int_{\Omega}&|\nabla u|^2dx=\int_{\Omega}|\nabla PW_{1}(x)|^2dx\\&=\int_{\Omega}\big(|x|^{-(n-2)}\ast W_{1}^{p}\big)W_{1}^{p-1}(x)\big[W_{1}(x)-\tilde{c}_{n,\mu}\lambda_{\epsilon}^{-\frac{n-2}{2}}\tilde{H}(x_1,x)-f_{\lambda_{\epsilon}},
\big]dx\\&
=\int_{\mathbb{R}^n}\int_{\mathbb{R}^n}\frac{W^{p}(y)W^{p}(z)}{|y-z|^{n-2}} dydz-\frac{\tilde{c}_{n,\mu}}{\lambda_{\epsilon}^{n-2}}\int_{\Omega_{\epsilon}}\int_{\Omega_{\epsilon}}\frac{W^{p}(z)W^{p-1}(y)\tilde{H}(x_1,x_1+\lambda_{\epsilon}^{-1}y)}{|y-z|^{n-2}} dydz+O(\frac{1}{\lambda_{\epsilon}^{n}})\\&
=C_{HLS}^{\frac{n+2}{4}}-\frac{\tilde{c}_{n,\mu}}{\lambda_{\epsilon}^{n-2}}\int_{\Omega_{\epsilon}}\int_{\Omega_{\epsilon}}\frac{W^{p}(z)W^{p-1}(y)\tilde{H}(x_1,x_1+\lambda_{\epsilon}^{-1}y)}{|y-z|^{n-2}} dydz+O(\frac{1}{\lambda_{\epsilon}^{n}}).
\end{split}
\end{equation}
Note that,
\begin{equation*}
\begin{split}
\int_{\Omega_{\epsilon}}\int_{\Omega_{\epsilon}}\frac{W^{p}(z)W^{p-1}(y)}{|y-z|^{n-2}}&=
\int_{\mathbb{R}^n}\int_{\mathbb{R}^n}\frac{W^{p}(z)W^{p-1}(y)}{|y-z|^{n-2}} dydz-2\int_{\mathbb{R}^n\setminus\Omega_{\epsilon}}\int_{\Omega_{\epsilon}}\frac{W^{p}(z)W^{p-1}(y)}{|y-z|^{n-2}}
\\&\hspace{4mm}-\int_{\mathbb{R}^n\setminus\Omega_{\epsilon}}\int_{\mathbb{R}^n\setminus\Omega_{\epsilon}}\frac{W^{p}(z)W^{p-1}(y)}{|y-z|^{n-2}}.
\end{split}
\end{equation*}
Then using \eqref{p1-00}, Lemma \ref{Gx} and the fact that $|\tilde{H}(x_1,x_1+\lambda_{\epsilon}^{-1}y)-\tilde{H}(x_1,x_1)|\leq C\lambda_{\epsilon}^{-1}y$, we deduce that
\begin{equation}\label{HX1X1}
\begin{split}
\int_{\Omega_{\epsilon}}\int_{\Omega_{\epsilon}}&\frac{W^{p}(z)W^{p-1}(y)\tilde{H}(x_1,x_1+\lambda_{\epsilon}^{-1}y)}{|y-z|^{n-2}} dydz\\&=\int_{\Omega_{\epsilon}}\int_{\Omega_{\epsilon}}\frac{W^{p}(z)W^{p-1}(y)}{|y-z|^{n-2}} [\tilde{H}(x_1,x_1+\lambda_{\epsilon}^{-1}y)-\tilde{H}(x_1,x_1)]dydz\\&\hspace{4mm}+\tilde{H}(x_1,x_1)\int_{\Omega_{\epsilon}}\int_{\Omega_{\epsilon}}\frac{W^{p}(z)W^{p-1}(y)}{|y-z|^{n-2}} dydz\\&
=\tilde{H}(x_1,x_1)\int_{\mathbb{R}^n}\int_{\mathbb{R}^n}\frac{W^{p}(z)W^{p-1}(y)}{|y-z|^{n-2}} dydz
+O\Big(\lambda_{\epsilon}^{-1}
\int_{\Omega_{\epsilon}}W^{2^{\ast}-1}|y|(y)dy\Big)
\\&=\tilde{H}(x_1,x_1)\int_{\mathbb{R}^n}\int_{\mathbb{R}^n}\frac{W^{p}(z)W^{p-1}(y)}{|y-z|^{n-2}} dydz+O(\frac{1}{\lambda_{\epsilon}}).
\end{split}
\end{equation}
Consequently, combining this estimate and \eqref{tiduuo} entails that
\begin{equation}\label{womiga}
\begin{split}
\int_{\Omega}&|\nabla u|^2dx=C_{HLS}^{\frac{n+2}{4}}-\frac{\tilde{c}_{n,\mu}}{\lambda_{\epsilon}^{n-2}}\tilde{H}(x_1,x_1)\int_{\mathbb{R}^n}\int_{\mathbb{R}^n}\frac{W^{p}(z)W^{p-1}(y)}{|y-z|^{n-2}} dydz+o(\frac{1}{\lambda_{\epsilon}^{n-2}})=:I_{\epsilon}.
\end{split}
\end{equation}

On the other hand, in view of $PW[x_1,\lambda_{\epsilon}]=W[x_1,\lambda_{\epsilon}]-\psi[x_1,\lambda_{\epsilon}]>0$, one has $0\leq\frac{\psi[x_1,\lambda_{\epsilon}]}{W[x_1,\lambda_{\epsilon}]}\leq1$.
As the result, combining the estimate
$$PW_1^{p-\epsilon}=W_1^{p-\epsilon}+O(W_1^{p-1-\epsilon}\psi[x_1,\lambda_{\epsilon}]),\quad\|\psi[x_1,\lambda_{\epsilon}]\|_{L^{\infty}(\Omega)}=\lambda_{\epsilon}^{-\frac{n-2}{2}} (dist(x_1,\partial\Omega))^{n-2},$$
and Lemma \ref{Lem2.2}, we have that
\begin{equation*}
\begin{split}
\lambda_{\epsilon}^{\frac{(n-2)\epsilon}{p-\epsilon}}&\Big[\int_{\Omega}(|x|^{-(n-2)} \ast PW_1^{p-\epsilon})PW_1^{p-\epsilon} dx\Big]^{\frac{1}{p-\epsilon}}\\
=&\lambda_{\epsilon}^{\frac{(n-2)\epsilon}{p-\epsilon}}\Big[\int_{\Omega}\int_{\Omega}\big(|x-y|^{-(n-2)} \big[W_1(y)-\frac{\tilde{c}_{n,\mu}}{\lambda_\epsilon^{\frac{n-2}{2}}}H(x_1,y)-f_{\lambda_\epsilon}(y)\big]^{p-\epsilon}\big)\\&\times\big[W_1(x)-\frac{\tilde{c}_{n,\mu}}{\lambda_\epsilon^{\frac{n-2}{2}}}H(x_1,x)-f_{\lambda_\epsilon}(x)\big]^{p-\epsilon} dxdy\Big]^{\frac{1}{p-\epsilon}}\\
=&\bigg[\lambda_{\epsilon}^{(n-2)\epsilon-(n+2)}\int_{\Omega_{\epsilon}}\int_{\Omega_{\epsilon}}|y-z|^{-(n-2)}\Big(\lambda_\epsilon^{\frac{n-2}{2}}W(z)-\frac{\tilde{c}_{n,\mu}}{\lambda_\epsilon^{\frac{n-2}{2}}}\tilde{H}(x_1,x_1+\lambda_{\epsilon}^{-1}z)-f_{\lambda_\epsilon}(z)\Big)^{p-\epsilon}
\\&\times\Big(\lambda_\epsilon^{\frac{n-2}{2}}W(y)-\frac{\tilde{c}_{n,\mu}}{\lambda_\epsilon^{\frac{n-2}{2}}}\tilde{H}(x_1,x_1+\lambda_{\epsilon}^{-1}y)-f_{\lambda_\epsilon}(y)\Big)^{p-\epsilon} dydz\bigg]^{\frac{1}{p-\epsilon}}\\
=&\bigg[\int_{\Omega_{\epsilon}}\int_{\Omega_{\epsilon}}\frac{W^{p-\epsilon}(y)W^{p-\epsilon}(z)}{|y-z|^{n-2}} dydz-p\frac{\tilde{c}_{n,\mu}}{\lambda_{\epsilon}^{n-2}}\int_{\Omega_{\epsilon}}\int_{\Omega_{\epsilon}}\frac{W^{p}(z)W^{p-1}(y)H(x_1,x_1+\lambda_{\epsilon}^{-1}y)}{|y-z|^{n-2}} dydz\\&\hspace{4mm}-p\frac{\tilde{c}_{n,\mu}}{\lambda_{\epsilon}^{n-2}}\int_{\Omega_{\epsilon}}\int_{\Omega_{\epsilon}}\frac{W^{p-1}(z)H(x_1,x_1+\lambda_{\epsilon}^{-1}z)W^{p}(y)}{|y-z|^{n-2}} dydz\\&
+p^2\frac{\tilde{c}_{n,\mu}^2}{\lambda_{\epsilon}^{2(n-2)}}\int_{\Omega_{\epsilon}}\int_{\Omega_{\epsilon}}\frac{W^{p-1}(z)H(x_1,x_1+\lambda_{\epsilon}^{-1}z)W^{p-1}(y)H(x_1,x_1+\lambda_{\epsilon}^{-1}y)}{|y-z|^{n-2}} dydz+o(\frac{1}{\lambda_{\epsilon}^{n-2}})\bigg]^{\frac{1}{p-\epsilon}}.
\end{split}
\end{equation*}
Recalling the previous computation in \eqref{HX1X1}, we have
\begin{equation*}
\begin{split}
&\lambda_{\epsilon}^{\frac{(n-2)\epsilon}{p-\epsilon}}\Big[\int_{\Omega}(|x|^{-(n-2)} \ast PW_1^{p-\epsilon})PW_1^{p-\epsilon} dx\Big]^{\frac{1}{p-\epsilon}}\\&
=\bigg[\int_{\mathbb{R}^n}\int_{\mathbb{R}^n}\frac{W^{p-\epsilon}(y)W^{p-\epsilon}(z)}{|y-z|^{n-2}} dydz-p\frac{\tilde{c}_{n,\mu}}{\lambda_{\epsilon}^{n-2}}\tilde{H}(x_1,x_1)\int_{\mathbb{R}^n}\int_{\mathbb{R}^n}\frac{W^{p}(z)W^{p-1}(y)}{|y-z|^{n-2}} dydz\\&
\hspace{4mm}-p\frac{\tilde{c}_{n,\mu}}{\lambda_{\epsilon}^{n-2}}\tilde{H}(x_1,x_1)\int_{\mathbb{R}^n}\int_{\mathbb{R}^n}\frac{W^{p-1}(z)W^{p}(y)}{|y-z|^{n-2}} dydz+o(\frac{1}{\lambda_{\epsilon}^{n-2}})\bigg]^{\frac{1}{p-\epsilon}}
\end{split}
\end{equation*}
as $\epsilon\rightarrow0$, where the equality holds since
\begin{equation*}
\begin{split}
\int_{\Omega_{\epsilon}}\int_{\Omega_{\epsilon}}&\frac{W^{p-1}(z)\tilde{H}(x_1,x_1+\lambda_{\epsilon}^{-1}z)W^{p}(y)}{|y-z|^{n-2}} dydz
\\&=\tilde{H}(x_1,x_1)\int_{\mathbb{R}^n}\int_{\mathbb{R}^n}\frac{W^{p}(z)W^{p-1}(y)}{|y-z|^{n-2}} dydz+O(\frac{1}{\lambda_{\epsilon}}),
\end{split}
\end{equation*}
and
\begin{equation*}
\begin{split}
\frac{1}{\lambda_{\epsilon}^{2(n-2)}}\int_{\Omega_{\epsilon}}\int_{\Omega_{\epsilon}}&\frac{W^{p-1}(z)|H|W^{p-1}(y)|H|}{|y-z|^{n-2}} dydz=
\begin{cases}
O\big(\frac{1}{\lambda_{\epsilon}^{2(n-2)}}\big), \quad\quad\hspace{1.5mm}\mbox{ when } n=3,\\
O\big(\frac{\log\lambda_{\epsilon}}{\lambda_{\epsilon}^{2(n-2)}}\big), \quad\quad\hspace{2mm}\mbox{ when } n=4,\\
O\big(\frac{1}{\lambda_{\epsilon}^{4}}\big), \hspace{1mm}\quad\quad\hspace{8mm}\mbox{ when } n=5.
\end{cases}
\end{split}
\end{equation*}
Moreover,
\begin{equation}\label{taileextend}
\begin{split}
\int_{\mathbb{R}^n}\int_{\mathbb{R}^n}\frac{W^{p-\epsilon}(y)W^{p-\epsilon}(z)}{|y-z|^{n-2}} dydz&=\int_{\mathbb{R}^n}\int_{\mathbb{R}^n}\frac{W^{p}(y)W^{p}(z)}{|y-z|^{n-2}} dydz+o(\lambda_{\epsilon}^{-(n-2)})\\&-\kappa(n,x_0)\lambda_{\epsilon}^{-(n-2)}\int_{\mathbb{R}^n}\int_{\mathbb{R}^n}\frac{W^{p}(z)W^{p}(y)\log W(y)}{|y-z|^{n-2}} dydz\\&
-\kappa(n,x_0)\lambda_{\epsilon}^{-(n-2)}\int_{\mathbb{R}^n}\int_{\mathbb{R}^n}\frac{W^{p}(z)\log W(z)W^{p}(y)}{|y-z|^{n-2}} dydz\\&
+\kappa^2(n,x_0)\lambda_{\epsilon}^{-2(n-2)}\int_{\mathbb{R}^n}\int_{\mathbb{R}^n}\frac{W^{p}\log W(y)W^{p}\log W(z)}{|y-z|^{n-2}} dydz,
\end{split}
\end{equation}
where we have exploited the fact that
\begin{equation*}
\begin{split}
W^{p-\epsilon}=W^{p}(1-\epsilon\log W+O(\epsilon^2))&=W^{p}-\epsilon W^{p}\log W+O(\epsilon^2W^{p}))\\&
=W^{p}-\kappa(n,x_0)\lambda_{\epsilon}^{-(n-2)}W^{p}\log W+O(\lambda_{\epsilon}^{-2(n-2)}W^{p})
\end{split}
\end{equation*}
by Taylor's theorem and Theorem \ref{Figalli2}. One has
\begin{equation}\label{w(x)}
\begin{split}
&\lambda_{\epsilon}^{\frac{(n-2)\epsilon}{p-\epsilon}}\Big[\int_{\Omega}(|x|^{-(n-2)} \ast PW_1^{p-\epsilon})PW_1^{p-\epsilon} dx\Big]^{\frac{1}{p-\epsilon}}\\&
=\bigg\{\int_{\mathbb{R}^n}\int_{\mathbb{R}^n}\frac{W^{p}(y)W^{p}(z)}{|y-z|^{n-2}} dydz-2\frac{\kappa(n,x_0)}{\lambda_{\epsilon}^{n-2}}\int_{\mathbb{R}^n}\int_{\mathbb{R}^n}\frac{W^{p}(z)W^{p}(y)\log W(y)}{|y-z|^{n-2}} dydz\\&
\hspace{2mm}-2p\tilde{c}_{n,\mu}\lambda_{\epsilon}^{-(n-2)}\tilde{H}(x_1,x_1)\int_{\mathbb{R}^n}\int_{\mathbb{R}^n}\frac{W^{p}(z)W^{p-1}(y)}{|y-z|^{n-2}} dydz
+o(\frac{1}{\lambda_{\epsilon}^{n-2}})\bigg\}^{\frac{1}{p-\epsilon}},
\end{split}
\end{equation}
by virtue of
$$
\kappa^2(n,x_0)\int_{\mathbb{R}^n}\int_{\mathbb{R}^n}\frac{W^{p}\log W(y)W^{p}\log W(z)}{|y-z|^{n-2}}dydz<\infty.
$$
Thus we only need to deal with main terms $\mathcal{A}$, $\mathcal{B}$ and  $\mathcal{C}$ which are given by
$$\mathcal{A}:=\int_{\mathbb{R}^n}\int_{\mathbb{R}^n}\frac{W^{p}(y)W^{p}(z)}{|y-z|^{n-2}} dydz,\quad\mathcal{B}:=2\kappa(n,x_0)\lambda_{\epsilon}^{-(n-2)}\int_{\mathbb{R}^n}\int_{\mathbb{R}^n}\frac{W^{p}(y)W^{p}(z)\log W}{|y-z|^{n-2}}dydz,$$
$$\mathcal{C}:=p\tilde{c}_{n,\mu}\lambda_{\epsilon}^{-(n-2)}\tilde{H}(x_1,x_1)\int_{\mathbb{R}^n}\int_{\mathbb{R}^n}\frac{W^{p}(z)W^{p-1}(y)}{|y-z|^{n-2}} dydz.$$
Also, the elementary inequality
$$(1+t)^{-1}=1-t+t^2-\cdots,\quad \mbox{as}\quad|t|<1,$$
one has
$$\frac{1}{p-\epsilon}=\frac{1}{p}(1-\frac{\epsilon}{p})^{-1}\approx{\frac{1}{p}}+\frac{\epsilon}{p^2}\quad\mbox{as}\quad\epsilon\rightarrow0.$$
Therefore, from Taylor's theorem $e^{z}=1+z+\frac{z^2}{2!}+\cdots(z\in\mathbb{C})$ imply that
\begin{equation*}
\begin{split}
\Big(\mathcal{A}+\mathcal{B}+\mathcal{C}+o(\lambda_{\epsilon}^{-(n-2)})\Big)^{\frac{1}{p-\epsilon}}
&\approx e^{\frac{1}{p}\log\big(\mathcal{A}+\mathcal{B}+\mathcal{C}+o(\lambda_{\epsilon}^{-(n-2)})\big)}\cdot e^{\frac{\epsilon}{p^2}\log\big(\mathcal{A}+\mathcal{B}+\mathcal{C}+o(\lambda_{\epsilon}^{-(n-2)})\big)}\\&
\approx\big(\mathcal{A}+\mathcal{B}+\mathcal{C}+o(\frac{1}{\lambda_{\epsilon}^{n-2}})\big)^{\frac{1}{p}}\big[1+\frac{\epsilon}{p^2}\log\big(\mathcal{A}+\mathcal{B}+\mathcal{C}+o(\frac{1}{\lambda_{\epsilon}^{n-2}})\big)\big].
\end{split}
\end{equation*}
Furthermore, we note that
$$\mathcal{A}+\mathcal{B}+\mathcal{C}+o(\frac{1}{\lambda_{\epsilon}^{n-2}})\approx\mathcal{A}$$
as $\epsilon>0$ small enough. Thus, thanks to Theorem \ref{Figalli2} and $\lambda_{\epsilon}^{-(n-2)}\rightarrow0$ as $\epsilon\rightarrow0$,
$$\frac{\epsilon}{p^2}\big(\mathcal{A}+\mathcal{B}+\mathcal{C}+o(\frac{1}{\lambda_{\epsilon}^{n-2}})\big)^{\frac{1}{p}}\log\big(\mathcal{A}+\mathcal{B}+\mathcal{C}+o(\frac{1}{\lambda_{\epsilon}^{n-2}})\big)\approx\kappa(n,x_0)\lambda_{\epsilon}^{-(n-2)}\frac{1}{p^2}\mathcal{A}^{\frac{1}{p}}\log\mathcal{A}
$$
as $\epsilon>0$ small enough. As the result,
$$\big(\mathcal{A}+\mathcal{B}+\mathcal{C}+o(\lambda_{\epsilon}^{-(n-2)})\big)^{\frac{1}{p-\epsilon}} \approx\big(\mathcal{A}+\mathcal{B}+\mathcal{C}+o(\frac{1}{\lambda_{\epsilon}^{n-2}})\big)^{\frac{1}{p}}+\kappa(n,x_0)\lambda_{\epsilon}^{-(n-2)}\frac{1}{p^2}\mathcal{A}^{\frac{1}{p}}\log\mathcal{A}+
o(\frac{1}{\lambda_{\epsilon}^{n-2}}).
$$
Finally, combining all this together, from \eqref{w(x)} we conclude that
\begin{equation*}
\begin{split}
&\lambda_{\epsilon}^{\frac{(n-2)\epsilon}{p-\epsilon}}\Big[\int_{\Omega}(|x|^{-(n-2)} \ast PW_1^{p-\epsilon})PW_1^{p-\epsilon} dx\Big]^{\frac{1}{p-\epsilon}}\\&
=\Big[\int_{\mathbb{R}^n}\int_{\mathbb{R}^n}\frac{W^{p}(y)W^{p}(z)}{|y-z|^{n-2}} dydz-2\kappa(n,x_0)\lambda_{\epsilon}^{-(n-2)}\int_{\mathbb{R}^n}\int_{\mathbb{R}^n}\frac{W^{p}(z)W^{p}(y)\log W}{|y-z|^{n-2}} dydz\\&
\hspace{4mm}-2p\frac{\tilde{c}_{n,\mu}}{\lambda_{\epsilon}^{n-2}}\tilde{H}(x_1,x_1)\int_{\mathbb{R}^n}\int_{\mathbb{R}^n}\frac{W^{p}(z)W^{p-1}(y)}{|y-z|^{n-2}} dydz+o(\lambda_{\epsilon}^{-(n-2)})\Big]^{\frac{1}{p}}\\&
\hspace{4mm}+\kappa(n,x_0)\lambda_{\epsilon}^{-(n-2)}\frac{1}{p^2}\big(\int_{\mathbb{R}^n}\int_{\mathbb{R}^n}\frac{W^{p}(y)W^{p}(z)}{|y-z|^{n-2}}\big)^{\frac{1}{p}}\log\big(\int_{\mathbb{R}^n}\int_{\mathbb{R}^n}\frac{W^{p}(y)W^{p}(z)}{|y-z|^{n-2}}\big)+o(\frac{1}{\lambda_{\epsilon}^{n-2}}).
\end{split}
\end{equation*}
Using Taylor's theorem again, we obtain
\begin{equation*}
\begin{split}
&\lambda_{\epsilon}^{\frac{(n-2)\epsilon}{p-\epsilon}}\Big[\int_{\Omega}(|x|^{-(n-2)} \ast PW_1^{p-\epsilon})PW_1^{p-\epsilon} dx\Big]^{\frac{1}{p-\epsilon}}\\&
=\bigg\{\int_{\mathbb{R}^n}\int_{\mathbb{R}^n}\frac{W^{p}(y)W^{p}(z)}{|y-z|^{n-2}} dydz\bigg\}^{\frac{1}{p}}-\frac{1}{p}\bigg\{\int_{\mathbb{R}^n}\int_{\mathbb{R}^n}\frac{W^{p}(y)W^{p}(z)}{|y-z|^{n-2}} dydz\bigg\}^{\frac{1}{p}-1}\\&\times\bigg\{\frac{2\kappa(n,x_0)}{\lambda_{\epsilon}^{n-2}}\int_{\mathbb{R}^n}\int_{\mathbb{R}^n}\frac{W^{p}(z)W^{p}\log W}{|y-z|^{n-2}} dydz+2p\frac{\tilde{c}_{n,\mu}}{\lambda_{\epsilon}^{n-2}}\tilde{H}(x_1,x_1)\int_{\mathbb{R}^n}\int_{\mathbb{R}^n}\frac{W^{p}(z)W^{p-1}}{|y-z|^{n-2}} dydz\bigg\}\\&
+\frac{\kappa(n,x_0)}{\lambda_{\epsilon}^{n-2}}\frac{1}{p^2}\big(\int_{\mathbb{R}^n}\int_{\mathbb{R}^n}\frac{W^{p}(y)W^{p}(z)}{|y-z|^{n-2}}dxdy\big)^{\frac{1}{p}}\log\big(\int_{\mathbb{R}^n}\int_{\mathbb{R}^n}\frac{W^{p}(y)W^{p}(z)}{|y-z|^{n-2}}dydz\big)+o(\lambda_{\epsilon}^{-(n-2)})
\\&
=C_{HLS}^{\frac{n+2}{4p}}-\frac{1}{p}C_{HLS}^{\frac{n+2}{4}(\frac{1}{p}-1)}2\kappa(n,x_0)\lambda_{\epsilon}^{-(n-2)}\int_{\mathbb{R}^n}\int_{\mathbb{R}^n}\frac{W^{p}(y)W^{p}(z)\log W}{|y-z|^{n-2}} dydz\\&
\hspace{4mm}-2C_{HLS}^{\frac{n+2}{4}(\frac{1}{p}-1)}\tilde{H}(x_1,x_1)\tilde{c}_{n,\mu}\lambda_{\epsilon}^{-(n-2)}\int_{\mathbb{R}^n}\int_{\mathbb{R}^n}\frac{W^{p}(z)W^{p-1}(y)}{|y-z|^{n-2}}dydz\\&
\hspace{4mm}+\kappa(n,x_0)\frac{1}{p^2}C_{HLS}^{\frac{n+2}{4p}}\lambda_{\epsilon}^{-(n-2)}\log C_{HLS}^{\frac{n+2}{4}}+o(\lambda_{\epsilon}^{-(n-2)})=:J_{\epsilon}.
\end{split}
\end{equation*}
Then as a consequence,
\begin{equation}\label{IJE}
\begin{split}
&\frac{S_{HL}^{\epsilon}}{\lambda_{\epsilon}^{\frac{(n-2)\epsilon}{p-\epsilon}}}\leq \frac{\int_{\Omega}|\nabla u|^2dx}{\lambda_{\epsilon}^{\frac{(n-2)\epsilon}{p-\epsilon}} \left[\int_{\Omega}(|x|^{-(n-2)} \ast|u_{\epsilon}|^{p-\epsilon})|u_{\epsilon}|^{p-\epsilon} dx\right]^{\frac{1}{p-\epsilon}}}=\frac{I_{\epsilon}
}{J_{\epsilon}}.
\end{split}
\end{equation}

In what follows, we consider the each term of $I_{\epsilon}$ and $J_{\epsilon}$, respectively. By a direct computation, it follows that
\begin{equation}\label{Iepusi}
\begin{split}
\frac{I_{\epsilon}}{C_{HLS}^{(n+2)/4p}}=\frac{C_{HLS}^{\frac{n+2}{4}}}{C_{HLS}^{(n+2)/4p}}-\frac{\tilde{c}_{n,\mu}}{\lambda_{\epsilon}^{n-2}}\frac{H(x_1,x_1)}{C_{HLS}^{(n+2)/4p}}\int_{\mathbb{R}^n}\int_{\mathbb{R}^n}\frac{W^{p}(z)W^{p-1}(y)}{|y-z|^{n-2}} dydz+\frac{o(\lambda_{\epsilon}^{-(n-2)})}{C_{HLS}^{(n+2)/4p}}.
\end{split}
\end{equation}
Thanks to the identity
$$\frac{1}{1-t}=1+t+t^2+\cdots\quad \mbox{for}\quad|t|<1,$$
Then, by a direct computation, we deduce that
\begin{equation}\label{JLONG}
\begin{split}
\big(\frac{J_{\epsilon}}{C_{HLS}^{(n+2)/4p}}\big)^{-1}&=\frac{1}{1-C_{HLS}^{-\frac{n+2}{4}}\lambda_{\epsilon}^{-(n-2)}\mathcal{D}
+\lambda_{\epsilon}^{-(n-2)}\mathcal{E}
-2C_{HLS}^{-\frac{n+2}{4}}\lambda_{\epsilon}^{-(n-2)}\mathcal{F}+o(\lambda_{\epsilon}^{-(n-2)})C_{HLS}^{-\frac{n+2}{4p}}}\\&
=1+\frac{\lambda_{\epsilon}^{-(n-2)}}{C_{HLS}^{\frac{n+2}{4}}}\mathcal{D}
-\lambda_{\epsilon}^{-(n-2)}\mathcal{E}
+2\frac{\lambda_{\epsilon}^{-(n-2)}}{C_{HLS}^{\frac{n+2}{4}}}\mathcal{F}-\frac{o(\lambda_{\epsilon}^{-(n-2)})}{C_{HLS}^{\frac{n+2}{4p}}}+O(\lambda_{\epsilon}^{-2(n-2)}),
\end{split}
\end{equation}
where
$$\mathcal{D}:=\frac{1}{p}2\kappa(n,x_0)\int_{\mathbb{R}^n}\int_{\mathbb{R}^n}\frac{W^{p}(y)W^{p}(z)\log W}{|y-z|^{n-2}}dydz,\hspace{2mm}\mathcal{E}:=\frac{1}{p^2}\kappa(n,x_0)\log C_{HLS}^{\frac{n+2}{4}},$$
$$\mathcal{F}:=\tilde{c}_{n,\mu}\tilde{H}(x_1,x_1)\int_{\mathbb{R}^n}\int_{\mathbb{R}^n}\frac{W^{p}(z)W^{p-1}(y)}{|y-z|^{n-2}}dydz.$$
Hence, combining \eqref{IJE}, \eqref{Iepusi} and \eqref{JLONG} yields
\begin{equation*}
\begin{split}
\frac{S_{HL}^{\epsilon}}{\lambda_{\epsilon}^{\frac{(n-2)\epsilon}{p-\epsilon}}}&\leq\frac{C_{HLS}^{\frac{n+2}{4}}-\lambda_{\varepsilon}^{-(n-2)} \mathcal{F}+o(\lambda_{\varepsilon}^{-(n-2)})}{C_{HLS}^{\frac{n+2}{4p}}-C_{HLS}^{\frac{n+2}{4}(\frac{1}{p}-1)}\lambda_{\epsilon}^{-(n-2)}\mathcal{D}
+C_{HLS}^{\frac{n+2}{4p}}\lambda_{\epsilon}^{-(n-2)}\mathcal{E}
-C_{HLS}^{\frac{n+2}{4}(\frac{q}{p}-1)}\lambda_{\epsilon}^{-(n-2)}\mathcal{F}+o(\lambda_{\epsilon}^{-(n-2)})}\\
&=\frac{C_{HLS}^{\frac{n+2}{4}}}{C_{HLS}^{\frac{n+2}{4p}}}+\lambda_{\varepsilon}^{-(n-2)}\bigg[\frac{\mathcal{F}}{C_{HLS}^{\frac{n+2}{4p}}}+\frac{\mathcal{D}}{C_{HLS}^{\frac{n+2}{4p}}}-\frac{\mathcal{E}}{C_{HLS}^{\frac{n+2}{4}(\frac{1}{p}-1)}}\bigg]+o(\lambda_{\varepsilon}^{-(n-2)})\\&
=C_{HLS}+\lambda_{\varepsilon}^{-(n-2)}\bigg[\frac{\mathcal{F}}{C_{HLS}^{\frac{n-2}{4}}}+\frac{\mathcal{D}}{C_{HLS}^{\frac{n-2}{4}}}-\frac{\mathcal{E}}{C_{HLS}^{-1}}\bigg]+o(\lambda_{\varepsilon}^{-(n-2)}).
\end{split}
\end{equation*}
This concludes the proof.
\end{proof}
\subsection{Proofs of Theorems \ref{maximum}-\ref{asymptotic}}
In order to prove our results, we first recall that
the problem
\begin{equation}\label{question}
\begin{cases}
-\Delta v_{\epsilon}(x)=\big(|x|^{-{(n-2)}}\ast v_\epsilon^{p-\epsilon}\big)v_\epsilon^{p-1-\epsilon}\hspace{6mm}\mbox{in}\hspace{2mm} \Omega_{\epsilon},\\
 v_{\varepsilon}(x)>0~~~~~~~~~~~~~~~~~~~~~~~~~~~~~~~~~~~~~~~~~~~~~~~~\quad\text{in}~~\Omega_{\varepsilon}:=\lambda_{\epsilon}(\Omega-x_{\epsilon}),
\\
v_{\varepsilon}(x)=0~~~~~~~~~~~~~~~~~~~~~~~~~~~~~~~~~~~~~~~~~~~~~~~~~~~~\text{on}~~\partial\Omega_{\varepsilon},\\
v_{\varepsilon}(0)=\max\limits_{x\in\Omega_{\varepsilon}}v_{\varepsilon}(x)=1.
\end{cases}
\end{equation}
Suppose
$$v_{\epsilon}(y)=\lambda_{\epsilon}^{-(n-2)/2}PW[x_{\epsilon},\lambda_{\epsilon}](\lambda_{\epsilon}^{-1}y+x_{\epsilon})+\lambda_{\epsilon}^{-(n-2)}\phi_{\epsilon}(y).$$
Then
\begin{equation}\label{fepusilong}
-\Delta\phi_{\epsilon}(y)=p\Big(|y|^{-(n-2)}\ast W^{p-1}\phi_{\epsilon}\Big)
W^{p-1}+(p-1)\Big(|y|^{-(n-2)}\ast W^{p}\Big)W^{p-2}\phi_{\epsilon}+f_{\epsilon}(\phi_{\epsilon}),
\end{equation}
where
\begin{equation*}
\begin{split}
f_{\epsilon}(\phi_{\epsilon})
&:=\lambda_{\epsilon}^{n-2}\bigg\{\Big(|y|^{-(n-2)}\ast v_{\epsilon}^{p-\epsilon}\Big)v_{\epsilon}^{p-1-\epsilon}
-\Big(|y|^{-(n-2)}\ast W^{p}\Big)W^{p-1}\\&
~~~-\lambda_{\epsilon}^{-(n-2)}\Big[p
\Big(|y|^{-(n-2)}\ast W^{p-1}\phi_{\epsilon}\Big)
W^{p-1}+(p-1)
\Big(|y|^{-(n-2)}\ast W^{p}\Big)
W^{p-2}\phi_{\epsilon}\Big]\bigg\}.
\end{split}
\end{equation*}
For $f_{\epsilon}(\phi_{\epsilon})$, we need the following useful result.
\begin{lem}\label{usefulestimate}
It holds that
\begin{equation*}
\begin{split}
|f_{\epsilon}(\phi_{\epsilon})|&\leq C\big[|y|^{-(n-2)}\ast W^{p-1-\epsilon}|\phi_{\epsilon}|\big]
W^{p-2-\epsilon}|v_{\epsilon}-W|+\big(|y|^{-(n-2)}\ast W^{p-1-\epsilon}\big)W^{p-2-\epsilon}(1+\lambda_{\epsilon}^{-(n-2)})
\\&~~~+C\big[
\big(|y|^{-(n-2)}\ast W^{p}\big)
W^{p-1-\epsilon}|\log W|+\big(|y|^{-(n-2)}\ast W^{p-1-\epsilon}|\log W|\big)W^{p-1}\big]\\&~~~+C\big[\lambda_{\epsilon}^{-(n-2)}\big(|y|^{-(n-2)}\ast W^{p-1-\epsilon}\big)W^{p-1-\epsilon}(\log W+1)\big]\\&
~~~+C\big[\big(|y|^{-(n-2)}\ast W^{p}\big)W^{p-2-\epsilon}|\log W|+\big(|y|^{-(n-2)}\ast W^{p-\epsilon}|\log W|\big)
W^{p-1-\epsilon}\big]\\&~~~+C\big[\big(|y|^{-(n-2)}\ast W^{p-\epsilon}(|\log W|+1)\big)W^{p-2-\epsilon}\big].
\end{split}
\end{equation*}
\end{lem}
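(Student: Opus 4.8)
The plan is to Taylor-expand the nonlinearity $N(v):=\big(|y|^{-(n-2)}\ast v^{p-\epsilon}\big)v^{p-1-\epsilon}$ about the bubble $W$, keeping track separately of the two sources of error that $f_\epsilon(\phi_\epsilon)$ encodes: the discrepancy $v_\epsilon-W$ and the discrepancy between the exponent $p-\epsilon$ and $p$. First I would record, from the definition $v_\epsilon=\lambda_\epsilon^{-(n-2)/2}PW[x_\epsilon,\lambda_\epsilon](\lambda_\epsilon^{-1}y+x_\epsilon)+\lambda_\epsilon^{-(n-2)}\phi_\epsilon(y)$, the scaling identity $\lambda_\epsilon^{-(n-2)/2}W[x_\epsilon,\lambda_\epsilon](\lambda_\epsilon^{-1}y+x_\epsilon)=W(y)$ and the projection estimate (Lemma \ref{Lem2.2}), that
$$v_\epsilon(y)=W(y)-\tilde c_{n,\mu}\lambda_\epsilon^{-(n-2)}\tilde H(x_\epsilon,x_\epsilon+\lambda_\epsilon^{-1}y)+\lambda_\epsilon^{-(n-2)}\phi_\epsilon(y)+\text{l.o.t.},$$
so that $|v_\epsilon-W|\le C\lambda_\epsilon^{-(n-2)}\big(1+|\phi_\epsilon|\big)$ on $B(0,M\lambda_\epsilon)$, while Lemma \ref{cWU} gives $0\le v_\epsilon\le cW$ everywhere; the bound $\lambda_\epsilon^{n-2}\epsilon\le C$ (indeed $\lambda_\epsilon^{n-2}\epsilon\to\kappa(n,x_0)$) follows from Theorem \ref{Figalli2} and Lemma \ref{finite}.

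Next, I would use the bilinearity of $(f,g)\mapsto\big(|y|^{-(n-2)}\ast f\big)g$ to split
$$N(v_\epsilon)-\big(|y|^{-(n-2)}\ast W^{p}\big)W^{p-1}=\big(|y|^{-(n-2)}\ast(v_\epsilon^{p-\epsilon}-W^{p})\big)v_\epsilon^{p-1-\epsilon}+\big(|y|^{-(n-2)}\ast W^{p}\big)\big(v_\epsilon^{p-1-\epsilon}-W^{p-1}\big),$$
and then, for $q\in\{p,p-1\}$, write $v_\epsilon^{q-\epsilon}-W^{q}=\big(v_\epsilon^{q-\epsilon}-W^{q-\epsilon}\big)+\big(W^{q-\epsilon}-W^{q}\big)$. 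For the "variable" piece the mean value theorem gives $v_\epsilon^{q-\epsilon}-W^{q-\epsilon}=(q-\epsilon)\theta^{q-1-\epsilon}(v_\epsilon-W)$ with $\theta$ between $W$ and $v_\epsilon$, hence $|v_\epsilon^{q-\epsilon}-W^{q-\epsilon}|\le CW^{q-1-\epsilon}|v_\epsilon-W|$ using $\theta\le cW$; one then peels off the linear part by writing $(q-\epsilon)\theta^{q-1-\epsilon}=qW^{q-1}+O\big(W^{q-2-\epsilon}|v_\epsilon-W|+\epsilon W^{q-1-\epsilon}|\log W|\big)$, a first- and second-order expansion of $t\mapsto t^{q-1-\epsilon}$ near $W$ that is legitimate because $q-2\ge p-2\ge0$, i.e. $n\le5$. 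For the "exponent" piece, $W^{q-\epsilon}-W^{q}=W^{q}\big(e^{-\epsilon\log W}-1\big)$ yields $|W^{q-\epsilon}-W^{q}|\le C\epsilon W^{q-\epsilon}|\log W|$.

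Finally I would multiply through by $\lambda_\epsilon^{n-2}$ and subtract $L\phi_\epsilon:=p\big(|y|^{-(n-2)}\ast W^{p-1}\phi_\epsilon\big)W^{p-1}+(p-1)\big(|y|^{-(n-2)}\ast W^{p}\big)W^{p-2}\phi_\epsilon$. Since $v_\epsilon-W=\lambda_\epsilon^{-(n-2)}\phi_\epsilon+O(\lambda_\epsilon^{-(n-2)})$, with the $\tilde H$-part smooth and bounded on $B(0,M\lambda_\epsilon)$, the leading linear-in-$(v_\epsilon-W)$ terms (the $pW^{p-1}(v_\epsilon-W)$ inside the convolution and the $(p-1)W^{p-2}(v_\epsilon-W)$ outside) reconstruct $\lambda_\epsilon^{n-2}\cdot\lambda_\epsilon^{-(n-2)}L\phi_\epsilon$ together with a smooth $\tilde H(x_\epsilon,\cdot)$ contribution; what survives are exactly the Taylor remainders and the $\epsilon W^{q}|\log W|$- and $\epsilon W^{q-\epsilon}|\log W|$-type terms, in which the prefactor $\lambda_\epsilon^{n-2}\epsilon$ is $O(1)$. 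Estimating the convolutions crudely via Hardy-Littlewood-Sobolev together with $v_\epsilon\le cW$, each surviving term is matched to one of the six groups in the statement, the $(1+\lambda_\epsilon^{-(n-2)})$ factors recording whether the term carries a bare $W$ or an extra power of $v_\epsilon-W$. The delicate point, and the reason the argument is confined to $n\in\{3,4,5\}$, is the second-order control of $t\mapsto t^{q-1-\epsilon}$ near $W$: one must keep it uniform in $\epsilon$, extract the remainder in the form $W^{q-2-\epsilon}|v_\epsilon-W|$ rather than something weaker, and simultaneously track the $\epsilon$-dependence so that the logarithmic contributions appear with precisely the weights $W^{p-1-\epsilon}|\log W|$, $W^{p-2-\epsilon}|\log W|$, $W^{p-1}|\log W|$ displayed; for $n\ge6$ the power $p-2$ is negative, $t^{p-2}$ is singular at $0$, and this clean decomposition breaks down.
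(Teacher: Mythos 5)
Your proposal is correct and follows essentially the same route as the paper: both expand the nonlinearity about $W$ using Lemma \ref{Lem2.2} for the projection, the pointwise comparison $v_\epsilon\le cW$ from Lemma \ref{cWU}, the mean-value theorem together with a first-order binomial expansion of $t\mapsto t^{q-\epsilon}$ (the paper's display \eqref{ab}), and Theorem \ref{Figalli2} to turn $\lambda_\epsilon^{n-2}\epsilon$ into an $O(1)$ prefactor, then compare against the linearized operator to isolate the remainder. The paper inserts intermediate terms and groups them as $J_1,\dots,J_4$ while you split via bilinearity first and then separate ``variable'' from ``exponent'' contributions, but this is a repackaging of the same Taylor-expansion bookkeeping rather than a new idea. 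One small slip: you assert $q-2\ge p-2\ge 0$ for $q\in\{p,p-1\}$, ``i.e.\ $n\le5$''; in fact $q\le p$ gives $q-2\le p-2$, the condition $p-2\ge0$ is $n\le6$, and for $q=p-1$ and $n=5$ one has $q-2<0$, so the reason the argument is confined to $n\le5$ is more delicate (one needs $p>2$, i.e.\ $n<6$, so that the decaying weight $W^{p-2-\epsilon}$ in the remainder is genuinely integrable against the $L^{q'}$ norms used later), though this does not affect the correctness of the bound itself.
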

\begin{proof}
We note that
\begin{equation}\label{lambdaepsilon}
\lambda_{\epsilon}^{-(n-2)/2}PW[a,\lambda](x)=W(y)-\tilde{c}_{n,\mu}\lambda^{-(n-2)}\tilde{H}(x_0,x_0+\lambda_{\epsilon}^{-1}y)+o(\lambda_{\epsilon}^{-(n-2)})
\end{equation}
by Lemma \ref{Lem2.2}.
Direct calculation shows that
\begin{equation*}
\begin{split}
&\big|\lambda_{\epsilon}^{-(n-2)}f_{\epsilon}(\phi_{\epsilon})\big|=\Big|\big(|y|^{-(n-2)}\ast v_{\epsilon}^{p-\epsilon}\big)v_{\epsilon}^{p-1-\epsilon}
-\big(|y|^{-(n-2)}\ast W^{p}\big)W^{p-1}\\&
~~~-\lambda_{\epsilon}^{-(n-2)}\big[p
\big(|y|^{-(n-2)}\ast W^{p-1}\phi_{\epsilon}\big)
W^{p-1}+(p-1)
\big(|y|^{-(n-2)}\ast W^{p}\big)
W^{p-2}\phi_{\epsilon}\big]\Big|\\&
\leq\Big|\big(|y|^{-(n-2)}\ast v_{\epsilon}^{p-\epsilon}\big)v_{\epsilon}^{p-1-\epsilon}
\\&~~~-\lambda_{\epsilon}^{-(n-2)}\Big[(p-\epsilon)
\big(|y|^{-(n-2)}\ast W^{p-1-\epsilon}\big(\phi_{\epsilon}-\tilde{c}_{n,\mu}H(x_0,x_0+\lambda_{\epsilon}^{-1}z)+o(\lambda_{\epsilon}^{-(n-2)})\big)\big)
W^{p-1-\epsilon}\\&~~~+(p-1-\epsilon)
\big(|y|^{-(n-2)}\ast W^{p-\epsilon}\big)
W^{p-2-\epsilon}\big(\phi_{\epsilon}-\tilde{c}_{n,\mu}H(x_0,x_0+\lambda_{\epsilon}^{-1}z)+o(\lambda_{\epsilon}^{-(n-2)})\big)\Big]\\&
~~~-\big(|x|^{-(n-2)}\ast W^{p-\epsilon}\big)W^{p-1-\epsilon}\Big|+\Big|\big(|x|^{-(n-2)}\ast W^{p-\epsilon}\big)W^{p-1-\epsilon}-\big(|x|^{-(n-2)}\ast W^{p}\big)W^{p-1}\Big|
\\&~~~+\lambda_{\epsilon}^{-(n-2)}\Big|p
\big(|y|^{-(n-2)}\ast W^{p-1}\phi_{\epsilon}\big)
W^{p-1}\\&~~~-(p-\epsilon)
\big[|y|^{-(n-2)}\ast W^{p-1-\epsilon}\big(\phi_{\epsilon}-\tilde{c}_{n,\mu}H(x_0,x_0+\lambda_{\epsilon}^{-1}z)+o(\lambda_{\epsilon}^{-(n-2)})\big)\big]
W^{p-1-\epsilon}\Big|\\&~~~+\lambda_{\epsilon}^{-(n-2)}\Big|(p-1)
\big(|y|^{-(n-2)}\ast W^{p}\big)
W^{p-2}\phi_{\epsilon}\\&~~~-(p-1-\epsilon)
\big(|y|^{-(n-2)}\ast W^{p-\epsilon}\big)
W^{p-2-\epsilon}\big(\phi_{\epsilon}-\tilde{c}_{n,\mu}H(x_0,x_0+\lambda_{\epsilon}^{-1}z)+o(\lambda_{\epsilon}^{-(n-2)})\big)\Big|\\&
:=J_1+J_2+J_3+J_4.
\end{split}
\end{equation*}
In the following, we shall estimate values of both $J_1-J_2$ and $J_3-J_4$.
For $J_1$, we first note that for $\epsilon$ small enough, there holds
\begin{equation}\label{ab}
a^t(1+\frac{b}{a})^t\approx a^t(1+t\frac{b}{a})\quad\mbox{as}\quad|\frac{b}{a}|\ll1.
\end{equation}
Du to \eqref{ab}, we get
\begin{equation*}
\begin{split}
J_1&\leq C\lambda_{\epsilon}^{-2(n-2)}
\big[|y|^{-(n-2)}\ast W^{p-1-\epsilon}\big(\phi_{\epsilon}-\tilde{c}_{n,\mu}H(x_0,x_0+\lambda_{\epsilon}^{-1}z)+o(\lambda_{\epsilon}^{-(n-2)})\big)\big]\\&
\hspace{4mm}\times
W^{p-2-\epsilon}\big[\phi_{\epsilon}-\tilde{c}_{n,\mu}H(x_0,x_0+\lambda_{\epsilon}^{-1}z)+o(\lambda_{\epsilon}^{-(n-2)})\big]\\&
\leq C\lambda_{\epsilon}^{-(n-2)}
\big[|y|^{-(n-2)}\ast W^{p-1-\epsilon}|\phi_{\epsilon}|\big]
W^{p-2-\epsilon}\big[1+|v_{\epsilon}-W|+\lambda_{\epsilon}^{-(n-2)}\big]\\&
\hspace{4mm}+C\lambda_{\epsilon}^{-2(n-2)}
\big[|y|^{-(n-2)}\ast W^{p-1-\epsilon}\big]
W^{p-2-\epsilon}\big[1+|\phi_{\epsilon}|\big],
\end{split}
\end{equation*}
as $\epsilon\rightarrow0$. We compute
$$|J_2|\leq \kappa(n,x_0)\lambda_{\epsilon}^{-(n-2)}\big[\big(|y|^{-(n-2)}\ast W^{p}\big)
W^{p-1-\epsilon}|\log W|+\big(|y|^{-(n-2)}\ast W^{p-\varepsilon}|\log W|\big)
W^{p-1-\epsilon}\big]$$
by the mean theorem and Theorem \ref{Figalli2}. For $J_3$, we have
\begin{equation*}
\begin{split}
J_3&\leq C\lambda_{\epsilon}^{-(n-2)}\Big[
\big(|y|^{-(n-2)}\ast W^{p-1}|\phi_{\epsilon}|\big)W^{p-1}-\big(|y|^{-(n-2)}\ast W^{p-1-\epsilon}|\phi_{\epsilon}|\big)
W^{p-1-\epsilon}\\&~~~+C\lambda_{\epsilon}^{-(n-2)}\Big[\epsilon\big(|y|^{-(n-2)}\ast W^{p-1-\epsilon}|\phi_{\epsilon}|\big)W^{p-1-\epsilon}+\big(|y|^{-(n-2)}\ast W^{p-1-\epsilon}\big)W^{p-1-\epsilon}\Big]\\&
\leq C\lambda_{\epsilon}^{-(n-2)}\Big[\lambda_{\epsilon}^{-(n-2)}
\big(|y|^{-(n-2)}\ast W^{p-1-\epsilon}|\log W||\phi_{\epsilon}|\big)
W^{p-1}+\big(|y|^{-(n-2)}\ast W^{p-1-\epsilon}\big)W^{p-1-\epsilon}\Big]\\&~~~+C\lambda_{\epsilon}^{-(n-2)}\Big[\lambda_{\epsilon}^{-(n-2)}\big(|y|^{-(n-2)}\ast W^{p-1-\epsilon}|\phi_{\epsilon}|\big)W^{p-1-\epsilon}(|\log W|+1)\Big],
\end{split}
\end{equation*}
by the mean theorem and Theorem \ref{Figalli2}.
Analogously to estimate of $J_3$, we also have
\begin{equation*}
\begin{split}
J_4&\leq C\lambda_{\epsilon}^{-(n-2)}\Big[\lambda_{\epsilon}^{-(n-2)}
\big(|y|^{-(n-2)}\ast W^{p}\big)
W^{p-2-\epsilon}|\log W||\phi_{\epsilon}|+\big(|y|^{-(n-2)}\ast W^{p-\epsilon}\big)W^{p-2-\epsilon}\Big]\\&~~~+C\lambda_{\epsilon}^{-(n-2)}\Big[\lambda_{\epsilon}^{-(n-2)}\big(|y|^{-(n-2)}\ast W^{p-\epsilon}(\log W+1)\big)W^{p-2-\epsilon}|\phi_{\epsilon}|\Big].
\end{split}
\end{equation*}
Combined with \eqref{cU}, hence the conclusion follows.
\end{proof}
We will need the following crucial result.
\begin{Prop}\label{pro}
Let $n=3,4,5$ and there exists a constant $M>0$ depending only on $\Omega$ such that
$B(x_0,3M)\subset\Omega$, there holds
$$\phi_{\epsilon}\rightarrow\phi_0\quad\mbox{in}\quad L^{\infty}(B(0,M\lambda_{\epsilon}))$$
as $\epsilon\rightarrow0$, where $\phi_0\in W^{2,q^{\prime}}(\mathbb{R}^n)(~\mbox{for}~q^{\prime}>n)$ is a bounded solution of
\begin{equation}\label{fai0}
\begin{split}
-\Delta\phi_0&=p
\Big(|y|^{-(n-2)}\ast W^{p-1}\phi_0\Big)
W^{p-1}+(p-1)\Big(|y|^{-(n-2)}\ast W^{p}\Big)
W^{p-2}\phi_{0}\\&-\kappa(n,x_0)\big(|y|^{-(n-2)}\ast W^{p}\big)W^{p-1}\log W-\kappa(n,x_0)\big(|y|^{-(n-2)}\ast W^{p}\log W\big)W^{p-1}\\&-(p-1)\tilde{c}_{n,\mu}\big(|y|^{-(n-2)}\ast W^p\big)W^{p-2}\tilde{H}(x_0,x_0)\\&
-p\tilde{c}_{n,\mu}\big(|y|^{-(n-2)}\ast W^{p-1}\tilde{H}(x_0,x_0)\big)W^{p-1}\quad\mbox{in}\quad\mathbb{R}^n.
\end{split}
\end{equation}
\end{Prop}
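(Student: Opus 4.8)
The plan is to prove Proposition \ref{pro} in two steps: an $\epsilon$-uniform a priori estimate for $\phi_\epsilon$ (in $L^\infty$, together with a uniform spatial decay bound), and then a compactness argument that identifies the limit. Throughout, I would write the linearized Hartree operator
\[
L[\phi]:=-\Delta\phi-p\bigl(|y|^{-(n-2)}\ast W^{p-1}\phi\bigr)W^{p-1}-(p-1)\bigl(|y|^{-(n-2)}\ast W^{p}\bigr)W^{p-2}\phi,
\]
so that by \eqref{fepusilong} we have $L[\phi_\epsilon]=f_\epsilon(\phi_\epsilon)$ in $\Omega_\epsilon$ with $\phi_\epsilon=0$ on $\partial\Omega_\epsilon$ (both $v_\epsilon$ and $PW[x_\epsilon,\lambda_\epsilon]$ vanish there). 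I would also record at the outset that the maximum-point normalization $v_\epsilon(0)=\max v_\epsilon=1$, combined with the expansion of the projection in Lemma \ref{Lem2.2}, forces $\nabla\phi_\epsilon(0)=o(1)$ and $\phi_\epsilon(0)=\tilde c_{n,\mu}\tilde H(x_0,x_0)+o(1)$ as $\epsilon\to0$; these $n+1$ conditions are what will select the limiting profile.

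\textbf{Step 1 (uniform bounds).} I would prove $m_\epsilon:=\|\phi_\epsilon\|_{L^\infty(B(0,M\lambda_\epsilon))}\le C$ by contradiction. Assuming $m_\epsilon\to\infty$, set $\hat\phi_\epsilon:=\phi_\epsilon/m_\epsilon$. By Lemma \ref{usefulestimate}, the decay of $W$, and Theorem \ref{Figalli2} (which gives $\epsilon\approx\kappa(n,x_0)\lambda_\epsilon^{-(n-2)}\to0$), the rescaled source $f_\epsilon(\phi_\epsilon)/m_\epsilon$ is dominated by a fixed, rapidly decaying weight times $(1+|\hat\phi_\epsilon|)$ with arbitrarily small coefficient once $\epsilon$ is small and $|y|$ is large. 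The crucial point is to convert this into a uniform decay estimate $|\hat\phi_\epsilon(y)|\le C(1+|y|)^{2-n}$ valid on all of $B(0,M\lambda_\epsilon)$: this follows from the Green representation $\hat\phi_\epsilon(y)=\int_{\Omega_\epsilon}G_{\Omega_\epsilon}(y,z)\,m_\epsilon^{-1}(\mathrm{RHS})(z)\,dz$, the bound $0<G_{\Omega_\epsilon}(y,z)\le c|y-z|^{2-n}$, and a Moser-type iteration in the exterior region — equivalently, applied to the Kelvin transform of $\hat\phi_\epsilon$ exactly as in the proof of Lemma \ref{cWU} — using that the $L^{n/2}$ norm of the coefficient potential over $\Omega_\epsilon\setminus B(0,R)$ is small for large $R$, uniformly in $\epsilon$. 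With this decay in hand, interior elliptic estimates give $\hat\phi_\epsilon\to\hat\phi_0$ in $C^1_{loc}(\mathbb R^n)$ with $L[\hat\phi_0]=0$ and $\hat\phi_0$ bounded; by the known non-degeneracy of the Hartree bubble $W$, $\hat\phi_0\in\mathrm{span}\{\partial_1W,\dots,\partial_nW,\,Z\}$ with $Z:=\tfrac{n-2}{2}W+y\cdot\nabla W$. Passing the normalization to the limit, $\nabla\hat\phi_0(0)=0$ forces the $\partial_iW$-coefficients to vanish (since $D^2W(0)$ is a nonzero multiple of the identity while $\nabla Z(0)=0$), and then $\hat\phi_0(0)=0$ forces the $Z$-coefficient to vanish (since $Z(0)=\tfrac{n-2}{2}W(0)\neq0$). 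Hence $\hat\phi_0\equiv0$; since the uniform decay prevents the supremum of $\hat\phi_\epsilon$ from escaping to $|y|\sim\lambda_\epsilon$, this contradicts $\|\hat\phi_\epsilon\|_{L^\infty}=1$. The same decay estimate applied directly to $\phi_\epsilon$ then gives $|\phi_\epsilon(y)|\le C(1+|y|)^{2-n}$ on $B(0,M\lambda_\epsilon)$.

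\textbf{Step 2 (passing to the limit).} Given the uniform $L^\infty$ and decay bounds, I would insert the Taylor expansions $W^{p-\epsilon}=W^{p}-\kappa(n,x_0)\lambda_\epsilon^{-(n-2)}W^{p}\log W+o(\lambda_\epsilon^{-(n-2)})$ (using Theorem \ref{Figalli2}) and $\lambda_\epsilon^{-(n-2)/2}PW[x_\epsilon,\lambda_\epsilon](\lambda_\epsilon^{-1}\cdot+x_\epsilon)=W-\tilde c_{n,\mu}\lambda_\epsilon^{-(n-2)}\tilde H(x_0,x_0)+o(\lambda_\epsilon^{-(n-2)})$ (Lemma \ref{Lem2.2}) into the definition of $f_\epsilon(\phi_\epsilon)$, and use Lemma \ref{usefulestimate}, the decay of $W$, and dominated convergence to show $\lambda_\epsilon^{n-2}f_\epsilon(\phi_\epsilon)$ converges in $L^{q'}_{loc}$ to exactly the four inhomogeneous terms on the right of \eqref{fai0}. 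Since all of these, and the linear terms (controlled by the decay bound on $\phi_\epsilon$), lie in $L^{q'}(\mathbb R^n)$ for $q'>n$ by the decay of $W$, interior $W^{2,q'}$ estimates together with $W^{2,q'}\hookrightarrow C^{1,\alpha}$ yield, along a subsequence, $\phi_\epsilon\to\phi_0$ in $C^1_{loc}(\mathbb R^n)$ with $\phi_0\in W^{2,q'}(\mathbb R^n)$ bounded and solving \eqref{fai0}; combining $C^1_{loc}$ convergence with the common $|y|^{2-n}$ decay of $\phi_\epsilon$ and $\phi_0$ upgrades this to convergence in $L^\infty(B(0,M\lambda_\epsilon))$. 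For uniqueness, the difference of two bounded solutions of \eqref{fai0} lies in $\ker L=\mathrm{span}\{\partial_iW,Z\}$, and the limiting constraints $\phi_0(0)=\tilde c_{n,\mu}\tilde H(x_0,x_0)$, $\nabla\phi_0(0)=0$ kill it exactly as in Step 1 — and solvability of \eqref{fai0} is itself consistent because orthogonality of its right-hand side to $Z$ is precisely the Pohozaev relation underlying Theorem \ref{Figalli2}. Since the limit is then independent of the subsequence, the whole family converges.

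\textbf{Main obstacle.} The hard part is Step 1, and within it the uniform-in-$\epsilon$ decay estimate on the expanding domain $B(0,M\lambda_\epsilon)$: one must rule out that $\phi_\epsilon$ develops a bump in the transition annulus near $\partial\Omega_\epsilon$, where the projection $PW[x_\epsilon,\lambda_\epsilon]$ and the domain geometry interact. This is handled by the Kelvin-transform plus Moser-iteration machinery already deployed in Section \ref{sec:sobolev}; once it is in place, the rest is a long but routine passage to the limit powered by Lemma \ref{usefulestimate}, Theorem \ref{Figalli2}, and the non-degeneracy of $W$.
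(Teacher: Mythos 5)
Your proposal follows the same overall strategy as the paper's Section 7 argument: argue by contradiction, normalize, pass to the limit, invoke the non-degeneracy of $W$ (the paper's Proposition \ref{prondgr}), and then use the $n+1$ constraints $\phi_\epsilon(0)=\tilde c_{n,\mu}\tilde H(x_0,x_0)+o(1)$, $\nabla\phi_\epsilon(0)=o(1)$ (coming from $v_\epsilon(0)=\max v_\epsilon=1$ and Lemma \ref{Lem2.2}) to kill the $(n+1)$-dimensional kernel. Your identification of the limiting equation \eqref{fai0} via the Taylor expansions $W^{p-\epsilon}=W^p-\kappa(n,x_0)\lambda_\epsilon^{-(n-2)}W^p\log W+\cdots$ and $\lambda_\epsilon^{-(n-2)/2}PW[x_\epsilon,\lambda_\epsilon]\to W-\tilde c_{n,\mu}\lambda_\epsilon^{-(n-2)}\tilde H(x_0,x_0)+\cdots$ is exactly what the paper does, and your uniqueness argument at the end is also the paper's.

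Where you diverge is in the a priori estimate. The paper normalizes $\eta_j:=\phi_{\epsilon_j}/\|\phi_{\epsilon_j}\|_{L^{q'}(\Omega_{\epsilon_j})}$ and proves a uniform $W^{2,q'}(\mathbb R^n)$ bound (Lemma \ref{bopund}), combining Hardy--Littlewood--Sobolev and H\"older estimates for the nonlocal terms with the two-exponent elliptic estimate of Lemma \ref{regular000}; Sobolev embedding $W^{2,q'}\hookrightarrow L^\infty$ for $q'>n$ then delivers the pointwise control, and $L^\infty(B(0,M\lambda_\epsilon))$ convergence is obtained in the proposition's proof by comparing $\phi_j$ with a cutoff $z_j=\tilde\xi(\lambda_j^{-1}y)\phi_0(y)$ in the $W^{2,q'}$ norm. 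You instead normalize by the $L^\infty$ norm and propose deriving a pointwise decay estimate $|\hat\phi_\epsilon(y)|\le C(1+|y|)^{2-n}$ via the Green representation together with the Kelvin-transform-plus-Moser machinery from Section \ref{sec:sobolev}. That route should be flagged: Lemma \ref{regular1} and the argument of Lemma \ref{cWU} treat equations of the form $-\Delta u=a(x)u^q$, where the Riesz convolution is applied to powers of $u$ itself and can therefore be absorbed into a pointwise coefficient $a(x)$. In the linearized equation the term $p\bigl(|y|^{-(n-2)}\ast W^{p-1}\phi\bigr)W^{p-1}$ has $\phi$ \emph{inside} the convolution, so it is not a potential acting by multiplication, and the Moser iteration of Lemma \ref{regular1} does not apply to it directly — one must treat that nonlocal piece separately (e.g. bound it pointwise using the $L^\infty$ normalization and the decay of $W^{p-1}$, then iterate), which is precisely the estimate the paper carries out in $L^{q'}$ in \eqref{0091qq}--\eqref{0q668}. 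Your Step~1 as written glosses over this; it is the only real soft spot in an otherwise faithful reconstruction of the argument.

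Two minor further remarks: the precise rate $(1+|y|)^{2-n}$ you claim is not what the paper uses (it works with $W^{2,q'}$ integrability and, in the cutoff estimate $\mathcal I_6$, derives $|\phi_0|\lesssim|y|^{-4}$ from the decay of the right-hand side of \eqref{fai0}), and for $n=4,5$ the kernels $W^{p-1}$ are not integrable, so the exact exterior decay needs care; and the paper records the passage from a subsequence to the full family at the very end (two subsequential limits must coincide), which you also state, so no issue there.
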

Its proof is postponed to Section \ref{pro-1}, but use it to show Theorems \ref{maximum}-\ref{asymptotic} in section \ref{THEOREM}.
\subsection{A lower bound for $S_{HL}^{\epsilon}$}
This subsection is devoted to the derivation of a lower bound for $S_{HL}^{\epsilon}$.
\begin{Prop}\label{pro00}
It holds that
\begin{equation*}
\frac{S_{HL}^{\epsilon}}{\lambda_{\epsilon}^{\frac{(n-2)\epsilon}{p-\epsilon}}}= C_{HLS}+\lambda_{\varepsilon}^{-(n-2)}\bigg[\tilde{H}(x_0,x_0)C_{HLS}^{-\frac{n-2}{4}}\mathcal{F}^{\prime}+\frac{2}{p}C_{HLS}^{-\frac{n-2}{4}}\mathcal{D}^{\prime}-\frac{1}{p^2}C_{HLS}\mathcal{E}^{\prime}\bigg]+o(\lambda_{\varepsilon}^{-(n-2)}),
\end{equation*}
where the above notations $\mathcal{F}^{\prime}$, $\mathcal{D}^{\prime}$ and $\mathcal{E}^{\prime}$ are defined in Lemma \ref{shlep}.
\end{Prop}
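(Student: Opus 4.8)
The plan is to sandwich $S_{HL}^{\epsilon}\lambda_{\epsilon}^{-(n-2)\epsilon/(p-\epsilon)}$ between two estimates of the same order. The upper bound is essentially Lemma \ref{shlep} applied with $x_1=x_{\epsilon}$: since $x_{\epsilon}\to x_0$ and $\tilde H$ is continuous on the diagonal, this gives ``$\le$'' with $\tilde H(x_0,x_0)$ in front of $\mathcal F'$. The genuinely new content is the matching lower bound, which I would extract from the fact that the least energy solution $u_\epsilon$ itself realises $S_{HL}^{\epsilon}$.

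First I would use the equation \eqref{ele-1.1} to write $\int_\Omega|\nabla u_\epsilon|^2=\int_\Omega(|x|^{-(n-2)}\ast u_\epsilon^{p-\epsilon})u_\epsilon^{p-\epsilon}$, so that $S_{HL}^{\epsilon}$ equals that quantity raised to the power $\tfrac{p-\epsilon-1}{p-\epsilon}$; after rescaling as in \eqref{rescaled} this becomes
\[
\frac{S_{HL}^{\epsilon}}{\lambda_{\epsilon}^{(n-2)\epsilon/(p-\epsilon)}}
=\Big[\int_{\Omega_{\epsilon}}\int_{\Omega_{\epsilon}}\frac{v_{\epsilon}^{p-\epsilon}(z)v_{\epsilon}^{p-\epsilon}(w)}{|z-w|^{n-2}}\,dz\,dw\Big]^{\frac{p-\epsilon-1}{p-\epsilon}}.
\]
Into the right-hand side I would insert the decomposition $v_{\epsilon}(y)=\lambda_{\epsilon}^{-(n-2)/2}PW[x_{\epsilon},\lambda_{\epsilon}](\lambda_{\epsilon}^{-1}y+x_{\epsilon})+\lambda_{\epsilon}^{-(n-2)}\phi_{\epsilon}(y)$ and expand the double integral to order $\lambda_{\epsilon}^{-(n-2)}$, using: the projection expansion \eqref{lambdaepsilon} of Lemma \ref{Lem2.2}, which together with Proposition \ref{pro} gives $v_{\epsilon}=W+\lambda_{\epsilon}^{-(n-2)}\big(\phi_0-\tilde c_{n,\mu}\tilde H(x_0,x_0)\big)+o(\lambda_{\epsilon}^{-(n-2)})$ pointwise; the pointwise bound \eqref{cU} of Lemma \ref{cWU}, which supplies the $W$-domination needed both for dominated convergence and for discarding the tails over $\mathbb R^n\setminus\Omega_{\epsilon}$; the relation $\epsilon=\kappa(n,x_0)\lambda_{\epsilon}^{-(n-2)}+o(\lambda_{\epsilon}^{-(n-2)})$ coming from \eqref{miu} and Theorem \ref{Figalli2}; and the Taylor expansion $W^{p-\epsilon}=W^{p}-\epsilon W^{p}\log W+O(\epsilon^2 W^{p})$. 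The terms linear in $\phi_0$ that appear are rewritten through $\int(|x|^{-(n-2)}\ast W^{p})W^{p-1}\phi_0=\int(-\Delta W)\phi_0=\int W(-\Delta\phi_0)$ and the explicit equation \eqref{fai0}; this both consumes the $\phi_0$-dependence and produces the $\mathcal D'$, $\mathcal E'$ and $\mathcal F'$ contributions with their exact coefficients. Taking the $\tfrac{p-\epsilon-1}{p-\epsilon}$-th power and expanding once more yields the asserted equality, and combining with the upper bound finishes the proof.

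The main obstacle is the bookkeeping of this $O(\lambda_{\epsilon}^{-(n-2)})$ expansion. On one side one must verify that every genuinely higher-order piece — the tails $\int_{\mathbb R^n\setminus\Omega_{\epsilon}}$, the difference $\phi_{\epsilon}-\phi_0$, the $\tilde H^2$-type terms and the $\epsilon^2$-type terms — is $o(\lambda_{\epsilon}^{-(n-2)})$, and it is precisely here that the hypothesis $n=3,4,5$ is used: for $n\ge 6$ one has $p\le 2$, so that $W^{p-2}$ fails to decay, $\phi_0$ is no longer bounded, and several of these remainders become of the critical order $\lambda_{\epsilon}^{-(n-2)}$. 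On the other side one must carry out the exact algebraic identification of the linear-in-$\phi_0$ part via \eqref{fai0}, which is the step that pins down the numerical coefficients $\tilde H(x_0,x_0)C_{HLS}^{-(n-2)/4}\mathcal F'$, $\tfrac{2}{p}C_{HLS}^{-(n-2)/4}\mathcal D'$ and $-\tfrac{1}{p^2}C_{HLS}\mathcal E'$.
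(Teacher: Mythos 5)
Your plan reproduces the paper's proof in its essential steps: use $-\Delta u_\epsilon = (|x|^{-(n-2)}\ast u_\epsilon^{p-\epsilon})u_\epsilon^{p-1-\epsilon}$ to write $\lambda_\epsilon^{-(n-2)\epsilon/(p-\epsilon)}S_{HL}^\epsilon$ as a $(1-\tfrac{1}{p-\epsilon})$-power of the rescaled nonlocal energy, insert $v_\epsilon = W + \lambda_\epsilon^{-(n-2)}\bigl(\phi_\epsilon-\tilde c_{n,\mu}\tilde H(x_0,x_0+\lambda_\epsilon^{-1}\cdot)\bigr)+o(\lambda_\epsilon^{-(n-2)})$ via Lemma \ref{Lem2.2} and Proposition \ref{pro}, Taylor-expand with $\epsilon=\kappa(n,x_0)\lambda_\epsilon^{-(n-2)}+o(\lambda_\epsilon^{-(n-2)})$ and the $W^{p-\epsilon}$ expansion, and eliminate the $\phi_0$-linear terms by testing \eqref{fai0} against $W$ (equivalently, the identity the paper writes out explicitly). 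The only cosmetic difference is your "sandwich" framing: since $u_\epsilon$ is assumed to attain $S_{HL}^\epsilon$, the expansion already yields an equality and Lemma \ref{shlep} is not needed at this stage — it is invoked later in the proof of Theorem \ref{maximum} to compare with an arbitrary $x_1$.
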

\begin{proof}
We compute
\begin{equation}\label{SHLeong}
\begin{split}
\lambda_{\epsilon}^{-(n-2)\epsilon/(p-\epsilon)}S_{HL}^{\epsilon}&=\lambda_{\epsilon}^{-(n-2)\epsilon/(p-\epsilon)}S_{HL}^{\epsilon}(u_{\epsilon})\\&
=\lambda_{\epsilon}^{-(n-2)\epsilon/(p-\epsilon)}\left[\int_{\Omega}(|x|^{-(n-2)} \ast|u_{\epsilon}|^{p-\epsilon})|u_{\epsilon}|^{p-\epsilon} dx\right]^{1-\frac{1}{p-\epsilon}}\\&
=\left[\int_{\Omega_\epsilon}(|y|^{-(n-2)} \ast|v_{\epsilon}|^{p-\epsilon})|v_{\epsilon}|^{p-\epsilon} dy\right]^{1-\frac{1}{p-\epsilon}}.
\end{split}
\end{equation}
Owing to Lemma \ref{cWU} and Hardy-Littlewood-Sobolev inequality, we have
\begin{equation}\label{Leong}
\begin{split}
\left[\int_{\Omega_\epsilon}\int_{\Omega_\epsilon}\frac{|v_{\epsilon}|^{p-\epsilon}|v_{\epsilon}|^{p-\epsilon}}{|y-z|^{n-2}} dydz\right]^{1-\frac{1}{p-\epsilon}}=\left[\int_{B_{M\lambda_{\epsilon}}}\int_{B_{M\lambda_{\epsilon}}}\frac{|v_{\epsilon}(y)|^{p-\epsilon}|v_{\epsilon}(z)|^{p-\epsilon}}{|y-z|^{n-2}} dydz+O(\frac{1}{\lambda_{\epsilon}^4})\right]^{1-\frac{1}{p-\epsilon}},
\end{split}
\end{equation}
where we have used the fact that
\begin{equation*}
\begin{split}
2\int_{\Omega_{\epsilon}\setminus B_{M\lambda_{\epsilon}}}\int_{\Omega_{\epsilon}}\frac{W^{p}(z)W^{p}(y)}{|y-z|^{n-2}}dydz
+\int_{\Omega_{\epsilon}\setminus\ B_{M\lambda_{\epsilon}}}\int_{\Omega_{\epsilon}\setminus\ B_{M\lambda_{\epsilon}}}\frac{W^{p}(z)W^{p}(y)}{|y-z|^{n-2}}dydz=O(\frac{1}{\lambda_{\epsilon}^4}).
\end{split}
\end{equation*}
Furthermore, we note that
$$v_{\epsilon}(y)=W(y)+\lambda_{\epsilon}^{-(n-2)}[\phi_{\epsilon}-\tilde{c}_{n,\mu}H(x_0,x_0+\lambda_{\epsilon}^{-1}y))]+o(\frac{1}{\lambda_{\epsilon}^{n-2}}):=W(y)+\mathcal{H}_{\phi_{\epsilon}}(y)+o(\frac{1}{\lambda_{\epsilon}^{n-2}}).$$ Then, combing \eqref{lambdaepsilon} and \eqref{SHLeong}-\eqref{Leong} entail that
\begin{equation}\label{S2H}
\begin{split}
&\lambda_{\epsilon}^{-(n-2)\epsilon/(p-\epsilon)}S_{HL}^{\epsilon}\\=&\left[\int_{B_{M\lambda_{\epsilon}}}\int_{B_{M\lambda_{\epsilon}}}\frac{|v_{\epsilon}(y)|^{p-\epsilon}|v_{\epsilon}(z)|^{p-\epsilon}}{|y-z|^{n-2}} dydz+O(\frac{1}{\lambda_{\epsilon}^4})\right]^{1-\frac{1}{p-\epsilon}}\\
=&\left[\int_{B_{M\lambda_{\epsilon}}}\int_{B_{M\lambda_{\epsilon}}}\frac{(W+\mathcal{H}_{\phi_{\epsilon}}+o(\frac{1}{\lambda_{\epsilon}^{n-2}}))^{p-\epsilon}(W+\mathcal{H}_{\phi_{\epsilon}}+o(\frac{1}{\lambda_{\epsilon}^{n-2}}))^{p-\epsilon}}{|y-z|^{n-2}} dydz\right]^{1-\frac{1}{p-\epsilon}}+o(\frac{1}{\lambda_{\epsilon}^{n-2}}).
\end{split}
\end{equation}
We note that
\begin{equation*}
\begin{split}
\int_{B_{M\lambda_{\epsilon}}}\int_{B_{M\lambda_{\epsilon}}}\frac{W^{p-1}(z)\mathcal{H}_{\phi_{\epsilon}}(z)W^{p-1}(y)\mathcal{H}_{\phi_{\epsilon}}(y)}{|y-z|^{n-2}} dydz=
\begin{cases}
O\big(\frac{1}{\lambda_{\epsilon}^{2(n-2)}}\big), \quad\quad\hspace{1.5mm}\mbox{ when } n=3,\\
O\big(\frac{\log\lambda_{\epsilon}}{\lambda_{\epsilon}^{2(n-2)}}\big), \quad\quad\hspace{2mm}\mbox{ when } n=4,\\
O\big(\frac{1}{\lambda_{\epsilon}^{4}}\big), \hspace{1mm}\quad\quad\hspace{8mm}\mbox{ when } n=5.
\end{cases}
\end{split}
\end{equation*}
Therefore, using Hardy-Littlewood-Sobolev inequality, \eqref{taileextend}, Proposition \ref{pro} and \eqref{p1-00}, a direct calculation shows that
\begin{equation*}
\begin{split}
&\left[\int_{B_{M\lambda_{\epsilon}}}\int_{B_{M\lambda_{\epsilon}}}\frac{(W(z)+\mathcal{H}_{\phi_{\epsilon}}(z))^{p-\epsilon}(W(y)+\mathcal{H}_{\phi_{\epsilon}}(y))^{p-\epsilon}}{|y-z|^{n-2}} dydz\right]^{1-\frac{1}{p-\epsilon}}\\&
=\bigg[\int_{\mathbb{R}^n}\int_{\mathbb{R}^n}\frac{W^{p-\epsilon}(z)W^{p-\epsilon}(y)}{|y-z|^{n-2}} dydz+p\int_{B_{M\lambda_{\epsilon}}}\int_{B_{M\lambda_{\epsilon}}}\frac{W^p(z)W^{p-1}(y)\mathcal{H}_{\phi_{\epsilon}}(y)}{|y-z|^{n-2}} dydz\\&
\hspace{4mm}+p\int_{B_{M\lambda_{\epsilon}}}\int_{B_{M\lambda_{\epsilon}}}\frac{W^{p-1}(z)\mathcal{H}_{\phi_{\epsilon}}(z)W^{p}(y)}{|y-z|^{n-2}} dydz\\&\hspace{4mm}+p^2\int_{B_{M\lambda_{\epsilon}}}\int_{B_{M\lambda_{\epsilon}}}\frac{W^{p-1}(z)\mathcal{H}_{\phi_{\epsilon}}(z)W^{p-1}(y)\mathcal{H}_{\phi_{\epsilon}}(y)}{|y-z|^{n-2}} dydz+o(\lambda_{\epsilon}^{-(n-2)})\bigg]^{1-\frac{1}{p-\epsilon}}
\\&
=\bigg[\int_{\mathbb{R}^n}\int_{\mathbb{R}^n}\frac{W^{p}(y)W^{p}(z)}{|y-z|^{n-2}} dydz-\kappa(n,x_0)\lambda_{\epsilon}^{-(n-2)}\int_{\mathbb{R}^n}\int_{\mathbb{R}^n}\frac{W^{p}(z)W^{p}(y)\log W(y)}{|y-z|^{n-2}} dydz\\&
\hspace{4mm}-\kappa(n,x_0)\lambda_{\epsilon}^{-(n-2)}\int_{\mathbb{R}^n}\int_{\mathbb{R}^n}\frac{W^{p}(z)\log W(z)W^{p}(y)}{|y-z|^{n-2}} dydz\\&
\hspace{4mm}+\frac{p}{\lambda_{\epsilon}^{n-2}}\int_{B_{M\lambda_{\epsilon}}}\int_{B_{M\lambda_{\epsilon}}}\frac{W^p(z)W^{p-1}(y)[\phi_{\epsilon}-\tilde{c}_{n,\mu}H(x_0,x_0+\lambda_{\epsilon}^{-1}y)]}{|y-z|^{n-2}} dydz\\&\hspace{4mm}+\frac{p}{\lambda_{\epsilon}^{n-2}}\int_{B_{M\lambda_{\epsilon}}}\int_{B_{M\lambda_{\epsilon}}}\frac{W^{p-1}[\phi_{\epsilon}-\tilde{c}_{n,\mu}H(x_0,x_0+\lambda_{\epsilon}^{-1}z)]W^{p}}{|y-z|^{n-2}} +o(\frac{1}{\lambda_{\epsilon}^{n-2}})\bigg]^{1-\frac{1}{p-\epsilon}}+o(\frac{1}{\lambda_{\epsilon}^{n-2}})\\&
\end{split}
\end{equation*}
\begin{equation*}
\begin{split}
&=\bigg[\int_{\mathbb{R}^n}\int_{\mathbb{R}^n}\frac{W^{p}(y)W^{p}(z)}{|y-z|^{n-2}} dydz-\kappa(n,x_0)\lambda_{\epsilon}^{-(n-2)}\int_{\mathbb{R}^n}\int_{\mathbb{R}^n}\frac{W^{p}(z)W^{p}(y)\log W(y)}{|y-z|^{n-2}} dydz\\&
\hspace{4mm}-\frac{\kappa(n,x_0)}{\lambda_{\epsilon}^{n-2}}\int_{\mathbb{R}^n}\int_{\mathbb{R}^n}\frac{W^{p}(z)\log W(z)W^{p}(y)}{|y-z|^{n-2}} dydz
+\frac{p}{\lambda_{\epsilon}^{n-2}}\int_{\mathbb{R}^n}\int_{\mathbb{R}^n}\frac{W^p(z)W^{p-1}(y)\phi_{0}(y)}{|y-z|^{n-2}} dydz\\&+\frac{p}{\lambda_{\epsilon}^{n-2}}\int_{\mathbb{R}^n}\int_{\mathbb{R}^n}\frac{W^{p-1}(z)\phi_{0}(z)W^{p}(y)}{|y-z|^{n-2}} dydz-\frac{p\tilde{c}_{n,\mu}}{\lambda_{\epsilon}^{n-2}}\tilde{H}(x_0,x_0)\int_{\mathbb{R}^n}\int_{\mathbb{R}^n}\frac{W^{p-1}(z)W^{p}(y)}{|y-z|^{n-2}} dydz\\&\hspace{4mm}-p\tilde{c}_{n,\mu}\lambda_{\epsilon}^{-(n-2)}\tilde{H}(x_0,x_0)\int_{\mathbb{R}^n}\int_{\mathbb{R}^n}\frac{W^p(z)W^{p-1}(y)}{|y-z|^{n-2}} dydz+o(\lambda_{\epsilon}^{-(n-2)})\bigg]^{1-\frac{1}{p-\epsilon}}+o(\lambda_{\epsilon}^{-(n-2)}),
\end{split}
\end{equation*}
where we have used the fact that $|\tilde{H}(x_0,x_0+\lambda_{\epsilon}^{-1}y)-\tilde{H}(x_0,x_0)|\leq C\lambda_{\epsilon}^{-1}y$.
Moreover, one has
\begin{equation*}
\begin{split}
&\int_{\mathbb{R}^n}\int_{\mathbb{R}^n}\frac{W^p(z)W^{p-1}(y)\phi_{0}}{|y-z|^{n-2}} dydz+\int_{\mathbb{R}^n}\int_{\mathbb{R}^n}\frac{W^{p-1}(z)\phi_{0}W^{p}(y)\phi_{0}}{|y-z|^{n-2}} dydz\\&
=\frac{1}{p-1}\bigg\{\kappa(n,x_0)\int_{\mathbb{R}^n}\int_{\mathbb{R}^n} \frac{W^{p}(y)W^{p}\log W}{|y-z|^{n-2}}dydz+\kappa(n,x_0)\int_{\mathbb{R}^n}\int_{\mathbb{R}^n} \frac{W^{p}(z)W^{p}\log W }{|y-z|^{n-2}}dydz\\&\hspace{2mm}+\tilde{c}_{n,\mu}\tilde{H}(x_0,x_0)\Big[(p-1)\int_{\mathbb{R}^n}\int_{\mathbb{R}^n}\frac{W^p(z)W^{p-1}(y)}{|y-z|^{n-2}} dydz+p\int_{\mathbb{R}^n}\int_{\mathbb{R}^n}\frac{W^{p-1}(z)W^{p}(y)}{|y-z|^{n-2}} dydz\Big]\bigg\}
\end{split}
\end{equation*}
by \eqref{fai0}. Consequently, recalling \eqref{S2H} and combined with the above identities, straightforward computations show that
\begin{equation*}
\begin{split}
&\lambda_{\epsilon}^{-(n-2)\epsilon/(p-\epsilon)}S_{HL}^{\epsilon}\\&
=\bigg\{\int_{\mathbb{R}^n}\int_{\mathbb{R}^n}\frac{W^{p}(y)W^{p}(z)}{|y-z|^{n-2}} dydz+\frac{2}{p-1}\kappa(n,x_0)\lambda_{\epsilon}^{-(n-2)}\int_{\mathbb{R}^n}\int_{\mathbb{R}^n}\frac{W^{p}(z)W^{p}(y)\log W(y)}{|y-z|^{n-2}} dydz\\&
+\frac{p}{p-1}\tilde{c}_{n,\mu}\lambda_{\epsilon}^{-(n-2)}\tilde{H}(x_0,x_0)\int_{\mathbb{R}^n}\int_{\mathbb{R}^n}\frac{W^p(z)W^{p-1}(y)}{|y-z|^{n-2}} dydz+o(\lambda_{\epsilon}^{-(n-2)})\bigg\}^{1-\frac{1}{p-\epsilon}}+o(\lambda_{\epsilon}^{-(n-2)}).
\end{split}
\end{equation*}
Since \eqref{ab} and by
$$
\frac{p-1-\epsilon}{p-\epsilon}=\frac{p-1}{p}+\frac{-\epsilon}{p^2}+O(\epsilon^2)\quad\mbox{as}\quad\epsilon\rightarrow0,
$$
the conclusion of Theorem \ref{Figalli2} and Taylor's expansion, we have
$$\big(a^{\prime}+b^{\prime}+c^{\prime}+o(\lambda_{\epsilon}^{-(n-2)})\big)^{\frac{p-1-\epsilon}{p-\epsilon}} \approx\big(a^{\prime}+b^{\prime}+c^{\prime}+o(\frac{1}{\lambda_{\epsilon}^{n-2}})\big)^{\frac{p-1}{p}}-\frac{\kappa(n,x_0)}{\lambda_{\epsilon}^{n-2}}\frac{1}{p^2}{a^{^{\prime}}}^{\frac{1}{p}}\log a^{\prime}+
o(\frac{1}{\lambda_{\epsilon}^{n-2}}).
$$
As a consequence, coupling this bound and $(1+t)^m\approx1+mz$ as $z$ small enough, we use the same idea as in the proof for Lemma \ref{shlep} to get
\begin{equation*}
\begin{split}
&\lambda_{\epsilon}^{-(n-2)\epsilon/(p-\epsilon)}S_{HL}^{\epsilon}\\&
=\bigg\{\int_{\mathbb{R}^n}\int_{\mathbb{R}^n}\frac{W^{p}(y)W^{p}(z)}{|y-z|^{n-2}} dydz+\frac{p}{p-1}\tilde{c}_{n,\mu}\lambda_{\epsilon}^{-(n-2)}\tilde{H}(x_0,x_0)\int_{\mathbb{R}^n}\int_{\mathbb{R}^n}\frac{W^p(z)W^{p-1}(y)}{|y-z|^{n-2}} dydz\\&
\hspace{4mm}+\frac{2}{p-1}\kappa(n,x_0)\lambda_{\epsilon}^{-(n-2)}\int_{\mathbb{R}^n}\int_{\mathbb{R}^n}\frac{W^{p}(z)W^{p}(y)\log W(y)}{|y-z|^{n-2}}dydz+o(\lambda_{\epsilon}^{-(n-2)})\bigg\}^{1-\frac{1}{p}}
\\&\hspace{4mm}-\kappa(n,x_0)\lambda_{\epsilon}^{-(n-2)}\frac{1}{p^2}\big(\int_{\mathbb{R}^n}\int_{\mathbb{R}^n}\frac{W^{p}(y)W^{p}(z)}{|y-z|^{n-2}}\big)^{\frac{p-1}{p}}\log\big(\int_{\mathbb{R}^n}\int_{\mathbb{R}^n}\frac{W^{p}(y)W^{p}(z)}{|y-z|^{n-2}}\big)+o(\lambda_{\epsilon}^{-(n-2)})\\&
=C_{HLS}+\lambda_{\varepsilon}^{-(n-2)}\Big[\tilde{H}(x_0,x_0)C_{HLS}^{-\frac{n-2}{4}}\mathcal{F}^{\prime}+\frac{2}{p}C_{HLS}^{-\frac{n-2}{4}}\mathcal{D}^{\prime}-\frac{n+2}{4p^2}C_{HLS}\mathcal{E}^{\prime}\Big]+o(\lambda_{\varepsilon}^{-(n-2)})
\end{split}
\end{equation*}
as $\epsilon\rightarrow0$, wehere
$$\mathcal{D}^{\prime}:=\kappa(n,x_0)\int_{\mathbb{R}^n}\int_{\mathbb{R}^n}\frac{W^{p}(y)W^{p}(z)\log W}{|y-z|^{n-2}}dydz,\hspace{2mm}\mathcal{E}^{\prime}:=\kappa(n,x_0)\log C_{HLS},$$
$$\mathcal{F}^{\prime}:=\tilde{c}_{n,\mu}\int_{\mathbb{R}^n}\int_{\mathbb{R}^n}\frac{W^{p}(z)W^{p-1}(y)}{|y-z|^{n-2}}dydz,$$
as desired.
\end{proof}
\subsection{Proofs of Theorems \ref{maximum}-\ref{asymptotic}}\label{THEOREM}
We can now prove Theorems \ref{maximum}-\ref{asymptotic} by using Lemma \ref{shlep}, Proposition \ref{pro} and Proposition \ref{pro00}.
\begin{proof}[Proof of Theorem \ref{maximum}]
By virtue of Lemma \ref{shlep} and Proposition \ref{pro00}, we obtain
\begin{equation*}
\begin{split}
&C_{HLS}+\lambda_{\varepsilon}^{-(n-2)}\bigg[\tilde{H}(x_0,x_0)C_{HLS}^{-\frac{n-2}{4}}\mathcal{F}^{\prime}+\frac{2}{p}C_{HLS}^{-\frac{n-2}{4}}\mathcal{D}^{\prime}-\frac{1}{p^2}C_{HLS}\mathcal{E}^{\prime}\bigg]+o(\lambda_{\varepsilon}^{-(n-2)}).
\\&\leq C_{HLS}+\lambda_{\varepsilon}^{-(n-2)}\bigg[\tilde{H}(x_1,x_1)C_{HLS}^{-\frac{n-2}{4}}\mathcal{F}^{\prime}+\frac{2}{p}C_{HLS}^{-\frac{n-2}{4}}\mathcal{D}^{\prime}-\frac{n+2}{4p^2}C_{HLS}\mathcal{E}^{\prime}\bigg]+o(\lambda_{\varepsilon}^{-(n-2)}).
\end{split}
\end{equation*}
As a result,
$$
\tilde{H}(x_0,x_0)\leq\tilde{H}(x_1,x_1)\quad\mbox{for any}\quad x_1\in\Omega,
$$
which implies that $H(x_0,x_0)\geq H(x_1,x_1)$ and the conclusion follows.
\end{proof}
\begin{proof}[Proof of Theorem \ref{asymptotic}]
Owing to
\begin{equation*}
\begin{split}
v_{\epsilon}(y)&=\lambda_{\epsilon}^{-\frac{n-2}{2}}PW[x_{\epsilon},\lambda_{\epsilon}](\lambda_{\epsilon}^{-1}y+x_{\epsilon})+\lambda_{\epsilon}^{-(n-2)}\phi_{\epsilon}(y)\\&
=W(y)-\tilde{c}_{n,\mu}\lambda^{-(n-2)}\tilde{H}(x_\epsilon,x_\epsilon+\lambda_{\epsilon}^{-1}y)+\lambda_{\epsilon}^{-(n-2)}\phi_{0}(y)+o(\lambda_{\epsilon}^{-(n-2)}).
\end{split}
\end{equation*}
Combining this equality with Proposition \ref{pro} yields the conclusion.
\end{proof}

\section{Proof of Proposition \ref{pro}}\label{pro-1}
This section is devoted to the proof of Proposition \ref{pro}. As a first step, we
introduce the nondegeneracy property of the positive solutions of equation \eqref{ele} at $W_\lambda:=W[\xi,\lambda]$.
\begin{Prop}[Non-degeneracy \cite{XLi}]\label{prondgr}
Assume $n\geq 3$, $0<\mu\leq n$ with $0<\mu\leq4$ and $p^{\ast}=\frac{2n-\mu}{n-2}$. Then the solution
 $ W[\xi,\lambda](x)$ of the equation
\begin{equation*}
    -\Delta u=(|x|^{-\mu}\ast u^{p^{\ast}})u^{p^{\ast}-1} \quad \mbox{in}\quad \mathbb{R}^n
    \end{equation*}
is non-degenerate in the sense that all bounded solutions of linearized equation
\begin{equation*}
-\Delta \phi-p^{\ast}\left(|x|^{-\mu} \ast (W_{\lambda}^{p^{\ast}-1}\phi)\right)W_{\lambda}^{p^{\ast}-1}
-(p^{\ast}-1)\left(|x|^{-\mu} \ast W_{\lambda}^{p^{\ast}}\right)W_{\lambda}^{p^{\ast}-2}\phi=0
\end{equation*}
are the linear combination of the functions
\begin{equation*}
\partial_\lambda W_{\lambda}[\xi,\lambda],\hspace{2mm}\partial_{\xi_1}W_{\lambda}[\xi,\lambda],\cdots,\partial_{\xi_n}W_{\lambda}[\xi,\lambda].
\end{equation*}
\end{Prop}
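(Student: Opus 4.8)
The plan is to conjugate the linearized Choquard equation to the round sphere $\mathbb{S}^n$, where the bubble becomes a constant and the linearization has rotation-invariant coefficients, and then to identify the kernel by decomposing into spherical harmonics. First I would use the translation and dilation invariance of the equation to reduce to $\xi=0$, $\lambda=1$, so that $W=W[0,1]=\tilde{c}_{n,\mu}(1+|x|^2)^{-(n-2)/2}$. Then I would pass to $\mathbb{S}^n$ via the inverse stereographic projection $\Pi\colon\mathbb{R}^n\to\mathbb{S}^n\setminus\{P\}$: with $J_\Pi(x)=2/(1+|x|^2)$ one has $|\Pi(x)-\Pi(y)|_{\mathbb{S}^n}^2=J_\Pi(x)J_\Pi(y)\,|x-y|^2$, and, combined with the conformal identity $|x|^{-\mu}\ast W^{p^{\ast}}=A_{n,\mu}(1+|x|^2)^{-\mu/2}$ (the Riesz potential of $(1+|x|^2)^{-(2n-\mu)/2}$), the substitution $\phi(x)=J_\Pi(x)^{(n-2)/2}\,(v\circ\Pi)(x)$ turns $W$ into a constant $v_0$ and identifies bounded solutions of the linearized equation on $\mathbb{R}^n$ with $C^{1,\alpha}$ solutions on $\mathbb{S}^n$ of
\[
-\Delta_{\mathbb{S}^n}\psi+\tfrac{n(n-2)}{4}\psi=\tfrac{n(n-2)}{4}\Big[p^{\ast}\kappa_0^{-1}(K\ast\psi)+(p^{\ast}-1)\psi\Big],
\]
where $K(\zeta,\eta)=|\zeta-\eta|_{\mathbb{S}^n}^{-\mu}$, $\ast$ is convolution against the normalized measure, and $\kappa_0=\int_{\mathbb{S}^n}K(\zeta,\eta)\,d\eta$ is a constant; here one uses that $v_0$ solves $\tfrac{n(n-2)}{4}v_0=C_{\ast}\kappa_0 v_0^{2p^{\ast}-1}$.

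Next I would diagonalize. Writing $\psi=\sum_{k\ge0}\psi_k$ in spherical harmonics of degree $k$, the Funk--Hecke theorem gives $K\ast\psi_k=\mu_k\psi_k$ with $\mu_0=\kappa_0$ and, by Branson's formula for the conformal operator of order $-(n-\mu)$ on $\mathbb{S}^n$, $\mu_k=c_{n,\mu}\,\Gamma(k+\tfrac{\mu}{2})/\Gamma(k+n-\tfrac{\mu}{2})$; also $-\Delta_{\mathbb{S}^n}\psi_k=k(k+n-1)\psi_k$. Hence $\psi_k\not\equiv 0$ solves the equation if and only if
\[
k(k+n-1)+\tfrac{n(n-2)}{4}=\tfrac{n(n-2)}{4}\big[p^{\ast}\tfrac{\mu_k}{\kappa_0}+p^{\ast}-1\big].
\]
For $k=0$ this would force $p^{\ast}=1$, which is excluded since $p^{\ast}=\tfrac{2n-\mu}{n-2}>1$. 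For $k=1$ the identity holds automatically: differentiating the bubble equation in the dilation and translation parameters shows $\partial_\lambda W$, $\partial_{\xi_1}W,\dots,\partial_{\xi_n}W$ already lie in the kernel on $\mathbb{R}^n$, and these $n+1$ functions correspond under $\Pi$ precisely to the $(n+1)$-dimensional space of degree-one harmonics on $\mathbb{S}^n$, so degree one must satisfy the identity. For $k\ge2$ one has $\mu_{k+1}/\mu_k=(k+\tfrac{\mu}{2})/(k+n-\tfrac{\mu}{2})<1$ (using $0<\mu<n$), hence $\mu_k<\mu_1$; since the right-hand side above is affine and strictly increasing in $\mu_k$ and equals the left-hand side at $k=1$, the left-hand side strictly exceeds it for every $k\ge2$, so there is no nontrivial solution. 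Therefore the kernel on $\mathbb{S}^n$ is exactly the degree-one harmonics; transporting back by $\Pi$ identifies this span with $\mathrm{span}\{\partial_\lambda W[\xi,\lambda],\partial_{\xi_1}W[\xi,\lambda],\dots,\partial_{\xi_n}W[\xi,\lambda]\}$, which is the claim.

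I expect the real work to be Step~1: tracking all the conformal factors so the operator on $\mathbb{S}^n$ comes out in exactly the above form, and --- because a merely bounded $\phi$ on $\mathbb{R}^n$ need not decay --- using elliptic regularity and a bootstrap near the pole $P$ to show the transported $\psi$ is genuinely regular on $\mathbb{S}^n$, which is what legitimizes the spherical-harmonic expansion; this is also where the hypothesis $0<\mu\le4$ (equivalently $p^{\ast}\ge2$) is convenient, keeping the nonlocal and power terms in the linearization in admissible form. Once on the sphere the remaining argument is essentially algebraic, as sketched. An alternative that avoids $\mathbb{S}^n$ is to expand $\phi$ in spherical harmonics directly on $\mathbb{R}^n$, rewriting the linearization via $|x|^{-\mu}\ast W^{p^{\ast}}=A_{n,\mu}(1+|x|^2)^{-\mu/2}$ as $-\Delta\phi-c\,W^{4/(n-2)}\phi=c'\,(|x|^{-\mu}\ast(W^{p^{\ast}-1}\phi))W^{p^{\ast}-1}$ and analyzing the resulting family of integro-ODEs indexed by the harmonic degree; this works but makes the eigenvalue bookkeeping heavier, so I would prefer the sphere route.
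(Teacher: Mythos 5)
The paper does not actually prove Proposition \ref{prondgr}: it is stated as a citation to \cite{XLi}, with no proof supplied in this manuscript. So the fair comparison is between your sketch and the argument one would find in the cited reference, and your sketch is indeed essentially that argument: conjugate to the sphere by stereographic projection so the bubble becomes constant, use the zonal structure and Funk--Hecke to diagonalize the nonlocal term on each space of spherical harmonics, and compare growth rates. Two details in your sketch are correct and worth keeping: (i) the identity $|x|^{-\mu}\ast W^{p^{\ast}}\propto(1+|x|^2)^{-\mu/2}$ combined with $(n-2)(p^{\ast}-2)=4-\mu$ shows the local potential term is just $c\,W^{4/(n-2)}\phi$, which is exactly the Yamabe linearization, and (ii) your observation that the eigenvalue identity at $k=1$ can be read off from the known kernel elements $\partial_\lambda W,\ \partial_{\xi_i}W$ --- rather than computed from the Funk--Hecke eigenvalue $\mu_1$ --- is a clean shortcut, and the monotonicity argument (LHS $k(k+n-1)$ increasing, RHS affine and decreasing in $k$ because $\mu_{k+1}/\mu_k=(k+\tfrac{\mu}{2})/(k+n-\tfrac{\mu}{2})<1$) correctly kills $k\geq2$ once equality at $k=1$ is known.

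The one place where you wave your hands is, however, the crux, and you should not understate it. The hypothesis is only that $\phi$ is \emph{bounded} on $\mathbb{R}^n$. Under the substitution $\phi(x)=J_\Pi(x)^{(n-2)/2}(v\circ\Pi)(x)$ with $J_\Pi(x)=2/(1+|x|^2)$, you get $v(\Pi(x))\sim |x|^{n-2}\phi(x)$ as $|x|\to\infty$, so a merely bounded $\phi$ produces a $v$ that may blow up like $\mathrm{dist}(\cdot,P)^{-(n-2)}$ near the pole. In particular $v$ need not lie in $L^2(\mathbb{S}^n)$, and the spherical-harmonics expansion you rely on is not a priori legitimate. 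The standard way to close this is to first upgrade boundedness to the decay $\phi(x)=O(|x|^{-(n-2)})$: write $\phi$ via the Newtonian potential applied to the right-hand side of the linearized equation, use that $|x|^{-\mu}\ast W^{p^{\ast}}\sim(1+|x|^2)^{-\mu/2}$ and $W^{p^{\ast}-1}\in L^{2n/(n+2-\mu)}$ (here $p^{\ast}\geq2$, i.e., $\mu\leq4$, is used so that $W^{p^{\ast}-2}$ is bounded), and bootstrap. Only after that does the transported $v$ extend with enough regularity across $P$ to expand in spherical harmonics. As a sketch your proposal is correct and matches the approach of \cite{XLi}, but in a written proof this decay/removable-singularity step must be carried out, not merely flagged.
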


We need the following result.
\begin{lem}\label{bopund} Up to subsequence, there exists a sequence of $\epsilon_j$ such that for $j$ large, there holds
$\big\|\eta_{j}(y)\big\|_{W^{2,q^{\prime}}}\leq C$, where we denote $\eta_{j}(y):=\big\|\phi_{\epsilon_j}(y)\big\|^{-1}_{L^{q^{\prime}}(\Omega_{\epsilon_j})}\phi_{\epsilon_j}(y)\hspace{2mm}\mbox{in}\hspace{2mm}\Omega_{\epsilon_j}.$
\end{lem}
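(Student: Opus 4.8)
The plan is to write down the linear equation satisfied by the normalised functions $\eta_j$, bound its right-hand side in $L^{q'}(\Omega_{\epsilon_j})$ uniformly in $j$, and then invoke the Calder\'on--Zygmund $W^{2,q'}$ estimate for the Dirichlet problem on $\Omega_{\epsilon_j}$ --- whose constant is uniform in $j$, since after the rescaling the boundaries $\partial\Omega_{\epsilon_j}$ flatten out and the domains are uniformly $C^{1,1}$. Dividing \eqref{fepusilong} by $\|\phi_{\epsilon_j}\|_{L^{q'}(\Omega_{\epsilon_j})}$ and writing $\eta_j=\|\phi_{\epsilon_j}\|_{L^{q'}(\Omega_{\epsilon_j})}^{-1}\phi_{\epsilon_j}$, one gets
\begin{equation*}
-\Delta\eta_j=p\big(|y|^{-(n-2)}\ast W^{p-1}\eta_j\big)W^{p-1}+(p-1)\big(|y|^{-(n-2)}\ast W^{p}\big)W^{p-2}\eta_j+\big\|\phi_{\epsilon_j}\big\|^{-1}_{L^{q'}(\Omega_{\epsilon_j})}f_{\epsilon_j}(\phi_{\epsilon_j}),
\end{equation*}
together with $\eta_j=0$ on $\partial\Omega_{\epsilon_j}$ and the normalisation $\|\eta_j\|_{L^{q'}(\Omega_{\epsilon_j})}=1$, so the whole matter reduces to an $L^{q'}$ bound on the right-hand side.

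First I would estimate the two linear terms. By the identity \eqref{p1-00} the coefficient $|y|^{-(n-2)}\ast W^{p}=\widetilde{\alpha}_{n}W^{2^{\ast}-p}$ is bounded, and since $n\in\{3,4,5\}$ one has $p-2=\tfrac{6-n}{n-2}>0$, so $W^{p-2}$ is bounded too; hence the second term is $\lesssim\|\eta_j\|_{L^{q'}}=1$ in $L^{q'}$. For the first term, I would place $W^{p-1}\eta_j\in L^{s}$ for some $s\in(1,n/2)$ --- possible by H\"older because $W^{p-1}=W^{4/(n-2)}$ decays like $|x|^{-4}$ and $\eta_j\in L^{q'}$ with $q'>n$ --- then apply the Hardy--Littlewood--Sobolev (Riesz potential) inequality to put $|y|^{-(n-2)}\ast(W^{p-1}\eta_j)$ in a large $L^{s^{\ast}}$ with $\tfrac1{s^{\ast}}=\tfrac1s-\tfrac2n$, and multiply by the outer decaying factor $W^{p-1}$; a routine exponent check (again using $q'>n$) puts the product in $L^{q'}$ with norm $\lesssim\|\eta_j\|_{L^{q'}}=1$.

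Next I would treat the remainder $\|\phi_{\epsilon_j}\|_{L^{q'}(\Omega_{\epsilon_j})}^{-1}f_{\epsilon_j}(\phi_{\epsilon_j})$ via Lemma \ref{usefulestimate}. Its bound splits into terms linear in $\phi_{\epsilon_j}$ carrying a small factor --- $|v_{\epsilon_j}-W|$ (which is $o(1)$ locally and $\lesssim W$ globally), or $\lambda_{\epsilon_j}^{-(n-2)}$, possibly times $|\log W|$ --- which after division by $\|\phi_{\epsilon_j}\|_{L^{q'}}$ become linear in $\eta_j$ and are controlled in $L^{q'}$ exactly as above, in fact with an $o(1)$ bound; and genuine source terms independent of $\phi_{\epsilon_j}$, of the form $\big(|y|^{-(n-2)}\ast W^{p}\big)W^{p-1-\epsilon}|\log W|$, $\big(|y|^{-(n-2)}\ast W^{p-1-\epsilon}|\log W|\big)W^{p-1}$, $\big(|y|^{-(n-2)}\ast W^{p-\epsilon}(|\log W|+1)\big)W^{p-2-\epsilon}$, and so on, each of which is bounded in $L^{q'}$ uniformly in small $\epsilon$ by Hardy--Littlewood--Sobolev and the decay of $W$ (all exponents $p-\epsilon,\,p-1-\epsilon,\,p-2-\epsilon$ stay positive for $n\le5$, and $W^{\sigma}|\log W|$ still decays). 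To absorb these $\phi_{\epsilon_j}$-independent pieces after division I would pass to a subsequence along which $\|\phi_{\epsilon_j}\|_{L^{q'}(\Omega_{\epsilon_j})}$ stays bounded away from $0$; such a subsequence exists because, were $\|\phi_{\epsilon_j}\|_{L^{q'}}\to0$, dividing \eqref{fepusilong} by this norm and passing to the limit would force the non-trivial limiting source term in \eqref{fai0} to vanish, a contradiction. Collecting the three contributions yields $\|\Delta\eta_j\|_{L^{q'}(\Omega_{\epsilon_j})}\le C$, and the elliptic estimate gives $\|\eta_j\|_{W^{2,q'}}\le C$.

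The main obstacle is the bookkeeping for the source part: one must check that \emph{every} term produced by Lemma \ref{usefulestimate}, including the mixed convolution expressions involving $\log W$ and $\widetilde{H}(x_0,x_0)$, lies in $L^{q'}$ with a bound independent of $\epsilon$ --- this is exactly where $n\in\{3,4,5\}$ (keeping $W^{p-2-\epsilon}$ bounded) and $q'>n$ are both essential --- together with securing $\liminf_j\|\phi_{\epsilon_j}\|_{L^{q'}(\Omega_{\epsilon_j})}>0$ so that the $\phi$-independent source can be divided out. Everything else is a routine combination of Hardy--Littlewood--Sobolev, the pointwise decay $W\le c(1+|x|^2)^{-(n-2)/2}$, and Calder\'on--Zygmund theory on the expanding domains $\Omega_{\epsilon_j}$.
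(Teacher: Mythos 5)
Your bookkeeping for the linear terms, the sublinear contribution from $|v_{\epsilon_j}-W|\lesssim W$, and the uniform elliptic estimate on $\Omega_{\epsilon_j}$ matches the paper's, but the way you handle the $\phi$-independent pieces of $f_{\epsilon_j}$ misses the actual structure of the argument. The paper does not need $\liminf_j\|\phi_{\epsilon_j}\|_{L^{q'}(\Omega_{\epsilon_j})}>0$: Lemma \ref{bopund} is proved under the standing hypothesis --- inherited from the contradiction argument of Lemma \ref{upto} --- that $\|\phi_{\epsilon_j}\|_{L^{q'}(\Omega_{\epsilon_j})}\to\infty$. Under that hypothesis both the $\phi$-independent source terms and the sublinear $\|\phi_j\|_{L^{q'}}^{2n/((2q'+n)r)}$ contribution, once divided by $\|\phi_{\epsilon_j}\|_{L^{q'}}$, are automatically $o(1)$, and the $W^{2,q'}$ bound is immediate; no lower bound on $\|\phi_{\epsilon_j}\|_{L^{q'}}$ ever enters.

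Your attempt at an unconditional proof forces you to secure $\liminf_j\|\phi_{\epsilon_j}\|_{L^{q'}}>0$, and the argument you offer --- ``dividing \eqref{fepusilong} by this norm and passing to the limit would force the limiting source term in \eqref{fai0} to vanish'' --- is circular as written: passing to the limit in the equation for $\eta_j=\phi_{\epsilon_j}/\|\phi_{\epsilon_j}\|_{L^{q'}}$ presupposes precisely the $W^{2,q'}$ compactness the lemma is meant to provide. A non-circular variant would pass to the limit in the undivided equation for $\phi_{\epsilon_j}$ (which tends to zero in $L^{q'}$) and conclude $\int\Psi\varphi=0$ for every test function $\varphi$; but you would then still have to verify that $f_{\epsilon_j}\to\Psi$ distributionally using only the $L^{q'}$-smallness of $\phi_{\epsilon_j}$ (plausible, since the $\phi$-dependent part of $f_{\epsilon_j}$ carries a $\lambda_{\epsilon_j}^{-(n-2)}$ factor, but it is not spelled out), and that $\Psi\not\equiv 0$ --- true, because $\tilde H(x_0,x_0)=-(n-2)\omega_n H(x_0,x_0)>0$ and the $\tilde H$-terms cannot cancel the $\log W$-terms, but you do not check it. The cleaner path is to take the paper's conditional viewpoint, which is all that is used downstream in the proof of Lemma \ref{upto}.
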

\begin{proof}
We argue by contradiction, similarly to \cite{JW0}. Assume that there exists a sequence of $\epsilon_j$ such that $\big\|\phi_{\epsilon_j}\big\|_{L^{q^{\prime}}(\Omega_{\epsilon_j})}\rightarrow\infty$.
Let us denote $\Omega_j:=\Omega_{\epsilon_j}$, $\lambda_{j}:=\lambda_{\epsilon_j}$,
$\phi_{\epsilon_j}:=\phi_j$ and $f_j:=f_{\epsilon_j}$ for simplicity. Then $\eta_{j}$ satisfies
\begin{equation}\label{92}
\begin{split}
\begin{cases}
-\Delta\eta_{j}(y)=p\Big(|y|^{-(n-2)}\ast W^{p-1}\eta_{j}\Big)
W^{p-1}\\ \quad \quad\quad\hspace{2mm}+(p-1)\Big(|y|^{-(n-2)}\ast W^{p}\Big)W^{p-2}\eta_{j}+f_{\epsilon_j}(\phi_{\epsilon})\big\|\phi_{j}(y)\big\|^{-1}_{L^{q^{\prime}}(\Omega_{j})}=0,\quad\mbox{in}\quad\Omega_j,\\
\eta_j(y)=0,\quad \quad\quad\quad \quad\quad\quad \quad\quad\quad \quad\quad\quad \quad\quad\quad \quad\quad\quad \quad\quad\quad \quad \quad\quad\quad\hspace{3mm}\mbox{on}\quad\partial\Omega_j.
\end{cases}
\end{split}
\end{equation}
By regularity theorem \cite{GT}, we infer that
\begin{equation}\label{ittarq}
\begin{split}
\big\|\eta_{j}(y)\big\|_{W^{2,q^{\prime}}}&\leq C\big(\big\|\eta_{j}(y)\big\|_{L^{q^{\prime}}}+\big\|f_{\epsilon}(\phi_{\epsilon})\big\|_{L^{q^{\prime}}}\big\|\phi_{j}(y)\big\|^{-1}_{L^{q^{\prime}}}\big)\\&
+C\Big\|\Big(|y|^{-(n-2)}\ast W^{p-1}\eta_{j}\Big)
W^{p-1}+\Big(|y|^{-(n-2)}\ast W^{p}\Big)W^{p-2}\eta_{j}\Big\|_{L^{q^{\prime}}}.
\end{split}
\end{equation}
Here and after we drop the domain $\Omega_{j}$ if there is no confusion for simplicity.
In order to get the boundedness $\eta_{j}(y)$ in $W^{2,q^{\prime}}(\Omega_{j})$, it is sufficient to estimate the two rightmost terms in the above inequality.
Then, since
$$\frac{1}{r}-\frac{1}{2q^{\prime}}=\frac{2}{n}\quad\mbox{and}\quad\frac{1}{q^{\ast}}+\frac{2n}{4q^{\prime}+n}=1,$$
Hardy-Littlewood-Sobolev inequality and H\"{o}lder inequality give that
\begin{equation}\label{0091qq}
\begin{split}
\Big\|\Big(|y|^{-(n-2)}\ast W^{p-1}\eta_{j}\Big)
W^{p-1}\Big\|_{L^{q^{\prime}}}&\leq C\big\|W^{p-1}\eta_{j}\big\|_{L^{r}}\big\|W^{\frac{8}{n-2}}\big\|^{\frac{1}{2}}_{L^{q^{\prime}}}
\\&\leq C\big\|W^{p-1}\big\|_{L^{q^{\ast}r}}\big\|\eta_{j}\big\|^{\frac{2n}{(4q^{\prime}+n)r}}_{L^{q^{\prime}}}\big\|W^{\frac{8}{n-2}}\big\|^{\frac{1}{2}}_{L^{q^{\prime}}}<\infty.
\end{split}
\end{equation}
Similarly, we have
\begin{equation}\label{0q668}
\Big\|\Big(|y|^{-(n-2)}\ast W^{p}\Big)W^{p-2}\eta_{j}\Big\|_{L^{q^{\prime}}}<\infty.
\end{equation}
Furthermore, by virtue of Lemma \ref{usefulestimate}, we get
\begin{equation}\label{92qq}
\begin{split}
\big\|f_{j}\big\|_{L^{q^{\prime}}(\Omega_{j})}&\leq
C\Big\|\big[|y|^{-(n-2)}\ast W^{p-1-\epsilon_j}|\phi_{j}|\big]
W^{p-2-\epsilon_j}|v_{\epsilon_j}-W|\Big\|_{L^{q^{\prime}}}\\+&C\Big\|\big(|y|^{-(n-2)}\ast W^{p-1-\epsilon_j}\big)W^{p-2-\epsilon_j}(1+\lambda_{j}^{-(n-2)})\Big\|_{L^{q^{\prime}}}
\\+&C\Big\|\big[
\big(|y|^{-(n-2)}\ast W^{p}\big)
W^{p-1-\epsilon_j}|\log W|+\big(|y|^{-(n-2)}\ast W^{p-1-\epsilon_j}|\log W|\big)W^{p-1}\big]\Big\|_{L^{q^{\prime}}}\\+&C\Big\|\big[\lambda_{j}^{-(n-2)}\big(|y|^{-(n-2)}\ast W^{p-1-\epsilon_j}\big)W^{p-1-\epsilon}(\log W+1)\big]\Big\|_{L^{q^{\prime}}}\\+&C\Big\|\big[\big(|y|^{-(n-2)}\ast W^{p}\big)W^{p-2-\epsilon_j}|\log W|+\big(|y|^{-(n-2)}\ast W^{p-\epsilon_j}|\log W|\big)
W^{p-1-\epsilon_j}\big]\Big\|_{L^{q^{\prime}}}\\+&C\Big\|\big[\big(|y|^{-(n-2)}\ast W^{p-\epsilon_j}(|\log W|+1)\big)W^{p-2-\epsilon_j}\big]\Big\|_{L^{q^{\prime}}}.
\end{split}
\end{equation}
Applying Hardy-Littlewood-Sobolev inequality, H\"{o}lder inequality and \eqref{cU}, we obtain that, for $\epsilon$ small
\begin{equation}\label{933qq}
\begin{split}
\Big\|\big[|y|^{-(n-2)}\ast W^{p-1-\epsilon_j}&|\phi_{j}|\big]
W^{p-2-\epsilon_j}|v_{\epsilon_j}-W|\Big\|_{L^{q^{\prime}}}\\&\leq C\big\| W^{p-1-\epsilon_j}\big\|_{L^{q^{\ast}r}}\big\|\phi_{j}\big\|^{\frac{2n}{(2q^{\prime}+n)r}}_{L^{q^{\prime}}}
\big\|W^{p-2-\epsilon_j}|v_{\epsilon_j}-W|\big\|_{L^{q^{\prime}}}
\\&\leq C\big\|\phi_{j}\big\|^{\frac{2n}{(2q^{\prime}+n)r}}_{L^{q^{\prime}}}.
\end{split}
\end{equation}
For the remaining terms, since $q^{\prime}>n$ and using \eqref{p1-00} together with the fact that $\log W\leq W$,  a straightforward computations shows that
\begin{equation}\label{9934qq}
\begin{split}
&\Big\|\big(|y|^{-(n-2)}\ast W^{p-1-\epsilon}\big)W^{p-2-\epsilon_j}(1+\lambda_{j}^{-(n-2)})\Big\|_{L^{q^{\prime}}}
\\&+\Big\|\big[
\big(|y|^{-(n-2)}\ast W^{p}\big)
W^{p-1-\epsilon}|\log W|+\big(|y|^{-(n-2)}\ast W^{p-1-\epsilon_j}|\log W|\big)W^{p-1}\big]\Big\|_{L^{q^{\prime}}}\\&+\Big\|\big[\lambda_{j}^{-(n-2)}\big(|y|^{-(n-2)}\ast W^{p-1-\epsilon_j}\big)W^{p-1-\epsilon}(\log W+1)\big]\Big\|_{L^{q^{\prime}}}\\&+\Big\|\big[\big(|y|^{-(n-2)}\ast W^{p}\big)W^{p-2-\epsilon_j}|\log W|+\big(|y|^{-(n-2)}\ast W^{p-\epsilon_j}|\log W|\big)
W^{p-1-\epsilon_j}\big]\Big\|_{L^{q^{\prime}}}\\&+\Big\|\big[\big(|y|^{-(n-2)}\ast W^{p-\epsilon_j}(|\log W|+1)\big)W^{p-2-\epsilon_j}\big]\Big\|_{L^{q^{\prime}}}<\infty
\end{split}
\end{equation}
by Hardy-Littlewood-Sobolev inequality and H\"{o}lder inequality as $\epsilon\rightarrow0$.
Combining the previous estimates together, we get
\begin{equation}\label{itabounded}
\big\|\eta_{j}(y)\big\|_{W^{2,q^{\prime}}}\leq C\Big(\big\|\eta_{j}(y)\big\|_{L^{q^{\prime}}}+C\big(1+\big\|\phi_{j}\big\|^{\frac{2n}{(2q^{\prime}+n)r}}_{L^{q^{\prime}}}\big)\Big)\big\|\phi_{j}(y)\big\|^{-1}_{L^{q^{\prime}}}\leq C
\end{equation}
as $j\rightarrow\infty$. By Lemma \ref{regular000}, we can extend $\eta_{j}$ to $\mathbb{R}^n$ in such a way that
\begin{equation}\label{Rrnn}
\big\|\eta_{j}(y)\big\|_{W^{2,q^{\prime}}(\mathbb{R}^n)}\leq C\big\|\eta_{j}(y)\big\|_{W^{2,q^{\prime}}}.
\end{equation}
Therefore, combined with the Sobolev imbedding therorem, we infer that
$$\big\|\eta_{j}(y)\big\|_{L^{\infty}(\mathbb{R}^n)}\leq C.$$
The result follows.
\end{proof}
\begin{lem}\label{upto}
Up to a subsequence, there exists a function $\phi_0$ in $W^{2,q^{\prime}}$ such that $\phi_{\epsilon_j}\rightarrow \phi_0$ weakly in $W^{2,q^{\prime}}(\mathbb{R}^n)$ for every $q^{\prime}>n$ and $\phi_{\epsilon_j}\rightarrow \omega_0$ in $C_{loc}^{1}(\mathbb{R}^n)$.
\end{lem}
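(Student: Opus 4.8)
The plan is to deduce Lemma~\ref{upto} by first upgrading the conditional bound of Lemma~\ref{bopund} into an \emph{unconditional} $W^{2,q^{\prime}}$ bound for $\phi_{\epsilon_j}$ itself, and then extracting a convergent subsequence by weak compactness together with the compact Sobolev embedding $W^{2,q^{\prime}}(B_R)\hookrightarrow C^{1}(\overline{B_R})$, which is available since $q^{\prime}>n$.

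First I would complete the contradiction argument begun in the proof of Lemma~\ref{bopund}. Suppose, along a subsequence, $\|\phi_{\epsilon_j}\|_{L^{q^{\prime}}(\Omega_{\epsilon_j})}\to\infty$. By Lemma~\ref{bopund} and \eqref{Rrnn} the rescaled functions $\eta_j$ satisfy $\|\eta_j\|_{W^{2,q^{\prime}}(\mathbb{R}^n)}\le C$, so after a further subsequence $\eta_j\rightharpoonup\eta_0$ weakly in $W^{2,q^{\prime}}$ and $\eta_j\to\eta_0$ in $C^{1}_{loc}(\mathbb{R}^n)$. Passing to the limit in \eqref{92} is the routine part: the inhomogeneous term $\|\phi_j\|^{-1}_{L^{q^{\prime}}}f_{\epsilon_j}(\phi_{\epsilon_j})$ tends to $0$ in $L^{q^{\prime}}$ because Lemma~\ref{usefulestimate} and \eqref{92qq}--\eqref{933qq} give $\|f_{\epsilon_j}\|_{L^{q^{\prime}}}\lesssim 1+\|\phi_j\|^{\theta}_{L^{q^{\prime}}}$ with $\theta=\tfrac{2n}{(2q^{\prime}+n)r}<1$, while the two convolution terms pass to the limit by dominated convergence using the decay of $W$ and the $C^{1}_{loc}$ convergence. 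Hence $\eta_0$ is a bounded solution of the homogeneous linearized equation, and Proposition~\ref{prondgr} forces $\eta_0=c_0\,\partial_\lambda W[0,1]+\sum_{i=1}^{n}c_i\,\partial_{\xi_i}W[0,1]$.

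Next I would kill the coefficients $c_0,\dots,c_n$ via the normalization built into the ansatz. From $v_\epsilon(y)=\lambda_\epsilon^{-(n-2)/2}PW[x_\epsilon,\lambda_\epsilon](\lambda_\epsilon^{-1}y+x_\epsilon)+\lambda_\epsilon^{-(n-2)}\phi_\epsilon(y)$, the constraints $v_\epsilon(0)=1=\max_{\Omega_\epsilon}v_\epsilon$ (with $0$ interior, by Lemma~\ref{finite}) and $\nabla v_\epsilon(0)=0$, together with the expansion \eqref{lambdaepsilon}, give that $\phi_\epsilon(0)=\tilde c_{n,\mu}\tilde H(x_\epsilon,x_\epsilon)+o(1)$ and $\nabla\phi_\epsilon(0)=o(1)$ are bounded. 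Dividing by $\|\phi_j\|_{L^{q^{\prime}}}\to\infty$ yields $\eta_0(0)=0$ and $\nabla\eta_0(0)=0$. Since $\partial_{\xi_i}W[0,1](0)=0$, $\partial_\lambda W[0,1](0)\ne0$, $\nabla(\partial_\lambda W[0,1])(0)=0$, and $\{\nabla(\partial_{\xi_i}W[0,1])(0)\}_{i=1}^{n}$ is a basis of $\mathbb{R}^n$, these two conditions force $c_0=0$ and $c_1=\dots=c_n=0$, i.e. $\eta_0\equiv0$. To contradict $\|\eta_j\|_{L^{q^{\prime}}(\Omega_{\epsilon_j})}=1$ I must then rule out escape of mass: I would derive a uniform pointwise bound $|\eta_j(y)|\le C(1+|y|)^{-(n-2)}$ by a Green's representation/comparison argument applied to \eqref{92}, using that the potential coefficients decay like $W^{p-1}\approx(1+|y|)^{-4}$ and that $|v_{\epsilon_j}-W|$ is controlled by Lemma~\ref{cWU}. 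Since $q^{\prime}>n\ge3$ implies $(n-2)q^{\prime}>n$, the majorant lies in $L^{q^{\prime}}(\mathbb{R}^n)$, so $\sup_j\|\eta_j\|_{L^{q^{\prime}}(\mathbb{R}^n\setminus B_R)}\to0$ as $R\to\infty$; combined with $\eta_j\to\eta_0=0$ in $C^{1}_{loc}$ (hence in $L^{q^{\prime}}(B_R)$) this gives $\|\eta_j\|_{L^{q^{\prime}}(\mathbb{R}^n)}\to0$, the desired contradiction.

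Therefore $\|\phi_{\epsilon_j}\|_{L^{q^{\prime}}}$ is bounded uniformly in $j$, and feeding this into the elliptic estimate \eqref{ittarq} (now applied to $\phi_{\epsilon_j}$ in place of $\eta_j$, together with \eqref{Rrnn}) gives $\|\phi_{\epsilon_j}\|_{W^{2,q^{\prime}}(\mathbb{R}^n)}\le C$ for every $q^{\prime}>n$. By reflexivity of $W^{2,q^{\prime}}$ and the compact embedding into $C^{1}(\overline{B_R})$, a diagonal subsequence converges weakly in $W^{2,q^{\prime}}(\mathbb{R}^n)$ to some $\phi_0\in W^{2,q^{\prime}}$ and strongly in $C^{1}_{loc}(\mathbb{R}^n)$, which is exactly the assertion. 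I expect the genuinely delicate step to be the uniform decay estimate $|\eta_j(y)|\lesssim(1+|y|)^{-(n-2)}$ that upgrades the local convergence to $L^{q^{\prime}}$ convergence and excludes loss of mass at infinity; the non-degeneracy/normalization bookkeeping and the regularity machinery are already essentially in place from Lemma~\ref{bopund}.
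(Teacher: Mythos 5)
Your proposal shares the overall architecture of the paper's proof: assume $\|\phi_{\epsilon_j}\|_{L^{q'}(\Omega_{\epsilon_j})}\to\infty$, normalize to $\eta_j$ with $\|\eta_j\|_{L^{q'}(\Omega_j)}=1$, extract a weak $W^{2,q'}$- and strong $C^1_{loc}$-limit $\eta_0$, pass to the limit in the linearized equation, invoke the non-degeneracy result (Proposition~\ref{prondgr}) to write $\eta_0$ as a combination of $\partial_\lambda W$ and $\partial_{\xi_i}W$, and kill those coefficients using the normalization $v_\epsilon(0)=1$, $\nabla v_\epsilon(0)=0$ together with the expansion \eqref{lambdaepsilon}. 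Once the $L^{q'}$ bound is established, the passage to $\|\phi_{\epsilon_j}\|_{W^{2,q'}}\leq C$ and then to weak/$C^1_{loc}$ convergence matches the paper. The genuine divergence is in how the normalization $\|\eta_j\|_{L^{q'}(\Omega_j)}=1$ is contradicted once $\eta_0\equiv0$ is known. You propose to establish a uniform pointwise decay $|\eta_j(y)|\lesssim(1+|y|)^{-(n-2)}$ by a Green's-function comparison on $\Omega_j$ and use this, together with $C^1_{loc}$ convergence to $0$, to force $\|\eta_j\|_{L^{q'}}\to0$. The paper instead exploits Lemma~\ref{regular000}: the right-hand side of that elliptic estimate involves only $L^{r'}$- and $L^{q'}$-norms of the source (with $1/r'=1/q'+2/n$), and \emph{not} $\|\eta_j\|_{L^{q'}}$ itself. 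Hence, in \eqref{rq94}, each source term is made small by splitting the domain into $B_R(0)$ and its complement — on $B_R$ using $\eta_j\to0$ in $L^{q'}_{loc}$, on $B_R^c$ using that $W^{p-1}$ has small $L^{\zeta r}$-norm — yielding $\|\eta_j\|_{W^{2,q'}}=o(1)$ directly, without any pointwise decay estimate.

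Both routes are sound. Your decay ansatz $(1+|y|)^{-(n-2)}$ is tight, since it matches the decay of the kernel elements $\partial_\lambda W$ of Proposition~\ref{prondgr}, and $q'>n$ indeed guarantees $(n-2)q'>n$ so the majorant is $q'$-integrable; moreover, a one-pass Green's representation with the crude input $\|\eta_j\|_{L^\infty}\leq C$ from Lemma~\ref{bopund} does produce the claimed decay for the contributions of the convolution terms in dimensions $3,4,5$. But carrying this out is nontrivial: it must be done with the Dirichlet Green's function on $\Omega_j$ (the Sobolev extension \eqref{Rrnn} does not preserve pointwise decay outside $\Omega_j$, so you must work with $\|\eta_j\|_{L^{q'}(\Omega_j)}$), and the nonlocal convolution term $|y|^{-(n-2)}\ast W^{p-1}\eta_j$ with $\eta_j$ merely bounded needs a careful case analysis in $n=3,4,5$. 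The paper's route avoids all of this because Lemma~\ref{regular000} returns a $W^{2,q'}$ bound with no $\|\eta_j\|_{L^{q'}}$ term to absorb; what it costs is invoking that slightly finer two-norm elliptic estimate and the domain-splitting Hölder bookkeeping. You correctly flagged the decay estimate as the delicate unproved step; it is provable, but the paper's mechanism reaches the contradiction with less analysis of behavior at infinity.
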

\begin{proof}
To conclude the proof of Lemma \ref{upto}, we establish now the
following key estimate:
\begin{equation}\label{keyestimate}
\big\|\phi_{\epsilon}\big\|_{L^{q^{\prime}}(\Omega_{\epsilon})}\leq C(q^{\prime})\quad\mbox{for every}\quad n<q^{\prime}<\infty.
\end{equation}
Once this is done, as in \eqref{ittarq}, a well-known regularity argument show that
$\big\|\phi_{\epsilon}\big\|_{W^{2,q^{\prime}}(\Omega_{\epsilon})}\leq C(q^{\prime})$ for every $q^{\prime}>n$.
Furthermore, we can extend $\phi_{\epsilon}$ to $\mathbb{R}^n$ in the same way similarl to \eqref{Rrnn}. Then up to subsequence, we may extract a sequence-which we still call $\phi_{\epsilon_j}$-which converges weakly in $W^{2,q^{\prime}}(\mathbb{R}^n)$, and strongly convergence to a limit $\phi_0$ in $C_{loc}^{1}(\mathbb{R}^n)$. This concludes the proof of Lemma.

Assume now by contradiction that \eqref{keyestimate} does not hold true, in other words that, up to a subsequence, there exists a sequence of $\epsilon_j$ such that $\big\|\phi_{\epsilon_j}\big\|_{L^{q^{\prime}}(\Omega_{\epsilon_j})}\rightarrow\infty$.
In order to have the desired contradiction with $\|\eta_j\|_{L^{q^{\prime}}(\Omega_{j})}=1$, it is sufficient to obtain
\begin{equation}\label{contradictiion}
\big\|\eta_j\big\|_{W^{2,q^{\prime}}(\mathbb{R}^n)}=o(1) \quad\mbox{as}\quad j\rightarrow\infty,
\end{equation}
which we prove next.
Since Lemma \ref{bopund},
then up to a subsequence, we have $\eta_j\rightarrow \omega_0$ weakly in $W^{2,q^{\prime}}(\mathbb{R}^n)$ for every $q^{\prime}>n$ and $\eta_j\rightarrow \omega_0$ in $C_{loc}^{1}(\mathbb{R}^n)$.

We claim now that $\omega_0=0$. Indeed, recalling \eqref{itabounded}, and Lemma \ref{usefulestimate} entail that
\begin{equation*}
\begin{split}
&\big\|f_{j}\big\|_{L^{\infty}}\big\|\phi_{j}(y)\big\|^{-1}_{L^{\infty}}\leq
C\big\|\phi_{j}(y)\big\|^{-1}_{L^{\infty}}\Big\|\big[|y|^{-(n-2)}\ast W^{p-1-\epsilon_j}|\phi_{j}|\big]
W^{p-2-\epsilon_j}|v_{\epsilon_j}-W|\Big\|_{L^{\infty}}\\+&C\big\|\phi_{j}(y)\big\|^{-1}_{L^{\infty}}\Big\|\big(|y|^{-(n-2)}\ast W^{p-1-\epsilon_j}\big)W^{p-2-\epsilon_j}(1+\lambda_{j}^{-(n-2)})\Big\|_{L^{\infty}}
\\+&C\big\|\phi_{j}(y)\big\|^{-1}_{L^{\infty}}\Big\|\big[
\big(|y|^{-(n-2)}\ast W^{p}\big)
W^{p-1-\epsilon_j}|\log W|+\big(|y|^{-(n-2)}\ast W^{p-1-\epsilon}|\log W|\big)W^{p-1}\big]\Big\|_{L^{\infty}}\\+&C\big\|\phi_{j}(y)\big\|^{-1}_{L^{\infty}}\Big\|\big[\lambda_{j}^{-(n-2)}\big(|y|^{-(n-2)}\ast W^{p-1-\epsilon_j}\big)W^{p-1-\epsilon_j}(\log W+1)\big]\Big\|_{L^{\infty}}\\+&C\big\|\phi_{j}(y)\big\|^{-1}_{L^{\infty}}\Big\|\big[\big(|y|^{-(n-2)}\ast W^{p}\big)W^{p-2-\epsilon_j}|\log W|+\big(|y|^{-(n-2)}\ast W^{p-\epsilon_j}|\log W|\big)
W^{p-1-\epsilon_j}\big]\Big\|_{L^{\infty}}\\+&C\big\|\phi_{j}(y)\big\|^{-1}_{L^{\infty}}\Big\|\big[\big(|y|^{-(n-2)}\ast W^{p-\epsilon_j}(|\log W|+1)\big)W^{p-2-\epsilon_j}\big]\Big\|_{L^{\infty}}\rightarrow0
\end{split}
\end{equation*}
for $\epsilon\rightarrow0$ sufficiently small and $j\rightarrow\infty$ by Hardy-Littlewood-Sobolev inequality and H\"{o}lder inequality and the fact that $\|v_{\epsilon_j}-W\|_{L^{\infty}(\Omega_{j})}\rightarrow0$ as $\epsilon\rightarrow0$ and $j\rightarrow\infty$. This means $\omega_0$ is a classical solution of
\begin{equation}\label{9002}
\begin{split}
\begin{cases}
-\Delta\omega_0(y)=p\Big(|y|^{-(n-2)}\ast W^{p-1}\omega_0\Big)
W^{p-1}\\ \quad\quad\quad\quad+(p-1)\Big(|y|^{-(n-2)}\ast W^{p}\Big)W^{p-2}\omega_0\quad\mbox{for all}\quad\omega_0\in W^{2,q^{\prime}}(\mathbb{R}^n).
\end{cases}
\end{split}
\end{equation}
From the non-degeneracy of $W$ in Proposition \ref{prondgr}, we have
$$\omega_0=\omega=c_0\frac{\partial W[0,\lambda]}{\partial\lambda}\Big|_{\lambda=1}+\sum_{i=1}^{n}c_i\frac{\partial W[\xi,1]}{\partial\xi_i}\Big|_{\xi=0},\hspace{4mm}i=1,\cdots,n.$$
On the other hand, owing to \eqref{lambdaepsilon} and \eqref{question} entails that
\begin{equation*}
\eta_{j}(0)=\tilde{c}_{n,\mu}\big\|\phi_{j}\big\|^{-1}_{L^{q^{\prime}}(\Omega_{j})}\tilde{H}(x_0,x_0)+o(1),\quad\nabla\eta_{j}(0)=o(1).
\end{equation*}
Therefore, we infer that
$$0=\omega_0(0)=c_0\frac{\partial W[0,\lambda](0)} {\partial\lambda}\Big|_{\lambda=1}+\sum_{i=1}^{n}c_i\frac{\partial W[\xi,1](0)}{\partial\xi_i}\Big|_{\xi=0}=\frac{n-2}{2}c_0,$$
and
$$0=\nabla \omega_{0}(0)=c_0\nabla\frac{\partial W[0,\lambda](0)} {\partial\lambda}\Big|_{\lambda=1}+\sum_{i=1}^{n}c_i\nabla\frac{\partial W[\xi,1](0)}{\partial\xi_i}\Big|_{\xi=0},$$
which gives us $c_0=0$ and $c_1,\cdots,c_n=0$ because $\nabla\partial W_{\xi_1}[\xi,1](0)\big|_{\xi=0},\cdots,\nabla\partial W_{\xi_n}[\xi,1](0)\big|_{\xi=0}$ is linearly independent and $\nabla\partial W_{\lambda}[0,\lambda](0)\big|_{\lambda=1}=0$. Thus, the claim follows, and so that $\eta_j\rightarrow 0$ weakly in $W^{2,q^{\prime}}(\mathbb{R}^n)$ for every $q^{\prime}>n$.

Now we are position to prove \eqref{contradictiion}. Owing to Lemma \ref{regular000},
\begin{equation}\label{rq94}
\begin{split}
\big\|\eta_{j}(y)\big\|_{W^{2,q^{\prime}}}&\leq C\big\|f_{\epsilon}(\phi_{\epsilon})\big\|_{L^{q^{\prime}}}\big\|\phi_{j}(y)\big\|^{-1}_{L^{q^{\prime}}}
+C\big\|f_{j}(\phi_{\epsilon})\big\|_{L^{r^{\prime}}}\big\|\phi_{j}(y)\big\|^{-1}_{L^{r^{\prime}}}\\&
+C\Big\|\Big(|y|^{-(n-2)}\ast W^{p-1}\eta_{j}\Big)
W^{p-1}+\Big(|y|^{-(n-2)}\ast W^{p}\Big)W^{p-2}\eta_{j}\Big\|_{L^{q^{\prime}}}\\&
+C\Big\|\Big(|y|^{-(n-2)}\ast W^{p-1}\eta_{j}\Big)
W^{p-1}+\Big(|y|^{-(n-2)}\ast W^{p}\Big)W^{p-2}\eta_{j}\Big\|_{L^{r^{\prime}}}\\&:=P_1+P_2+P_3+P_4.
\end{split}
\end{equation}
Similarly to the previous discussion in \eqref{ittarq}, we apply Hardy-Littlewood-Sobolev inequality, H\"{o}lder inequality and dominated convergence theorem to estimate each term on the right-hand side of the above inequality. Recalling \eqref{92qq}-\eqref{9934qq}, since $\frac{2n}{(2q^{\prime}+n)r}<1$, we obtain
\begin{equation}\label{lqqq}
\begin{split}
P_1&\leq C\Big(1+\big\|\phi_{j}\big\|^{\frac{2n}{(2q^{\prime}+n)r}}_{L^{q^{\prime}}(\Omega_{j})}
\big\|W^{p-2-\epsilon}|v_{\epsilon_j}-W|\big\|_{L^{q^{\prime}}(\Omega_{j})}\Big)\big\|\phi_{j}\big\|^{-1}_{L^{q^{\prime}}(\Omega_{j})}=o(1)
\end{split}
\end{equation}
as $j\rightarrow\infty$. Similarly to \eqref{933qq}-\eqref{9934qq}, we have
\begin{equation}\label{lq00}
\begin{split}
P_2&\leq C\Big(1+\big\|\phi_{j}\big\|^{\frac{2n}{(2q^{\prime}+n)r}}_{L^{r^{\prime}}(\Omega_{j})}
\big\|W^{p-2-\epsilon}|v_{\epsilon_j}-W|\big\|_{L^{r^{\prime}}(\Omega_{j})}\Big)\big\|\phi_{j}\big\|^{-1}_{L^{r^{\prime}}(\Omega_{j})}=o(1)
\end{split}
\end{equation}
as $j\rightarrow\infty$. Next we ready to estimate $P_3$ and $P_4$. We obtain that
\begin{equation*}
\begin{split}
\Big\|&\Big(|y|^{-(n-2)}\ast W^{p-1}\eta_{j}\Big)
W^{p-1}\Big\|_{L^{r^{\prime}}}\leq C\Big\||y|^{-(n-2)}\ast W^{p-1}\eta_{j}\Big\|_{L^{2r^{\prime}}}\Big\|W^{p-1}\Big\|_{L^{2r^{\prime}}}\\&
\leq C\Big(\big(\int_{\Omega_j\cap B_{R}(0)}|W^{p-1}(y)\eta_{j}(y)|^{r}dy\big)^{1/r}+\big(\int_{\Omega_j\cap B_{R}^{c}(0)}|W^{p-1}(y)\eta_{j}(y)|^{r}dy\big)^{1/r}\Big),
\end{split}
\end{equation*}
where $R>$ will be fixed subsequently and $r=\frac{2nr^{\prime}}{n+4r^{\prime}}$. Writing the H\"{o}lder conjugate of $\frac{q^{\prime}}{r}$ as $\zeta$, we find
$$
\big(\int_{\Omega_j\cap B_{R}(0)}|W^{p-1}(y)\eta_{j}(y)|^{r}dy\big)^{1/r}\leq C \big(\int_{\mathbb{R}^n}|W^{p-1}|^{\zeta r}dy\big)^{1/\zeta r}\big(\int_{B_{R}(0)}|\eta_{j}(y)|^{q^{\prime}}dy\big)^{1/q^{\prime}}=o(1),
$$
by $\eta_j\rightarrow 0$ strongly in $W^{2,q^{\prime}}(B_{R}(0))$ for any fixed $R$.
Furthermore, for $\epsilon>0$, we can choose $R>0$ such that
$$\big(\int_{\mathbb{R}^n\setminus B_{R}(0)}|W^{p-1}(y)|^{\zeta r}dy\big)^{1/(\zeta r)}\leq\epsilon.$$
Then we have
\begin{equation*}
\begin{split}
\big(\int_{\Omega_j\cap B_{R}^{c}(0)}&|W^{p-1}(y)\eta_{j}(y)|^{r}dy\big)^{1/r}\\&\leq C\big(\int_{\mathbb{R}^n\setminus B_{R}(0)}|W^{p-1}(y)|^{\zeta r}dy\big)^{1/(\zeta r)}\big(\int_{\Omega_j}|\eta_{j}(y)|^{q^{\prime}}dy\big)^{1/q^{\prime}}=o(1).
\end{split}
\end{equation*}
Similarly, we also have
\begin{equation*}
\begin{split}
\Big\|\Big(|y|^{-(n-2)}\ast W^{p}\Big)W^{p-2}\eta_{j}\Big\|_{L^{r^{\prime}}}
=o(1)\quad\mbox{as}\quad j\rightarrow\infty.
\end{split}
\end{equation*}
Combining the previous estimates together, we get
\begin{equation}\label{qprime}
\begin{split}
\Big\|\Big(|y|^{-(n-2)}\ast W^{p-1}\eta_{j}\Big)
W^{p-1}+\Big(|y|^{-(n-2)}\ast W^{p}\Big)W^{p-2}\eta_{j}\Big\|_{L^{r^{\prime}}}=o(1)
\end{split}
\end{equation}
as $j\rightarrow\infty$. Analogously, we also get
\begin{equation}\label{qp}
\begin{split}
C\Big\|\Big(|y|^{-(n-2)}\ast W^{p-1}\eta_{j}\Big)
W^{p-1}+\Big(|y|^{-(n-2)}\ast W^{p}\Big)W^{p-2}\eta_{j}\Big\|_{L^{q^{\prime}}}=o(1)
\end{split}
\end{equation}
as $j\rightarrow\infty$. From \eqref{Rrnn}, \eqref{rq94}, \eqref{lqqq}, \eqref{lq00}, \eqref{qprime} and \eqref{qp} we conclude that
$$\big\|\eta_{j}(y)\big\|_{W^{2,q^{\prime}}(\mathbb{R}^n)}=o(1)\quad\mbox{as}\quad j\rightarrow\infty,$$
a contradiction.
\end{proof}

We can now prove Proposition \ref{pro} by using Lemma \ref{upto} and Lemma \ref{regular000}.
\begin{proof}[Proof of Proposition \ref{pro}]
We shall prove that $\phi_0$ is a bounded solution of equation \eqref{fai0}. To show this fact, we first set
\begin{equation*}
\begin{split}
\Psi(y)=&-\kappa(n,x_0)\big(|y|^{-(n-2)}\ast W^{p}\big)W^{p-1}\log W-\kappa(n,x_0)\big(|y|^{-(n-2)}\ast W^{p}\big)W^{p-1}\log W\\&-(p-1)\tilde{c}_{n,\mu}\big(|y|^{-(n-2)}\ast W^p\big)W^{p-2}\tilde{H}(x_0,x_0)\\&
-p\tilde{c}_{n,\mu}\big(|y|^{-(n-2)}\ast W^{p-1}\tilde{H}(x_0,x_0)\big)W^{p-1}.
\end{split}
\end{equation*}
Then we only need to show that $f_{j}\rightarrow\Psi(y)$ in $L^{\infty}(\mathbb{R}^n)$ as $j\rightarrow\infty$. It is immediate to have that
\begin{equation*}
\begin{split}
&\big|\lambda_{j}^{-(n-2)}(f_{j}-\Psi(y))\big|
\leq\Big|\big(|y|^{-(n-2)}\ast v_{\epsilon_j}^{p-\epsilon_j}\big)v_{\epsilon}^{p-1-\epsilon_j}-\big(|x|^{-(n-2)}\ast W^{p-\epsilon_j}\big)W^{p-1-\epsilon_j}
\\&~~~-\lambda_{j}^{-(n-2)}\Big[(p-\epsilon_j)
\big(|y|^{-(n-2)}\ast W^{p-1-\epsilon_j}\big(\phi_{\epsilon_j}-\tilde{c}_{n,\mu}\tilde{H}(x_0,x_0+\lambda_{j}^{-1}z)\big)\big)
W^{p-1-\epsilon_j}\\&~~~+(p-1-\epsilon_j)
\big(|y|^{-(n-2)}\ast W^{p-\epsilon_j}\big)
W^{p-2-\epsilon_j}\big(\phi_{\epsilon_j}-\tilde{c}_{n,\mu}\tilde{H}(x_0,x_0+\lambda_{j}^{-1}z)\big)\Big]\Big|\\&~~~+\Big|\big(|x|^{-(n-2)}\ast W^{p-\epsilon_j}\big)W^{p-1-\epsilon_j}-\big(|x|^{-(n-2)}\ast W^{p}\big)W^{p-1}\\&~~~-\kappa(n,x_0)\lambda_{j}^{-(n-2)}\big[\big(|y|^{-(n-2)}\ast W^{p}\big)W^{p-1}\log W-\big(|y|^{-(n-2)}\ast W^{p}\log W\big)W^{p-1}\big]\Big|
\\&~~~+\lambda_{j}^{-(n-2)}\Big|p
\big(|y|^{-(n-2)}\ast W^{p-1}\phi_{j}\big)
W^{p-1}-p\tilde{c}_{n,\mu}\big(|y|^{-(n-2)}\ast W^{p-1}\tilde{H}(x_0,x_0)\big)W^{p-1}\\&~~~-(p-\epsilon_j)
\big[|y|^{-(n-2)}\ast W^{p-1-\epsilon_j}\big(\phi_{j}-\tilde{c}_{n,\mu}\tilde{H}(x_0,x_0+\lambda_{j}^{-1}z)\big)\big]
W^{p-1-\epsilon_j}\Big|\\&~~~+\lambda_{j}^{-(n-2)}\Big|(p-1)
\big(|y|^{-(n-2)}\ast W^{p}\big)
W^{p-2}\phi_{j}-(p-1)\tilde{c}_{n,\mu}\big(|y|^{-(n-2)}\ast W^p\big)W^{p-2}\tilde{H}(x_0,x_0)\\&~~~-(p-1-\epsilon_j)
\big(|y|^{-(n-2)}\ast W^{p-\epsilon_j}\big)
W^{p-2-\epsilon_j}\big(\phi_{j}-\tilde{c}_{n,\mu}\tilde{H}(x_0,x_0+\lambda_{j}^{-1}z)\big)\Big|\\&
:=\mathcal{E}_1+\mathcal{E}_2+\mathcal{E}_3+\mathcal{E}_4.
\end{split}
\end{equation*}
Arguing as in Lemma \ref{usefulestimate} for $J_1$, we get that
 \begin{equation*}
 \begin{split}
\|\mathcal{E}_1\|_{L^{\infty}}\leq C\|\lambda_{j}^{-2(n-2)}
\big[|y|^{-(n-2)}\ast W^{p-1-\epsilon_j}|\phi_{j}-\tilde{c}_{n,\mu}\tilde{H}|\big]
W^{p-2-\epsilon_j}|\phi_{j}-\tilde{c}_{n,\mu}\tilde{H})|\|_{L^{\infty}}=o(1).
\end{split}
\end{equation*}
Analogously to $J_2$, we deduce that
\begin{equation*}
\begin{split}
\|\mathcal{E}_2\|_{L^{\infty}}\leq& o(1)\|\lambda_{j}^{-(n-2)}\big(|y|^{-(n-2)}\ast W^{p}\big)
W^{p-1-\epsilon_j}|\log W|^2+\big(|y|^{-(n-2)}\ast W^{p-\varepsilon_j}|\log W|^2\big)W^{p-1-\epsilon_j}\|_{L^{\infty}}\\&+o(1)\|\big(|y|^{-(n-2)}\ast W^{p}|\log W|\big)W^{p-1-\epsilon}|\log W|\|_{L^{\infty}}=o(1).
\end{split}
\end{equation*}
Recalling $|\tilde{H}(x_0,x_0+\lambda_{j}^{-1}y)-\tilde{H}(x_0,x_0)|\leq C\lambda_{j}^{-1}y$ and $|\phi_j|_{L^{\infty}(\mathbb{R}^n)}\leq C$, we deduce that
\begin{equation*}
\|\mathcal{E}_3\|_{L^{\infty}}\leq C\lambda_{j}^{-(n-1)}\|
\big(|y|^{-(n-2)}\ast W^{p-1}|z|\big)W^{p-1}\|_{L^{\infty}}=o(1),
\end{equation*}
and similarly, we get that
\begin{equation*}
\|\mathcal{E}_4\|_{L^{\infty}}\leq C\lambda_{j}^{-(n-1)}\|
\big(|y|^{-(n-2)}\ast W^{p}\big)W^{p-2}|y|\|_{L^{\infty}}=o(1).
\end{equation*}
Therefore, we obtain $\phi_0$ is a weak solution $W^{2,q^{\prime}}$ and hence a classical solution of equation \eqref{fai0}.

Furthermore, we can consider as a test function in \eqref{fepusilong} and \eqref{fai0} a function of the form $z_j=\tilde{\xi}(\lambda_{j}^{-1}y)\phi_0(y)$ with the cut-off function $\tilde{\xi}$ equal to $1$ in $B(x_0,M)$ and vanishing outside $B(x_0,3M)$. Then, a simple computation gives that
\begin{equation*}
\begin{split}
-&\Delta(\phi_j-z_j)=p\Big[\Big(|y|^{-(n-2)}\ast W^{p-1}\phi_{j}\Big)
W^{p-1}(y)-\Big(|y|^{-(n-2)}\ast W^{p-1}\phi_{0}\Big)
W^{p-1}(y)\tilde{\xi}(\lambda_{j}^{-1}y)\Big]\\&+(p-1)\Big(|y|^{-(n-2)}\ast W^{p}\Big)W^{p-2}(\phi_{j}-z_j)+\big[f_{j}(\phi_{j})-\Psi(y)\big]\\&
-(1-\tilde{\xi}(\lambda_{j}^{-1}y))\kappa(n,x_0)\Big[\big(|y|^{-(n-2)}\ast W^{p}\big)W^{p-1}\log W+\big(|y|^{-(n-2)}\ast W^{p}\log W\big)W^{p-1}\Big]\\&-(1-\tilde{\xi}(\lambda_{j}^{-1}y))\tilde{c}_{n,\mu}\tilde{H}(x_0,x_0)\Big[(p-1)\big(|y|^{-(n-2)}\ast W^p\big)W^{p-2}+p\big(|y|^{-(n-2)}\ast W^{p-1}\big)W^{p-1}\Big]\\&
+\big[\Delta_{y}\tilde{\xi}\phi_0+2\nabla_{y}\tilde{\xi}\nabla\phi_0\big]:=\mathcal{I}_1+\mathcal{I}_2+\mathcal{I}_3+\mathcal{I}_4+\mathcal{I}_5+\mathcal{I}_6.
\end{split}
\end{equation*}
We aim to show that
\begin{equation}\label{faizjBM}
\phi_j\rightarrow z_j\quad\mbox{in}\quad L^{\infty}(B(0,M\lambda_{\epsilon})).
\end{equation}
In view of Sobolev imbedding theorem, to conclude \eqref{faizjBM}, it is sufficient to verify that as $j\rightarrow\infty$, there holds
\begin{equation}\label{fai-zj}
\big\|\phi_j-z_j\big\|_{W^{2,q^{\prime}}(\Omega_{j})}=o(1).
\end{equation}
Exploiting Lemma \ref{regular000},  we next have to estimate each term of $\mathcal{I}_1-\mathcal{I}_6$ in $L^{q^{\prime}}$-norm and $L^{r^{\prime}}$-norm on $\Omega_j$, respectively. For $\mathcal{I}_3$, we have using the above conclusion of the proof, $\|f_j-\Psi(y)\|_{L^{q^{\prime}}}+\|f_j-\Psi(y)\|_{L^{r^{\prime}}}=o(1)$.
For $\mathcal{I}_2$, similar to the argument of \eqref{qprime}-\eqref{qp}, we deduce that
\begin{equation*}
\|\mathcal{I}_2\|_{L^{q^{\prime}}}
=o(1),\quad
\|\mathcal{I}_2\|_{L^{r^{\prime}}}
=o(1),
\end{equation*}
as $j\rightarrow\infty$ and $R\rightarrow\infty$. We similarly compute and gives that
\begin{equation*}
\begin{split}
\|\mathcal{I}_1\|_{L^{q^{\prime}}}&=p\Big(|y|^{-(n-2)}\ast W^{p-1}(\phi_{j}-\phi_0)\Big)
W^{p-1}(y)\\&~~~+p(1-\tilde{\xi}(\lambda_{j}^{-1}y))\Big(|y|^{-(n-2)}\ast W^{p-1}\phi_{0}\Big)
W^{p-1}(y)\\&
\leq 
o(1)+C\lambda_{\epsilon}^{\frac{n}{2q^{\prime}}}\big[\int_{\Omega_j}W^{2(p-1)q^{\prime}}(\lambda_{\epsilon}y)dy\big]^{\frac{1}{2q^{\prime}}}\big[\int_{\Omega_j}W^{(p-1)r}(\lambda_{\epsilon}y)dy\big]^{\frac{1}{r}}\\&=o(1)+O(\lambda_{\epsilon}^{-(8-\frac{n}{2q^{\prime}})})=o(1),
\end{split}
\end{equation*}
where $r=\frac{2nq^{\prime}}{n+4q^{\prime}}$ and so we also have
\begin{equation*}
\|\mathcal{I}_1\|_{L^{r^{\prime}}}=O(\lambda_{\epsilon}^{-(8-\frac{n}{2r^{\prime}})})=o(1).
\end{equation*}
In same way, we deduce that
\begin{equation*}
\|\mathcal{I}_4\|_{L^{q^{\prime}}}=O(\lambda_{\epsilon}^{-(2n-\frac{n}{q^{\prime}})})=o(1),\quad\|\mathcal{I}_4\|_{L^{r^{\prime}}}=O(\lambda_{\epsilon}^{-(2n-\frac{n}{r^{\prime}})})=o(1),
\end{equation*}
and that
\begin{equation*}
\|\mathcal{I}_5\|_{L^{q^{\prime}}}=O(\lambda_{\epsilon}^{-(4-\frac{n}{q^{\prime}})})=o(1),\quad\|\mathcal{I}_5\|_{L^{r^{\prime}}}=O(\lambda_{\epsilon}^{-(4-\frac{n}{r^{\prime}})})=o(1).
\end{equation*}
To estimate $\mathcal{I}_6$, we write right hand side of \eqref{fai0} as $F(y)$. Then we consider the integral equation
$$\phi_0=\int_{\mathbb{R}^n}\frac{F(y)}{|x-y|^{n-2}}dy.$$
By virtue of the boundedness of $\phi_0$, recalling the right hand side of \eqref{fai0}, we have the following estimates hold:
\begin{equation*}
\int_{\mathbb{R}^n}\frac{1}{|x-y|^{n-2}}\frac{1}{(1+|y|)^{4}}dy\leq
\begin{cases}
\frac{C}{(1+|y|)^{2}}, \quad\quad\hspace{9.8mm}\mbox{ when } n=3,\\
\frac{C(1+\log|y|}{(1+|y|)^{2}}, \quad\quad\hspace{6mm}\mbox{ when } n=4,\\
\frac{C}{(1+|y|)^{3}}\big), \hspace{1mm}\quad\quad\hspace{7.5mm}\mbox{ when } n=5,
\end{cases}
\end{equation*}
and
\begin{equation*}
\quad\int_{\mathbb{R}^n}\frac{1}{|x-y|^{n-2}}\frac{1}{(1+|y|)^{2n}}dy\leq \frac{C}{(1+|y|)^{n-2}}.
\end{equation*}
Therefore, it is not hard to get that
$$|F(y)|\leq\frac{C}{|y|^{4}}\quad\mbox{for}\quad|y|\geq1.$$
By estimate of the Newtonian potential given in $F(y)$ (see \cite[lemma 2.3]{LINI}), then we get
$$|\phi_0|\leq\frac{C}{|y|^{4}}\quad\mbox{for}\quad|y|\geq1,$$
and so we similarly compute and gives that $|\nabla \phi_0|\leq\frac{C}{|y|^{4}}$ for $|y|\geq1$. Combining this bound, then we get
\begin{equation*}
\begin{split}
\|\mathcal{I}_6\|_{L^{q^{\prime}}}+\|\mathcal{I}_6\|_{L^{r^{\prime}}}&=O(\lambda_{\epsilon}^{-1}\|\phi_0\|_{W^{2,q^{\prime}}(\mathbb{R}^n)})+O(\lambda_{\epsilon}^{-(1-\frac{n}{r^{\prime}})}(\|\nabla \phi_{0}(\lambda_{\epsilon}^{-1}y)\|_{L^{r^{\prime}}}+\lambda_{\epsilon}^{-1}\|\phi_{0}(\lambda_{\epsilon}^{-1}y)\|_{L^{r^{\prime}}})\\&=o(1).
\end{split}
\end{equation*}
This, combined with the previous estimates together,  gives the conclusion.

Finally, there exist two sequences $\epsilon_j$ and $\tilde{\epsilon}_j$ such that for $j$ large, up to subsequence, there holds
\begin{equation*}
\phi_{\epsilon_j}\rightarrow \phi_0\quad\mbox{and}\quad\phi_{\tilde{\epsilon}_j}\rightarrow \tilde{\phi}_0\quad\mbox{in}\quad L^{\infty}(B(0,M\lambda_{\epsilon})).
\end{equation*}
Then, similarly to the first claim in the proof of Lemma \ref{upto}, we can verify that $\phi_0-\tilde{\phi}_0=0$ holds. The conclusion of Proposition follows easily.
\end{proof}


\vspace{1cm}

{\bf Acknowledgments}:
The authors would like to thank the anonymous referee for his/her useful comments and suggestions which help to improve the presentation of the paper greatly.
\vspace{0.5cm}

\appendix

\section{Technical lemmata}
\begin{lem}[\cite{HANZCHAO}]\label{regular}
Let $u$ solve
\begin{equation*}
\begin{cases}
-\Delta u=f\quad\mbox{in}\hspace{2mm}\Omega\subset\mathbb{R}^n,\\
u=0\quad\quad\hspace{2mm}\mbox{on}\hspace{2mm}\partial\Omega.
\end{cases}
\end{equation*}
Then
\begin{equation}\label{regu}
\|u\|_{W^{1,q}(\Omega)}+\|\nabla u\|_{C^{0,\alpha}(\omega^{\prime})}\leq C\big(\|f\|_{L^1(\Omega)}+\|f\|_{L^\infty(\omega)}\big)
\end{equation}
for $q<n/(n-1)$ and $\alpha\in(0,1)$. Here $\omega$ be a neighborhood of $\partial\Omega$ and $\omega^{\prime}\subset\omega$ is a strict subdomain of $\omega$.
\end{lem}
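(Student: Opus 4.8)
The plan is to establish the two halves of \eqref{regu} separately. The global $W^{1,q}(\Omega)$ bound will follow from the Green representation of $u$ together with the pointwise gradient estimate for the Green function in Lemma \ref{Gx}, and the local $C^{0,\alpha}$ bound on $\nabla u$ near $\partial\Omega$ will follow from splitting the source term according to whether it lies inside or outside the collar $\omega$ and applying, respectively, $L^p$ and boundary Schauder estimates for the Dirichlet Laplacian.

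For the global estimate I would write $u(x)=\int_{\Omega}G(x,y)f(y)\,dy$ and differentiate under the integral sign to get $\nabla u(x)=\int_{\Omega}\nabla_xG(x,y)f(y)\,dy$. Using $|\nabla_xG(x,y)|\le c|x-y|^{-(n-1)}$ from Lemma \ref{Gx} and Minkowski's integral inequality, $\|\nabla u\|_{L^q(\Omega)}$ is controlled by $\|f\|_{L^1(\Omega)}\,\sup_{y\in\Omega}\big\||x-y|^{-(n-1)}\big\|_{L^q(\Omega)}$, and this supremum is finite precisely when $q<n/(n-1)$, since then $-(n-1)q+n-1>-1$ is integrable near the singularity. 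Poincaré's inequality, valid because $u\in W^{1,q}_0(\Omega)$, then upgrades this to the full $W^{1,q}$ norm, so $\|u\|_{W^{1,q}(\Omega)}\le C\|f\|_{L^1(\Omega)}$.

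For the local estimate I would set $f=f_1+f_2$ with $f_1=f\chi_{\omega}$ and $f_2=f\chi_{\Omega\setminus\omega}$, let $u_i\in H^1_0(\Omega)$ solve $-\Delta u_i=f_i$, and use $u=u_1+u_2$. Since $f_1\in L^{\infty}(\Omega)\subset L^p(\Omega)$ for any $p>n$, Calderón--Zygmund theory gives $u_1\in W^{2,p}(\Omega)$, hence $u_1\in C^{1,\alpha}(\overline{\Omega})$ by Sobolev embedding, with $\|u_1\|_{C^{1,\alpha}(\overline{\Omega})}\le C\|f\|_{L^{\infty}(\omega)}$. On the other hand $f_2$ is supported away from $\omega$, so $u_2$ is harmonic in the relative interior of $\omega$ and vanishes on the smooth boundary $\partial\Omega$; interior and boundary estimates for harmonic functions then give $\|u_2\|_{C^{1,\alpha}(\omega')}\le C\|u_2\|_{L^1(\Omega)}$, and $\|u_2\|_{L^1(\Omega)}\le\|u\|_{L^1(\Omega)}+\|u_1\|_{L^1(\Omega)}\le C\big(\|f\|_{L^1(\Omega)}+\|f\|_{L^{\infty}(\omega)}\big)$ by the previous step. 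Summing the $\nabla u_1$ and $\nabla u_2$ contributions on $\omega'$ and combining with the global bound gives \eqref{regu}.

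The only non-routine point is the boundary regularity, i.e. obtaining $C^{1,\alpha}$ control up to $\partial\Omega$ for $u_1$ and $u_2$ rather than merely in the interior; this is exactly where the smoothness of $\partial\Omega$ is used, through the boundary $L^p$ and Schauder theory for the Dirichlet problem, and it is also the structural reason one cannot in general do better than $C^{1,\alpha}$. Everything else is bookkeeping of norms.
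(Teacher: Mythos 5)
The paper does not prove this lemma: it is stated with the citation \cite{HANZCHAO} and used as a black box, so there is no internal proof to compare against. Your reconstruction is correct and is the standard argument one finds for this sort of result. The $L^1 \to W^{1,q}$ bound via the Green representation, the pointwise gradient bound on $G$ (the paper's Lemma \ref{Gx}), Minkowski's integral inequality, the observation that $q<n/(n-1)$ is exactly the range in which $|x-y|^{-(n-1)}$ is $q$-integrable, and Poincar\'e for $u\in W^{1,q}_0(\Omega)$, is the right route; the alternative (Stampacchia duality against test functions $\mathrm{div}\,\phi$) gives the same conclusion but is no shorter here since the paper already records the Green-function bounds. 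The local part, splitting $f$ into $f\chi_\omega$ and $f\chi_{\Omega\setminus\omega}$, using $L^p$ (Calder\'on--Zygmund) plus Sobolev embedding for the piece sourced in $\omega$, and interior/boundary estimates for the harmonic piece $u_2$ (harmonic in $\omega$, vanishing on $\partial\Omega$, controlled by $\|u_2\|_{L^1}$ on the strictly smaller $\omega'$), is likewise the standard decomposition, and your closing of the loop $\|u_2\|_{L^1}\le\|u\|_{L^1}+\|u_1\|_{L^1}$ feeding back into the $L^1$ estimate is the right bookkeeping. The only detail worth making explicit is that $u\in H^1_0(\Omega)\hookrightarrow W^{1,q}_0(\Omega)$ for $q<2$ on bounded $\Omega$, so Poincar\'e indeed applies; and that the boundary $C^{1,\alpha}$ estimate for $u_2$ on $\omega'$ requires $\overline{\omega'}\cap\overline\Omega\subset\omega\cup\partial\Omega$, which is what ``strict subdomain'' should mean here. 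These are routine; the proof is sound.
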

The following lemma in \cite{JW0} will play a crucial role in the proof of Proposition \ref{pro}.
\begin{lem}\label{regular000}
Let $u$ solve
\begin{equation*}
\begin{cases}
-\Delta u(y)=f(y)\quad\mbox{in}\hspace{2mm}\Omega_{\epsilon},\\
u=0\quad\quad\hspace{4mm}\quad \quad\mbox{on}\hspace{2mm}\partial\Omega_{\epsilon}.
\end{cases}
\end{equation*}
Then
\begin{equation}\label{regu11}
\|u\|_{W^{2,q^{\prime}}(\Omega_{\epsilon})}\leq C\big(\|f\|_{L^{r^{\prime}}(\Omega_{\epsilon})}+\|f\|_{L^{q^{\prime}}(\omega_{\epsilon})}\big)
\end{equation}
for $q^{\prime}>2$ and $1/{r^{\prime}}=1/{q^{\prime}}+2/n$.
\end{lem}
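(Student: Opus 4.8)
The plan is to combine a global $L^{q'}$ a priori bound for $u$ with local interior and boundary second-order Calder\'{o}n--Zygmund estimates, taking care that every constant stays independent of $\epsilon$.

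\emph{Step 1: a global $L^{q'}$ bound for $u$.} Represent $u$ through the Green's function of $\Omega_\epsilon$, namely $u(y)=\int_{\Omega_\epsilon}G_{\Omega_\epsilon}(y,z)f(z)\,dz$. Comparing $G_{\Omega_\epsilon}(\cdot,z)$ with the fundamental solution $\frac{1}{(n-2)\omega_n}|\cdot|^{2-n}$ via the maximum principle gives $0\le G_{\Omega_\epsilon}(y,z)\le\frac{1}{(n-2)\omega_n}|y-z|^{2-n}$ with a purely dimensional constant; in particular this bound is uniform in $\epsilon$ (equivalently, it is dilation invariant and transfers from $\Omega$, cf.\ Lemma~\ref{Gx}). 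Hence $|u(y)|\le c_n\big(|\cdot|^{2-n}\ast|f\chi_{\Omega_\epsilon}|\big)(y)$, and since $\frac1{r'}-\frac1{q'}=\frac2n$, the Hardy--Littlewood--Sobolev inequality \eqref{hlsi} yields $\|u\|_{L^{q'}(\Omega_\epsilon)}\le C(n,q')\,\|f\|_{L^{r'}(\Omega_\epsilon)}$.

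\emph{Step 2: localisation and patching.} Cover $\Omega_\epsilon$ by balls of a fixed radius $\rho$ with overlap bounded by a dimensional constant. On an interior ball $B$ with $2B\subset\Omega_\epsilon$, the interior $W^{2,q'}$ estimate for the Laplacian gives $\|u\|_{W^{2,q'}(B)}\le C\big(\|f\|_{L^{q'}(2B)}+\|u\|_{L^{q'}(2B)}\big)$ with $C=C(n,q',\rho)$. On a ball $B$ meeting $\partial\Omega_\epsilon$ we invoke the boundary $W^{2,q'}$ estimate together with $u=0$ on $\partial\Omega_\epsilon$: the decisive point is that $\partial\Omega_\epsilon=\lambda_\epsilon(\partial\Omega-x_\epsilon)$ with $\lambda_\epsilon\to\infty$, so on the fixed scale $\rho$ the boundary is, after a rigid motion, the graph of a function whose $C^{1,1}$-norm tends to $0$; the boundary estimate therefore holds with a constant uniform in $\epsilon$ and reads $\|u\|_{W^{2,q'}(B\cap\Omega_\epsilon)}\le C\big(\|f\|_{L^{q'}(2B\cap\Omega_\epsilon)}+\|u\|_{L^{q'}(2B\cap\Omega_\epsilon)}\big)$. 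Choosing $\omega_\epsilon$ to be the $2\rho$-neighbourhood of $\partial\Omega_\epsilon$ inside $\Omega_\epsilon$, every such $2B\cap\Omega_\epsilon$ lies in $\omega_\epsilon$, so its $f$-contribution is at most $\|f\|_{L^{q'}(\omega_\epsilon)}$. Raising all local estimates to the power $q'$, summing over the covering (bounded overlap on both sides), and using Step 1 to dominate each term $\|u\|_{L^{q'}(2B\cap\Omega_\epsilon)}$ by $C\|f\|_{L^{r'}(\Omega_\epsilon)}$, one obtains $\|u\|_{W^{2,q'}(\Omega_\epsilon)}\le C\big(\|f\|_{L^{r'}(\Omega_\epsilon)}+\|f\|_{L^{q'}(\omega_\epsilon)}\big)$, which is \eqref{regu11}.

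\textbf{The main obstacle} is precisely the $\epsilon$-uniformity of all constants, which forbids simply quoting the textbook global $W^{2,p}$ estimate on the very large, $\epsilon$-dependent domain $\Omega_\epsilon$. It is overcome by working only with local estimates --- whose constants are scale invariant under the dilation $x\mapsto\lambda_\epsilon x$ --- together with the observation that the only geometric datum entering the boundary estimate, the $C^{1,1}$-character of $\partial\Omega_\epsilon$, stays bounded (in fact degenerates to that of a hyperplane) because $\partial\Omega_\epsilon$ is a blow-up of the fixed smooth $\partial\Omega$; the global bound of Step 1 is what converts the crude localisation terms $\|u\|_{L^{q'}}$ into the intrinsic datum $\|f\|_{L^{r'}(\Omega_\epsilon)}$.
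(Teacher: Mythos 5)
The paper itself gives no proof of Lemma~A.2 (\texttt{regular000}); it simply cites Wei~\cite{JW0}. Your reconstruction of the argument is structurally correct and identifies the genuine issue — the $\epsilon$-uniformity of the constants on the dilated domains $\Omega_\epsilon$ — and Step 1 (Green's function comparison plus Hardy--Littlewood--Sobolev, giving $\|u\|_{L^{q'}(\Omega_\epsilon)}\le C\|f\|_{L^{r'}(\Omega_\epsilon)}$ via the exponent relation $\tfrac1{r'}=\tfrac1{q'}+\tfrac2n$) and the dilation-invariance/flattening observation for the boundary CZ estimates in Step 2 are exactly right.

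However, the final bookkeeping does not yield the inequality you claim. When you sum the local estimates, the $f$-contributions from \emph{interior} balls $B$ with $2B\subset\Omega_\epsilon$ enter as $\|f\|_{L^{q'}(2B)}^{q'}$, and after summing over the bounded-overlap covering these assemble into $\|f\|_{L^{q'}(\Omega_\epsilon\setminus\omega_\epsilon)}^{q'}$, which you silently dropped. Your argument therefore proves
\begin{equation*}
\|u\|_{W^{2,q'}(\Omega_\epsilon)}\le C\bigl(\|f\|_{L^{r'}(\Omega_\epsilon)}+\|f\|_{L^{q'}(\Omega_\epsilon)}\bigr),
\end{equation*}
not the version with the $L^{q'}$ norm restricted to a collar $\omega_\epsilon$ of $\partial\Omega_\epsilon$. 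This is not just a cosmetic point: the collar (or any fixed bounded) version of the inequality is in fact false, because Calder\'on--Zygmund forces $\|D^2u\|_{L^{q'}}\sim\|f\|_{L^{q'}}$ and this cannot be absorbed into $\|f\|_{L^{r'}}$. Indeed taking $f=\chi_{B_\delta(z_0)}/|B_\delta|^{1/q'}$ with $z_0$ in the interior far from $\partial\Omega_\epsilon$ and $\delta\to0$ gives $\|f\|_{L^{q'}}=1$, $\|f\|_{L^{r'}}=|B_\delta|^{2/n}\to0$, $\|f\|_{L^{q'}(\omega_\epsilon)}=0$, while $\|D^2u\|_{L^{q'}}$ stays bounded below. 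So the estimate you prove (with $\omega_\epsilon$ replaced by the full $\Omega_\epsilon$) is the correct one, and it is also precisely how the lemma is invoked later in the paper, where the authors estimate the right-hand side terms $\mathcal{I}_1,\dots,\mathcal{I}_6$ in both $L^{q'}(\Omega_j)$ and $L^{r'}(\Omega_j)$ on the whole rescaled domain. In short: keep Steps 1--2 as they are, but in the patching either sum the interior $f$-terms into $\|f\|_{L^{q'}(\Omega_\epsilon)}$ honestly, or do not introduce the boundary strip $\omega_\epsilon$ at all.
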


\begin{lem}[\cite{Rey-1990}]\label{Lem2.2}
	Assume that $a\in\Omega$ and $\lambda\in\mathbb{R}^{+}$, then we have the following property:
	\begin{equation*}	\psi[a,\lambda](x)=\frac{\tilde{c}_{n,\mu}}{\lambda^{\frac{n-2}{2}}}\tilde{H}(a,x)+f_{\lambda}\quad\mbox{with}\quad PW[a,\lambda](x)=W[a,\lambda](x)-\psi[a,\lambda](x),
\end{equation*}
where $\tilde{c}_{n,\mu}$ is defined in \eqref{fU} and $f_{\lambda}$ verifies the uniform estimates
$$f_{\lambda}=O\Big(\frac{1}{\lambda^{\frac{n+2}{2}}d^n}\Big),\quad \frac{\partial f_{\lambda}}{\partial x_i}=O\Big(\frac{1}{\lambda^{\frac{n+2}{2}}d^{n+1}}\Big).$$
\end{lem}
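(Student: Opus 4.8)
The plan is to exploit the fact that $\psi[a,\lambda]:=W[a,\lambda]-PW[a,\lambda]$ is \emph{harmonic} in $\Omega$ with a prescribed boundary trace. Indeed, from the definition \eqref{eq2.13} of the projection one has $\Delta\psi[a,\lambda]=\Delta W[a,\lambda]-\Delta PW[a,\lambda]=0$ in $\Omega$ and $\psi[a,\lambda]=W[a,\lambda]$ on $\partial\Omega$. On the other hand, $\tilde H(a,\cdot)=-(n-2)\omega_n H(a,\cdot)$ is also harmonic in $\Omega$, since $H$ is harmonic in its first argument by the characterisation in Theorem \ref{Figalli} and symmetric. Hence, setting $f_\lambda:=\psi[a,\lambda]-\frac{\tilde c_{n,\mu}}{\lambda^{(n-2)/2}}\tilde H(a,\cdot)$, the function $f_\lambda$ is harmonic in $\Omega$, so by uniqueness for the Dirichlet problem the claimed identity $\psi[a,\lambda]=\frac{\tilde c_{n,\mu}}{\lambda^{(n-2)/2}}\tilde H(a,\cdot)+f_\lambda$ holds throughout $\Omega$, and the whole statement reduces to (i) computing the boundary values of $f_\lambda$ and (ii) propagating their smallness into $\Omega$.

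For step (i) I would Taylor-expand $W[a,\lambda]$ on $\partial\Omega$. Writing $d:=\dist(a,\partial\Omega)$, for $x\in\partial\Omega$ one has $|x-a|\ge d$, so
\[
W[a,\lambda](x)=\tilde c_{n,\mu}\Big(\frac{\lambda}{1+\lambda^2|x-a|^2}\Big)^{\frac{n-2}{2}}=\frac{\tilde c_{n,\mu}}{\lambda^{\frac{n-2}{2}}|x-a|^{n-2}}\Big(1+\frac{1}{\lambda^2|x-a|^2}\Big)^{-\frac{n-2}{2}},
\]
and expanding $(1+t)^{-(n-2)/2}=1+O(t)$ with $t=\lambda^{-2}|x-a|^{-2}\le\lambda^{-2}d^{-2}$ gives
\[
W[a,\lambda](x)=\frac{\tilde c_{n,\mu}}{\lambda^{\frac{n-2}{2}}|x-a|^{n-2}}+O\!\Big(\frac{1}{\lambda^{\frac{n+2}{2}}d^{n}}\Big)\qquad\text{on }\partial\Omega .
\]
Since $H(x,a)=-\big((n-2)\omega_n|x-a|^{n-2}\big)^{-1}$ for $x\in\partial\Omega$ and $\tilde H=-(n-2)\omega_n H$, the leading term is exactly $\frac{\tilde c_{n,\mu}}{\lambda^{(n-2)/2}}\tilde H(a,x)$, so $f_\lambda=O(\lambda^{-(n+2)/2}d^{-n})$ on $\partial\Omega$. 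Differentiating the same expansion tangentially — each derivative of $|x-a|^{-(n-2)}$ (or of the remainder $\sim|x-a|^{-n}$) on $\partial\Omega$ costs one extra factor $d^{-1}$ — yields $|\nabla_{\mathrm{tan}}f_\lambda|=O(\lambda^{-(n+2)/2}d^{-n-1})$ on $\partial\Omega$, together with the analogous $C^{1,\alpha}(\partial\Omega)$ bound.

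For step (ii) I would transfer these estimates inside by elliptic regularity. The maximum principle applied to the harmonic $f_\lambda$ gives immediately $\|f_\lambda\|_{L^\infty(\Omega)}\le\|f_\lambda\|_{L^\infty(\partial\Omega)}=O(\lambda^{-(n+2)/2}d^{-n})$, which is the first estimate. For the gradient, since $\partial\Omega$ is smooth the boundary Schauder estimate for harmonic functions gives $\|\nabla f_\lambda\|_{L^\infty(\overline\Omega)}\le C\|f_\lambda\|_{C^{1,\alpha}(\partial\Omega)}$, and combining with the boundary bounds of step (i) yields $\partial f_\lambda/\partial x_i=O(\lambda^{-(n+2)/2}d^{-n-1})$, as claimed. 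Alternatively one can use the representation $f_\lambda(x)=-\int_{\partial\Omega}f_\lambda(y)\,\partial_{\nu_y}G(x,y)\,dS_y$ together with the pointwise bound $|\nabla_xG(x,y)|\le c|x-y|^{1-n}$ from Lemma \ref{Gx} to estimate $\nabla_xf_\lambda$ directly.

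The one point that requires genuine care — and the only non-routine step — is keeping all the $O(\cdot)$'s uniform in $d=\dist(a,\partial\Omega)$ down to $d\to0$: in step (i) one must check that no constant degenerates as $a$ approaches $\partial\Omega$ (this is fine, because on $\partial\Omega$ the quantity $|x-a|$ is bounded below by $d$, so every negative power of $|x-a|$ is controlled by the matching power of $d$), and in step (ii) one must know that the Schauder constant of $\Omega$ (resp. the bound on $\nabla_xG$) depends only on the fixed domain $\Omega$, the $d$-dependence being entirely carried by the boundary data. Modulo this bookkeeping the lemma is precisely the projection estimate of Rey \cite{Rey-1990}, and I would simply reproduce that computation.
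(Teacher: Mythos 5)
The paper itself offers no proof of this lemma — it is cited verbatim from Rey \cite{Rey-1990} as a known projection estimate — so the comparison is with Rey's original argument, which your proposal reconstructs essentially correctly: $\psi[a,\lambda]$ and $\tilde{c}_{n,\mu}\lambda^{-(n-2)/2}\tilde H(a,\cdot)$ are both harmonic in $\Omega$, their difference $f_\lambda$ has small boundary trace by Taylor expansion of $W[a,\lambda]$ at $\partial\Omega$ (where $|x-a|\ge d$), and the maximum principle gives the $L^\infty$ bound. This is exactly Rey's mechanism, and the identification of the leading boundary term with $\tilde H$ via $H(x,a)=-((n-2)\omega_n|x-a|^{n-2})^{-1}$ on $\partial\Omega$ is carried out correctly.

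The one place your sketch, as literally written, does not quite close is the gradient bound. The Schauder estimate $\|\nabla f_\lambda\|_{L^\infty(\overline\Omega)}\le C\|f_\lambda\|_{C^{1,\alpha}(\partial\Omega)}$ costs a $C^{1,\alpha}$ seminorm on the boundary, and for boundary data behaving like $\lambda^{-(n+2)/2}|x-a|^{-n}$ with $|x-a|\ge d$ this seminorm is $O(\lambda^{-(n+2)/2}d^{-n-1-\alpha})$, which overshoots the claimed $d^{-n-1}$ by $d^{-\alpha}$. Your alternative via the Poisson representation $f_\lambda(x)=-\int_{\partial\Omega}f_\lambda(y)\,\partial_{\nu_y}G(x,y)\,dS_y$ is the right fix, but it needs one extra ingredient you did not state: one must subtract the value at the nearest boundary point, $\nabla_x f_\lambda(x)=\int_{\partial\Omega}\nabla_x P(x,y)\,[f_\lambda(y)-f_\lambda(x_\partial)]\,dS_y$, and use $\int_{\partial\Omega}P(x,y)\,dS_y\equiv1$ so that the singular part of $\nabla_xP$ cancels; the bare bound $|\nabla_x G|\lesssim|x-y|^{1-n}$ alone does not give an integrable kernel for $\nabla_x\partial_{\nu_y}G$. (Alternatively, at the points actually used in the paper — $x$ at distance $\gtrsim d$ from $\partial\Omega$ — an interior gradient estimate $|\nabla f_\lambda|\le Cd^{-1}\|f_\lambda\|_{L^\infty}$ already yields the stated rate.) These are standard adjustments and do not affect the correctness of the strategy, which is precisely Rey's.
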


\small

\end{document}